\title{Geodesic flow, left-handedness, and templates}
\author{Pierre Dehornoy}
\address{Institut Fourier, 100 rue des maths, BP 74, 38402 Saint Martin d'H\`eres cedex, France}
\email{pierre.dehornoy@ujf-grenoble.fr}
\urladdr{http://www-fourier.ujf-grenoble.fr/~dehornop/}
\date{First version: December 29, 2011, Last revision: August 4, 2014}
\newcommand{\bord}{\partial}
\newcommand{\barre}{\overline}
\newcommand{\BoundTorus}{\barre{\un\Side}}
\newcommand{\BranchPQ}{\mathrm{BS}_{P\to Q}}
\newcommand{\BranchQP}{\mathrm{BS}_{Q\to P}}
\newcommand{\BS}{\mathrm{BS}}
\newcommand{\cc}[1]{\overline{#1}}
\newcommand{\cci}{\overline{\imath}}
\newcommand{\Cone}{C}
\newcommand{\D}{\mathbb D}
\newcommand{\discr}[2]{#1_{#2}}
\newcommand{\deform}[1]{F_{#1}}
\newcommand\GF[1]{\Phi_{#1}}
\newcommand{\Gr}{\mathcal G}
\newcommand{\Hy}{\mathbb H}
\newcommand{\Isom}{\mathrm{Isom^+}}
\newcommand{\isot}[2]{f_{#2,#1}}
\newcommand{\lk}{\mathrm{Lk}}
\newcommand{\hzero}[2]{h_0(#1,#2)}
\newcommand{\hun}[2]{h_1(#1,#2)}
\newcommand{\Orb}{\Sigma}
\newcommand{\Orbpq}{\Orb_{p, q, \infty}}
\newcommand{\Orbdq}{\Orb_{2, q, \infty}}
\newcommand{\Orbg}{\Orb_{2,3,4g+2}}
\newcommand{\PSLZ}{\mathrm{PSL}_2(\Z)}
\newcommand{\Pol}[1]{\mathrm{Pol}_{#1}}
\newcommand{\PPol}[1]{\mathrm{Pol_{#1}^\circ}}
\newcommand{\prot}{\widetilde{\mathcal B}}
\newcommand{\Polg}{P_{4g+2}}
\newcommand{\Polgint}{\hat P_{4g+2}}
\newcommand{\Q}{\mathbb Q}
\newcommand{\R}{\mathbb R}
\newcommand{\Rib}{\mathrm{Rib}}
\newcommand{\resp}{{\it resp.\ }}
\newcommand{\Sph}{\mathbb S}
\newcommand{\Side}{e}
\newcommand{\Sp}{S^\gamma_2}
\newcommand{\Sq}{S^\gamma_Q}
\newcommand{\Sb}{S^\gamma_e}
\newcommand{\Spi}{S^\gamma_\pi}
\newcommand{\Sd}{S^\gamma_\bord}
\newcommand{\Ss}{S^\gamma_{V_0}}
\newcommand{\Sss}{S^\gamma_{V_1}}
\newcommand{\T}{\mathbb T}
\newcommand{\Tiling}{\mathcal T}
\newcommand{\Tile}{T}
\newcommand{\temp}{\mathcal B}
\newcommand{\tinf}{a_Z}
\newcommand{\Tpq}{\temp_{\Gamma_{p,q}, \Tiling_{P,Q}}}
\newcommand{\Tdq}{\temp_{\Gamma_{2,q}, \Tiling_{P,Q}}}
\newcommand{\un}{T^1}
\newcommand{\uQ}{\barre{\un\Delta_Q/\Gamma_Q}}
\newcommand{\uP}{\barre{\un\Delta_P/\Gamma_P}}
\newcommand{\Vis}[2]{\mathrm{Vis}_{e_{#1}}^{e_{#2}}}
\newcommand{\vzero}[2]{v_0(#1,#2)}
\newcommand{\vun}[2]{v_1(#1,#2)}
\newcommand{\WT}[1]{\Theta_\mathrm{wheel}(#1)}
\newcommand{\Z}{\mathbb Z}
\numberwithin{equation}{section}
\theoremstyle{plain}
\newtheorem*{theoA}{Theorem A}
\newtheorem*{theoB}{Theorem B}
\theoremstyle{definition}
\newtheorem{defi}[equation]{Definition}
\newtheorem{ques}[equation]{Question}
\newtheorem{conj}[equation]{Conjecture}
\theoremstyle{plain}
\newtheorem{prop}[equation]{Proposition}
\newtheorem{lemma}[equation]{Lemma}
\newtheorem{theo}[equation]{Theorem}
\newtheorem{coro}[equation]{Corollary}
\theoremstyle{remark}
\newtheorem{remark}[equation]{Remark}
\begin{document}

\begin{abstract}
We establish that, for every hyperbolic orbifold of type~$(2,q,\infty)$ and for every orbifold of type~$(2,3,4g{+}2)$,  the geodesic flow on the unit tangent bundle is left-handed. This implies that the link formed by every collection of periodic orbits $(i)$ bounds a Birkhoff section for the geodesic flow, and $(ii)$ is a fibered link. 
We also prove similar results for the torus with any flat metric. 
Besides, we observe that the natural extension of the conjecture to arbitrary hyperbolic surfaces (with non-trivial homology) is false.
\end{abstract}

\maketitle

\section{Introduction}
\label{S:Introduction}

In this paper, we investigate the dynamical properties of certain particular $3$-dimensional flows, namely the geodesic flows attached to surfaces and $2$-dimensional orbifolds. If $\Sigma$ is a Riemannian surface or, more generally, a Riemannian $2$-dimensional orbifold, that is, a space locally modelled on quotients of surfaces under the action of discrete rotation groups, the unit tangent bundle~$\un\Sigma$ is a $3$-manifold, and the geodesics of~$\Sigma$ induce a natural complete flow in~$\un\Sigma$. This flow is called the \emph{geodesic flow} of~$\un\Sigma$, hereafter denoted by~$\GF\Sigma$. What we do here is to specifically study the way the periodic orbits of~$\GF\Sigma$ may wrap one around the other.

In every 3-dimensional manifold~$M$, the linking number of two disjoint links can be defined in a non-ambiguous way whenever the links are null-homologous, that is, have a trivial image in~$H_1(M; \Q)$~\cite{Kaiser}. When the latter group is trivial, that is, when $M$ is a rational homology sphere, the linking number is always defined, and it yields a topological invariants of links. 

If $\Sigma$ is a 2-dimensional orbifold, every geodesic on~$\Sigma$ can be lifted to~$\un\Sigma$ in two ways, yielding a pair of orbits of~$\GF{\Sigma}$. It follows from Birkhoff's results~\cite{Birkhoff} that the linking number of any two such pairs of orbits is the opposite of the number of intersections of the geodesics, hence is nonpositive. This implies that, in a geodesic flow, there are always many pairs of orbits with a negative linking number. By contrast, there is no simple construction necessarily leading to collections of orbits with a positive linking number, and it makes sense to raise

\begin{ques}
\label{Q:OneHandedKnot}
Assume that~$\Sigma$ is a Riemannian 2-dimensional orbifold. Let~$\gamma, \gamma'$ be two null-homologous collections of periodic orbits of~$\GF{\Sigma}$. Does $\lk(\gamma, \gamma') <0$ necessarily hold?
\end{ques}

There are two cases when the answer to Question~\ref{Q:OneHandedKnot} is known to be positive, namely when $\Sigma$ is a sphere~$\Sph^2$ with a round metric and when $\Sigma$ is the modular surface~$\Hy^2/\PSLZ$~\cite{GhysJapan}. In the latter article, \'Etienne Ghys actually proves stronger results involving the natural extension of the linking number to arbitrary measures. Namely, he defines a complete flow~$\Phi$ in a homology $3$-sphere~$M$ to be \emph{left-handed} if the linking number of every pair of $\Phi$-invariant measures is always negative, and proves that the above two flows are left-handed. It is then natural to raise 



\begin{ques}[Ghys]
\label{Q:OneHanded}
Assume that $\Sigma$ is a Riemannian 2-dimensional orbifold satisfying $H_1(\un\Sigma, \Q) = 0$. Is the geodesic flow~$\GF{\Sigma}$ on~$\un\Sigma$ necessarily left-handed?
\end{ques}

By definition, a positive answer to Question~\ref{Q:OneHanded} implies a positive answer to Question~\ref{Q:OneHandedKnot}. As we shall explain, the converse implication, that is, the fact that the negativity of the linking number for pairs of periodic orbits implies the negativity of the linking number for arbitrary invariant measures, is true whenever the flow has sufficiently many periodic orbits, in particular when the flow is of Anosov type. 

The aim of this paper is to provide positive answers to Questions~\ref{Q:OneHandedKnot} and~\ref{Q:OneHanded} in new cases, namely when $\Sigma$ is a hyperbolic orbifold of type~$(2, q, \infty)$ with $q \ge 3$ and when $\Sigma$ is a hyperbolic orbifold of type~$(2, 3, 4g+2)$ with $g\ge 2$. 

\begin{theoA}
\label{T:OneHanded}
Assume that $\Sigma$ is $(a)$ either an orbifold of type~$(2, q, \infty)$ with $q \ge 3$, equipped with a negatively curved metric, or $(b)$ an orbifold of type~$(2, 3, 4g+2)$ with $g\ge2$,  equipped with a negatively curved metric. Then 

\noindent $(i)$ any two null-homologous collections of periodic orbits of~$\GF{\Sigma}$ have a negative linking number,

\noindent$(ii)$ the geodesic flow of~$\un\Sigma$ is left-handed.
\end{theoA}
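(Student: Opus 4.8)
The strategy is to encode $\GF\Sigma$ by a \emph{template}, i.e.\ a branched surface carrying a semiflow, and then to extract from it \emph{Birkhoff sections} — embedded surfaces transverse to the flow, bounded by periodic orbits, and crossed by every orbit within bounded time. First I would build such a section for each of the two families. For type $(2,q,\infty)$ it comes from the $\Gamma_{2,q,\infty}$-invariant ideal tessellation of $\Hy^2$, a Farey-type tessellation generalising the one that yields Birkhoff's section for the modular surface $\Hy^2/\PSLZ$; for type $(2,3,4g{+}2)$ it comes from the tessellation of $\Sigma$ by copies of the regular hyperbolic $(4g{+}2)$-gon $\Polg$, equivalently from a hyperelliptic-type symmetry of a genus-$g$ cover. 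Collapsing the first-return map along its contracting direction produces an explicit template $\temp\subset\un\Sigma$ and an isotopy in $\un\Sigma$ sending each periodic orbit of $\GF\Sigma$ to a periodic orbit of the semiflow on $\temp$, preserving homology classes and pairwise linking numbers. This reduces everything to $\temp$.

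I would then establish two facts about $\temp$. \emph{(1)} In the standard diagram of $\temp$ all crossings carry the same sign; orienting things so that this sign is negative makes $\lk(\gamma,\gamma')$ a sum of non-positive terms for every pair of disjoint collections of periodic orbits, so $\lk\le 0$ unconditionally. \emph{(2)} Whenever a collection $\gamma'$ of periodic orbits on $\temp$ is null-homologous in $\un\Sigma$, the template structure lets one promote a spanning surface for $\gamma'$ to a genuine Birkhoff section $S_{\gamma'}$ with $\bord S_{\gamma'}=\gamma'$. Granting~(2): if $\gamma$ is any further null-homologous collection disjoint from $\gamma'$, then, $S_{\gamma'}$ being a rational Seifert surface met transversally and coherently by the flow, $\lk(\gamma,\gamma')=-(\gamma\cdot S_{\gamma'})=-\#(\gamma\cap S_{\gamma'})$, which is strictly negative because a Birkhoff section meets \emph{every} orbit. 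With the correspondence of the first step this proves part~$(i)$.

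For part~$(ii)$ I would pass to arbitrary $\GF\Sigma$-invariant probability measures. A negatively curved metric makes $\GF\Sigma$ Anosov on its recurrent part, so by the Anosov closing lemma and specification the periodic orbits are weak-$*$ dense in the space of invariant measures, and the approximating families can be taken null-homologous; meanwhile Ghys's asymptotic linking extends $\lk$ to a continuous bilinear pairing on that space. Fact~(1) gives $\lk\le 0$ on null-homologous measures, and the \emph{quantitative} form of~(2) — the intersection count $\#(\,\cdot\,\cap S_{\gamma'})$ is bounded below by the inverse of the (bounded) return time of $S_{\gamma'}$, uniformly over the sections the construction produces — survives the limit and forces $\lk(\mu,\nu)<0$ for all nonzero null-homologous invariant $\mu,\nu$; that is exactly left-handedness. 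When $\Sigma$ has a cusp, $\un\Sigma$ is not a rational homology sphere and ``left-handed'' should be read relative to the subspace of null-homologous invariant measures, which is where the statement has content and where Ghys's definition already lives in case~$(b)$.

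The crux is the second step: writing the template down explicitly for these two families, and, above all, carrying out the prescribed-boundary construction~(2) together with its uniform quantitative refinement — the soft inequality $\lk\le 0$ is built into the negativity of $\temp$, but excluding equality, and doing so uniformly enough to reach the measure statement, forces one to identify precisely which cycles on $\temp$ become null-homologous in $\un\Sigma$ and to realise each of them as the boundary of an honest cross-section. Making the apparatus — Birkhoff sections, the asymptotic linking pairing, density of periodic orbits, bounded return times — function in the non-compact case~$(a)$ is a further technical layer.
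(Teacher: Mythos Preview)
Your outline has a genuine gap at its core: fact~(1), that ``in the standard diagram of $\temp$ all crossings carry the same sign,'' is \emph{false} for the templates in question. In case~$(a)$ with $q>3$, the paper's own computation (see the proof of Proposition~\ref{P:2q} and Figure~\ref{F:PositiveIntersection}) shows that the projection of the template to the median torus has crossings of \emph{both} signs; the 2-chain one constructs for~$\gamma$ has a piece~$\Sq$ whose intersection with~$\gamma'$ is genuinely \emph{positive}, and negativity of $\lk(\gamma,\gamma')$ only emerges after balancing this against a second, more negative piece~$\Sb$ via an explicit quadratic estimate in the ribbon-counts. In case~$(b)$ the situation is worse: the template sits in $\un\Sigma_g$ for a genus-$g$ surface covering $\Orbg$, and on $\Sigma_g$ there \emph{exist} null-homologous collections with $\lk>0$ (Proposition~\ref{P:Cex}). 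So no diagrammatic sign argument can work on $\temp_{4g+2}$; what saves the day is that lifts of geodesics from $\Orbg$ are $(4g{+}2)$-symmetric, which forces their ribbon-codes into a small subcone on which the bounding bilinear form $S_{4g+2}$ is negative. Your proposal never invokes this symmetry reduction, and without it the argument cannot close.

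Your fact~(2) is also problematic as stated: you want to build a Birkhoff section bounded by an arbitrary null-homologous $\gamma'$ and then read off $\lk(\gamma,\gamma')<0$ from transversality. But the existence of such Birkhoff sections is, in this paper, a \emph{consequence} of left-handedness (Corollary~\ref{C:Birkhoff} via Ghys), not an independent input; using it to prove $(i)$ is circular unless you supply a direct construction, which is essentially as hard as the theorem itself. The paper instead constructs an explicit rational 2-chain with boundary a multiple of~$\gamma$ (not a Birkhoff section) and bounds its intersection with~$\gamma'$ by a bilinear form in combinatorial data, then checks negativity of that form on the relevant cone. Part~$(ii)$ then follows from~$(i)$ by the soft Lemma~\ref{L:Approx}, exactly as you suggest.
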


In the case of a good orbifold with zero curvature, that is, a quotient of a torus with a flat metric, the unit tangent bundle always has non-trivial homology. Nevertheless it makes sense to address Question~\ref{Q:OneHandedKnot}. In this case as well, the answer is (almost) always positive.

\begin{theoB}
\label{T:TorusSimple}
Assume that~$\Sigma$ is a quotient of the torus~$\T^2$ equipped with a flat metric. Then any two collections~$\gamma, \gamma'$ of orbits of~$\GF{\Sigma}$ whose projections on~$\Sigma$ intersect have a negative linking number.
\end{theoB}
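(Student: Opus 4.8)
The plan is to reduce the statement about a general flat quotient to the case of the standard flat torus $\T^2 = \R^2/\Z^2$, and then compute linking numbers explicitly using the simple structure of geodesics (straight lines) and of the unit tangent bundle $\un\T^2 \cong \T^2 \times \Sph^1 \cong \T^3$. First I would observe that every flat $2$-orbifold $\Sigma$ is finitely covered by a flat torus, and that the geodesic flow on $\un\Sigma$ is covered by the geodesic flow on $\un\T^2$; since linking numbers (when defined) behave well under finite covers — the linking number upstairs is a positive multiple of the one downstairs when the relevant classes stay null-homologous — it suffices to treat $\Sigma = \T^2$ with an arbitrary flat metric, and up to a linear change of coordinates we may take the metric to be the standard one.

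Next I would describe the periodic orbits concretely. A periodic orbit of $\GF{\T^2}$ is a closed geodesic, i.e.\ a line of rational slope on $\T^2$ lifted to $\un\T^2$ by attaching the constant direction $\theta$; so each periodic orbit $\gamma$ carries a homology class in $H_1(\un\T^2;\Z) \cong \Z^3$ of the form $(p,q,0)$ where $(p,q)$ is the primitive slope vector (the third coordinate, the winding in the fiber direction, is $0$). The hypothesis that the projections of $\gamma$ and $\gamma'$ to $\T^2$ intersect means precisely that their slope vectors $(p,q)$ and $(p',q')$ are not parallel. The key point is that for two such orbits the linking number is well defined exactly when each is null-homologous — which does \emph{not} hold for a single straight closed geodesic — so one must instead work with balanced combinations $\gamma = \sum n_i \gamma_i$ with $\sum n_i (p_i,q_i) = 0$; the statement of the theorem should be read with this in mind, and the quantity $\lk(\gamma,\gamma')$ is the one furnished by the pairing on null-homologous cycles.

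The computational heart of the argument is then a formula: for two closed geodesics $\gamma, \gamma'$ on $\T^2$ with slope vectors $u=(p,q)$ and $u'=(p',q')$, lifted at directions, the ``linking number density'' is proportional to $\det(u,u') = pq' - p'q$ times the number of intersection points, with a sign that is always the same. Concretely, two lifted closed geodesics in $\un\T^2 = \T^3$ are disjoint (generic translates) and their linking number in $\T^3$, computed via intersection with a Seifert surface after passing to the null-homologous combination, equals $-\,\#(\text{geometric intersections of the projections})$ up to the orientation conventions — this is the flat-metric analogue of the Birkhoff inequality recalled in the introduction. I would prove this by exhibiting an explicit $2$-chain bounding the null-homologous cycle built from the first collection (a union of ``vertical annuli'' $\ell \times \Sph^1$ over the geodesic segments, capped off using the balancing condition) and counting its transverse intersections with the second lifted collection; each intersection of the base curves contributes $\pm 1$ with a sign governed by the sign of $\det(u,u')$, and these signs all agree because the fiber directions are constant.

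The main obstacle I anticipate is \emph{not} the local intersection count but the bookkeeping of homology in $\un\T^2$: because $H_1(\un\T^2;\Q) \ne 0$, one has to be careful that ``null-homologous collection'' is the right hypothesis, that the bounding $2$-chains can be chosen so that the parts coming from the non-trivial $H_1$ (the fiber circle direction) do not spuriously contribute to the linking pairing, and that the construction is insensitive to the choices made (well-definedness of $\lk$). Handling the finite-cover reduction also requires checking that a null-homologous combination downstairs pulls back to one upstairs, which is automatic, and that the linking number multiplies by the degree, which follows from naturality of the pairing; once these points are settled, the sign of $\det(u,u')$ combined with the non-parallelism hypothesis gives strict negativity, completing the proof.
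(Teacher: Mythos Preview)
Your reduction step via finite covers matches the paper exactly (Lemma~\ref{L:Covering}), and your description of periodic orbits as horizontal curves $(p,q,0)$ in $\un\T^2\cong\T^3$ is correct. The divergence is in the computational core, and there is a genuine gap there.

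The paper does \emph{not} compute the linking number by a direct Seifert-surface intersection count. Instead it encodes each null-homologous collection~$\gamma$ by a convex lattice polygon~$\Pol\gamma$ (Definition~\ref{D:Polygon}) whose edges are the slope vectors $n_i(p_i,q_i)$ in angular order, classifies surfaces transverse to~$\GF{\T^2}$ via helix boxes, and proves the exact formula $\lk(\gamma,\gamma') = A(\gamma)+A(\gamma')-A(\gamma\cup\gamma')$ (Theorem~\ref{T:Torus}\,$(v)$). Negativity then follows from a convexity fact: the polygon of the union strictly contains both individual polygons unless all slopes are parallel.

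Your proposed construction---cylinders $\ell_i\times[\theta_0,\theta_i]$ capped by a $2$-chain at a base level---is a legitimate Seifert $2$-chain and, if carried out carefully, yields an expression equivalent to the area formula. But the assertion that ``these signs all agree because the fiber directions are constant'' is where your argument breaks. The intersection of such a $2$-chain with an orbit~$\gamma'_j$ at level~$\theta'_j$ contributes $\det\big((p_i,q_i),(p'_j,q'_j)\big)$ only for those~$i$ with $\theta'_j$ lying in the interval $[\theta_0,\theta_i]$ (with a sign from the interval orientation). Summed over all $i,j$, these determinants do \emph{not} all have the same sign; cancellation occurs, and what survives is precisely the area difference above. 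The reason the total is negative is the convexity of the polygons---equivalently, the fact that the slopes are cyclically ordered by their angles~$\theta_i$. This is the missing idea in your sketch: you need some geometric input beyond ``constant fiber direction'' to force the sign, and the paper's polygon formalism is exactly the packaging that makes this transparent.
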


On the other hand, we give two examples showing that, when~$\Sigma$ is not a homology sphere or its curvature has a non-constant sign, Question~\ref{Q:OneHandedKnot} has a negative answer.

\begin{prop}
\label{P:CEx}
$(i)$ Let~$\Sigma$ be a hyperbolic surface. Then there exist two null-homologous collections~$\gamma, \gamma'$ of periodic orbits of~$\GF{\Sigma}$ with~$\lk(\gamma, \gamma') > 0$.

\noindent $(ii)$ Let~$\Sigma$ be a sphere with two non-intersecting simple geodesics. Then there exist two null-homologous collections~$\gamma, \gamma'$ of periodic orbits of~$\GF{\Sigma}$ with~$\lk(\gamma, \gamma') > 0$. The geodesic flow~$\GF{\Sigma}$ is not left-handed.
\end{prop}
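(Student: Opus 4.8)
The plan is to derive both parts from a single linking-number computation in $\un\Sigma$, namely that of the two lifts of a separating simple closed geodesic. For a simple closed geodesic $c$, write $c^+,c^-$ for the two periodic orbits of $\GF\Sigma$ lying over $c$ (the lifts of $c$ with its two orientations); over each point of $c$ they carry antipodal unit vectors, so $c^+\cap c^-=\emptyset$. Suppose $c$ separates $\Sigma$ into $\Sigma_1$ and $\Sigma_2$. Then $[c]=0$ in $H_1(\Sigma;\Q)$, and since the Gysin sequence of the circle bundle $\un\Sigma\to\Sigma$ --- whose Euler number $\chi(\Sigma)$ is nonzero --- makes $H_1(\un\Sigma;\Q)\to H_1(\Sigma;\Q)$ an isomorphism, the orbits $c^+$ and $c^-$ are null-homologous and their linking number is defined. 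The identity I aim for is
\[
\lk(c^+,c^-)=-\,\frac{\chi(\Sigma_1)\,\chi(\Sigma_2)}{\chi(\Sigma)}\,.
\]

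For part $(i)$ I would take $\Sigma$ closed hyperbolic (the non-compact cases being analogous) and let $c$ be a separating simple closed geodesic, which exists because the genus is at least $2$. The three quantities $\chi(\Sigma_1),\chi(\Sigma_2),\chi(\Sigma)$ are then all negative, so the identity yields $\lk(c^+,c^-)>0$; for the genus-two splitting into two one-holed tori it equals $\tfrac12$. Taking $\gamma=c^+$ and $\gamma'=c^-$ proves $(i)$. Note that $\un\Sigma$ is not a rational homology sphere here, so this does not contradict the conjecture of Ghys; it rather shows that the "rational homology sphere" hypothesis cannot simply be dropped.

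For part $(ii)$, let $\Sigma$ be a sphere with two disjoint simple closed geodesics $c_1,c_2$. Now $\un\Sigma\cong\mathbb{RP}^3$ is a rational homology sphere, so all orbits are null-homologous and all linking numbers lie in $\tfrac12\Z$. The class of $c_i^+$ in $\pi_1(\un\Sigma)=\Z/2$ depends only on $c_i$ as an embedded circle of $\Sigma$ (the unit tangent bundle of $S^2$ being canonically homotopy equivalent to the metric-independent bundle of tangent directions), and by the Schoenflies theorem all such circles are isotopic; computing on the round sphere (the "plate trick") one finds this class to be the generator. Since the linking form of $\mathbb{RP}^3$ takes the value $\tfrac12$ on the generator, $\lk(c_1^+,c_2^+)\equiv\tfrac12\pmod 1$, hence is nonzero. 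Because $c_1$ and $c_2$ are disjoint, rotating the unit vectors over $c_2$ through angles $0$ to $\pi$ carries $c_2^+$, inside $\un\Sigma\setminus c_1^+$, onto $c_2^-$ with its orientation reversed, whence $\lk(c_1^+,c_2^-)=-\lk(c_1^+,c_2^+)$; so exactly one of the two orientations of $c_2$ gives a positive linking number with $c_1^+$. With $\gamma=c_1^+$ and $\gamma'$ that lift, the first assertion follows, and the second is immediate: a left-handed flow links any two of its invariant measures --- in particular any two periodic orbits --- negatively, so $\GF\Sigma$ is not left-handed.

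The main obstacle is the displayed formula for $\lk(c^+,c^-)$. I would prove it by exhibiting an explicit rational Seifert surface for $c^+$: over each $\Sigma_i$ take the graph of a generic unit vector field extending the tangent field of $c$; it has interior zeros of total index $\chi(\Sigma_i)$, each contributing a fibre-parallel boundary curve, and these are capped off using that $|\chi(\Sigma)|$ parallel copies of a fibre bound a standard $2$-chain. Gluing the two graphs along a collar of $c$ and then tracking the multiplicity of the resulting $2$-chain, the orientation conventions, and its algebraic intersection with $c^-$ is the delicate step, and it is there that the denominator $\chi(\Sigma)$ and the sign emerge. Specialising to $\Sigma_i=D^2$ gives the value $-\tfrac12$ for a geodesic bounding a disc on each side, which on the round sphere recovers the familiar linking number $-\tfrac12$ of two generic closed geodesics and is consistent with $\GF{\Sph^2}$ being left-handed; the sign reversal forced by a negative Euler characteristic is precisely what produces the positive linking numbers above.
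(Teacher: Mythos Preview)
Your proposal is correct, and while it shares the paper's basic toolkit (relate lifts of curves to fibers via sections coming from vector fields, then use that two distinct fibers link with value $-1/\chi(\Sigma)$), the execution differs in both parts.

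For $(i)$, the paper works on the genus-two surface and chooses two \emph{distinct} collections of curves (one on each side of a pants decomposition), shows via a vector field on each pair of pants that each lift is cobordant, in the complement of the other, to twice a fiber, and concludes $\lk(\gamma,\gamma')=2\cdot 2\cdot\tfrac12=2$. You instead take the two lifts $c^+,c^-$ of a \emph{single} separating geodesic and extract the clean closed formula $\lk(c^+,c^-)=-\chi(\Sigma_1)\chi(\Sigma_2)/\chi(\Sigma)$, which immediately handles every closed hyperbolic surface and makes the mechanism transparent (three negative numbers combine to a positive one). Your derivation --- graph of a vector field on $\Sigma_i$ gives a cobordism from $c^\pm$ to $-\chi(\Sigma_i)$ copies of a fiber, then cap with the global section with a single singularity --- is exactly the computation the paper performs in Lemma~\ref{L:LkSs} and its surroundings, so the ``delicate step'' you flag is no worse than what the paper itself leaves to a picture.

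For $(ii)$, the paper again argues by direct cobordism to (signed) fibers and reads off $\lk=+\tfrac12$. Your route is genuinely different: you use that $\un\Sph^2\cong\mathbb{RP}^3$, invoke the torsion linking form to force $\lk(c_1^+,c_2^+)\in\tfrac12+\Z$, and then the fiberwise $\pi$-rotation (valid in the complement of $c_1^+$ because $c_1\cap c_2=\emptyset$) to get $\lk(c_1^+,c_2^-)=-\lk(c_1^+,c_2^+)$. This is slicker in that it avoids building any Seifert surface at all for this case, at the cost of only determining the linking number up to sign and then fixing the sign by symmetry rather than by computation.
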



When Questions~\ref{Q:OneHandedKnot} and~\ref{Q:OneHanded} have positive answers, an important consequence is the existence of many Birkhoff sections. A Birkhoff section for a non-singular flow on a $3$-manifold is a compact surface whose boundary is the union of finitely many periodic orbits of the flow, whose interior is transverse to the flow and intersects every orbit infinitely many times. The existence of a Birkhoff section for a flow is very useful as, in this case, studying the dynamics of the flow essentially reduces to studying the first return map on the section. Therefore, it is natural to wonder whether a flow admits Birkhoff sections. Now, as explained by Ghys \cite{GhysJapan}, the left-handedness of a flow implies the existence, for every finite collection of periodic orbits, of a Birkhoff section bounded by this collection. Thus our current results imply

\begin{coro}
\label{C:Birkhoff}
If $\Sigma$ is one of the orbifolds mentioned in Theorem~A, every finite null-homologous collection of periodic orbits of~$\GF{\Sigma}$ bounds a Birkhoff section. 
\end{coro}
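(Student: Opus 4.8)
The plan is to deduce the statement directly from part~(ii) of Theorem~A together with a general implication, due to Ghys~\cite{GhysJapan}, relating left-handedness to the existence of Birkhoff sections. First I would record that, for each orbifold~$\Sigma$ appearing in Theorem~A, the $3$-manifold~$\un\Sigma$ is a rational homology sphere: in case~(b) it is closed, while in case~(a) it is open with a single cusp, exactly as the unit tangent bundle of the modular surface~$\Hy^2/\PSLZ$. In either case the linking number of two disjoint null-homologous collections of curves, and more generally the linking pairing on $\GF\Sigma$-invariant measures, is well defined, so that Theorem~A(ii) makes sense and asserts that $\GF\Sigma$ is left-handed.

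Next I would invoke Ghys's principle: on a rational homology $3$-sphere, a left-handed flow has the property that every null-homologous finite collection of its periodic orbits bounds a Birkhoff section. The underlying mechanism is a transversality criterion in the spirit of Fried and Sullivan, namely that a null-homologous collection~$\gamma$ of periodic orbits bounds a Birkhoff section as soon as the linking number $\lk(\mu, \gamma)$ has a constant sign and stays bounded away from~$0$ as $\mu$ ranges over the invariant probability measures not carried by~$\gamma$. Since this set of measures is compact and $\mu \mapsto \lk(\mu, \gamma)$ is continuous, the pointwise inequality $\lk(\mu,\gamma) < 0$ provided by left-handedness automatically upgrades to such a uniform negative bound. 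Feeding Theorem~A(ii) into this principle gives the corollary.

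The only point calling for care is the non-compactness occurring in case~(a): one has to check that the compactness of the space of invariant probability measures and the transversality criterion survive on the cusped manifold~$\un\Sigma$. I expect this to be the main — and rather mild — obstacle; it is handled exactly as Ghys handles the modular surface in~\cite{GhysJapan}, the cusp contributing no invariant probability measure that escapes to infinity.
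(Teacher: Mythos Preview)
Your proposal is correct and follows exactly the route the paper takes: the corollary is stated in the introduction as an immediate consequence of Theorem~A(ii) together with Ghys's implication from~\cite{GhysJapan} that left-handedness forces every finite collection of periodic orbits to bound a Birkhoff section. Your additional remarks on the Fried--Sullivan mechanism and on the cusped case~(a) go beyond what the paper spells out, but they are consistent with how the paper handles $\Sigma_{2,q,\infty}$ (via the compactification in Section~\ref{S:Compactification}) and do not change the argument.
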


Next, it is known~\cite{These} that every link that is the boundary of a Birkhoff section for a flow is fibered. Therefore, a direct consequence of Corollary~\ref{C:Birkhoff} is

\begin{coro}
\label{C:Fibered}
If $\Sigma$ is one of the orbifolds mentioned in Theorem~A, every link in~$\un\Sigma$ formed by a null-homologous collection of periodic orbits of the flow~$\GF{\Sigma}$ is fibered. 
\end{coro}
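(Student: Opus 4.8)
The plan is to deduce Corollary~\ref{C:Fibered} as an immediate formal consequence of Corollary~\ref{C:Birkhoff} together with the cited result of \cite{These} to the effect that the binding of any Birkhoff section of a non-singular flow on a closed $3$-manifold is a fibered link. So the real content has already been packaged into the two inputs, and the proof is essentially a one-line chain of implications; what I would spell out is why the hypotheses of those inputs are met in the situation at hand.

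First I would fix $\Sigma$ to be one of the orbifolds listed in Theorem~A, so that $\un\Sigma$ is a closed orientable $3$-manifold (a rational homology sphere, by the running hypothesis $H_1(\un\Sigma;\Q)=0$) and $\GF{\Sigma}$ is a non-singular flow on it. Let $L$ be a link in $\un\Sigma$ formed by a collection of periodic orbits of $\GF{\Sigma}$. Since $\un\Sigma$ is a rational homology sphere, $L$ is automatically null-homologous in $H_1(\un\Sigma;\Q)$, so Corollary~\ref{C:Birkhoff} applies and yields a Birkhoff section $S$ for $\GF{\Sigma}$ whose boundary $\bord S$ is exactly $L$. Then the theorem of \cite{These} — that the boundary of any Birkhoff section of a non-singular flow on a closed $3$-manifold is a fibered link — applies verbatim to $S$ and gives that $L=\bord S$ is fibered. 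This proves the statement.

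The only point that deserves a word of care, and the step I would flag as the potential obstacle, is the requirement in Corollary~\ref{C:Birkhoff} (and hence implicitly here) that the collection of periodic orbits be \emph{null-homologous}: for a single periodic orbit, or for an arbitrary finite union of them, this needs $H_1(\un\Sigma;\Q)=0$, which is guaranteed precisely for the orbifolds of Theorem~A and is why Corollary~\ref{C:Fibered} is stated only for those. (For the flat-torus case of Theorem~B the unit tangent bundle has non-trivial homology, so this argument does not apply there, consistently with the fact that Theorem~B is only about linking numbers.) I would also remark that the Birkhoff section produced is a genuine (topological) surface with boundary transverse-except-at-$\bord S$ to the flow, which is exactly the hypothesis needed to invoke \cite{These}; no further regularity or connectedness is required, since a fibered link may have several components and the fibration is the open-book decomposition associated to the first-return map on the section. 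With these observations in place, Corollary~\ref{C:Fibered} follows immediately.
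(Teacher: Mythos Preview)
Your proof is correct and follows exactly the paper's own argument: the paper simply states that Corollary~\ref{C:Fibered} is a direct consequence of Corollary~\ref{C:Birkhoff} together with the result from~\cite{These} that the boundary of a Birkhoff section is fibered. One small caveat: in case~$(a)$ of Theorem~A the orbifold $\Orb_{2,q,\infty}$ has a cusp, so $\un\Sigma$ is not literally closed and has $\pi_1\cong\Z$; the paper works in the hyperbolic compactification $\barre{\un\Orbdq}$ (a lens space, hence a rational homology sphere) where periodic orbits of $\GF{\Sigma}$ are indeed null-homologous, so your appeal to $H_1(\un\Sigma;\Q)=0$ should be read in that compactified sense.
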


Similar statements hold in the case of the flat torus (see Theorem~\ref{T:Torus}), with, in addition, an explicit simple formula for the genus of the involved Birkhoff sections. 

Let us give a few hints about proofs. The case of the torus~$\T^2$ is the most simple one. It can be solved by elementary means, and it appears as a sort of warm-up. The key point is to encode every null-homologous collection~$\gamma$ of periodic orbits of~$\GF{\T^2}$ into some convex polygon~$\Pol{\gamma}$ in the affine plane~$\R^2$ with integral vertices. Using~$\Pol\gamma$ and VanHorn-Morris' helix boxes~\cite{JVHM}, we classify Birkhoff sections up to isotopy and derive their existence and the explicit formulas for the genus and  the linking number of two null-homologous collections of periodic orbits (Theorem~\ref{T:Torus}). Once these formulas are available, the negativity of the linking numbers easily follows (Corollary~\ref{C:Torus}). 

For Theorem~A, the proofs rely on a common principle but require specific ingredients depending on the orbifold. Our strategy decomposes in two steps. We first develop a general method for investigating the geodesic flow on a hyperbolic orbifold. A \emph{multitemplate} is a geometric 2-dimensional branched surface carrying a flow. This notion generalises Birman-Williams' notion of template~\cite{BW}, that have been introduced for studying hyperbolic flows. Here we prove that, given an orbifold~$\Sigma$, for every tessellation~$\Tiling$ of the hyperbolic plane adapted to~$\Sigma$, there exists a multitemplate~$\temp_\Tiling$ embedded in~$\un\Sigma$ such that the set of periodic orbits of~$\GF{\Sigma}$ is isotopic to a subset of the periodic orbits of~$\temp_\Tiling$ (Proposition~\ref{P:Template}). 
Moreover, if the orbifold~$\Sigma$ has at least one cusp, we can choose the tessellation~$\Tiling$ so that the set of periodic orbits of the geodesic flow is isotopic to the whole set of  periodic orbits of the template~$\temp_\Tiling$. 
This result provides a combinatorial description of the isotopy classes of the periodic orbits of~$\GF\Sigma$ in terms of some finite data specifying the orbifold. 
Note that the construction of the multi-template follows the strategy proposed by Birman and Williams for hyperbolic flows~\cite{BW}. 
In one sentence: we choose a Markov partition for the flow and contract the stable direction.

To complete the proof in the case when $\Sigma$ is an orbifold of type~$(2,q,\infty)$ with~$q\ge 3$, we start from the fact that $\un\Sigma$ is diffeomorphic to the complement of a certain knot~$K_\infty$ in some lens  space, and we choose a particular compactification. Then, choosing an adapted tessellation of the hyperbolic plane and using the template provided by Proposition~\ref{P:Template}, we estimate the linking number of an arbitrary pair of 
collections of periodic orbits and see that it is always negative. Along the way, we also compute the linking number of a geodesic with the knot~$K_\infty$ (Proposition~\ref{P:LkInfty}), a function of interest in number theory.

To complete the proof in the case of the orbifolds $\Sigma_{2,3,4g+2}$, the most delicate case, we use a covering of~$\Sigma_{2,3,4g+2}$ by some explicit genus~$g$ surface~$\Sigma_g$. Then, we use the template of Proposition~\ref{P:Template} to bound the linking number of two collections of periodic orbits of~$\GF{\Sigma_g}$ in terms of some associated combinatorial data. More precisely, we start from a tessellation of~$\Hy^2$ by $4g+2$-gons. For every periodic geodesic~$\gamma$ in~$\Sigma_g$ and for every pair of edges~$(e_i, e_j)$ in a tile of the tessellation, we denote by $b_{i,j}(\gamma)$ the number of times the projection of~$\gamma$ goes from~$e_i$ to $e_j$. Then, for every pair of geodesics $\gamma, \gamma'$, we show that the linking number~$\lk(\gamma, \gamma')$ is bounded above by a certain bilinear form~$S_{4g+2}$ involving the coefficients~$b_{i,j}(\gamma)$ and $b_{i,j}(\gamma')$. The form~$S_{4g+2}$ is not negative on the whole cone of vectors with positive coordinates (a manifestation of Proposition~\ref{P:CEx}). What we do here is to show that the form~$S_{4g+2}$ is negative on the subcone of vectors that come from liftings of geodesics of~$\Sigma_{2,3,4g+2}$, which is enough to deduce the main result (Proposition~\ref{P:237}). The reason why the proof works in this case, unlike for general families of geodesics on~$\Sigma_g$, is that a familly of geodesics on~$\Sigma_{2,3,4g+2}$ lifts to a family on~$\Sigma_g$ that admits many symmetries, and that these symmetries force the associated coefficients~$b_{i,j}$ to live in a small subcone where the bilinear form~$Q_{4g+2}$ is negative. 

It should be noted that, in the case of orbifolds of type~$(2,q,\infty)$, a result similar to Proposition~\ref{P:Template} has been established by Tali Pinsky~\cite{Tali} in a previous work. 
Precisely, when~$\Sigma$ is the orbifold~$\Hy/\PSLZ$, Ghys~\cite{GhysMadrid} proved that the periodic orbits of the geodesic flow can be distorted on a template which coincides with the geometric Lorenz template, so that periodic orbits are Lorenz knots~\cite{BW}. His construction corresponds to ours when~$\Sigma$ is the orbifold $\Hy/\PSLZ$ (which is of type~$(2,3,\infty)$) and $\Tiling$ the tessellation of~$\Hy^2$ by ideal triangles. Later, Pinsky~\cite{Tali} generalized Ghys' construction to orbifolds of type~$(2,q,\infty)$. Her construction can be recovered in our setting using a tiling of~$\Hy^2$ by ideal regular $q$-gons. The presentations of Ghys and Pinsky differ from ours in the sense that they construct a template by opening the cusp in the associated orbifold, thus distorting the underlying manifold~$\un\Sigma$, and then contracting the stable direction of the geodesic flow. The notion of discretisation of geodesics (Definition~\ref{D:Discretisation}) allows us to construct multitemplates even when the considered orbifold has no cusp. 

The plan of the article is as follows. First, we recall some basic definitions---linking number, orbifold, unit tangent bundle, geodesic flow---and prove two general lemmas on left-handed flows in Section~\ref{S:Basics}. We then treat the case of the torus in Section~\ref{S:Torus}. Next, we turn to hyperbolic orbifolds and construct a template for the geodesic flow on every orbifold in Section~\ref{S:Template}, where we prove Proposition~\ref{P:Template}. We then complete the case of orbifolds of type~$(2,q,\infty)$ in Section~\ref{S:q}. We investigate the geodesic flows on surfaces of genus~$g$ and complete the case of the orbifolds of type~$(2,3,4g+2)$ in Section~\ref{S:237}. Finally, we construct the counter-examples of Proposition~\ref{P:CEx} and discuss further questions in Section~\ref{S:Question}.

\noindent{\bf Acknowledgement.} I thank my phD advisor \'Etienne Ghys for numerous discussions on left-handed flows and templates and for his strong support. I also thank Maxime Bourrigan for answering many of my topological questions, and Patrick Massot for explaining me the content of J.\,VanHorn-Morris article~\cite{JVHM}. Finally I thank the two anonymous referees for pointing out numerous vague points in previous versions.

\section{Definition and motivation}
\label{S:Basics}

Here we set the general context. We recall the needed definitions, and establish some preliminary results.

\subsection{Orbifolds and their unit tangent bundles}
\label{S:Orfifolds}

A \emph{Riemannian, orientable, 2-dimensional orbifold}~$\Orb$ is a topological surface locally modelled on a Riemannian surface modulo actions by finite subgroups of rotations~\cite{Thurston}. More precisely $\Orb$ consists of a topological surface~$X_\Orb$ with an atlas of covering charts $\phi_i: V_i\to U_i$, where~$\{U_i\}$ is a collection of open sets of~$X_\Orb$ closed under finite intersections, $\{V_i\}$ is a collection of open sets of a Riemannian surface, such that to each~$V_i$ is associated a finite group~$\Gamma_i$ of rotations of~$V_i$ identifying~$U_i$ with $V_i/\Gamma_i$, and such that every change of charts $\phi_i^{-1}\circ \phi_j$, when defined, consist of isometries. 

In the sequel we will restrict ourselves to orbifolds which are also \emph{good}, meaning that the whole underlying space~$X_\Orb$ admits a finite degree covering by a surface (which needs not to be compact), say~$\Sigma_0$. In this case, the orbifold $\Orb$ can be identified with the quotient~$\Sigma_0/\Gamma_0$ for some discrete subgroup $\Gamma_0$ of~$\Isom(\Sigma_0)$. The universal cover of~$\Sigma_0$ is defined as the universal cover of~$\Orb$, hereafter denoted by~$\tilde\Orb$. One can then identify~$\Orb$ with the quotient~$\tilde\Orb/\Gamma$ for some discrete subgroup~$\Gamma$ of $\Isom(\tilde\Sigma)$. The latter subgroup is called the~\emph{fundamental group} of~$\Orb$. If $\Orb$ has a constant curvature, then~$\tilde\Orb$ is either the sphere~$\Sph^2$, the Euclidean plane~$\R^2$ or the hyperbolic plane~$\Hy^2$. Accordingly, the orbifold~$\Orb$ is  said to be \emph{spherical, Euclidean}, or \emph{hyperbolic}.


By definition, the orbifold structure transports the metrics, so that each point~$x$ of a good 2-orbifold admits a neighbourhood of the form~$V_x/\Gamma_x$ where $V_x$ is an open disc in~$\tilde\Sigma$ and $\Gamma_x$ a finite group of rotations. The order of~$\Gamma_x$ is called the~\emph{index} of~$x$. A point with index $1$ is called~\emph{regular}, otherwise it is called \emph{singular}. It is important to note that singular points are isolated.

We now turn to the unit tangent bundle of an orbifold. Let $\Orb$ be a good 2-orbifold with fundamental group $\Gamma$. Then the action of $\Gamma$ on $\tilde\Sigma$ by isometries is properly discontinuous
. The \emph{unit tangent bundle}~$\un\Orb$ of~$\Orb$ is defined to be the quotient of the total space $\un\tilde\Orb$ of the unit tangent bundle of $\tilde\Sigma$ by the action of $\Gamma$ on the tangent space of $\tilde\Orb$, {\it i.e.}, $\un\Orb = (\un\tilde\Orb)/\Gamma$. 

Let us illustrate this definition with two examples which are important for the sequel. Assume that~$\D^2$ is an open disc. Its unit tangent bundle~$\un\D^2$ then consists of the set of unit vectors tangent to~$\D^2$. The unit tangent vectors based at a given point form a circle, so that the manifold $\un\D^2$ is a solid torus. 

Consider the action of~$\Z/p\Z$ on~$\D^2$ by rotations of angles that are multiple of~$2\pi/p$. The action is not free because the center of~$\D^2$ is fixed. It is the only point with non-trivial stabilizor. The quotient~$\D^2/(\Z/p\Z)$ is then an orbifold. Denote it by~$\D^2_p$. Since the action of~$\Z/p\Z$ is by isometries, it can be extended to the unit tangent bundle~$\un\D^2$. Given a point with polar coordinates~$(x, \theta)$ on~$\D^2$, and a unit tangent vector making an angle~$\phi$ with the horizontal direction, an element~$\bar k$ of $\Z/p\Z$ then acts by $\bar k\cdot (r, \theta, \phi) = (r, \theta+2k\pi/p, \phi+2k\pi/p)$. The action on~$\un\D^2$ is therefore free, and the quotient $\un\D^2/(\Z/p\Z)$ is a manifold. It is the unit tangent bundle~$\un\D^2_p$ to~$\D^2_p$. It is also a solid torus (see Figure~\ref{F:UTBundle}). 

\begin{figure}[hbt]
	\begin{center}
	\begin{picture}(145,100)(0,0)
	\put(0,0){\includegraphics*[scale=.7]{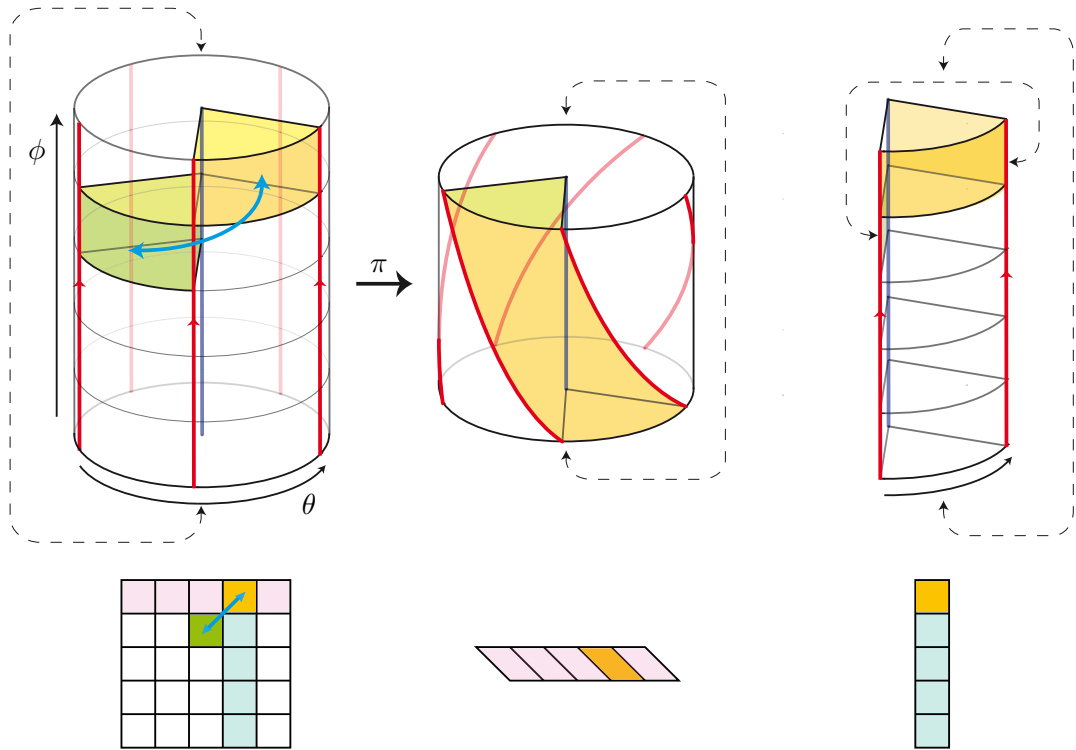}}
	\put(39,32){$\theta$}
	\put(3,79){$\phi$}
	\put(48,64){$\pi$}
	\end{picture}
	\end{center}
	\caption{\small On the top left, the unit tangent bundle $\un\D^2$ to a disc~$\D^2$. It is a solid torus. The action of $\Z/p\Z$ (here with $p=5$) is indicated with a blue arrow. It is a screw-motion. Thus $\un\D^2$ can be seen as a tower formed of $p^2$ pieces of cheese, where the generator of $\Z/p\Z$ acts by a vertical translation plus a $2\pi/p$-rotation. On the bottom left, the boundary of $\un\D^2$ with the rind of the $p^2$ pieces. 
	A horizontal storey of $p$ pieces of cheese is then a fundamental domain for the action (in the center). The quotient is obtained by identifying the floor and the ceiling of the storey with a $-2\pi/p$-rotation. Every meridian disc intersects each fiber $p$ times, except the central fiber, which it intersects only once. This model (called the \emph{storey model}) shows that the unit tangent bundle is a Seifert fibered bundle~\cite{Montesinos}. 
	The $p$ pieces of cheese located between two vertical walls form another fundamental domain (on the right). The quotient is obtained by identifying two vertical walls with a vertical translation of length~$2\pi/p$ (assuming the thickness of the cake to be~$2\pi$). We call this model the \emph{slice-of-cake model}. Figures~\ref{F:TemplateQuotient}, \ref{F:ProjTemp}, \ref{F:ProjOrbit} and \ref{F:ProjTemplate} are drawn using this model.} 
	\label{F:UTBundle}
\end{figure}

As every point in an orbifold admits a neighbourhood of the form $\D^2$ or~$\D_p^2$ for some~$p$, the unit tangent bundle of every orbifold is obtained by gluing solid tori of type~$\un\D^2$ or $\un\D_p^2$.

\subsection{The geodesic flow on the unit tangent bundle}

Assume that $\Sigma$ is a good $2$-dimensional orbifold. 
The orientation of~$\Sigma$ defines an orientation on the tangent planes, whence an orientation on~$\un\Sigma$. 

Assume now that $\underline\gamma$ is an oriented curve drawn on~$\Sigma$. For~$p$ lying on~$\underline\gamma$, let $T_p(\underline\gamma)$ be the unit tangent vector to~$\underline\gamma$ at~$p$. Then the family of all pairs $(p, T_p(\underline\gamma))$ is an oriented curve in~$\un\Sigma$, the \emph{lifting} of~$\underline\gamma$ in~$\un\Sigma$. 
In particular, the oriented geodesics of~$\Sigma$ are canonically lifted to~$\un\Sigma$. 
More precisely, for every point~$p$ in~$\Sigma$ and every direction~$v$ in~$\Sph^1$, there exists a unique geodesic~$\underline\gamma_{p,v}$ of~$\Sigma$ going through~$p$ with the direction~$v$. Now, for~$t$ in~$\R$ and~$(p, v)$ in~$\un\Sigma$, let us define~$\GF{\Sigma}(t, (p, v))$ to be~$(p', v')$ where $x'$ is the unique point of~$\underline\gamma_{p,v}$ at distance~$t$ from~$p$ and $v'$ is the unit tangent vector to~$\underline\gamma_{p,v}$ at~$p'$. Then $\GF{\Sigma}$ is a continuous map of~$\R \times \un\Sigma$ to~$\un\Sigma$ and, by construction, it is additive in the first coordinate. Hence $\GF{\Sigma}$ is what is called a complete flow on~$\un\Sigma$, and it is naturally called the \emph{geodesic flow} on~$\un\Sigma$. By construction, the liftings of the geodesics of~$\Sigma$ in~$\un\Sigma$ are the orbits of the geodesic flow (but they are not geodesic in~$\un\Sigma$, since no metric has been defined there).  


\subsection{Linking number and left-handed flows}

Assume that $M$ is a 3-manifold, and that~$K$, $K'$ are two null-homologous links in~$M$. Then there exists an oriented surface~$S$ (or even a simplicial 2-chain) with boundary~$K$ that is transverse to~$K'$. The intersection points between~$S$ and~$K'$ then have an orientation, and their sum defines the algebraic intersection number~$\mathrm{Int}(S, K')$. Adding a closed~2-chain to $S$ does not change the intersection number since~$K'$ is null-homologous, so that~$\mathrm{Int}(S, K')$ depends on~$K$ and~$K'$ only. It is the \emph{linking number} of the pair~$K, K'$, denoted by~$\lk(K,K')$.

In the last fifty years, several works~\cite{Schwartzmann, Arnold, GG, B-M} have emphasized the interest of considering a vector field as a long knot, or, more precisely, of considering invariant measures under the flow as (infinite) invariant knots. Following this idea, given a flow~$\Phi$ on a rational homology sphere~$M$, one can generalize the standard definition of the linking number for pairs of periodic orbits to pairs of invariant measures (see Arnold's work on asymptotic linking number~\cite{Arnold}).
Ghys then suggested to look at those flows for which this linking number is always negative, and called them \emph{left-handed flows}. We refer to the original article~\cite{GhysJapan} for a discussion about the motivations and the properties of these flows. Below we only mention the result explaining that, for a flow with many periodic orbits, left-handedness can be deduced from the negativity of the linking numbers of pairs of periodic orbits only. A flow~$\Phi$ is said \emph{knot-shadowable} if, for every $\Phi$-invariant measure~$\mu$, there exists a sequence~$(\gamma_n)$ of periodic orbits of~$\Phi$ such that the sequence of the Dirac measures on~$\gamma_n$ weakly converges to~$\mu$.

\begin{lemma}
\label{L:Approx}
Assume that~$\Phi$ is a knot-shadowable flow. If the linking number of every pair of periodic orbits of~$\Phi$ is negative, then~$\Phi$ is left-handed.
\end{lemma}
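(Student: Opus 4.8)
The plan is to establish left-handedness by approximating an arbitrary pair of invariant measures by sequences of periodic orbits, using the knot-shadowable hypothesis, and then showing that the asymptotic linking number is continuous under such weak-* approximations. First I would recall that the asymptotic (Schwartzmann--Arnold) linking number $\lk(\mu, \mu')$ of two $\Phi$-invariant probability measures can be computed as follows: pick a $2$-form (or a de Rham current) whose exterior derivative represents the Poincar\'e dual of a chosen primitive, and integrate an appropriate pairing against $\mu \otimes \mu'$; concretely, since $M$ is a rational homology sphere, the $\Phi$-invariant $1$-form obtained by contracting the generator of $H^1$ with the flow is exact, $= dg$ for some function $g$, and the linking form can be written as an integral of a bounded measurable (in fact, off the diagonal, continuous) kernel against $\mu \otimes \mu'$. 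The key structural fact I would invoke is that this kernel extends to a function that is continuous on $M \times M$ minus the diagonal and has the right integrability so that $(\mu, \mu') \mapsto \lk(\mu,\mu')$ is continuous for the weak-* topology on pairs of invariant measures with no common atoms on orbits — this is exactly the content of Ghys's discussion in~\cite{GhysJapan}, which I would cite rather than reprove.

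Next I would carry out the approximation step. Let $\mu, \mu'$ be two $\Phi$-invariant probability measures; by knot-shadowability choose periodic orbits $\gamma_n$ and $\gamma'_n$ such that the normalized Dirac measures $\delta_{\gamma_n} \rightharpoonup \mu$ and $\delta_{\gamma'_n} \rightharpoonup \mu'$. For each $n$ the hypothesis gives $\lk(\gamma_n, \gamma'_n) < 0$, hence the normalized version $\lk(\delta_{\gamma_n}, \delta_{\gamma'_n}) \le 0$ (a nonstrict inequality survives the normalization, which is all we can hope to push through a limit). By the continuity of the asymptotic linking pairing recalled above, $\lk(\mu,\mu') = \lim_n \lk(\delta_{\gamma_n}, \delta_{\gamma'_n}) \le 0$. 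This already shows the linking number is nonpositive; the remaining point is to upgrade $\le 0$ to $< 0$.

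For the strict inequality I would argue as follows: if $\lk(\mu, \mu') = 0$, then one can find (again using knot-shadowability, now applied to a slightly perturbed or convex-combined measure, or by a direct argument on the supports) two periodic orbits approximating $\mu$ and $\mu'$ closely enough that their linking number is forced to vanish or be nonnegative, contradicting the hypothesis; alternatively, and more cleanly, one observes that the set of pairs of invariant probability measures is weak-* compact and convex, the linking pairing is continuous and bilinear (affine in each variable) on it, so a zero value would be attained, and then an extreme-point / ergodic-decomposition argument reduces to a pair of ergodic measures, each a weak-* limit of periodic orbits, again contradicting strict negativity of $\lk(\gamma_n,\gamma'_n)$ after passing to the limit along a subsequence where the approximating orbits are eventually distinct. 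The cleanest route is: the map $(\mu,\mu')\mapsto\lk(\mu,\mu')$ is upper semicontinuous with value $<0$ on the dense set of pairs of periodic-orbit measures, hence $\le 0$ everywhere; strictness then follows because the value $0$ cannot be a limit of a sequence bounded away from $0$ unless the linking pairing degenerates, which the hypothesis on \emph{all} pairs of periodic orbits rules out.

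The main obstacle, and the place where I would need to be most careful, is the continuity/semicontinuity of the asymptotic linking number under weak-* convergence: the linking kernel is singular on the diagonal, so one must control the contribution of orbits that come close to each other, and the normalization of Dirac measures on periodic orbits (dividing by period) interacts with that control. I expect this to be handled exactly as in Ghys's paper — writing $\lk$ as $\int g \, d(\text{something})$ with $g$ a well-behaved function obtained from the exactness of the relevant invariant $1$-form — so in the write-up I would lean on that reference for the analytic heart and keep the argument here to the two-line approximation once continuity is granted. The passage from the nonstrict to the strict inequality is the only genuinely new bookkeeping, and it is elementary once the continuity is in place.
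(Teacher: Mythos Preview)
Your core approach---approximate $\mu,\mu'$ by periodic orbits using knot-shadowability and invoke the continuity of the asymptotic linking pairing established in~\cite{GhysJapan}---is exactly what the paper does. The paper's proof is two lines: pick sequences $(\gamma_n),(\gamma'_n)$ converging to $\mu,\mu'$, cite Ghys for the fact that $\frac{1}{t_n t'_n}\lk(\gamma_n,\gamma'_n)\to\lk(\mu,\mu')$, and conclude. So on the main line you are aligned with the paper, and your first two paragraphs are more detailed than what the paper actually writes.

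The genuine problem is your third paragraph, the upgrade from $\le 0$ to $<0$. None of the three arguments you sketch works. First, if $\lk(\mu,\mu')=0$, approximating $\mu,\mu'$ by periodic orbits does \emph{not} force any of those orbits to have nonnegative linking number: a limit of strictly negative reals can perfectly well be zero. Second, the ergodic-decomposition/extreme-point route runs into the same wall: even for ergodic $\mu,\mu'$ you only know they are weak-* limits of periodic-orbit measures, and again the limit of $<0$ is merely $\le 0$. Third, the claim that ``the value $0$ cannot be a limit of a sequence bounded away from $0$'' begs the question---the hypothesis gives no uniform lower bound on $|\lk(\gamma_n,\gamma'_n)|/(t_nt'_n)$, only the sign. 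So your attempt to obtain strict negativity is circular each time.

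The paper, for its part, simply writes ``which is therefore negative'' and does not dwell on the passage from $\le 0$ to $<0$; it treats this as part of what is ``known'' from~\cite{GhysJapan}. If you want to match the paper, cite Ghys and stop after obtaining the limit. If you want to actually justify strict negativity, you need an input beyond mere continuity---for instance, Ghys's characterisation that nonpositivity of the linking form on all invariant measures already forces the existence of a global Birkhoff section (hence a genuine negative linking form), or a separate argument specific to the Anosov case. What you have written does not supply such an input.
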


\begin{proof}
Assume that~$\mu, \mu'$ are two invariant measures. Let~$(\gamma_n), (\gamma'_n)$ be two distinct sequences of knots that converge to~$\mu, \mu'$. Write~$t_n, t'_n$ for the lengths of~$\gamma_n, \gamma'_n$ respectively. Then it is known~\cite{GhysJapan} that the sequence $\frac 1 {t_nt'_n}\lk(\gamma_n, \gamma'_n)$ converges to~$\lk(\mu, \mu')$, which is therefore negative.
\end{proof}

Lemma~\ref{L:Approx} is useful only for flows that are knot-shadowable. This is the case for flows of Anosov type, and in particular for the geodesic flows on hyperbolic 2-orbifolds. Thus a positive answer to Question~\ref{Q:OneHanded} follows from a positive answer to Question~\ref{Q:OneHandedKnot}. In short, if the curvature is negative, we only have to compute linking numbers of pairs of knots for proving left-handedness.

\subsection{Coverings}

We complete this introductory section with an observation about the behaviour of linking numbers under quotient. The result is easy, but useful, as it gives new left-handed flows from old ones. It will be crucial for establishing the left-handedness of~$\GF{\Orb_{2,3,7}}$ (Proposition~\ref{P:237}).

\begin{lemma}
\label{L:Covering}
Assume that $M, \hat M$ are two 3-manifolds with a covering map~$\pi: \hat M\to M$ of index~$n$. Let~$K, K'$ be two null-homologous links in~$M$. Write~$\hat K, \hat K'$ for the $\pi$-equivariant lifts of~$K, K'$ in~$\hat M$. Then the links~$\hat K, \hat K'$ are null-homologous, and we have $\lk(K, K') = \frac 1 n\lk(\hat K, \hat K')$.
\end{lemma}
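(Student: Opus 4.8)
The plan is to reduce the statement about linking numbers to a statement about algebraic intersection numbers of a surface with a curve, and then track how both sides behave under the covering $\pi$. First I would fix a Seifert surface: since $K$ is null-homologous in $M$, there is an oriented surface (or rational $2$-chain) $S\subset M$ with $\bord S=K$, chosen transverse to $K'$. Then $\lk(K,K')=\mathrm{Int}(S,K')$ by definition. The first observation is that $\hat K$ and $\hat K'$ are null-homologous in $\hat M$: the preimage $\hat S:=\pi^{-1}(S)$ is an oriented surface in $\hat M$, transverse to $\hat K'$, and its boundary is exactly $\pi^{-1}(\bord S)=\pi^{-1}(K)=\hat K$ (here one uses that $\hat K$ is by definition the full $\pi$-equivariant lift, i.e.\ the preimage, not merely one connected lift). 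Hence $[\hat K]=0$ in $H_1(\hat M;\Q)$, and $\lk(\hat K,\hat K')=\mathrm{Int}(\hat S,\hat K')$.

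The core of the argument is the local count. Near any intersection point $p\in S\cap K'$, the covering $\pi$ is a trivial $n$-sheeted cover, so $\pi^{-1}(p)$ consists of $n$ points, each a transverse intersection point of $\hat S$ with $\hat K'$; moreover each of these $n$ local intersections carries the same sign as the sign at $p$, because $\pi$ is a local orientation-preserving diffeomorphism (orientations on $\hat M$, $\hat S$, $\hat K'$ being pulled back from those on $M$, $S$, $K'$). Conversely every point of $\hat S\cap\hat K'$ lies over a point of $S\cap K'$. Summing signs therefore gives $\mathrm{Int}(\hat S,\hat K')=n\cdot\mathrm{Int}(S,K')$, which is the claimed identity $\lk(K,K')=\tfrac1n\lk(\hat K,\hat K')$.

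I expect the only genuine subtlety to be bookkeeping of orientations and the passage from honest surfaces to rational $2$-chains: if $K$ is merely $\Q$-null-homologous one works with a $2$-chain $S$ whose boundary is a nonzero integer multiple $mK$, computes $\mathrm{Int}(S,K')=m\,\lk(K,K')$, notes $\pi^{-1}(S)$ has boundary $m\hat K$, and divides through — all the multiplicities cancel consistently. One should also remark that $\pi^{-1}(S)$ being transverse to $\hat K'$ is automatic from transversality downstairs since $\pi$ is a local diffeomorphism, so no perturbation is needed. None of this is hard; it is the routine verification that covering maps multiply intersection numbers by the degree, specialized to the setting where the $2$-chain and the $1$-cycle are themselves pullbacks.
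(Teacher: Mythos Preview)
Your proposal is correct and follows essentially the same approach as the paper's proof: choose a Seifert surface $S$ for $K$, take its full preimage $\hat S=\pi^{-1}(S)$ to witness that $\hat K$ is null-homologous, and observe that each transverse intersection point of $S$ with $K'$ lifts to exactly $n$ intersection points of $\hat S$ with $\hat K'$ of the same sign. Your write-up is somewhat more detailed than the paper's (in particular the remarks on orientations, transversality under pullback, and the rational-chain case), but there is no substantive difference.
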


\begin{proof}
Let~$S$ be an oriented surface with boundary~$K$. Write~$\hat S$ for its~$\pi$-equivariant lift in~$\hat M$. Then we have~$\pi(\bord \hat S) = K$, hence $\bord \hat S = \hat K$. Therefore~$\hat K$ is also null-homologous. Since~$\hat S$ and~$\hat K'$ are~$\pi$-equivariant, every intersection point of~$S$ with~$K'$ lifts to~$n$ intersection points of~$\hat S$ with~$\hat K'$, so that $\lk(K, K') = \frac 1 n\lk(\hat K, \hat K')$ holds.
\end{proof}

If we have a covering map between two orbifolds~$\hat \Orb\to\Orb$, then the map extends to the unit tangent bundles and it commutes with the geodesic flow. Lemma~\ref{L:Covering} then implies that the sign of the linking numbers in~$\un\Orb$ are the same as those in~$\un\hat\Orb$, so that, if the geodesic flow~$\GF{\hat\Orb}$ is left-handed, so does~$\GF{\Orb}$. For instance, as the geodesic flow on~$\un\Sph^2$ is left-handed~\cite{GhysJapan}, we deduce that the same holds for any quotient of~$\Sph^2$, such as the Poincar\'e sphere~$\Orb_{2,3,5}$.

\begin{coro}
\label{C:Sph2}
Let~$\Orb$ be a spherical 2-orbifold. Then the geodesic flow~$\GF{\Orb}$ is left-handed.
\end{coro}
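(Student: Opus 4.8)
The plan is to deduce Corollary~\ref{C:Sph2} directly from the special case $\Orb = \Sph^2$ (with the round metric), which is due to Ghys~\cite{GhysJapan}, together with Lemma~\ref{L:Covering}. First I would recall that a spherical $2$-orbifold $\Orb$ is, by definition of "good" and "spherical", the quotient $\Sph^2/\Gamma$ of the round sphere by a finite subgroup $\Gamma$ of $\SO$ acting by isometries; the quotient map $\pi\colon \Sph^2 \to \Orb$ is an orbifold covering of finite index $n = |\Gamma|$. A subtlety to address: $\Gamma$ may act with fixed points, so $\Orb$ genuinely has singular points and $\pi$ is not a covering of topological spaces. However, as noted in Section~\ref{S:Orfifolds}, the action of $\Gamma$ on the tangent bundle is free, so the induced map $\un\pi\colon \un\Sph^2 \to \un\Orb$ \emph{is} a genuine covering map of $3$-manifolds of index $n$, and it intertwines the geodesic flows $\GF{\Sph^2}$ and $\GF{\Orb}$.

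Next I would check that the hypotheses of Lemma~\ref{L:Covering} are met in $\un\Orb$, i.e.\ that $\un\Orb$ is a rational homology sphere so that linking numbers are defined for all links. Since $\un\Sph^2 = \mathrm{SO}_3(\R) \cong \R P^3$ has finite (indeed cyclic of order $2$) first homology, and $\un\Orb = \un\Sph^2/\Gamma$ is a finite quotient, a transfer argument shows $H_1(\un\Orb;\Q)$ is a quotient of $(H_1(\un\Sph^2;\Q))_\Gamma = 0$, hence $\un\Orb$ is a $\Q$-homology sphere. (Alternatively one invokes that $\un\Orb$ is a Seifert-fibered space over a spherical base, hence an elliptic manifold with finite $\pi_1$.)

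Now the main argument: let $\mu, \mu'$ be two $\GF{\Orb}$-invariant measures on $\un\Orb$. Their pullbacks under the covering $\un\pi$ are $\GF{\Sph^2}$-invariant measures $\hat\mu, \hat\mu'$ on $\un\Sph^2$ (normalized appropriately). By the remark following Lemma~\ref{L:Covering} — which is exactly the statement that the sign of linking numbers is preserved under the finite covering, applied first to periodic orbits and then, by the asymptotic-linking formula used in Lemma~\ref{L:Approx}, to invariant measures — we get $\lk(\mu,\mu') = \frac{1}{n}\lk(\hat\mu,\hat\mu')$ up to a positive constant. Since $\GF{\Sph^2}$ is left-handed by~\cite{GhysJapan}, $\lk(\hat\mu,\hat\mu') < 0$, hence $\lk(\mu,\mu') < 0$, so $\GF{\Orb}$ is left-handed.

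The one genuinely delicate point — and the step I expect to require the most care — is that Lemma~\ref{L:Covering} as stated concerns links (finite collections of curves), whereas left-handedness is a statement about arbitrary invariant measures. The cleanest way around this is to observe that the geodesic flow on the round sphere is knot-shadowable (indeed periodic: every orbit is a great circle, closed of the same length), so that invariant measures are approximated by convex combinations of Dirac measures on periodic orbits, reducing the measure statement to the link statement of Lemma~\ref{L:Covering} and passing to the limit via the continuity of asymptotic linking numbers recalled in the proof of Lemma~\ref{L:Approx}. Equivalently, one notes that since every orbit of $\GF{\Sph^2}$ is periodic, the same is true downstairs (orbits of $\GF{\Orb}$ are covered by great circles, hence periodic), so $\GF{\Orb}$ is knot-shadowable and Lemma~\ref{L:Approx} applies once we know the negativity of linking numbers of pairs of periodic orbits of $\GF{\Orb}$ — and that negativity is precisely what Lemma~\ref{L:Covering} delivers from the negativity upstairs. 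Either route completes the proof.
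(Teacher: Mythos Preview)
Your proposal is correct and follows essentially the same approach as the paper: a spherical orbifold is a finite quotient of the round~$\Sph^2$, the induced map on unit tangent bundles is a genuine finite covering, and Lemma~\ref{L:Covering} together with Ghys' result for~$\Sph^2$ transports left-handedness downstairs. You are in fact more careful than the paper on two points the paper leaves implicit---that $\un\Orb$ is a rational homology sphere, and that Lemma~\ref{L:Covering} concerns links rather than measures---and your resolution of the latter via the periodicity of all orbits of~$\GF{\Sph^2}$ (hence of~$\GF{\Orb}$) and Lemma~\ref{L:Approx} is the right way to close that gap.
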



\section{Birkhoff sections for the geodesic flow on a flat torus}
\label{S:Torus}

This section is devoted to the geodesic flow~$\GF{\T^2}$ on a torus with a flat metric. Our aim is to establish Theorem~B. By the way, we shall completely classify Birkhoff sections up to isotopy and show that (almost) every collection of periodic orbits bounds a Birkhoff section (Theorem~\ref{T:Torus} and Corollary~\ref{C:Torus}).

We first parametrize the geodesic flow on a flat torus and define the polygon~$\Pol\gamma$ associated with a finite collection~$\gamma$ of periodic orbits  (\S\,\ref{SS:Polygon}). Next, we describe how Birkhoff sections may look like, first in the neighbourhood  of so-called regular levels (\S\,\ref{S:RegularLevels}), then in the neighbourhood of critical levels with the help of helix boxes (\S\,\ref{S:Boxes}). Finally, pieces are glued together in~\S\,\ref{S:PolSurface}. 


\subsection{The polygon associated with a collection of periodic orbits}
\label{SS:Polygon}

We show how to encode finite collections of periodic orbits of the geodesic flow~$\GF{\T^2}$ in~$\un\T^2$ using polygons whose vertices have integral coordinates.

Throughout this section, $\T^2$ denotes the torus equipped with a flat metric. By definition, $\T^2$ is a quotient $\R^2/\Z^2$ of the Euclidean plane. For all $p, p'$ in~$\T^2$, the translation by $p'-p$ carries the tangent plane at $p$ to the tangent plane at~$p'$. Therefore, the unit tangent bundle~$\un\T^2$ is $\T^2 \times \Sph^1$. Next, the geodesics of~$\T^2$ are induced by those of~$\R^2$. Their liftings in~$\un\T^2$ are horizontal, that is lie is some level~$\T^2\times\{\theta\}$ for some~$\theta$ in~$\Sph^1$. Hence we have $\GF{\T^2}(t, (x, y, \theta)) = (x + t\cos\theta, y + t \sin\theta, \theta)$.
If $\tan\theta$ is a rational number, then, for every initial value of~$(x, y)$, the associated orbit goes back to~$(x, y)$ in finite time, and, conversely, every finite orbit of~$\GF{\T^2}$ is of this type. In such a case, we define $\theta$ to be the \emph{slope} of the orbit, and the unique pair $(p, q)$ of coprime numbers verifying $\tan\theta = p/q$ and $p$ is of the same sign as~$\cos\theta$ to be the~\emph{code} of the orbit.

Assume that $\gamma$ is a finite collection of periodic orbits of~$\GF{\T^2}$. We define the \emph{combinatorial type} of~$\gamma$ to be the sequence $((n_1, \theta_1, p_1, q_1), \ldots , (n_k, \theta_k, p_k, q_k))$, such that $\gamma$ consists of $n_1$~orbits of slope $\theta_1$, plus $n_2$~orbits of slope $\theta_2$, \ldots , plus $n_k$~orbits of slope $\theta_k$, we have $\tan\theta_1=p_1/q_1, \ldots , \tan\theta_k=p_k/q_k$,  and $\theta_1, \ldots , \theta_k$ make an increasing sequence in~$[0, 2\pi)$. 


\begin{lemma}
\label{L:ZeroSum}
Assume that $\gamma$ is a finite collection of periodic orbits in~$\GF{\T^2}$. Let $((n_1, , \theta_1, p_1, q_1)$, ..., $(n_k, \theta_k, p_k, q_k))$ be the combinatorial type of~$\gamma$. Then the image of~$\gamma$ in~$H_1(\un\T^2; \Z)$ is zero if and only if $\sum n_i(p_i, q_i)=(0,0)$ holds.
\end{lemma}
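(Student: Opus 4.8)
The plan is to compute the class $[\gamma] \in H_1(\un\T^2; \Z)$ explicitly using the product structure $\un\T^2 = \T^2 \times \Sph^1$. First I would fix the identification $\un\T^2 \cong \R^2/\Z^2 \times \Sph^1$ and recall that $H_1(\un\T^2;\Z) \cong \Z^3$, with the three generators being the homology classes $a, b$ of the two standard loops $\Sph^1\times\{\mathrm{pt}\} \times \{\theta\}$ (in the $x$- and $y$-directions of $\T^2$) and the class $c$ of a fiber $\{\mathrm{pt}\}\times\Sph^1$. Since the product structure is genuine here (the bundle is trivial, unlike the case of hyperbolic orbifolds), there is no twisting to worry about: $H_1$ is just the direct sum of $H_1(\T^2;\Z) = \Z a \oplus \Z b$ and $H_1(\Sph^1;\Z) = \Z c$.

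Next I would compute the class of a single periodic orbit of code $(p,q)$ and slope $\theta$. Such an orbit lies in the level $\T^2 \times \{\theta\}$ and, under the geodesic flow $\GF{\T^2}(t,(x,y,\theta)) = (x+t\cos\theta, y+t\sin\theta,\theta)$, it closes up after winding $q$ times in the $x$-direction and $p$ times in the $y$-direction (using $\tan\theta = p/q$ with $(p,q)$ coprime, so that the orbit is an embedded $(q,p)$-curve on the torus $\T^2\times\{\theta\}$). Since the orbit is contained in a single level $\theta = \mathrm{const}$, its projection to the $\Sph^1$-factor is constant, so it carries no $c$-component. Hence the class of one orbit with code $(p,q)$ is $q\,a + p\,b + 0\cdot c$, i.e.\ it depends only on $(p,q)$ and not on which of the finitely many parallel orbits of that slope we picked. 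Summing over the combinatorial type, $[\gamma] = \sum_i n_i (q_i\, a + p_i\, b) = \left(\sum_i n_i q_i\right) a + \left(\sum_i n_i p_i\right) b$.

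Finally, since $a, b, c$ form a basis of $H_1(\un\T^2;\Z)$, the class $[\gamma]$ vanishes if and only if both coefficients vanish, that is, $\sum_i n_i p_i = 0$ and $\sum_i n_i q_i = 0$, which is exactly the condition $\sum_i n_i(p_i,q_i) = (0,0)$. This gives both implications at once.

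I do not anticipate a serious obstacle: the argument is essentially a bookkeeping exercise once the product decomposition of $\un\T^2$ is in hand. The only point requiring a little care is the claim that a periodic orbit with code $(p,q)$ represents $q\,a + p\,b$ rather than, say, $p\,a + q\,b$ — this is purely a matter of being consistent with the convention $\tan\theta = p/q$ established just above the lemma (with $x$ the "$q$-direction" since $\dot x = \cos\theta$ and $y$ the "$p$-direction" since $\dot y = \sin\theta$), and with whichever orientation convention is used for $a$ and $b$. Since the statement to be proved is symmetric in the sense that it only asserts the vanishing of the vector $\sum n_i(p_i,q_i)$, even a swapped convention would not affect the conclusion. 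The one genuine input being used is the triviality of the bundle $\un\T^2\to\T^2$, which was established earlier in \S\ref{SS:Polygon}.
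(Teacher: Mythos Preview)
Your proof is correct and follows essentially the same approach as the paper: both use the product decomposition $\un\T^2 = \T^2 \times \Sph^1$ to identify $H_1(\un\T^2;\Z)\cong\Z^3$, observe that a periodic orbit of code $(p,q)$ lies in a single level (hence has vanishing fiber component) and represents $(p,q)$ (or $(q,p)$, depending on convention) in $H_1(\T^2;\Z)$, and conclude by additivity. Your version is simply more detailed, and your remark that the $(p,q)$ versus $(q,p)$ ambiguity is immaterial to the conclusion is well taken.
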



\begin{proof}
The image of an orbit with slope~$(p, q)$ in~$H_1(\un\T^2; \Z)$ admits the coordinates~$(p, q, 0)$ in the standard basis. Indeed, the class of a straight line with code $(p,q)$ on~$\T^2$ is $(p,q)$ in this basis. As the lifts of the geodesics of~$\T^2$ in~$\un\T^2$ are horizontal, the third coordinate of the lift of a geodesic in~$\un\T^2$ is constant. Therefore the third coordinate of its image in~$H_1(\un\T^2; \Z)$ is zero. The result then  follows from the additivity of homology. 
\end{proof}

Here comes the main definition of this section.

\begin{defi}
\label{D:Polygon}
(See Figure~\ref{F:Polygon}.) Assume that $\gamma$ is a null-homologous collection of periodic orbits in~$\GF{\T^2}$ with combinatorial type $((n_1,\theta_1, p_1, q_1)$, ..., $(n_k, \theta_k, p_k, q_k))$. The \emph{polygon} $\Pol\gamma$ of~$\gamma$ is the $k$-vertex polygon of~$\R^2$ whose $j$th vertex is $\sum_{i=1}^j n_i(p_i, q_i)$ for $j = 1, ..., k$.
\end{defi}

\begin{figure}[htb]
	\begin{center}
	\begin{picture}(115,38)(0,0)
	\put(0,0){	\includegraphics*[width=.7\textwidth]{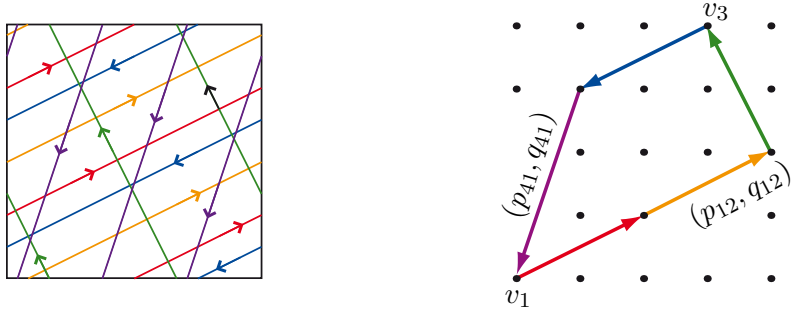}}
	\put(71,1){$v_{1}$}
	\put(97,39){$v_{3}$}
	\put(73,12){\begin{rotate}{73}{$(p_{41}, q_{41})$}\end{rotate}}
	\put(96,11){\begin{rotate}{28}{$(p_{12}, q_{12})$}\end{rotate}}
	\end{picture}
	\end{center}
	\caption{A null-homologous family~$\gamma$ of periodic orbits of the geodesic flow, and the associated polygon~$\Pol{\gamma}$.} 
	\label{F:Polygon}
\end{figure}

Owing to the order condition on the slopes in the combinatorial type, $\Pol\gamma$ is a convex polygon and, as $p_i$ and~$q_i$ are coprime for every~$i$, the only points on the boundary of~$\Pol\gamma$ that have integral coordinates are the vertices plus the intermediate points of the form $\sum_{i=1}^{j-1} n_i(p_i, q_i) + m(p_j, q_j)$ with $m < n_j$.

\subsection{Transverse surfaces and regular levels}
\label{S:RegularLevels}

We now turn to surfaces in~$\un\T^2$ transverse to~$\GF{\T^2}$, with the aim of connecting the existence of such a surface with boundary~$\gamma$ with the properties of the associated polygon~$\Pol\gamma$.

Hereafter, for every~$\theta$ in~$\R/2\pi\Z$, the subset of~$\un\T^2$ made of the points whose last coordinate is~$\theta$ will be called the \emph{$\theta$th level} of~$\un\T^2$, denoted by~$L_\theta$. As $\un\T^2$ is trivial, every level is a copy of~$\T^2$. If $\gamma$ is a null-homologous collection of periodic orbits in~$\GF{\T^2}$ with combinatorial type $((n_1, \theta_1, p_1, q_1)$, ..., $(n_k, \theta_k, p_k, q_k))$, the $k$~angles $\theta_i$, as well as the associated levels of~$\un\T^2$, will be called \emph{$\gamma$-critical}, whereas the other angles will be called \emph{$\gamma$-regular}.

\begin{lemma}
\label{L:RegularLevel}
Assume that $\gamma$ is a null-homologous collection of periodic orbits of~$\GF{\T^2}$ and $S$ is a surface with boundary~$\gamma$ whose interior is transverse to~$\GF{\T^2}$. For~$\theta$ in~$\R/2\pi\Z$, let $S_\theta$ be the intersection of~$S$ with the level~$L_\theta$. Then, if $\theta$ is $\gamma$-regular, $S_\theta$ is a union of disjoint circles. If $\theta, \theta'$ are $\gamma$-regular and the interval $(\theta, \theta')$ contains no $\gamma$-critical angle, $S_\theta$ and $S_{\theta'}$ are homologous.
\end{lemma}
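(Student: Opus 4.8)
The plan is to work with the geodesic flow in the explicit trivialization $\un\T^2 = \T^2\times\Sph^1$, where $\GF{\T^2}(t,(x,y,\theta)) = (x+t\cos\theta, y+t\sin\theta,\theta)$. The crucial structural fact is that the flow preserves each level $L_\theta = \T^2\times\{\theta\}$ and is nonsingular and tangent to $L_\theta$; thus $L_\theta$ is never transverse to $\GF{\T^2}$ (the flow direction lies inside the level), and the transversality of $\mathrm{int}(S)$ to $\GF{\T^2}$ gives information about how $S$ crosses the levels.

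First I would show that for $\gamma$-regular $\theta$, the intersection $S_\theta = S\cap L_\theta$ is a $1$-manifold, i.e.\ a union of disjoint circles (it has no boundary since $\bord S = \gamma$ lies entirely in the $\gamma$-critical levels, so $\bord S\cap L_\theta = \emptyset$). To see that $S$ and $L_\theta$ meet transversally in $\un\T^2$ — which forces $S_\theta$ to be an embedded $1$-submanifold — note that at a point $z$ of $S_\theta$, the tangent plane $T_zS$ contains the flow direction $v(z) = (\cos\theta,\sin\theta,0)$ (by transversality of $\mathrm{int} S$ to the flow, $v(z)\notin T_zS$ would... actually $v(z)\notin T_zS$), so instead I argue: $T_zL_\theta$ is the horizontal plane $\{\text{third coordinate }0\}$, $T_zS$ is transverse to $v(z)\in T_zL_\theta$, hence $T_zS\not\subseteq T_zL_\theta$, hence $T_zS + T_zL_\theta = T_z\un\T^2$, which is the transversality needed. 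So $S_\theta$ is a closed $1$-manifold in $L_\theta\cong\T^2$, i.e.\ a disjoint union of circles.

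Next I would prove that $S_\theta$ and $S_{\theta'}$ are homologous in $\T^2$ when $[\theta,\theta']$ contains no $\gamma$-critical angle. The natural approach is to consider the "slab" $S^{[\theta,\theta']} = S\cap(\T^2\times[\theta,\theta'])$. Since the interval contains no critical angle, $S$ meets every level $L_{\theta''}$, $\theta''\in[\theta,\theta']$, transversally by the argument above, and $\bord S = \gamma$ does not enter this slab; hence $S^{[\theta,\theta']}$ is a compact surface whose boundary is exactly $S_\theta \sqcup S_{\theta'}$ (using the projection $\T^2\times[\theta,\theta']\to[\theta,\theta']$ restricted to $S^{[\theta,\theta']}$, whose regular level sets are the $S_{\theta''}$ and which has no critical points on the interior by transversality — here one may perturb $\theta,\theta'$ slightly among regular values and invoke that a compact surface mapping to an interval with only two boundary level sets realizes a homology between them). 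Then the embedding $S^{[\theta,\theta']}\hookrightarrow \T^2\times[\theta,\theta']\simeq\T^2$ (the inclusion of a slab deformation-retracts to $\T^2$) exhibits $[S_\theta] - [S_{\theta'}]$ as a boundary, so $[S_\theta] = [S_{\theta'}]$ in $H_1(\T^2;\Z)$.

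The main obstacle I expect is the bookkeeping in the second part: one must be careful that $S$, a priori only an abstract surface with boundary $\gamma$ whose \emph{interior} is transverse to the flow, is genuinely a properly embedded (or at least nicely immersed) surface whose slab $S^{[\theta,\theta']}$ is compact with the claimed boundary — in particular that $S$ does not accumulate or spiral wildly against a regular level. This is where one uses that $\gamma$-regular levels are, by the transversality argument, \emph{regular values} of the height function $S\to\Sph^1$, $z\mapsto(\text{third coordinate of }z)$, so that Ehresmann-type reasoning applies on the closed interval $[\theta,\theta']$ of regular values and $S^{[\theta,\theta']}$ is a compact cobordism between $S_\theta$ and $S_{\theta'}$. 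Once this is set up cleanly the homology statement is immediate.
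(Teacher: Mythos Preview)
Your proposal is correct and follows essentially the same approach as the paper: the key observation in both is that the flow is tangent to each level $L_\theta$ while $\mathrm{int}\,S$ is transverse to the flow, which forces $S\pitchfork L_\theta$ at every $\gamma$-regular level; the homology statement then comes from the cobordism $S\cap(\T^2\times[\theta,\theta'])$. The paper's write-up is terser (it simply asserts that the family $(S_t)_{t\in[\theta,\theta']}$ gives an isotopy, which is actually a slightly stronger conclusion than needed), while you spell out the transversality computation and the Ehresmann-type reasoning explicitly; your worry about wild spiraling is unnecessary since $S$ is a compact embedded surface, but the resolution you give via regularity of the height function is exactly right.
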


\begin{proof}
By construction, the geodesic flow~$\GF{\un\T^2}$ is tangent to~$L_\theta$ whereas, by assumption, $S$ is transverse to~$\GF{\un\T^2}$. Hence $S$ and $L_\theta$ are transverse. Therefore their intersection is a closed 1-dimensional submanifold of~$L_\theta$, hence a union of parallel disjoint circles. The family~$(S_t)_{t\in[\theta, \theta']}$ provides an isotopy between $S_\theta$ and~$S_\theta'$. These (multi)-curves are therefore homologous.
\end{proof}

In the above context, the multicurve~$S_\theta$ is called a~\emph{stratum} of~$S$. For every $\gamma$-regular value~$\theta$, the stratum~$S_\theta$ is cooriented by the geodesic flow. By convention, we orient it so that the concatenation of the chosen orientation and the orientation of the flow gives the standard orientation on the torus~$L_\theta$. 
With this choice, the class~$[S_{\theta}]$ is a well-defined element of the group~$H_1(L_\theta; \Z)$, the latter being canonically identified with~$H_1(\T^2;\Z)$. Then, Lemma~\ref{L:RegularLevel} implies that $[S_{\theta}]$ is constant when~$\theta$ describe an interval of $\gamma$-regular values. Our goal now is to understand how $[S_{\theta}]$ evolves when $\theta$ passes a~$\gamma$-critical value.

\subsection{Packing into helix boxes}
\label{S:Boxes}

VanHorn-Morris~\cite{JVHM} constructed open book decompositions of the torus bundles over the circle by using special boxes and controlling how they match with each other. We use now the same elementary boxes for decomposing and describing the surfaces whose boundary is transverse to~$\GF{\T^2}$ around critical levels.

\begin{defi}
\label{D:HelixBox}
A~\emph{positive} (\resp \emph{negative}) \emph{helix box} is a cube containing an oriented surface isotopic to the surface depicted on Figure~\ref{F:HelixBox}, called the~\emph{helix}. The oriented boundary of the helix is made of seven oriented segments lying in the faces of the cube, plus one segment, called the~\emph{binding}, lying inside the cube and connecting two opposite faces of the cube.
\end{defi}

\begin{figure}[htb]
	\begin{center}
	\includegraphics[width=.8\textwidth]{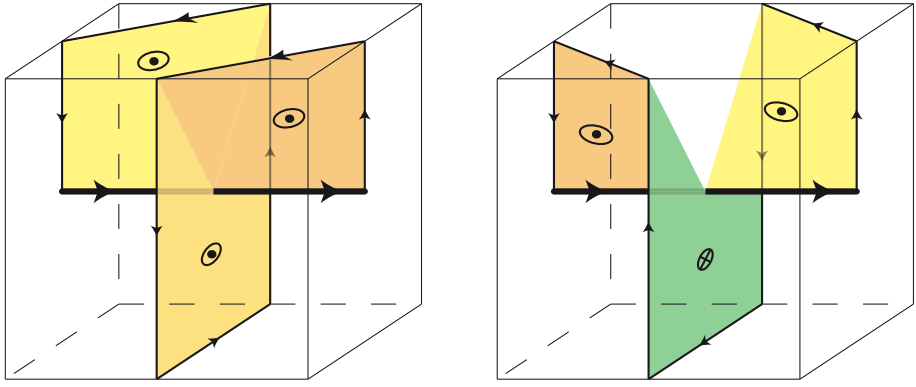}
	\end{center}
	\caption{A positive helix box on the left, a negative helix box. The bindings are in bold. The orientations of the helices are represented by dotted and crossed circles. The interiors of the helices are transverse to the direction of the binding, positively or negatively oriented according to the sign of the box.
} 
	\label{F:HelixBox}
\end{figure}


The next result asserts that almost every surface transverse to~$\GF{\T^2}$ is locally made of helices. 

\begin{lemma}
\label{L:BoxDecomposition}
(See Figures~\ref{F:HelixBox} and~\ref{F:HalfBox}.) Assume that $\gamma$ is a null-homologous collection of periodic orbits of~$\GF{\T^2}$ and $S$ is a surface with boundary~$\gamma$ whose interior is transverse to~$\GF{\T^2}$. Let~$\gamma_i$ be an element of~$\gamma$. Denote by~$L_{\theta_i}$ the $\gamma$-critical level containing~$\gamma_i$. Then there exists a small tubular neighbourhood~$N_{\gamma_i}$ of~$\gamma_i$ of the form~$]\gamma_i-\epsilon, \gamma_i+\epsilon[\times]\theta_i-\eta, \theta_i+\eta[$ in~$\T^2\times\Sph^1$ such that

$(i)$ if the interior of~$S$ does not intersect the level~$L_{\theta_i}$, then the surface $S$ is negatively transverse to~$\GF{\T^2}$ and is locally isotopic to~$\gamma_i\times[\theta, \theta+\epsilon[$ or to $\gamma_i\times]\theta - \epsilon, \theta]$;

$(ii)$ otherwise~$N_{\gamma_i}$ can be decomposed as the union of a positive number~$t_{\gamma_i}$ of helix boxes, which are all positive (\resp negative) if~$S$ is positively (\resp negatively) transverse to~$\GF{\T^2}$, and such that~$\gamma_i$ is identified with the union of the bindings, $S$ is the union of the helices, and the horizontal and vertical faces of~$N_{\gamma_i}$ are identified with the horizontal and vertical faces of the helix boxes.
\end{lemma}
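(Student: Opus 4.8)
The plan is to analyze the intersection of $S$ with the critical level $L_{\theta_i}$ and with the nearby regular levels, and to show that the only local models compatible with transversality are those listed. First I would set up coordinates: near $\gamma_i$, use $(u, \theta) \in\, ]\gamma_i - \epsilon, \gamma_i + \epsilon[\, \times\, ]\theta_i - \eta, \theta_i + \eta[$ inside $\T^2 \times \Sph^1$, where $u$ is arc-length along $\gamma_i$ (so $\gamma_i = \{\theta = \theta_i\}$ in this chart, after possibly shrinking $\eta$ so that no other component of $\gamma$ meets the neighbourhood). The geodesic flow $\GF{\T^2}$ is tangent to each level $L_\theta$ and, on $L_{\theta_i}$, points along $\gamma_i$ in the $u$-direction. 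The surface $S$ has boundary containing $\gamma_i$, so $\gamma_i \subset \partial S$; since $\partial S$ is a periodic orbit, the flow is tangent to $\partial S$ there, and transversality of $\mathrm{int}\,S$ to the flow is the constraint to exploit.

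The core dichotomy is whether $\mathrm{int}\,S$ meets $L_{\theta_i}$ or not. If it does not, then near $\gamma_i$ the surface $S$ sits entirely on one side, say $\theta \geq \theta_i$ or $\theta \leq \theta_i$, touching $L_{\theta_i}$ only along its boundary $\gamma_i$. Then $S$ is a surface with boundary $\gamma_i$ lying in a half-neighbourhood; by Lemma~\ref{L:RegularLevel} applied to the one-sided regular levels $L_\theta$ with $\theta$ slightly above (or below) $\theta_i$, the strata $S_\theta$ are unions of circles, and by a collar/isotopy argument the only such surface that limits onto $\gamma_i$ as $\theta \to \theta_i$ and stays transverse to the flow is the product annulus $\gamma_i \times [\theta_i, \theta_i + \epsilon[$ or $\gamma_i \times\, ]\theta_i - \epsilon, \theta_i]$ — any nontrivial circle in $S_\theta$ would be forced, as $\theta\to\theta_i$, to collapse and could not do so transversally. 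One then checks the sign of the transversality: because the flow on $L_{\theta_i}$ points along $\gamma_i$ while $S$ recedes to one side, the coorientation that $\GF{\T^2}$ induces on $S$ is the negative one, giving case $(i)$.

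If instead $\mathrm{int}\,S$ does meet $L_{\theta_i}$, the stratum $S_{\theta_i} = S \cap L_{\theta_i}$ is a properly embedded $1$-complex in $L_{\theta_i}$ whose boundary is on $\partial S = \gamma_i$ and which is transverse to the flow direction (the $u$-direction) away from $\gamma_i$; near $\gamma_i$, $S$ is attached to the binding $\gamma_i$. I would argue that $S_{\theta_i}$ must consist of finitely many arcs each spiralling around $\gamma_i$, meeting $\gamma_i$ in finitely many points, and that between consecutive such meeting points the local picture of $S$ in $N_{\gamma_i}$ — its intersection with $L_{\theta_i}$ together with the two adjacent strata $S_{\theta_i \pm \eta}$, each a collection of circles parallel to $\gamma_i$ — is exactly a helix as in Definition~\ref{D:HelixBox}. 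This is the step that does the real work: one has to check that transversality to the flow, together with the fact that $\gamma_i$ is a binding (flow-tangent) curve, leaves no freedom in how $S$ can turn the corner at each intersection point of $S_{\theta_i}$ with $\gamma_i$, so that each elementary block is a helix box, all of the same sign (the sign of the transversality of $S$ to $\GF{\T^2}$, which is locally constant on $\mathrm{int}\,S$). Counting these blocks gives the number $t_{\gamma_i} > 0$; gluing them cyclically along $\gamma_i$ reconstitutes $N_{\gamma_i}$, with $\gamma_i$ the union of the bindings, $S$ the union of the helices, and the horizontal/vertical faces matching, as claimed. The main obstacle I anticipate is precisely this last local normal-form analysis: showing rigorously that the germ of a flow-transverse surface along a flow-tangent boundary curve, in the presence of a tangent level, is a concatenation of VanHorn-Morris helices and nothing else — this presumably requires a careful isotopy argument in the $3$-dimensional neighbourhood $N_{\gamma_i}$, tracking how $S_\theta$ changes as $\theta$ crosses $\theta_i$ and matching it against the boundary behaviour of the helix on the faces of the box.
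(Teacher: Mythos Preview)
Your outline is reasonable, but you are missing the key device that makes the argument short, and as a result your case~$(ii)$ stays at the level of a wish (``one has to check\dots'', ``presumably requires a careful isotopy argument''). The paper does not analyse the singular stratum $S_{\theta_i}$ at all. Instead it works entirely along the boundary curve: for each $p\in\gamma_i$ it takes the unique unit vector $\vec{n_p}$ orthogonal to $\gamma_i$, tangent to~$S$, and pointing into~$S$, and defines $\psi(p)$ to be the angle between $\vec{n_p}$ and the level~$L_{\theta_i}$ (using the trivialisation of the normal bundle $\nu(\gamma_i)$ given by~$L_{\theta_i}$). The crucial observation is that transversality of $\mathrm{int}\,S$ to the flow forces $\psi$ to be \emph{monotone} as $p$ runs around $\gamma_i$: strictly increasing when $S$ is positively transverse, non-increasing when negatively transverse. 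One then sets $d_{\gamma_i}=\deg\psi$ and simply cuts $N_{\gamma_i}$ at each point where $\vec{n_p}$ points vertically upward; this yields $|d_{\gamma_i}|$ helix boxes of the correct sign, with $t_{\gamma_i}=|d_{\gamma_i}|$. No normal-form analysis of $S\cap L_{\theta_i}$ is needed.

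This framework also handles case~$(i)$ and its sign claim cleanly, where your argument has a gap. You assert that when $S$ stays on one side of $L_{\theta_i}$ the coorientation must be negative, but your justification (``the flow on $L_{\theta_i}$ points along $\gamma_i$ while $S$ recedes to one side'') does not distinguish the two signs. In the paper's language, case~$(i)$ is exactly $\deg\psi=0$; since positive transversality makes $\psi$ strictly increasing (hence $\deg\psi\ge 1$), degree zero can only occur in the negatively transverse case, where $\psi$ may be constant and $\vec{n_p}$ stays in a single half-space. That monotonicity, not a collar argument on strata, is what forces the sign.
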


\begin{figure}[htb]
	\begin{center}
	\includegraphics*[width=.6\textwidth]{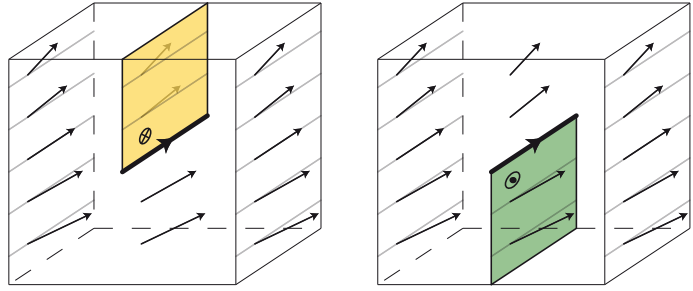}
	\end{center}
	\caption{Case $(i)$ of Lemma~\ref{L:BoxDecomposition} when the surface $S$ is negatively tranverse to the flow and the vector~$\vec{n_p}$ always points in the same half-space. The boundary~$\bord S$ is in bold.
} 
	\label{F:HalfBox}
\end{figure}

\begin{proof}
We write~$N_{\epsilon, \eta}$ for the tubular neighbourhood~$]\gamma_i-\epsilon, \gamma_i+\epsilon[\times]\theta_i-\eta, \theta_i+\eta[$ of~$\gamma_i$ in~$\T^2\times\Sph^1$. 
For every point~$p$ on~$\gamma_i$, we denote by~$\vec{n_p}$ the unique unit vector orthogonal to~$\gamma_i$, tangent to~$S$, and pointing inside~$S$. 
If $\epsilon$ and $\eta$ are small enough, then the intersection of~$S$ with~$N_{\epsilon, \eta}$ is isotopic to the surface generated by~$p+t\vec{n_p}$ when $p$ describes~$\gamma_i$ and $t$ is non-negative. We choose for~$N_{\gamma_i}$ such a neighbourhood. The surface~$L_{\theta_i}$ induces a trivialization of the unit normal bundle $\nu_p(\gamma_i)$ of~$\gamma_i$, so that we can define~$\psi(p)$ to be the angle between~$\vec{n_p}$ and~$L_{\theta_i}$. We then set $d_{\gamma_i}$ to be the degree of the map~$\psi:\gamma_i\simeq\Sph^1\to\nu_p(\gamma_i)\simeq\Sph^1$.

If $S$ is positively transverse to~$\GF{\T^2}$, then~$\psi(p)$ increases as $p$ describes the curve~$\gamma_i$. Therefore the degree $d_{\gamma_i}$ of $\psi$ is positive. We then obtain the helix boxes by cutting~$N_{\gamma_i}$ at each point where~$\vec{n_p}$ points upward. This happens~$d_{\gamma_i}$ times, thus yielding~$d_{\gamma_i}$ positive helix boxes. The result when $S$ is positively transverse follows with $t_{\gamma_i}=d_{\gamma_i}$.

If $S$ is negatively transverse to the flow, then~$\psi$ is a non-increasing function. Indeed, since the geodesic flow is not parallel to~$\gamma_i$, but rotates when level changes, the vector~$\vec{n_p}$ can be constant and the application~$\psi$ can be of degree~$0$, see Figure~\ref{F:HalfBox}. If so, the surface~$S$ lies on one side of~$L_{\theta_i}$ only. It is therefore isotopic to~$\gamma_i\times[\theta_i, \theta_i+\epsilon[$ or to $\gamma_i\times]\theta_i - \epsilon, \theta_i]$, and we are in case~$(i)$ of the statement. Otherwise, the degree $d_{\gamma_i}$ of $\psi$ is negative, and the situation is similar to that in the positive case. The only difference is that the negativity of the intersection of $S$ with~$\GF{\T^2}$ forces~$S$ to wind in the other direction, so that we obtain $-d_{\gamma_i}$~negative boxes. The result then follows with~$t_{\gamma_i}=-d_{\gamma_i}$.
\end{proof}

In the above context, the tubular neighbourhood~$N_{\gamma_i}$ of~$\gamma_i$ is called a~\emph{product-neighbourhood} of~$\gamma_i$. If the interior of~$S$ does not intersect the level~$L_{\theta_i}$ (case~$(ii)$), then $N_{\gamma_i}$ is assumed to be decomposed as a union of~$t_{\gamma_i}$ helix boxes.

Lemma~\ref{L:BoxDecomposition} gives the structure of a surface transverse to the flow around its boundary. The next result decomposes such a surface around an entire critical level.

\begin{lemma}
\label{L:LevelStructure}
Assume that $\gamma$ is a null-homologous collection of periodic orbits of~$\GF{\T^2}$ with combinatorial type~$((n_1, \theta_1, p_1, q_1), \ldots, (n_k, \theta_k, p_k, q_k))$, and $S$ is a surface with boundary~$\gamma$ whose interior is transverse to~$\GF{\T^2}$. Let~$i$ be an element of~$\{1, \ldots, k\}$. Call $\gamma_{i,1}, \ldots, \gamma_{i,n_i}$ the $n_k$ elements of~$\gamma$ lying in the $\gamma$-critical level~$L_{\theta_i}$, and suppose that~$N_{\gamma_{i,1}}, \ldots, N_{\gamma_{i,n_i}}$ are the associated product-neighbourhoods. Then all the curves $\gamma_{i,1}, \ldots, \gamma_{i,n}$ are parallel, and all the numbers~$t_{\gamma_{i,1}}, \ldots, t_{\gamma_{i,n_i}}$ are equal to some number, say~$t_{\theta_i}$. Moreover, if $t_{\theta_i}$ is not zero, there exists a neighbourhood of~$L_{\theta_i}$ of the form $]L_{\theta_i-\epsilon}, L_{\theta_i+\epsilon[}$ which is tiled by $n_i\times t_{\theta_i}$ helix boxes such that, in each helix box, the surface~$S$ coincides with the helix.
\end{lemma}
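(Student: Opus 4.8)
The plan is to deduce the statement from Lemma~\ref{L:BoxDecomposition} applied to each of the curves $\gamma_{i,1}, \ldots, \gamma_{i,n_i}$ lying in the critical level $L_{\theta_i}$, and then to glue the resulting product-neighbourhoods together along the level. First I would observe that, since all the curves $\gamma_{i,1}, \ldots, \gamma_{i,n_i}$ have the same code $(p_i, q_i)$, they are parallel straight curves on the torus $L_{\theta_i}$; this is immediate from the parametrization of $\GF{\T^2}$ given in \S\ref{SS:Polygon}, because a periodic orbit of slope $\theta_i$ projects to a line of slope $p_i/q_i$ and two such orbits in the same level differ by a translation. Next, each curve $\gamma_{i,j}$ has a product-neighbourhood $N_{\gamma_{i,j}}$ as provided by Lemma~\ref{L:BoxDecomposition}, and the associated integer $t_{\gamma_{i,j}}$ equals the absolute value of the degree $d_{\gamma_{i,j}}$ of the Gauss-type map $\psi$ attached to $S$ along $\gamma_{i,j}$.

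The key point is that all the $d_{\gamma_{i,j}}$ are equal. I would argue this by a connectedness/homotopy argument: the surface $S$ is transverse to the flow, and as one moves transversally across the level $L_{\theta_i}$ the ``winding'' of $S$ around a parallel curve varies continuously, hence is locally constant; since the curves $\gamma_{i,1}, \ldots, \gamma_{i,n_i}$ are all parallel and can be connected by a family of parallel curves in $L_{\theta_i}$ along which the flow behaves in the same way, the degree cannot jump. More concretely, the sign of the transversality of $S$ with the flow is the same along all of $\gamma$ (it is determined by the co-orientation convention fixed after Lemma~\ref{L:RegularLevel}, together with the remark that in case $(i)$ the transversality is necessarily negative), so Lemma~\ref{L:BoxDecomposition} yields boxes of a single sign; and the count of boxes at $\gamma_{i,j}$ is computed from the same local model near each parallel curve, so it is independent of $j$. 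This gives a common value $t_{\theta_i} := t_{\gamma_{i,1}} = \cdots = t_{\gamma_{i,n_i}}$.

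Finally, assuming $t_{\theta_i} \neq 0$, I would assemble the $n_i$ product-neighbourhoods, each decomposed into $t_{\theta_i}$ helix boxes by Lemma~\ref{L:BoxDecomposition}(ii), into a neighbourhood of the whole level $L_{\theta_i}$. Since the $\gamma_{i,j}$ are parallel and equally spaced (after an isotopy of $L_{\theta_i} \cong \T^2$) and their neighbourhoods $N_{\gamma_{i,j}}$ can be taken of the form $]\gamma_{i,j}-\epsilon, \gamma_{i,j}+\epsilon[\, \times\, ]\theta_i-\eta, \theta_i+\eta[$ with the same $\epsilon, \eta$, the union of the $N_{\gamma_{i,j}}$ covers a slab $]L_{\theta_i-\epsilon}, L_{\theta_i+\epsilon}[$ once we check that the complement of the union of the boxes in this slab carries no part of $S$ transverse to the flow in an ``unboxed'' way --- but outside the helix boxes the surface $S$ meets the slab only in regular levels, where by Lemma~\ref{L:RegularLevel} it consists of curves isotopic to the $\gamma_{i,j}$, which are exactly the horizontal faces of the adjacent helix boxes, so the boxes can be extended to tile the whole slab. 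Thus the slab is tiled by $n_i \times t_{\theta_i}$ helix boxes in which $S$ coincides with the helix. The main obstacle I expect is the gluing step: one must be careful that the horizontal and vertical faces of the $t_{\theta_i}$ boxes stacked around one $\gamma_{i,j}$, and of the boxes around adjacent curves $\gamma_{i,j}$ and $\gamma_{i,j+1}$, match up consistently (in the style of VanHorn-Morris' matching conditions~\cite{JVHM}), so that one genuinely obtains a tiling of a product neighbourhood of $L_{\theta_i}$ rather than merely a collection of boxes with $S$-pieces inside them.
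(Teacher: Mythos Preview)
Your proposal has a genuine gap at the step where you argue that all the $t_{\gamma_{i,j}}$ are equal. The ``connectedness/homotopy'' argument does not work as stated: the degree $d_{\gamma_{i,j}}$ is defined via the map $\psi$ built from the inward-pointing normal $\vec{n_p}$ to $S$ along the \emph{boundary} component $\gamma_{i,j}$, and this construction makes no sense along an intermediate parallel curve that is not a boundary component of $S$. So there is no continuously varying ``winding number'' to interpolate. Your fallback (``the count of boxes is computed from the same local model near each parallel curve, so it is independent of $j$'') is also not a proof: having the same local model type for the flow does not force $S$ to wind the same number of times around each boundary component.

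The paper's argument is different and concrete. It considers the complementary solid tori $M_{i,j}$ between consecutive product-neighbourhoods $N_{\gamma_{i,j}}$ and $N_{\gamma_{i,j+1}}$ in the slab $]L_{\theta_i-\eta}, L_{\theta_i+\eta}[$. Since $S$ is transverse to the direction $\theta_i$ inside each $M_{i,j}$, the intersection $S\cap M_{i,j}$ is a union of $s_{i,j}$ meridian discs. The shared boundary annulus between $N_{\gamma_{i,j}}$ and $M_{i,j}$ meets $S$ in $t_{\gamma_{i,j}}$ vertical segments (one per helix box), hence $s_{i,j}=t_{\gamma_{i,j}}$; the other boundary annulus gives $s_{i,j}=t_{\gamma_{i,j+1}}$. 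This forces all $t_{\gamma_{i,j}}$ equal, and moreover, since $S\cap M_{i,j}$ consists only of discs, the $N_{\gamma_{i,j}}$ can be fattened to cover the whole slab---which also resolves the gluing issue you flagged at the end. Your description of the complement (``outside the helix boxes $S$ meets the slab only in regular levels'') is incorrect: the complementary tori $M_{i,j}$ contain the critical level too, and what matters is that $S$ meets them in meridian discs, not in curves parallel to $\gamma_{i,j}$.
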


\begin{proof}
By definition of~$\GF{\T^2}$, every orbit in the level~$L_{\theta_i}$ has direction~$\theta_i$. At the expense of possibly restricting some of them, we can suppose that all rectangular tubular neighbourhoods~$N_{\gamma_{i,j}}$ have the same height~$2\eta$. Then the complement of their union~$N_{\gamma_{i,1}}\cup\cdots\cup N_{\gamma_{i,n_i}}$ in the horizontal thick torus~$]L_{\theta_i-\eta}, L_{\theta_i+\eta}[$ is also the union of~$n_i$ solid tori admitting a rectangular section. We denote these tori by~$M_{i,1}, \ldots, M_{i,n_i}$. At the expense of possibly permuting the names, we can suppose that, for every~$j$, the torus~$M_{i,j}$ lies between the tori~$N_{\gamma_{i,j}}$ and~$N_{\gamma_{i,j+1}}$. Since $S$ is transverse to the flow, its intersection with~$M_{i,j}$ is transverse to the direction~$\theta_i$. Therefore it is the union of a certain number, say~$s_{i,j}$, of discs whose boundaries are meridian circles in the solid torus~$M_{i,j}$. 

If, for some~$j$, the number $t_{\gamma_{i,j}}$ is zero, then the two vertical boundaries of~$N_{\gamma_{i,j}}$ do not intersect~$S$. Therefore, the intersection of~$S$ with~$M_{i,j}$ is empty, which implies $s_{i,j}=0$. Considering the other boundary of~$M_{i,j}$, we get~$t_{\gamma_{i,j+1}}=0$. By induction, we get $t_{\gamma_{i,j}}=0$ for every~$j$.

If, for some~$j$, the number $t_{\gamma_{i,j}}$ is not zero, then~$N_{\gamma_{i,j}}$ is tiled into~$t_{\gamma_{i,j}}$ helix boxes. Therefore the boundary between~$M_{i,j}$ and~$N_{\gamma_{i,j}}$ is an annulus that intersects~$S$ along~$t_{\gamma_{i,j}}$ vertical segments, and we deduce $s_{i,j}=t_{\gamma_{i,j}}$. Considering the other vertical boundary of~$M_{i,j}$, we get~$s_{i,j}=t_{\gamma_{i,j+1}}$, and therefore $t_{\gamma_{i,j+1}}=t_{\gamma_{i,j}}$. By induction, all numbers~$t_{\gamma_{i,j}}$ are equal to some fixed number, say~$t_{\theta_i}$. Finally, since the intersection of $S$ with $M_{i,j}$ consists of discs only, we can extend the solid tori~$N_{\gamma_{i,j}}$ so that their union covers the whole neighbourhood~$]L_{\theta_i-\eta}, L_{\theta_i+\eta}[$. Since every~$N_{\gamma_{i,j}}$ is tiled by~$t_{\theta_i}$ helix boxes, the thick torus~$]L_{\theta_i-\eta}, L_{\theta_i+\eta}[$ is tiled by~$n_i\times t_{\theta_i}$ helix boxes.
\end{proof}

Considering for a moment the angular parameter~$\theta$ as a (periodic) time, a surface transverse to~$\GF{\T^2}$ can be seen as the movie of its strata. By Lemma~\ref{L:RegularLevel}, the strata vary continuously as long as $\theta$ is regular. Using Lemma~\ref{L:LevelStructure}, we can now describe how the strata evolve when~$\theta$ crosses a~critical value.

\begin{lemma}
\label{L:RelHomol}
In the context of Lemma~\ref{L:LevelStructure}, if the surface $S$ is negatively transverse~$\GF{\T^2}$, then for every $\gamma$-critical angle~$\theta_i$, the homology classes of the strata~$S_{\theta_i-\eta}$ and $S_{\theta_i+\eta}$ are related by the srelation
\begin{equation}
\label{Eq:RelHomNeg}
[S_{\theta_i+\eta}]=[S_{\theta_i-\eta}] + n_i(p_i, q_i).
\end{equation}
If $S$ is positively transverse to~$\GF{\T^2}$, then we have
\begin{equation}
\label{Eq:RelHomPos}
[S_{\theta_i+\eta}]=[S_{\theta_i-\eta}] - n_i(p_i, q_i).
\end{equation}
\end{lemma}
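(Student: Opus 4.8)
The plan is to compute the evolution of the homology class $[S_\theta]$ across a critical level by tracking what happens inside the thick torus $]L_{\theta_i-\eta}, L_{\theta_i+\eta}[$ provided by Lemma~\ref{L:LevelStructure}. The point is that, away from a critical level, the class is constant by Lemma~\ref{L:RegularLevel}, so the whole variation is concentrated at the crossing and is entirely determined by the local combinatorial picture, that is, by the helix boxes tiling a neighbourhood of $L_{\theta_i}$.

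First I would recall that, when $t_{\theta_i}=0$, Lemma~\ref{L:LevelStructure} (case where all $t_{\gamma_{i,j}}$ vanish) shows that $S$ meets no point of $L_{\theta_i}$ other than $\gamma_i$ itself and lies on one side of $L_{\theta_i}$; a direct inspection of case~$(i)$ of Lemma~\ref{L:BoxDecomposition} shows that near such a $\gamma_{i,j}$ the surface is a half-annulus $\gamma_{i,j}\times[\theta_i,\theta_i+\epsilon[$ or $\gamma_{i,j}\times]\theta_i-\epsilon,\theta_i]$, so its stratum just acquires or loses one copy of the class of $\gamma_{i,j}$ in $H_1$, which is $(p_i,q_i)$. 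Summing over the $n_i$ parallel curves in the level and taking orientations into account gives the stated shift of $n_i(p_i,q_i)$, with sign governed by whether $S$ sits above or below $L_{\theta_i}$; for a negatively transverse surface this works out to $+n_i(p_i,q_i)$ because the coorientation convention flips the apparent sign.

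Next, for $t_{\theta_i}\ne 0$, I would use the tiling of $]L_{\theta_i-\eta},L_{\theta_i+\eta}[$ by $n_i\times t_{\theta_i}$ helix boxes in which $S$ coincides with the helix. The key computation is the ``one-box'' case: in a single helix box, compare the lower stratum (the intersection of the helix with the bottom face) and the upper stratum (intersection with the top face), together with the portion of $\bord S$ — namely the binding plus the seven boundary segments on the faces — that lies in the box. Reading Figure~\ref{F:HelixBox}, the difference $(\text{upper stratum})-(\text{lower stratum})$ in relative homology of the box (rel the vertical faces) equals exactly one oriented copy of the binding's direction, with sign equal to the sign of the box. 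Stacking $t_{\theta_i}$ boxes along one curve $\gamma_{i,j}$ and then running over the $n_i$ parallel curves, the relative classes telescope: the contributions of the vertical faces cancel between adjacent boxes in the thick torus (here I use that the complementary solid tori $M_{i,j}$ meet $S$ only in meridian discs, which contribute nothing to homology), and what survives is $n_i$ times the class carried by one full winding, i.e.\ the class of a single curve $\gamma_i$, which is $(p_i,q_i)$, each box contributing its sign. Since all boxes are negative when $S$ is negatively transverse, the net change is $+n_i(p_i,q_i)$, giving~\eqref{Eq:RelHomNeg}; reversing all box signs (positively transverse case) gives $-n_i(p_i,q_i)$ and hence~\eqref{Eq:RelHomPos}.

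I expect the main obstacle to be the bookkeeping of signs and of the coorientation convention: the stratum $S_\theta$ is oriented by the rule that its orientation followed by the flow direction gives the standard orientation of $L_\theta$, and one must check carefully that, with this convention, a \emph{negative} helix box produces a $+(p_i,q_i)$ jump rather than $-(p_i,q_i)$. The cleanest way to pin this down is to verify it on one explicit minimal example — a surface with a single boundary orbit and $t_{\theta_i}=1$ — where the global class can be computed by hand from the $\Z^2$-picture and compared with the local box computation; the general case then follows by the telescoping/additivity argument above. The remaining routine check is that the meridian discs in the $M_{i,j}$ truly contribute zero, which is immediate since their boundaries are null-homotopic in the solid tori and hence null-homologous in the level after pushing to $L_{\theta_i\pm\eta}$.
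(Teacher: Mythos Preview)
Your approach is correct in spirit but takes a longer road than the paper. You work box-by-box, computing the jump contributed by each helix box and then telescoping over the $n_i\times t_{\theta_i}$ boxes (with a separate treatment of the degenerate case $t_{\theta_i}=0$). The paper instead bypasses the boxes entirely: it simply observes that the piece of surface $S\cap\,]L_{\theta_i-\eta},L_{\theta_i+\eta}[$ has boundary consisting of the curves $\gamma_{i,1},\ldots,\gamma_{i,n_i}$ together with the two strata $S_{\theta_i-\eta}$ and $S_{\theta_i+\eta}$, so the sum of these three classes (with the orientations induced by~$S$) vanishes in $H_1(\un\T^2;\Z)$, hence also after projecting to $H_1(\T^2;\Z)$. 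The only work left is then to compare the $S$-induced orientation on each stratum with the flow-coorientation convention, which is a one-line check in each transversality case. This yields~\eqref{Eq:RelHomNeg} and~\eqref{Eq:RelHomPos} without ever invoking the helix-box decomposition, and in particular handles the $t_{\theta_i}=0$ case uniformly without the side discussion about half-annuli (where, incidentally, you would also need to argue that all $n_i$ half-annuli point to the same side of $L_{\theta_i}$, which Lemma~\ref{L:LevelStructure} does not assert in that case). Your method has the virtue of making the local picture very explicit and would be the natural one if you wanted finer information (e.g.\ counting boxes, as in Lemma~\ref{L:PPolygon}$(ii)$), but for the bare homology relation the paper's global boundary argument is both shorter and cleaner.
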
 

\begin{figure}[h]
	\begin{center}
	\begin{picture}(120,58)(0,0)
	\put(0,0){\includegraphics*[scale=0.84]{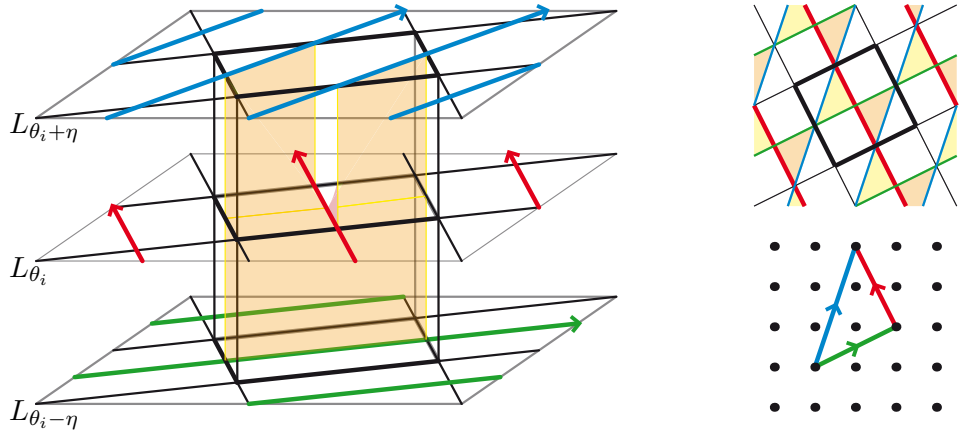}}
	\put(-2,1){$L_{\theta_i-\eta}$}
	\put(-2,20){$L_{\theta_i}$}
	\put(-2,39){$L_{\theta_i+\eta}$}
	\end{picture}
	\end{center}
	\caption{On the left, a surface~$S$ with boundary~$\gamma$ transverse to~$\GF{\T^2}$ in a neighbourhood of a $\gamma$-critical level~$L_{\theta_i}$ of the form~$]L_{\theta_i}-\eta, L_{\theta_i}+\eta[$. It is tiled by five negative helix boxes. Here, there is only one component, say~$\gamma_{i,1}$, of~$\gamma$ in~$L_{\theta_i}$ (in red), that is, we have~$n_i=1$. The intersection of $S$ with one of the five helix boxes is depicted. Its boundary consists of one fifth of the curve~$\gamma_{i,1}$, one fifth of the stratum~$S_{\theta_i+\eta}$ (on the top, in blue), one fifth of~$S_{\theta_i-\eta}$ (on the bottom, in green), and of vertical segments which are glued to the four other boxes. On the top right, the projection on a horizontal torus. On the bottom right, the homological relation between $n_i(p_i, q_i)$, $[S_{\theta_i-\eta}]$ and $[S_{\theta_i+\eta}]$ stated in Lemma~\ref{L:RelHomol}, here with $n_i=1$, $(p_i, q_i) = (-1, 2)$, $[S_{\theta_i-\eta}]=(2,1)$ and $[S_{\theta_i+\eta}]=(1,3)$. According to Lemma~\ref{L:PPolygon} $(ii)$, the area of this homological triangle $(5/2)$ is half the number of helix boxes involved in the tiling of the neighbourhood of the $\gamma$-critical level~$L_{\theta_i}$.}
	\label{F:CriticalLevel}
\end{figure}

\begin{proof}
We continue with the notation of Lemma~\ref{L:LevelStructure}. In particular, we assume that the neighbourhood~$]L_{\theta_i-\eta}, L_{\theta_i+\eta}[$ of~$L_\theta$ is tiled with~$n_i\times t_{\theta_i}$ helix boxes. The boundary of the intersection of the surface~$S$ with~$]L_{\theta_i-\eta}, L_{\theta_i+\eta}[$ consists of of pieces of three types: the curves~$\gamma_{i,1}, \ldots, \gamma_{i,n_i}$, the stratum~$S_{\theta_i-\eta}$, and the stratum~$S_{\theta_i+\eta}$. Therefore, the sum of these curves, with the orientations induced by the surface~$S$, is null-homologous in~$\un\T^2$. After projection on~$\T^2$, this sum is still zero.

When $S$ is negatively transverse to~$\GF{\T^2}$, then the two orientations on~$S_{\theta_i-\eta}$ given by~$S$ and by~$\GF{\T^2}$ agree, whereas the two orientations on~$S_{\theta_i+\eta}$ are opposite. We thus get~$n_i(p_i, q_i) + [S_{\theta_i-\eta}] - [S_{\theta_i+\eta}] =0$. Similarly, when $S$ is positively transverse to~$\GF{\T^2}$, the two orientations on~$S_{\theta_i-\eta}$ are opposite, whereas the two orientations on~$S_{\theta_i+\eta}$ agree, yielding Equation~\eqref{Eq:RelHomPos}.
\end{proof}


\subsection{Correspondence between pointed polygons and transverse surfaces}
\label{S:PolSurface}

We can now associate with every surface transverse to~$\GF{\T^2}$ a polygon in the lattice~$H_1(\T^2; \Z)$ that encodes the homology classes of all strata simultaneously.

\begin{defi}
\label{D:PointedPolygon}
Assume that $\gamma$ is a null-homologous collection of periodic orbits of~$\GF{\T^2}$ and $S$ is a surface with boundary~$\gamma$ whose interior is transverse to~$\GF{\T^2}$. The~\emph{pointed polygon}~$\PPol{S}$ of~$S$ is the polygon of~$\R^2$ whose vertices are the points $[S_\theta]$ for $\theta$ a $\gamma$-regular angle. 
\end{defi}

\begin{lemma}
\label{L:PPolygon}
$(i)$ In the above context, let $((n_1, \theta_1, p_1, q_1), \ldots, (n_k, \theta_k, p_k, q_k))$ be the combinatorial type of~$\gamma$ and~$\Pol\gamma$ be the polygon associated with $\gamma$. If $S$ is negatively transverse to~$\GF{\T^2}$, then the polygon~$\PPol{S}$ is a pointed copy of~$\Pol\gamma$. If $S$ is positively transverse to~$\GF{\T^2}$, then~$\PPol{S}$ is obtained from~$\Pol\gamma$ by a reflection.

$(ii)$ For every~$\gamma$-critical angle~$\theta_i$, the number of helix boxes used for tessellation a neighbourhood~$]L_{\theta_i-\eta}, L_{\theta_i+\eta}[$ is equal to the area of the parallelogram spanned by the vectors~$[S_{\theta_i-\eta}]$ and $[S_{\theta_i+\eta}]$.
\end{lemma}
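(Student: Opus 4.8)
The plan is to obtain both statements by combining the homological relations already established, the only genuinely new input being a count of how the strata of $S$ meet the level tori near a critical level. All computations take place in $H_1(\T^2;\Z)\cong\Z^2$, and $u\wedge v$ denotes the determinant of the pair $u,v$, so that $|u\wedge v|$ is the area of the parallelogram they span.

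\emph{Part~$(i)$.} By Lemma~\ref{L:RegularLevel} the class $[S_\theta]$ is constant on each of the $k$ cyclic intervals of $\gamma$-regular angles cut out by $\theta_1,\dots,\theta_k$; call $w_j$ its value on the interval between $\theta_j$ and $\theta_{j+1}$, with the orientation convention fixed just after Lemma~\ref{L:RegularLevel} and indices read cyclically. If $S$ is negatively transverse, Lemma~\ref{L:RelHomol} gives $w_j=w_{j-1}+n_j(p_j,q_j)$, hence $w_j=w_0+\sum_{i=1}^{j}n_i(p_i,q_i)$, which is the $j$-th vertex of $\Pol\gamma$ (Definition~\ref{D:Polygon}) translated by the fixed vector $w_0$; the sequence closes up consistently after $k$ steps precisely because $\sum_{i=1}^{k}n_i(p_i,q_i)=(0,0)$, which holds by Lemma~\ref{L:ZeroSum} as $\gamma$ is null-homologous. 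Thus $\PPol{S}$ is the translate $\Pol\gamma+w_0$, that is, a pointed copy of $\Pol\gamma$. If $S$ is positively transverse, the same computation with $-n_j(p_j,q_j)$ gives $\PPol{S}=w_0-\Pol\gamma$, the image of $\Pol\gamma$ under the point reflection $v\mapsto-v$ composed with a translation, which is the reflection in the statement.

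\emph{Part~$(ii)$.} By Lemma~\ref{L:LevelStructure} the neighbourhood $]L_{\theta_i-\eta},L_{\theta_i+\eta}[$ is tiled by exactly $n_i t_{\theta_i}$ helix boxes, so I must prove $n_i t_{\theta_i}=\bigl|[S_{\theta_i-\eta}]\wedge[S_{\theta_i+\eta}]\bigr|$. Using Lemma~\ref{L:RelHomol} to write $[S_{\theta_i+\eta}]=[S_{\theta_i-\eta}]\pm n_i(p_i,q_i)$ together with bilinearity of the determinant, the right-hand side equals $\bigl|[S_{\theta_i-\eta}]\wedge n_i(p_i,q_i)\bigr|=n_i\bigl|[S_{\theta_i-\eta}]\wedge(p_i,q_i)\bigr|$, so it suffices to show $t_{\theta_i}=\bigl|[S_{\theta_i-\eta}]\wedge(p_i,q_i)\bigr|$, i.e.\ that $t_{\theta_i}$ equals the intersection number on $L_{\theta_i-\eta}$ of the multicurve $S_{\theta_i-\eta}$ with a single orbit of slope $(p_i,q_i)$. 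Take that orbit to be the projection of $\gamma_{i,j}$. In the proof of Lemma~\ref{L:LevelStructure} the solid torus $N_{\gamma_{i,j}}$ is tiled by $t_{\theta_i}$ helix boxes, so its bottom annulus $N_{\gamma_{i,j}}\cap L_{\theta_i-\eta}$ meets $S_{\theta_i-\eta}$ in $t_{\theta_i}$ parallel arcs; one reads off from the helix that each such arc crosses the core of the annulus (the projection of $\gamma_{i,j}$) exactly once, and that all these crossings have the same sign because all $t_{\theta_i}$ boxes around $\gamma_{i,j}$ have the same sign (Lemma~\ref{L:BoxDecomposition}). Hence $S_{\theta_i-\eta}$ meets the projection of $\gamma_{i,j}$ in $t_{\theta_i}$ points of equal sign, so the algebraic intersection number $[S_{\theta_i-\eta}]\wedge(p_i,q_i)$ has absolute value $t_{\theta_i}$, and $(ii)$ follows.

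I expect the delicate point to be this last step of $(ii)$: turning the local combinatorics of the helix-box tiling into the clean identity $t_{\theta_i}=\bigl|[S_{\theta_i-\eta}]\wedge(p_i,q_i)\bigr|$. One must be sure that the $t_{\theta_i}$ arcs of $S_{\theta_i-\eta}$ across the relevant annulus really do all meet the orbit with the same orientation, so that no cancellation occurs and the algebraic count agrees with the geometric one, and one must track the orientation convention from Lemma~\ref{L:RegularLevel} so that it matches the sign produced by Lemma~\ref{L:RelHomol}. Once this local picture is fixed, everything else is routine bookkeeping with the intersection form on $H_1(\T^2;\Z)$.
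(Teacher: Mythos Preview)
Your proof is correct and follows essentially the same line as the paper's. For~$(i)$ both arguments iterate Lemma~\ref{L:RelHomol} to identify consecutive vertices of~$\PPol S$ with those of~$\Pol\gamma$ up to a translation (or a point reflection in the positive case); for~$(ii)$ both reduce the box count to an intersection number of a regular stratum with the curves of slope~$(p_i,q_i)$ and then invoke the determinant interpretation of the intersection form on~$H_1(\T^2;\Z)$. The only cosmetic differences are that the paper reads the intersections off Figure~\ref{F:CriticalLevel} using the \emph{upper} stratum~$S_{\theta_i+\eta}$ against all~$n_i$ components at once, whereas you use the \emph{lower} stratum against a single component and then multiply by~$n_i$; by Lemma~\ref{L:RelHomol} these are equivalent, and your more explicit check that all~$t_{\theta_i}$ crossings carry the same sign is exactly the point the paper leaves to the picture.
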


\begin{proof}
By Lemma~\ref{L:RegularLevel}, the polygon~$\PPol S$ has at most~$k$ vertices.
Now if $S$ is negatively transverse to~$\GF{\T^2}$, then \eqref{Eq:RelHomNeg} implies that, for every~$i$, the two vertices $[S_{\theta_i-\eta}]$ and $[S_{\theta_i+\eta}]$ of~$\PPol{S}$ differ by the vector~$n_i(p_i,q_i)$. On the other hand, if $S$ is positively transverse to~$\GF{\T^2}$, then $[S_{\theta_i-\eta}]$ and $[S_{\theta_i+\eta}]$ differ by~$-n_i(p_i,q_i)$. This proves~$(i)$.

For $(ii)$, we see on Figure~\ref{F:CriticalLevel} that, for every~$i$, every helix box used in the tiling of the neighbourhood~$]L_{\theta_i-\eta}, L_{\theta_i+\eta}[$ of~$L_{\theta_i}$ is above an intersection point between the projection of the curve~$S_{\theta_i+\eta}$ and the projection of one component of~$\gamma$ in~$L_{\theta_i}$. Therefore the number of helix boxes in the tiling is the absolute value of the intersection number of the classes $[S_{\theta_i+\eta}]$ and $n_i(p_i,q_i)$ in~$H_1(\T^2; \Z)$. As depicted on Figure~\ref{F:CriticalLevel} right, this number coincides with the absolute value of the intersection number of $[S_{\theta_i-\eta}]$ and $[S_{\theta_i+\eta}]$, which is the area of the parallelogram spanned by these two vectors.
\end{proof}

Assume that $S$ is a surface transverse to~$\GF{\T^2}$ and $L_\theta$ is a regular level of~$\un\T^2$ for~$S$, so that the stratum~$S_\theta$ is a smooth multicurve. Then we obtain another surface~$S'$ transverse to~$\GF{\T^2}$ by cutting $S$ along~$S_\theta$, gluing a copy of~$L_\theta$, and smoothing. We say that $S'$ is obtained from $S$ by~\emph{horizontal surgery}. It is easy to check that the polygons~$\PPol S$ and~$\PPol{S'}$ coincide although the surfaces~$S$ and~$S'$ are not isotopic.  Therefore pointed polygons do not encode all information about the isotopy type of transverse surfaces. Nevertheless, we will see that horizontal surgeries are the only freedom left by polygons. 

For~$\gamma$ a null-homologous collection of periodic orbits of~$\GF{\T^2}$ with associated polygon~$\Pol\gamma$, we write~$A(\gamma)$ for the area of~$\Pol\gamma$ (which is an integer by Pick's Formula) and $I(\gamma)$ for the number of integer points in the strict interior of~$\Pol\gamma$. We can now state and establish the main result. 

\begin{theo}
\label{T:Torus}
$(i)$ The map~$S\mapsto \PPol S$ induces a one-to-one correspondence between surfaces negatively transverse to the flow~$\GF{\T^2}$ with boundary made of periodic orbits, up to isotopy and horizontal surgeries, and convex polygons with integer vertices in~$\R^2$ containing the origin in their interior or on their boundary. 

$(ii)$ The map~$S\mapsto \PPol S$ induces a one-to-one correspondence between negative Birkhoff sections for the flow~$\GF{\T^2}$ and convex polygons with integer vertices in~$\R^2$ containing the origin their (strict) interior.

$(iii)$ There is no surface positively transverse to~$\GF{\T^2}$ with boundary made of periodic orbits.

$(iv)$ Assume that~$\gamma$ is a null-homologous collection of periodic orbits of~$\GF{\T^2}$ with associated polygon~$\Pol\gamma$. Then for every surface~$S$ transverse to~$\GF{\T^2}$ with boundary~$\gamma$, the Euler characteristic of~$S$ is~$-2A(\gamma)$ and the genus of~$S$ is~$I(\gamma)$.

$(v)$ Assume that $\gamma, \gamma'$ are two null-homologous collections of periodic orbits of~$\GF{\T^2}$. Then ther linking number $\lk(\gamma, \gamma')$ is equal to $A(\gamma) + A(\gamma') - A(\gamma\cup\gamma')$.
\end{theo}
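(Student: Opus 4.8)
The plan is to compute $\lk(\gamma, \gamma')$ by choosing, for one of the two links, a surface transverse to the flow and counting its intersections with the other. The natural candidate is a negative Birkhoff section, or more precisely a negative transverse surface: by Theorem~\ref{T:Torus}$(i)$ there is a surface $S$ negatively transverse to $\GF{\T^2}$ with $\bord S = \gamma$, whose pointed polygon $\PPol S$ is a pointed copy of $\Pol\gamma$. Since $\gamma'$ lies in the critical levels $L_{\theta'_j}$ and $S$ is transverse to the flow, $S$ meets $\gamma'$ transversally, and $\lk(\gamma, \gamma') = \mathrm{Int}(S, \gamma')$.

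The key computation is local to each critical level of $\gamma'$. Fix a slope $\theta'_j$ occurring in $\gamma'$ with data $(n'_j, \theta'_j, p'_j, q'_j)$, and look at the thick torus $]L_{\theta'_j - \eta}, L_{\theta'_j + \eta}[$. On this slab the surface $S$ restricts to a multicurve that sweeps from the stratum $S_{\theta'_j - \eta}$ of class $[S_{\theta'_j-\eta}]$ to the stratum $S_{\theta'_j + \eta}$ of class $[S_{\theta'_j+\eta}]$; if $\theta'_j$ is not a critical angle of $\gamma$ these two classes are equal (call it $c_j$), and if it is critical they differ by $n_i(p_i,q_i)$, in which case one should first pass to $\gamma \cup \gamma'$ and argue there, so assume without loss of generality that $\gamma$ and $\gamma'$ have disjoint slope sets (one can always perturb, or treat the general case by a limiting/additivity argument). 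The $n'_j$ orbits of $\gamma'$ in $L_{\theta'_j}$ each have homology class $(p'_j, q'_j)$, and the algebraic count of intersections of one of them with $S$ equals the intersection number in $H_1(\T^2;\Z)$ of the class $c_j$ with $(p'_j, q'_j)$, i.e. $\det(c_j, (p'_j, q'_j))$ up to sign; with the orientation conventions fixed before Lemma~\ref{L:RelHomol} it comes out with a definite sign. Summing over the $n'_j$ parallel orbits and then over all slopes $j$ of $\gamma'$ gives $\lk(\gamma,\gamma') = \sum_j n'_j \det(c_j, (p'_j,q'_j))$, where $c_j$ is the (constant) stratum class on the $\gamma$-regular interval around $\theta'_j$, equivalently the vertex of the pointed polygon $\PPol S$ sitting at that angular position.

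The final step is to recognise this sum as an area defect. As $\theta$ runs once around the circle, the vertices $c_j = [S_\theta]$ trace out the convex polygon $\PPol S$, which is a translate of $\Pol\gamma$; the edge of $\Pol\gamma$ corresponding to slope $\theta_i$ of $\gamma$ is the vector $n_i(p_i,q_i)$. Inserting the extra critical slopes of $\gamma'$ refines this polygon to $\PPol{S'}$ for $S' $ transverse to $\gamma\cup\gamma'$, whose boundary polygon is $\Pol{\gamma\cup\gamma'}$. A direct way to finish: the quantity $\sum_j n'_j\det(c_j,(p'_j,q'_j))$ measures, by the shoelace/Green's formula, twice the signed area swept between the boundary of $\Pol{\gamma\cup\gamma'}$ and the boundary of $\Pol\gamma$ when one inserts the $\gamma'$-edges, which telescopes to $A(\gamma) + A(\gamma') - A(\gamma\cup\gamma')$. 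Concretely, $A(\gamma\cup\gamma')$ is the area of the convex polygon whose edge vectors are the union (sorted by slope) of the edge vectors $n_i(p_i,q_i)$ of $\Pol\gamma$ and $n'_j(p'_j,q'_j)$ of $\Pol{\gamma'}$; expanding $2A(\gamma\cup\gamma') = \sum_{\text{edges } u<v} \det(u,v)$ over this sorted list and subtracting the analogous expansions of $2A(\gamma)$ and $2A(\gamma')$ leaves exactly the cross terms $\sum_{i,j}\pm\det\big(n_i(p_i,q_i), n'_j(p'_j,q'_j)\big)$ with signs according to slope order, which is precisely $2\sum_j n'_j \det(c_j,(p'_j,q'_j))$ once one writes $c_j$ as the partial sum of $\gamma$-edge vectors preceding $\theta'_j$ (up to the common translation, which drops out of the determinants). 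Matching the overall sign — so that the answer is $A(\gamma)+A(\gamma')-A(\gamma\cup\gamma')$ and not its negative — using the orientation convention on strata and the negativity of the transversality is the one place demanding care; this sign is pinned down by the worked example in Figure~\ref{F:CriticalLevel}, where $\lk$ of a single pair of orbits is computed directly and compared. The main obstacle is thus bookkeeping: reconciling the three area expansions with the signed intersection count while keeping every orientation convention consistent, rather than any conceptual difficulty.
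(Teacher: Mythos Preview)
Your proposal addresses only part~$(v)$; parts $(i)$--$(iv)$ are not touched, and you in fact invoke $(i)$ as an input. Assuming that is the intent, your argument for~$(v)$ is correct and follows the same line as the paper's: choose a surface~$S$ negatively transverse to~$\GF{\T^2}$ with $\bord S=\gamma$, observe that each component of~$\gamma'$ at level~$\theta'_j$ meets~$S$ in a number of points equal (up to sign) to the area of the parallelogram spanned by the stratum class $[S_{\theta'_j}]$ and $(p'_j,q'_j)$, and then sum. The paper phrases this last step geometrically via the ``jigsaw puzzle'' of Figure~\ref{F:SumPolygons}, which decomposes $\Pol{\gamma\cup\gamma'}$ into a copy of $\PPol S$, a copy of $\Pol{\gamma'}$, and the union of those parallelograms; you instead expand the shoelace formula for $A(\gamma\cup\gamma')$ and isolate the cross terms. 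These are two presentations of the same identity (indeed $\Pol{\gamma\cup\gamma'}$ is the Minkowski sum of $\Pol\gamma$ and $\Pol{\gamma'}$, and the cross terms you extract are twice the mixed area), so there is no genuine difference in content.

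Two small remarks. First, the paper actually picks a Birkhoff section rather than a general transverse surface; your use of~$(i)$ is slightly more robust, since a Birkhoff section need not exist when the origin lies on $\bord\Pol\gamma$. Second, your ``assume disjoint slope sets, perturb otherwise'' step is fine but could be replaced by the observation that when $\gamma$ and $\gamma'$ share a slope~$\theta$, the corresponding components of $\gamma'$ are disjoint from~$S$ near level~$L_\theta$ anyway (they are parallel to the binding), so the intersection count is unchanged and the determinant term with $c_j=[S_{\theta^-}]$ or $[S_{\theta^+}]$ gives the same parallelogram area either way.
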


\begin{proof}
$(i)$ (See Figure~\ref{F:PPol}.) Assume that~$S$ is a surface negatively transverse to~$\GF{\T^2}$. Let~$\gamma$ be its boundary and $((n_1, \theta_1, p_1, q_1), \ldots, (n_k, \theta_k, p_k, q_k))$ be the combinatorial type of~$\gamma$. For every~$\gamma$-regular angle~$\theta$, the stratum~$S_\theta$ is transverse to the direction~$\theta$. Therefore, if $S_\theta$ is non-empty and with the orientation of $S_\theta$ defined in Section~\ref{S:RegularLevels}, the basis formed by a vector tangent to~$S_\theta$ and a vector with direction~$\theta$ is direct. Hence the basis formed by the direction of~$[S_\theta]$ and the direction~$\theta$ is also direct. Let $D_\theta$ be the line oriented by~$\theta$ passing through the vertex~$S_\theta$ of~$\PPol S$. The previous observation implies that the point~$(0,0)$ is on the left of~$D_\theta$. Let $\theta_i$ be the smallest~$\gamma$-critical angle larger than~$\theta$. Then, when $\theta$ tends to~$\theta_i$, the line~$D_\theta$ tends to the line supporting the edge of~$\PPol S$ with direction~$\theta_i$. Therefore, the point~$(0,0)$ is also on the left of the edge of~$\PPol S$ with direction~$\theta_i$ (the boundary of~$\PPol S$ being oriented trigonometrically). If the stratum $S_\theta$ is empty, we have~$[S_\theta]=0$, and $(0,0)$ is also on the left the line~$D_\theta$. Taking all critical value of~$\theta$ into account, we deduce that the point~$(0,0)$ is on the left of all oriented edges of~$\PPol S$, and therefore lies in the interior or on the boundary of~$\PPol S$. Thus the map~$\PPol{}$ associates with every surface transverse to~$\GF{\T^2}$ a polygon in~$H_1(\T^2; \Z)$ containing~$(0,0)$ in its interior.

\begin{figure}[h]
	\begin{center}
	\begin{picture}(78,48)(0,0)
	\put(0,0){\includegraphics*[width=0.5\textwidth]{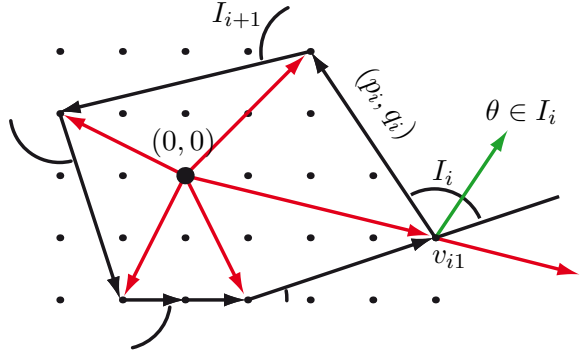}}
	\put(20,27){$(0,0)$}
	\put(64,31){$\theta\in I_i$}
	\put(57,23){$I_i$}
	\put(28,44){$I_{i+1}$}
	\put(57,12){$v_{i1}$}
	\put(47,37){\begin{rotate}{-55}{$(p_{i}, q_{i})$}\end{rotate}}
	\end{picture}
	\end{center}
	\caption{The Polygon~$\PPol S$ for a surface $S$ negatively transverse to the geodesic flow. For a $\gamma$-regular angle~$\theta$, the directions $[S_\theta]$ (indicated by a red arrow) and $\theta$ (indicated by a green arrow) form a direct basis. The point~$(0,0)$ is on the left of all edges of~$\PPol S$, and therefore in the interior or on the boundary of~$\PPol S$.}
	\label{F:PPol}
\end{figure}

For the surjectivity of the map~$\PPol{}$, suppose that a convex polygon~$P$ containing~$(0,0)$ is given. 
Let $\theta_0$ be a $\gamma$-regular angle. 
Let~$V$ be the unique vertex of~$P$ such that the line of slope~$\theta_0$ passing through~$V$ lies on the right of~$P$. 
Then construct a surface~$S$ a follows. Start with a stratum~$S_{\theta_0}$ that is transverse to the $\theta_0$-direction and whose homology class is~$V$. Let $(p_1, q_1)$ denote the edge of~$P$ starting at $V$. Then erect helix boxes whose bindings have direction $(p_1, q_1)$ so that their bottom faces match with~$S_{\theta_0}$. By Lemma~\ref{L:RelHomol}, the boundary of the helices in the top faces form a curve whose homology class is $V+(p_1,q_1)$, so that the stratum corresponds to the second vertex of~$P$. By continuing this procedure of gluing helix boxes whose direction is prescribed by the edges of~$P$ and whose number is dictated by the strata that have been constructed previously, we erect a surface which is negatively transverse to~$\GF{\T^2}$ and whose associated polygon is~$P$. 

For the injectivity, note that the surface~$S$ can be recovered from~$\PPol S$ by the above procedure. The only choice arises when $\theta$ has described the whole circle~$\Sph^1$ and comes back to~$\theta_0$: we have to glue the last floor of helix boxes to the stratum~$S_\theta$. This gluing is not unique, but two such gluings precisely differ by a horizontal surgery.

$(ii)$ Assume that~$S$ is a negative Birkhoff section for~$\GF{\T^2}$. As $S$ is transverse to the flow, we can apply the result of~$(i)$ and deduce that the polygon~$\PPol S$ contains $(0,0)$ in its interior or on its boundary. Since~$S$ is a Birkhoff section, it intersects all orbits of~$\GF{\T^2}$. In particular, this implies that for every $\gamma$-regular value of~$\theta$, the stratum~$S_\theta$ is non-empty. This excludes the case where~$(0,0)$ lies on the boundary of~$\PPol S$.

$(iii)$ Assume that~$S$ is a surface with boundary positively transverse to~$\GF{\T^2}$. Then we can apply the same argument as in the negative case~$(i)$. The only difference is that, for every $\gamma$-regular angle~$\theta$, the basis formed by~$[S_\theta]$ and $\theta$ is indirect. Therefore, the point~$(0,0)$ lies on the right of the line with direction~$\theta$ passing through the vertex~$[S_\theta]$. Thus~$(0,0)$ is on the right of all edges of~$\PPol S$, whereas the boundary is oriented trigonometrically, a contradiction.

$(iv)$ In every helix box, the helix surface consists of a topological disc, of eight edges, seven of them being on the boundary of the box, and of eight vertices, two of them being in the center of a face of the box and the six others in the middle of an edge of the box. Therefore, the contribution of a helix box to the Euler characteristic is $1-(1+7/2)+(2/2+6/4)=-1$. Assume that~$S$ is a surface transverse to~$\GF{\T^2}$ with boundary~$\gamma$, and let~$\PPol S$ be the associated polygon. Let $\theta, \theta'$ be two $\gamma$-regular angles such that there is exatly one $\gamma$-critical value in~$]\theta, \theta'[$. Then, according to Lemma~\ref{L:PPolygon}, the number of helix boxes used for tiling the thick torus lying between the two levels~$L_{\theta}$ and $L_{\theta'}$ is twice the area of the triangle whose vertices are~$(0,0)$, $[S_\theta]$ and~$[S_{\theta'}]$. By summing over all~$\gamma$-critical levels, we obtain that the total number of helix boxes in twice the area of~$\PPol S$, hence twice the area of~$\Pol\gamma$. As the genus of~$S$ is given by the formula $\chi(S) = 2-2g(S)-\sum{n_i}$, Pick's Formula for the area of a polygon with integral vertices gives the formula for the genus.

$(v)$ By definition, the linking number $\lk(\gamma, \gamma')$ is the intersection number of a surface with boundary~$\gamma$ and the collection~$\gamma'$. It is well-defined when~$\gamma$ is null-homologous, since, in this case, the intersection number does not depend on the choice of the surface. Here, let us pick a Birkhoff section for~$\GF{\T^2}$ with boundary~$\gamma$, and call it~$S_\gamma$. Let $((n'_1, \theta'_1, p'_1, q'_1)$, \ldots, $(n'_k, \theta'_k, p'_k, q'_k))$ be the combinatorial type of~$\gamma'$. Then the intersection number of~$S_\gamma$ with a periodic orbit of~$\GF{\T^2}$ of slope~$(p'_i, q'_i)$ is the opposite of the area of the parallelogram spanned by the vectors~$[S_{\theta'_i}]$ and $(p'_i, q'_i)$. Since the area of~$\PPol S$ equals the area of~$\Pol \gamma$, the jigsaw puzzle depicted on Figure~\ref{F:SumPolygons} shows that the sum of the areas of these parallelograms is equal to~$A(\gamma\cup\gamma')-A(\gamma)-A(\gamma')$. 
\end{proof}

\begin{figure}[h]
	\begin{center}
	\begin{picture}(140,55)(0,0)
	\put(0,0){\includegraphics*[width=0.85\textwidth]{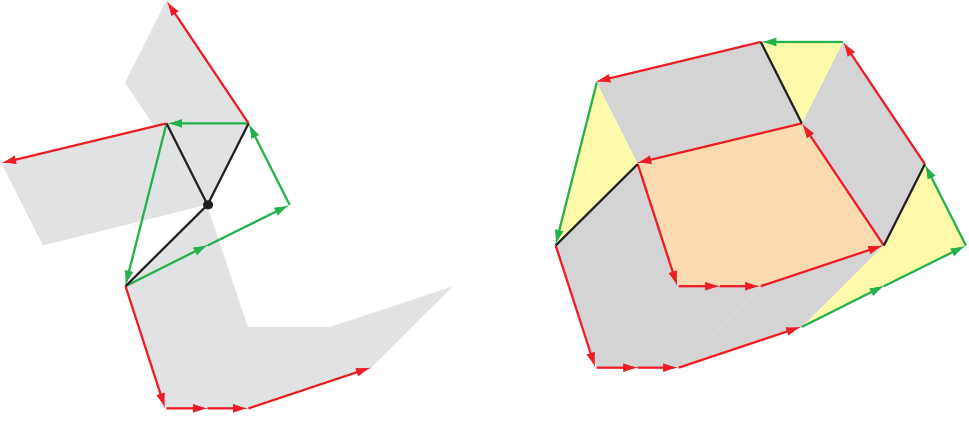}}
	\end{picture}
	\end{center}
	\caption{On the left, the polygon~$\PPol S$ associated to a Birkhoff section~$S$ of~$\GF{\T^2}$, and the homology classes of the elements of a family~$\gamma'$, each of them based at the vertex of~$\PPol S$ corresponding to the class of the intersected stratum of~$S$. In grey, the parallelograms whose areas add up to the intersection number of~$S$ with the collection~$\gamma'$. On the right, the polygon~$\Pol{\gamma\cup\gamma'}$ is decomposed into three parts whose areas respectively are the area of the grey zone, the area of~$\PPol{S}$, and the area of~$\Pol{\gamma'}$.}
	\label{F:SumPolygons}
\end{figure}

\begin{coro}[Theorem~B]
\label{C:Torus}
Assume that~$\Orb$ is a quotient of~$\T^2$ on which any two geodesics intersect. Then for every pair~$\gamma, \gamma'$ of periodic orbits of~$\GF{\Orb}$, the inequality $\lk(\gamma, \gamma') < 0$ holds.
\end{coro}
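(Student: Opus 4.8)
The plan is to deduce Corollary~\ref{C:Torus} from Theorem~\ref{T:Torus}$(v)$ by a purely geometric argument about areas of lattice polygons, combined with a reduction of the orbifold case to the torus case via Lemma~\ref{L:Covering}.

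First I would reduce to the case $\Orb=\T^2$. If $\Orb$ is a proper quotient of $\T^2$ by a finite group, then $\un\Orb$ is finitely covered by $\un\T^2$, the covering commutes with the geodesic flow, and any two periodic orbits $\gamma,\gamma'$ of $\GF\Orb$ whose projections on $\Orb$ intersect lift to collections $\hat\gamma,\hat\gamma'$ of periodic orbits of $\GF{\T^2}$ whose projections on $\T^2$ still intersect (an intersection point of the projections lifts to $\T^2$). By Lemma~\ref{L:Covering}, $\lk(\gamma,\gamma')$ has the same sign as $\lk(\hat\gamma,\hat\gamma')$, so it suffices to treat $\Orb=\T^2$. (One must check that $\hat\gamma,\hat\gamma'$ are null-homologous in $\un\T^2$; but by Lemma~\ref{L:ZeroSum} any collection of periodic orbits of $\GF{\T^2}$ whose projections have intersecting support necessarily has nonzero slopes in at least two distinct directions, and the condition of Theorem~\ref{T:Torus}$(v)$ is only that the collections be null-homologous — so I would instead apply part $(v)$ directly to $\gamma\cup\gamma'$ after first arranging, if necessary, that each of $\gamma$ and $\gamma'$ is individually null-homologous by passing to a suitable multiple, which does not change the sign of the linking number.)

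The heart of the matter is then the inequality $A(\gamma)+A(\gamma')-A(\gamma\cup\gamma')<0$ of Theorem~\ref{T:Torus}$(v)$, i.e. $A(\gamma\cup\gamma')>A(\gamma)+A(\gamma')$, under the hypothesis that the geodesics underlying $\gamma$ and $\gamma'$ intersect. Geometrically, $\Pol{\gamma\cup\gamma'}$ is the convex polygon obtained by taking the multiset of edge-vectors $n_i(p_i,q_i)$ of $\Pol\gamma$ together with the edge-vectors $n'_j(p'_j,q'_j)$ of $\Pol{\gamma'}$ and reassembling them in cyclic (angular) order; this is exactly the Minkowski-sum picture, $\Pol{\gamma\cup\gamma'}$ is a translate of $\Pol\gamma\oplus\Pol{\gamma'}$. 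Hence $A(\gamma\cup\gamma')=A(\gamma)+A(\gamma')+V(\Pol\gamma,\Pol{\gamma'})$, where the cross term is the mixed area (twice the sum, over pairs of one edge from each polygon, of the absolute value of the determinant of the two edge-vectors — precisely the "jigsaw" parallelograms of Figure~\ref{F:SumPolygons}). So the claim is equivalent to: the mixed area $V(\Pol\gamma,\Pol{\gamma'})$ is strictly positive, i.e. at least one such determinant $\det\bigl(n_i(p_i,q_i),\,n'_j(p'_j,q'_j)\bigr)$ is nonzero. This determinant is nonzero exactly when the slopes $\theta_i$ and $\theta'_j$ are distinct (mod $\pi$). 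Thus I must show: if every slope of $\gamma$ is parallel (mod $\pi$) to every slope of $\gamma'$, then the projected geodesics do not intersect. But if all slopes of $\gamma\cup\gamma'$ are the same line direction, the projections are disjoint parallel closed geodesics on $\T^2$ (finitely many of them, so generically disjoint, and one may isotope within the parametrization to make them disjoint — more precisely two distinct straight lines of the same rational slope on $\T^2$ either coincide or are disjoint). Contrapositively, intersecting projections force two non-parallel slopes, one from each (or from $\gamma\cup\gamma'$), giving a nonzero determinant and hence a strictly positive mixed area. This yields $\lk(\gamma,\gamma')<0$.

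The main obstacle, and the only place requiring care, is the bookkeeping at the two ends: first, matching the "jigsaw puzzle" of Figure~\ref{F:SumPolygons} with the statement $A(\gamma\cup\gamma')=A(\gamma)+A(\gamma')+V(\Pol\gamma,\Pol{\gamma'})$ and confirming that the cross term is exactly $\sum_{i,j}\lvert\det(n_i(p_i,q_i),n'_j(p'_j,q'_j))\rvert$ with all signs positive (this uses the angular-ordering convexity of $\Pol{\gamma\cup\gamma'}$, so no cancellation occurs); and second, the elementary but necessary verification that "the underlying geodesics intersect" implies "two distinct slope-directions occur", i.e. ruling out the all-parallel case — which is where the hypothesis of the corollary (as opposed to merely "$\gamma,\gamma'$ disjoint as links") is used, and is exactly the sharpness illustrated by the excluded configuration of two disjoint parallel geodesics. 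Once these two points are settled, the conclusion is immediate.
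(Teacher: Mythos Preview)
Your proof is correct and follows the same route as the paper: reduce to $\T^2$ via Lemma~\ref{L:Covering}, then use Theorem~\ref{T:Torus}$(v)$ together with the observation that $\Pol{\gamma\cup\gamma'}$ is the Minkowski sum of $\Pol\gamma$ and $\Pol{\gamma'}$, so the difference $A(\gamma\cup\gamma')-A(\gamma)-A(\gamma')$ is a sum of non-negative parallelogram areas that vanishes only in the all-parallel case. The paper states this more tersely, simply asserting that formula~$(v)$ gives zero exactly when the collections consist of parallel lifts, but your Minkowski-sum unpacking is exactly the content of the jigsaw of Figure~\ref{F:SumPolygons}.

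One remark: your parenthetical worry about whether $\hat\gamma,\hat\gamma'$ are null-homologous in $\un\T^2$ is unnecessary and slightly muddled. Lemma~\ref{L:Covering} already asserts, as part of its statement, that the equivariant lifts of null-homologous links are themselves null-homologous (this is the first sentence of its proof). So once $\gamma,\gamma'$ are null-homologous in $\un\Orb$---which they must be for $\lk(\gamma,\gamma')$ to make sense---their lifts are automatically null-homologous in $\un\T^2$, and no passage to multiples is needed.
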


\begin{proof}
By Lemma~\ref{L:Covering}, it is enough to shows that the lifts~$\hat\gamma, \hat\gamma'$ of~$\gamma, \gamma'$ in~$\un\T^2$ have a negative linking number. As the projections of~$\gamma, \gamma'$ on~$\Orb$ intersect, the projections of~$\hat\gamma, \hat\gamma'$ on~$\T^2$ also intersect. Formula~$(v)$ in Theorem~\ref{T:Torus} shows that the linking number of two collections is zero if and only if the latter consist of parallel lifts of one geodesics on~$\T^2$. The hypothesis on the  intersection then discards this situation.
\end{proof}

Theorem~\ref{T:Torus}\,$(ii)$ implies that almost every null-homologous collection of periodic orbits of~$\GF{\T^2}$ bounds a Birkhoff section. The exceptions are the collections whose associated polygon contains no point with integral coordinates. 

For example, let~$\gamma$ be an unoriented periodic geodesics on~$\T^2$. Let $(p,q)$ be its code. Denote by~$\gamma_+, \gamma_-$ its two lifts in~$\un\T^2$ (one for each of the two possible orientations of~$\gamma$). Then $\gamma_+$ and~$\gamma_-$ are periodic orbits of~$\GF{\T^2}$, and their sum is null-homologous. The associated polygon is made of one segment with coordinates~$(p,q)$ only. As predicted by Theorem~B\,$(i)$, the union of $\gamma_+$ and $\gamma_-$ bounds two non-isotopic surfaces which are transverse to~$\GF{\T^2}$, namely the two vertical ribbons in~$\un\T^2$ consisting of the unit tangent vectors which are based on~$\gamma$ and which point into one of the two sides of~$\gamma$. None of these two ribbons is a Birkhoff section for~$\GF{\T^2}$ since each of them only intersect half of the orbits. 

For another example, consider the three orbits with respective slopes $(1,0), (0,1)$ and $(-1,-1)$. They bound three non-isotopic surfaces transverse to~$\GF{\T^2}$, but they do not bound any Birkhoff section, since the associated polygon is a triangle whose interior contains no point with integral coordinates. 

A last example, which was a surprise for us, is given by the four orbits with slopes~$(\pm 1,0)$ and $(0, \pm 1)$, in which case the associated polygon is the unit square, again containing no integral point inside.

As explained in the introduction, Birkhoff sections give rise to open book decompositions for the underlying 3-manifold, here for unit tangent bundle~$\un\T^2$, a 3-torus. Planar open book decompositions, that is, decompositions where the pages are of genus 0, have been often investigated. Theorem~B\,$(iv)$ implies that none of them comes from Birkhoff sections of the geodesic flow on the torus.

\begin{coro}
The geodesic flow on~$\un\T^2$ contains no Birkhoff section of genus~$0$.
\end{coro}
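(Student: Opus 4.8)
The plan is to deduce the corollary directly from part $(iv)$ of Theorem~\ref{T:Torus}. First I would recall that, by Theorem~\ref{T:Torus}\,$(iv)$, for every null-homologous collection~$\gamma$ of periodic orbits of~$\GF{\T^2}$ and every surface~$S$ transverse to~$\GF{\T^2}$ with boundary~$\gamma$, the genus of~$S$ equals $I(\gamma)$, the number of integer points in the strict interior of the polygon~$\Pol\gamma$. A Birkhoff section is in particular such a transverse surface, so its genus is also $I(\gamma)$; hence a genus-$0$ Birkhoff section would force $I(\gamma) = 0$, i.e.\ $\Pol\gamma$ has no interior integer point.

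Next I would invoke Theorem~\ref{T:Torus}\,$(ii)$: a collection of periodic orbits bounds a (negative) Birkhoff section if and only if the polygon~$\Pol\gamma$ contains the origin in its \emph{strict} interior. Combining the two observations, a genus-$0$ Birkhoff section would require a convex polygon with integer vertices whose strict interior contains the origin but contains no integer point at all — an immediate contradiction, since the origin is itself an integer point lying in the strict interior. (Part $(iii)$ of Theorem~\ref{T:Torus} already rules out positively transverse surfaces, so there is nothing to check on that side.) This is really the whole argument; there is no genuine obstacle, the statement being a one-line consequence of the classification.

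I would therefore write the proof as: suppose $S$ is a genus-$0$ Birkhoff section with boundary~$\gamma$. By Theorem~\ref{T:Torus}\,$(iii)$, $S$ is negatively transverse to~$\GF{\T^2}$, so by $(ii)$ the polygon $\Pol\gamma$ contains the origin in its strict interior; in particular $I(\gamma) \ge 1$. But by $(iv)$ the genus of~$S$ equals $I(\gamma)$, so $0 = I(\gamma) \ge 1$, a contradiction. Hence no such section exists.

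One small point worth double-checking while writing is the consistency of conventions between $(ii)$ and $(iv)$ — namely that $\Pol\gamma$ and $\PPol S$ are congruent (a pointed copy, up to reflection), so that ``origin in the strict interior of $\PPol S$'' translates to ``$\Pol\gamma$ has an interior integer point''. This is exactly Lemma~\ref{L:PPolygon}\,$(i)$ together with the surjectivity statement in Theorem~\ref{T:Torus}\,$(i)$, so no extra work is needed; I would simply cite $(ii)$ and $(iv)$ of Theorem~\ref{T:Torus} and conclude.
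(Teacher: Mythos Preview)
Your proposal is correct and matches the paper's approach exactly: the paper states the corollary as an immediate consequence of Theorem~\ref{T:Torus}\,$(iv)$ (combined implicitly with~$(ii)$), without even writing out a separate proof. Your argument---that a Birkhoff section forces the origin into the strict interior of~$\Pol\gamma$, hence $I(\gamma)\ge 1$, while genus~$0$ would force $I(\gamma)=0$---is precisely the intended one-line deduction.
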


Since helix boxes contribute~$-1$ to the Euler characteristics, and since every helix box involves one boundary component, Birkhoff sections with genus~$1$ are very peculiar. 

\begin{coro}
A Birkhoff section of genus 1 for~$\GF{\T^2}$ is made of exactly one helix box per boundary component.
\end{coro}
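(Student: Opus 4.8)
The plan is to combine the two facts already established for transverse surfaces (in particular Birkhoff sections) of $\GF{\T^2}$: the Euler-characteristic count and the genus formula from Theorem~\ref{T:Torus}\,$(iv)$, together with the fact from Lemmas~\ref{L:BoxDecomposition} and~\ref{L:LevelStructure} that a Birkhoff section, being negatively transverse to the flow by Theorem~\ref{T:Torus}\,$(iii)$, is tiled by negative helix boxes in a neighbourhood of each $\gamma$-critical level, with exactly $n_i$ boundary components $\gamma_{i,1},\dots,\gamma_{i,n_i}$ contributing in the $i$th critical level and $t_{\theta_i}$ helix boxes per component there.

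First I would recall that for a genus~$1$ Birkhoff section $S$ with boundary $\gamma$ of combinatorial type $((n_1,\theta_1,p_1,q_1),\dots,(n_k,\theta_k,p_k,q_k))$, the formula $\chi(S)=2-2g(S)-\sum_i n_i$ from the proof of part~$(iv)$ gives $\chi(S)=-\sum_i n_i$. On the other hand, the count in the proof of part~$(iv)$ shows that the total number of helix boxes tiling all of $\un\T^2$ equals $-\chi(S)$, since each helix box contributes $-1$ to the Euler characteristic and the boxes tile the complement of the regular-level pieces, which themselves contribute $0$ (they are annuli). Hence the total number of helix boxes is $\sum_i n_i$. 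But by Lemma~\ref{L:LevelStructure} the number of helix boxes near the $i$th critical level is exactly $n_i\cdot t_{\theta_i}$, so $\sum_i n_i t_{\theta_i}=\sum_i n_i$. Since every $t_{\theta_i}$ is a positive integer (for a Birkhoff section no critical level can have $t_{\theta_i}=0$, as that would leave the strata on either side empty, contradicting that $S$ meets every orbit), this forces $t_{\theta_i}=1$ for every~$i$, which is precisely the assertion: each boundary component of $S$ lies in a critical level tiled by a single helix box attached to it.

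The one subtlety to make airtight is the bookkeeping that no helix boxes are ``hidden'' at regular levels and that the Euler-characteristic contributions genuinely split as helix-box part $(-\sum n_i t_{\theta_i})$ plus regular-level part $(0)$; this is already implicit in the proof of Theorem~\ref{T:Torus}\,$(iv)$, where the surface between consecutive critical levels is shown to be tiled by helix boxes and a stack of annular strata which add nothing to $\chi$. I expect this bookkeeping — and the verification that $t_{\theta_i}\ge 1$ for a Birkhoff section — to be the only point requiring care; the rest is a one-line arithmetic consequence of $\sum n_i t_{\theta_i}=\sum n_i$ with all $t_{\theta_i}\ge 1$.
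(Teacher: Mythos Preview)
Your argument is correct and is essentially the same as the paper's: the paper only gives the one-sentence justification preceding the corollary (each helix box contributes $-1$ to $\chi$ and each helix box involves one boundary component), and you have correctly unpacked this into the identity $\sum_i n_i t_{\theta_i}=\sum_i n_i$ with $t_{\theta_i}\ge 1$. Your care in justifying $t_{\theta_i}\ge 1$ for a Birkhoff section via the requirement that the interior meet every orbit in the critical level is exactly the right point to check.
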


In the article where he introduced the now called Birkhoff sections~\cite{Birkhoff}, Birkhoff gave examples by constructing sections for the geodesic flow on the unit tangent bundle of every surface~$\Sigma$. More precisely, a collection of periodic orbits of $\GF{\Sigma}$ is said~\emph{symmetric} if, for every element of the collection, the orbit corresponding to the opposite orientation of the underlying geodesics also belongs to the collection. Birkhoff showed that every large enough symmetric collection~$\gamma$ of periodic orbits of~$\GF{\T^2}$ bounds a section. In the case when $\Sigma$ is a torus with a flat metric, the symmetry hypothesis implies that the polygon~$\Pol \gamma$ is symmetric. The section constructed by Birkhoff corresponds to the surface~$S$ whose associated polygon~$\PPol S$ in pointed in the center, that is, contains~$(0,0)$ as symmetry center.

\section{Templates for the geodesic flow of a hyperbolic orbifold}
\label{S:Template}

We turn to hyperbolic orbifolds. The aim of this section is to show how the geodesic flow associated with an arbitrary hyperbolic 2-orbifold~$\Orb$ can be distorted onto a certain multitemplate (Definition~\ref{D:MultiTemplate}) lying inside~$\un\Orb$. 
The important property of this distortion is that its restriction to periodic orbits is an isotopy (Proposition~\ref{P:Template}), so that the topological properties of the periodic orbits of the geodesic flow can be studied using this multitemplate. 
What makes the construction possible is that distinct periodic geodesics on a hyperbolic orbifold never point in the same direction at infinity.
Our strategy is similar to Birman--Williams'~\cite{BW} who contract the stable direction of a hyperbolic flow. 
The characteristic here is that the explicit nature of the geodesic flow make it possible to perform the construction in full detail.

Let $\Orb$ be a good hyperbolic 2-orbifold, and let $\Gamma$ denote its fundamental group. Our strategy for constructing the template adapted to the geodesic flow~$\GF{\Orb}$ is as follows. We first choose an adapted tessellation of the universal cover~$\Hy^2$ of~$\Orb$, namely, a $\Gamma$-invariant tesselation such that every tile contains at most one point whose stabilizor has order larger than~2. We also choose in every tile a smooth immersed graph pairwise connecting the sides in such a way that the graphs associated with adjacent tiles match on their common side. We then distort all geodesics in the hyperbolic plane into quasi-geodesics consisting of edges of the graphs so constructed (\S\,\ref{S:Discretisation}). Next, we lift this deformation in the unit tangent bundle~$\un\Orb$ by forcing every tangent vector to always point toward its initial direction at infinity (\S\,\ref{S:Lifting}). Then the image of the deformation at time~$1$ provides the expected (multi)template. It naturally carries a flow, namely the image of the geodesic flow by the deformation (\S\,\ref{S:MultiTemplate}). 

\subsection{Discretisation of geodesics}
\label{S:Discretisation}

The construction starts with a tessellation of the hyperbolic plane that behaves nicely with respect to the orbifold.

\begin{defi}
\label{D:Tiling}
Assume that $\Gamma$ is a Fuchsian group. Let $\Orb$ denote the orbifold~$\Hy^2/\Gamma$. A tessellation $\Tiling$ of~$\Hy$ is \emph{adapted} to~$\Orb$ if 

$(i)$ $\Tiling$ is $\Gamma$-invariant;

$(ii)$ every tile of~$\Tiling$ is a convex polygon (with possibly some vertices on~$\bord \Hy^2$);

$(iii)$ every tile of~$\Tiling$ contains at most one singular point in its interior, and points of index at most~2 on its boundary;

$(iv)$ every tile of~$\Tiling$ has a finite stabilizor in~$\Gamma$;

$(v)$ if $\Tile, \Tile'$ are adjacent tiles of~$\Tiling$ separated by a side~$e_0$, then, for all other sides $e$ of $T$ and $e'$ of $T'$ not both adjacent to~$e_0$, the two geodesics respectively containing $e$ and $e'$ do not intersect.
\end{defi}

For example, assume that~$\Sigma$ is a hyperbolic compact surface. Consider a convex polygonal fundamental domain~$D$ for the action of~$\pi_1(\Sigma)$ on~$\Hy^2$. Then the tessellation formed by the images of $D$ under the action of~$\pi_1(\Sigma)$ is adapted to~$\Sigma$. Note that Condition $(iii)$ prevents fundamental domains from providing tessellations adapted to arbitrary orbifolds. However, it is easy to see that, when a Fuchsian group~$\Gamma$ and a $\Gamma$-invariant tessellation~$\Tiling$ are given, one can always subdivide~$\Tiling$ and adapt it to~$\Hy^2/\Gamma$.
Condition $(v)$ in Definition~\ref{D:Tiling} may look strange. It is nevertheless important in order to guarantee that the ribbons of the template we will subsequently construct do not intersect (Lemma~\ref{L:IsATemplate}).

It would be natural to add a sixth contraint, namely that no periodic geodesic goes through a vertex (see Definition~\ref{D:Discretisation} below). 
However this is not always possible, in particular for for the triangle groups we will be interested in in Sections~\ref{S:q} and~\ref{S:237}.

We now define, for every tessellation that is adapted to some orbifold, a graph that is \emph{dual} to the tessellation, and on which we will then distort the geodesics of~$\Hy^2$. We have to choose some additional data, namely to pick points on the sides of the tessellation and to choose edges connecting them, but the construction will not depend on these choices, {\it i.e.}, the templates we will eventually associate to two such choices will be isotopic. In the sequel, we use the word ``side'' when referring to the tiles of a tessellation, and ``edge'' when referring to a graph.

Assume that $P$ is a polygon in~$\Hy^2$ with finitely many sides $e_1, \ldots, e_n$, and suppose that $v_1, \ldots, v_n$ are points on $e_1, \ldots, e_n$ respectively. Let $\Gr_P^0$ be a complete unoriented graph with vertices $v_1, \ldots, v_n$, which is immersed in~$P$ so that its edges are orthogonal to the sides of~$P$, and such that two edges intersect at most once (see Figure~\ref{F:ImmersedGraph}). Call~\emph{internal graph} of $P$ associated to~$v_1, \ldots, v_n$ the orientation cover $\Gr_P$ of~$\Gr_P^0$, that is, the oriented graph with twice as many edges as $\Gr_P^0$, each of them corresponding to an edge of~$\Gr_P$ oriented in one of the two possible ways. If~$e_i, e_j$ are two distinct sides of~$\Polg$, we denote by $c_{e_i}^{e_j}$ the oriented edge of~$\Gr_P$ connecting $e_i$ to~$e_j$.

\begin{figure}[hbt]
	\begin{center}
	\begin{picture}(120,55)(0,0)
	\put(0,0){\includegraphics*[scale=.6]{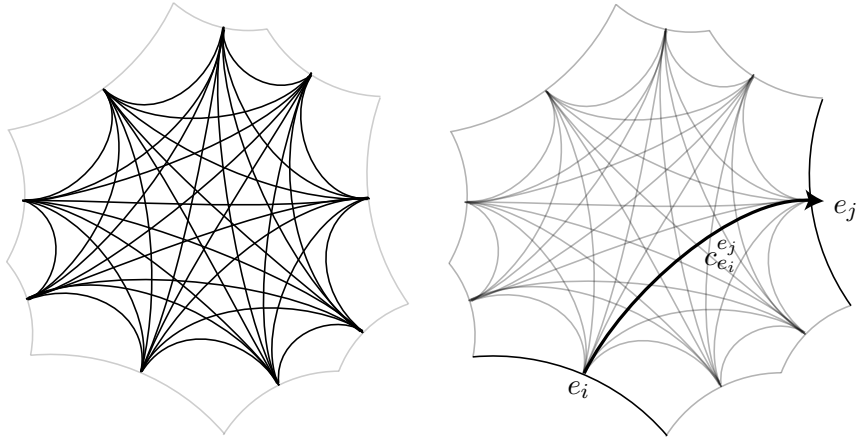}}
	\put(75,6){$e_i$}
	\put(110,30){$e_j$}
	\put(93,23){$c_{e_i}^{e_j}$}
	\end{picture}
	\end{center}
	\caption{\small An internal graph. On the left, the unoriented graph~$\Gr_P^0$. On the right, an oriented edge of~$\Gr_P$.} 
	\label{F:ImmersedGraph}
\end{figure}

Assume now that $\Gamma$ is a Fuchsian group, and that $\Tiling$ is a tessellation adapted to~$\Hy^2/\Gamma$. A set~$\mathcal V$ of points in~$\Hy^2$ is called a \emph{$\Tiling$-marking} if every point in $\mathcal V$ lies on the common boundary between two tiles of~$\Tiling$, every side between two tiles of~$\Tiling$ contains exactly one element of~$\mathcal V$, and $\mathcal V$ is $\Gamma$-invariant.

\begin{defi}
\label{D:Graph}
Assume that $\Gamma$ is a Fuchsian group, that $\Tiling$ is a tessellation of~$\Hy^2$ adapted to~
$\Hy^2/\Gamma$, and that $\mathcal V$ is a $\Tiling$-marking. Suppose that in every tile~$T$ of~$\Tiling$, there is an internal graph~$\Gr_T$ associated to~$\mathcal V$, and that the set of internal graphs is~$\Gamma$-invariant. Then the union~$\Gr_\Tiling$ of all internal graphs~$\Gr_T$ is said to be a graph \emph{dual to} $\Tiling$ and \emph{associated with}~$\mathcal V$.
\end{defi}

It is easy to see that dual graphs exist for every tessellation. In the sequel, we will omit to mention the set~$\mathcal V$  of marked points, since its choice does not influence the construction.
A graph dual is a sort of discretisation of the hyperbolic plane adapted to a given Fuchsian group. If the Fuchsian group is of the first kind, that is, when its limit set is the whole boundary at infinity~$\bord_\infty\Hy^2$, the limit set of any graph dual to any adapted tessellation is also the whole circle~$\bord_\infty\Hy^2$. 
We now introduce a procedure that distorts the geodesics of~$\Hy^2$ to curves included in the dual graph~$\Gr_\Tiling$. 

\begin{defi}
\label{D:Discretisation} 
Assume that~$\Gamma$ is a Fuchsian group of the first kind, and that $\Tiling$ is a tessellation adapted to~$\Hy/\Gamma$. Let $\Gr_\Tiling$ be a graph dual to~$\Tiling$. Then a \emph{discretisation of geodesics on~$\Gr_\Tiling$} is a family consisting, for every geodesics $\underline\gamma$ in~$\Hy^2$, of 

$(i)$ a curve~$\discr{\underline\gamma}{\Tiling}$ embedded in~$\Gr_\Tiling$ that crosses the same tiles of~$\Tiling$ as~$\underline\gamma$ (or a small perturbation of~$\underline\gamma$ in case $\underline\gamma$ goes through a vertex of~$\Gr_\Tiling$), 

$(ii)$ an isotopy $\isot{\underline\gamma}{\Tiling}:[0,1]\times \R\to\Hy^2$ between $\underline\gamma$ and $\discr{\underline\gamma}{\Tiling}$,  {\it i.e.}, a smooth map such that $\isot{\underline\gamma}{\Tiling}^0(t)$ describes $\underline\gamma$ when $t$ describes~$\R$, $\isot{\underline\gamma}{\Tiling}^1(t)$ describes $\discr{\underline\gamma}{\Tiling}$ when $t$ describes~$\R$, and, for every $s$ in~$[0,1]$, the curve $\isot{\underline\gamma}{\Tiling}^0(\R)$ is a smooth embedded curve in~$\Hy$.\\
In addition, the family is supposed to be $\Gamma$-invariant in the sense that, if $g(\underline\gamma)= \underline\gamma'$ holds for some~$g$ in $\Gamma$, then $g(\discr{\underline\gamma}{\Tiling})= \discr{\underline\gamma'}{\Tiling}$ and $g(\isot{\underline\gamma}{\Tiling}^s(t)) = \isot{\underline\gamma'}{\Tiling}^s(t)$ hold for every $(s, t)$ in $[0,1]\times\R$.
\end{defi}


The invariance condition implies in particular that, if $\underline\gamma$ is the lift of periodic geodesics on~$\Hy/\Gamma$, then $\discr{\underline\gamma}{\Tiling}$ projects on a periodic curve on~$\Hy/\Gamma$. More generally, it implies that all choices commute with the covering map~$\Hy\to\Hy/\Gamma$. 

Also, assume that a geodesic~$\underline\gamma$ enters a tile~$\Tile$ by a side~$e_i$ and leaves it by~$e_j$, then its discretisation~$\discr {\underline\gamma} \Tiling$ visits the same tiles as $\underline\gamma$ before and after~$\Tile$. Therefore~$\discr {\underline\gamma} \Tiling$ contains the edge~$c_{e_i}^{e_j}$ of~$\Gr_\Tiling$.

Given a hyperbolic 2-orbifold and an adapted tessellation, the existence of discretisation of geodesics easily follows from the definition.

A discretisation of geodesics contracts many geodesics together. Indeed, if two oriented geodesics $\underline\gamma, \underline\gamma'$ have one end in common, their discretisation will necessarily coincide on some neighbourhood of their positive end. Discretisation will nevertheless be useful for studying $\Gamma$-periodic geodesics.

\subsection{Lifting the discretisation to the unit tangent bundle}
\label{S:Lifting}

Given an orbifold~$\Hy^2/\Gamma$ and some additional data, using the discretisation procedure of Definition~\ref{D:Discretisation}, we distorted the geodesics of~$\Hy^2$ onto some discrete graph. We now lift this procedure to the unit tangent bundle, in view of subsequently constructing the expected template for~$\GF{\Hy/\Gamma}$. 

\begin{defi}
\label{D:Deformation}
Assume that $\Gamma$ is Fuchsian group of the first kind, that $\Tiling$ is an adapted tessellation of~$\Hy^2$, that $\Gr_\Tiling$ is a graph dual to~$\Tiling$ and that a discretisation of geodesics on $\Gr_\Tiling$ has been chosen. Then the associated \emph{tearing map} of the unit tangent bundle is the map $\deform\Tiling$ from $[0,1]\times \un\Hy^2$ to $\un\Hy^2$ defined as follows. For~$(p,v)$ in~$\un\Hy^2$, let $\underline\gamma$ denote the geodesics containing~$p$ and oriented by~$v$, let $\isot\Tiling{\underline\gamma}$ denote the associated isotopy, let $t_p$ be the real parameter such that $p=\isot\Tiling{\underline\gamma}^0(t_p)$, and let $\gamma_+$ be the positive extremity of~$\gamma$ in~$\bord\Hy^2$. Then $\deform\Tiling^s(p,v)$ is defined to be the unique unit tangent vector based at $\isot\Tiling{\underline\gamma}^s(t_p)$ and pointing in the direction of~${\underline\gamma}_+$.
\end{defi}

Note that a tearing map is not continuous. Indeed, since the graph~$\Gr_\Tiling$ is discrete, there are pairs of arbitrarily close tangent vectors that are mapped to different edges of~$\Gr_\Tiling$. Also, a tearing map can be injective when the time~$s$ is close to~$0$, but its time~$1$ map may, for instance, collapse some horocyle. For these two reasons, a tearing map is not an isotopy. Nevertheless, if we restrict to $\Gamma$-periodic geodesics, that is, to geodesics which are $g$-invariant for some $g$ in~$\Gamma$, we have

\begin{lemma}
\label{L:GeodDontCross}
In the above context, the restriction of $\deform\Tiling$ to vectors tangent to $\Gamma$-periodic geodesics is an isotopy. 
\end{lemma}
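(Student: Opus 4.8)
The plan is to show that, restricted to vectors tangent to $\Gamma$-periodic geodesics, the time-$s$ maps $\deform\Tiling^s$ are well-defined, continuous, and injective for each $s$, and that they vary continuously in $s$, hence form an isotopy onto the image. The key structural fact, emphasized in the preamble to Section~\ref{S:Template}, is that two distinct periodic geodesics on a hyperbolic orbifold never share an endpoint on $\bord_\infty\Hy^2$: a $\Gamma$-periodic geodesic is the axis of a hyperbolic element $g$, and its two endpoints are the fixed points of $g$, so two such geodesics sharing an endpoint would have to be axes of elements with a common fixed point, forcing them to be axes of powers of a common primitive element and hence equal. This is precisely what fails for general geodesics and is the reason the tearing map is only an isotopy on the periodic locus.

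First I would fix the set-up: let $\mathcal P \subset \un\Hy^2$ denote the set of unit vectors tangent to $\Gamma$-periodic geodesics, and note $\deform\Tiling$ maps $[0,1]\times\mathcal P$ to $\un\Hy^2$. For a single periodic geodesic $\underline\gamma$ the isotopy $\isot\Tiling{\underline\gamma}$ is, by Definition~\ref{D:Discretisation}, a genuine smooth isotopy of $\underline\gamma$ through embedded curves, so on the single fibre of vectors tangent to $\underline\gamma$ the map $\deform\Tiling^s$ is a smooth embedding (each vector is carried to the unit vector at the moved basepoint pointing toward the unchanged endpoint $\underline\gamma_+$). The point is therefore to control how these individual isotopies of distinct periodic geodesics interact.

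The heart of the argument is injectivity of $\deform\Tiling^s$ on $\mathcal P$ for each fixed $s$, and more precisely the statement that the images of two distinct periodic geodesics under $\isot\Tiling{\cdot}^s$ remain disjoint (or meet transversally in the same combinatorial pattern) so that no two tangent vectors get identified. Here I would argue as follows. If $\underline\gamma \ne \underline\gamma'$ are periodic geodesics, they have no common endpoint at infinity by the remark above, so in particular they are not asymptotic; since the isotopies move points only within the tiles crossed by $\underline\gamma$ (resp.\ $\underline\gamma'$) and a periodic geodesic crosses a tile only finitely many times in each fundamental domain, the relevant configurations are, up to the $\Gamma$-action, finite. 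For the tangent lift to stay injective one checks that whenever $\isot\Tiling{\underline\gamma}^s(t) = \isot\Tiling{\underline\gamma'}^s(t')$, the directions toward $\underline\gamma_+$ and $\underline\gamma'_+$ differ — which again uses that $\underline\gamma_+ \ne \underline\gamma'_+$, together with the fact that a discretised periodic geodesic never doubles back, so a single point of $\Gr_\Tiling$ is never approached along the same edge-direction by two branches heading to different endpoints. Condition~$(v)$ of Definition~\ref{D:Tiling} (non-crossing of far sides of adjacent tiles) is what keeps the quasi-geodesics from developing spurious extra intersections, and I would invoke it here, noting the cross-reference to Lemma~\ref{L:IsATemplate}. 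Finally, continuity of $\deform\Tiling$ on $[0,1]\times\mathcal P$ holds because on $\mathcal P$ the combinatorial type (the sequence of tiles and edges crossed) is locally constant — two nearby periodic geodesics with the same endpoints-pattern cross the same tiles — so the discontinuities of the tearing map all lie off $\mathcal P$; together with the smoothness of each $\isot\Tiling{\underline\gamma}$ this gives a continuous, fibre-preserving family of embeddings, i.e.\ an isotopy.

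The main obstacle I anticipate is the injectivity/non-crossing step: one must rule out that the discretisation process, which collapses and bends geodesics, creates a new coincidence or tangential contact between the distorted images of two periodic geodesics that were disjoint (or transverse) to begin with. Controlling this requires combining the no-shared-endpoint property of periodic geodesics, the tile-by-tile locality of the isotopies, and Condition~$(v)$ of Definition~\ref{D:Tiling}, and checking that along the whole deformation $s\in[0,1]$ no two branches of $\deform\Tiling^s(\mathcal P)$ cross — essentially a careful local analysis inside each tile and across each shared side, done equivariantly so that finitely many cases suffice.
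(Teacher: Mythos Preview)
Your proposal contains the correct core ingredients, and in particular you correctly isolate the key fact that two distinct $\Gamma$-periodic geodesics never share a point of $\bord_\infty\Hy^2$. However, you significantly overcomplicate the injectivity step and misidentify the ``main obstacle''. The paper's proof is a direct three-line argument: if $\deform\Tiling^s(p_1,v_1)=\deform\Tiling^s(p_2,v_2)$, then, since by Definition~\ref{D:Deformation} the image vector always points toward $\underline\gamma_+$, one gets $(\underline\gamma_1)_+=(\underline\gamma_2)_+$ immediately; hence $\underline\gamma_1=\underline\gamma_2$ by the no-shared-endpoint property; and then injectivity of the single-geodesic isotopy $\isot\Tiling{\underline\gamma_1}$ forces $p_1=p_2$ and thus $v_1=v_2$.

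The point you are missing is that the direction-at-infinity is \emph{built into the definition} of $\deform\Tiling$, so equality of images in $\un\Hy^2$ forces equality of positive endpoints without any further analysis. There is therefore no need to control how the discretised base curves $\discr{\underline\gamma}{\Tiling}$ and $\discr{\underline\gamma'}{\Tiling}$ intersect, no need to invoke Condition~$(v)$ of Definition~\ref{D:Tiling}, and no ``careful local analysis inside each tile''. Those concerns are relevant to Lemma~\ref{L:IsATemplate} (showing the ribbons form a multitemplate), not here. The ``main obstacle'' you anticipate simply does not arise: even if two discretised periodic geodesics cross at a common basepoint, the lifted tangent vectors there point to different boundary points and are automatically distinct.
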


\begin{proof}
Suppose that $\deform\Tiling^s(p_1, v_1)=\deform\Tiling^s(p_2, v_2)$ holds for some $s$ in~$[0,1]$. Let $\underline\gamma_1, \underline\gamma_2$ denote the two geodesics tangent to $v_1$ and $v_2$ at $p_1$ and $p_2$ respectively. As the vectors  $\deform\Tiling^s(p_1, v_1)$ and $\deform\Tiling^s(p_2, v_2)$ point in the directions $(\underline\gamma_1)_+$ and $(\underline\gamma_2)_+$, their equality implies $(\underline\gamma_1)_+=(\underline\gamma_2)_+$. Therefore $\underline\gamma_1$ and $\underline\gamma_2$ get closer with an exponential rate. By hypothesis, both are $\Gamma$-periodic, hence compact in~$\Hy/\Gamma$. Therefore they coincide. By definition of~$\deform\Tiling$, the equality $\deform\Tiling^s(p_1, v_1)=\deform\Tiling^s(p_2, v_2)$ implies $\isot{\underline\gamma_1}\Tiling^s(p_1)=\isot{\underline\gamma_1}\Tiling^s(p_2)$. Since $\isot{\underline\gamma_1}\Tiling$ is an isotopy, we deduce $p_1=p_2$. Finally, since the vectors $v_1, v_2$ point in the same direction, they also coincide.
\end{proof}

\subsection{Multitemplates for the geodesic flow}
\label{S:MultiTemplate}

We have now constructed a deformation of the unit tangent bundle that preserves the topology of periodic geodesics. Our task is now to determine the image of the deformation. In particular, we want to show that it lies inside some specific object that we call a \emph{multitemplate}.

\begin{defi}
\label{D:MultiTemplate}
(See Figure~\ref{F:Template}.) Assume that $M$ is a 3-manifold. A \emph{ribbon} in~$M$ is an embedded surface in~$M$ diffeomorphic to~$[0,1]^2$  equipped with the horizontal flow generated by~$\frac\bord{\bord x}$. If $\Rib$ is a ribbon, we denote by~$X_\Rib$ the vector field on it.

A \emph{multitemplate}~$S$ in~$M$ is a branched surface equipped with a vector field~$X_S$, that is locally a union of finitely many ribbons, and is such that

$(i)$ two distinct ribbons $\Rib_1, \Rib_2$ of~$S$ can only intersect along their vertical edges, which are then called \emph{branching segments},

$(ii)$ at every point on a branching segment, there are finitely many ribbons, and the associated vector fields all coincide,

$(iii)$ for every ribbon~$\Rib$ of~$S$, the vector field $X_\Rib$ coincide with~$X_S$ on~$\Rib$.

An \emph{orbit} of a multitemplate~$S$ is a complete immersion of the real line~$\R$ is~$S$ that is everywhere tangent to~$X_S$.
\end{defi}

\begin{figure}[hbt]
	\begin{center}
	\includegraphics*[scale=.8]{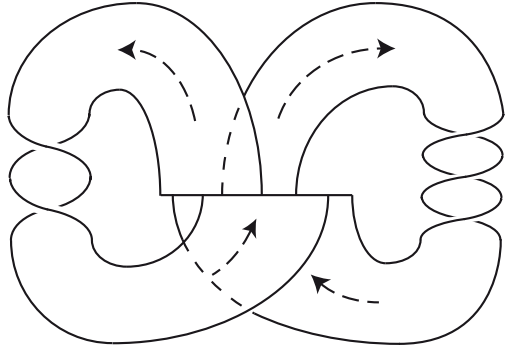}
	\end{center}
	\caption{\small A multitemplate in~$\R^3$. Along the branching segment, there are, from left to right, 1, then 2, then 1, and then 0 escaping ribbons.} 
	\label{F:Template}
\end{figure}

The difference with the usual notion of a template~\cite{BW, GHS} is that there is no uniquely defined semi-flow, but a multiflow. Indeed, at a point of a branching segment, there may be several escaping ribbons, and therefore several possible futures. If there were at most one escaping ribbon at every branching point, we would speak of a template. This will only happen in our construction when the starting tessellation consists of ideal polygons. Note also that there may be points that are visited by no orbit of the multitemplate, as for instance the points on the right of the branching segment on Figure~\ref{F:Template}.

Let us go back to the construction. In order to specify the ribbons making the expected multitemplate, we describe the set of directions at infinity that are pointed in by elements in the image of a deformation~$\deform\Tiling^1$.

\begin{figure}[hbt]
	\begin{center}
	\begin{picture}(75,70)(0,0)
	\put(0,0){\includegraphics*[scale=.8]{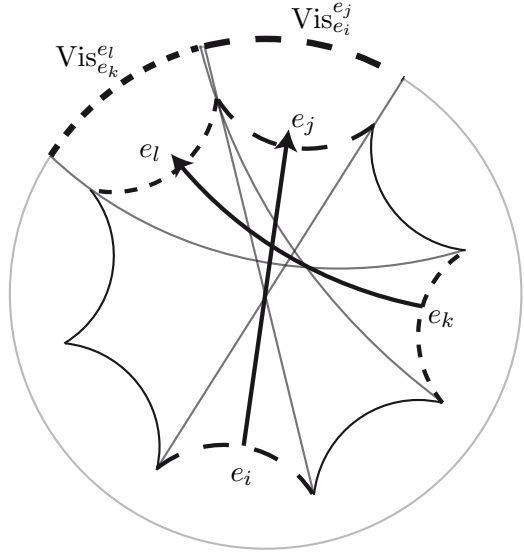}}
	\put(30,10){$e_i$}
	\put(38,57){$e_j$}
	\put(18,53){$e_l$}
	\put(56,31){$e_k$}
	\put(7,65){$\Vis{k}{l}$}
	\put(38,70){$\Vis{i}{j}$}
	\end{picture}
	\end{center}
	\caption{\small The visual intervals $\Vis{i}{j}, \Vis{k}{l}$ associated to two intersecting edges of~$\Gr_\Tiling$. Since $\Vis{i}{j}$ and $\Vis{k}{l}$  are disjoint, the associated ribbons~$\Rib_{e_i}^{e_j},\Rib_{e_k}^{e_l}$ do not intersect.}
	\label{F:VisInt}
\end{figure}

\begin{defi}
\label{D:TheTemplate}
(See Figure~\ref{F:VisInt}.) 
Assume that $\Gamma$ is Fuchsian group of the first kind, that $\Tiling$ is an adapted tessellation of~$\Hy^2$, and that $\Gr_\Tiling$ is a graph dual to~$\Tiling$. Let $\Tile$ be a tile of~$\Tiling$, and $e_i, e_j$ be two sides of~$T$. Then the~\emph{visual interval} associated to~$(e_i, e_j)$ is the interval consisting of the positive extremities of geodesics connecting a point of~$e_i$ to a point~$e_j$ in~$\bord_\infty\Hy^2$. We denote it by~$\Vis{i}{j}$. The associated \emph{product-ribbon} is the product of the oriented edge $c_{e_i}^{e_j}$ connecting $e_i$ to $e_j$ in~$\Gr_\Tiling$ by the interval $\Vis{i}{j}$ in~$\un\Hy^2$, seen as the product~$\Hy^2\times\bord_\infty\Hy^2$. We denote it by~$\Rib_{e_i}^{e_j}$. It is equipped with the horizontal vector field whose flow goes along the curves $c_{e_i}^{e_j}\times\{*\}$ at speed~1.

In the above context, we denote by~$\prot_\Tiling$ the union in~$\un\Hy^2$ of the product-ribbons associated with all oriented edges of~$\Gr_\Tiling$. Its quotient under the action of~$\Gamma$ is denoted by~$\temp_{\Gamma,\Tiling}$.
\end{defi}

\begin{lemma}
\label{L:IsATemplate}
In the context of Definition~\ref{D:TheTemplate}, $\prot_\Tiling$ is a multitemplate in~$\un\Hy^2$. 
\end{lemma}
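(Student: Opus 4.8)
The goal is to verify that $\prot_\Tiling$ satisfies the three axioms of Definition~\ref{D:MultiTemplate}. Each product-ribbon $\Rib_{e_i}^{e_j}$ is by construction a copy of $c_{e_i}^{e_j}\times\Vis{i}{j}$ sitting inside $\un\Hy^2=\Hy^2\times\bord_\infty\Hy^2$, hence an embedded surface diffeomorphic to $[0,1]^2$ equipped with the horizontal flow along the edge $c_{e_i}^{e_j}$; so it is genuinely a ribbon in the sense of Definition~\ref{D:MultiTemplate}, and axiom~$(iii)$ holds tautologically since $X_{\Rib_{e_i}^{e_j}}$ is exactly the restriction of the global horizontal field. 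The content is therefore in axioms~$(i)$ and~$(ii)$: I must show that two distinct product-ribbons meet only along vertical edges, and that over such a vertical edge all the ribbons passing through carry the same vector. The vertical edges of $\Rib_{e_i}^{e_j}$ are the fibers $\{v_i\}\times\Vis{i}{j}$ and $\{v_j\}\times\Vis{i}{j}$ above the endpoints $v_i\in e_i$, $v_j\in e_j$ of the edge $c_{e_i}^{e_j}$.

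\textbf{First step: locating intersections.} Two product-ribbons $\Rib_{e_i}^{e_j}$ and $\Rib_{e_k}^{e_l}$ project to the edges $c_{e_i}^{e_j}$ and $c_{e_k}^{e_l}$ of the dual graph $\Gr_\Tiling$. In the base $\Hy^2$ these edges are either inside a common tile or inside distinct tiles. I would treat three cases. (a) If the two edges lie in distinct non-adjacent tiles, their edge-graphs are disjoint, so the ribbons are disjoint. (b) If they lie in distinct \emph{adjacent} tiles $\Tile,\Tile'$ sharing a side $e_0$ with marked point $v_0$: by definition of an internal graph, the only way the edges $c_{e_i}^{e_j}\subset\Tile$ and $c_{e_k}^{e_l}\subset\Tile'$ can meet in $\Hy^2$ is if one of $e_i,e_j$ and one of $e_k,e_l$ equal $e_0$, in which case the edges meet exactly at $v_0$; the corresponding intersection in $\un\Hy^2$ is contained in the fiber $\{v_0\}\times\bord_\infty\Hy^2$, i.e.\ in the vertical edges of both ribbons. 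Here I will need to check that the pieces of the fiber carried by the two ribbons actually coincide as subintervals — this is where $\Vis{i}{0}$ and $\Vis{k}{0}$ (more precisely the relevant sub-intervals) have to match up; the key geometric input is Condition~$(v)$ of Definition~\ref{D:Tiling}, which guarantees that, for sides $e,e'$ not both adjacent to $e_0$, the supporting geodesics do not cross, so no ``unexpected'' interior intersection of the ribbons can occur. (c) If the two edges lie in the same tile $\Tile$: by construction of the internal graph $\Gr_\Tile$, two edges of $\Gr_\Tile^0$ meet at most once, and when they meet it is either at a shared vertex $v_i=v_k$ on a common side $e_i=e_k$, or at an interior transverse point. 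The orientation cover does not change incidence. The shared-vertex case again gives an intersection inside the vertical fiber over $v_i$. The interior-transverse case is the one I must rule out as producing a non-vertical intersection: at such a point the two edges of $\Gr_\Tile$ are orthogonal to different sides, hence point in different directions in $\Hy^2$; so the two ribbons, although meeting at a point of $\Hy^2$, are separated in the $\bord_\infty\Hy^2$ factor because the visual intervals $\Vis{i}{j}$ and $\Vis{k}{l}$ of two ``crossing'' chords of the polygon are disjoint in $\bord_\infty\Hy^2$. I expect this disjointness-of-visual-intervals statement to be the main obstacle: it is the combinatorial-geometric heart of the lemma and must be extracted from the convexity of the tile together with Conditions~$(ii)$ and~$(v)$ of Definition~\ref{D:Tiling}. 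Once it is established, interior crossings in $\Hy^2$ never produce intersections of ribbons, so all ribbon intersections are confined to the fibers over marked points, i.e.\ to vertical edges; this is axiom~$(i)$.

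\textbf{Second step: the vector field on branching segments.} Over a marked point $v_0$ on a side $e_0$, the ribbons passing through are exactly those $\Rib_{e_i}^{e_j}$ with $e_i=e_0$ or $e_j=e_0$. At the point of $\Gr_\Tiling$ corresponding to $v_0$, every incident edge $c$ is, by definition of an internal graph, orthogonal to the side $e_0$; hence all the horizontal vector fields $X_{\Rib}$, which are unit-speed along these edges and point across $e_0$, have the same direction at $v_0$ — namely the common normal direction to $e_0$, with a coherent orientation dictated by the side of $e_0$ the edge leaves through. Thus all the $X_\Rib$ coincide at $v_0$, and we may set $X_S$ to be this common value; axioms~$(ii)$ and~$(iii)$ follow. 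Finally, the whole picture is $\Gamma$-invariant by the $\Gamma$-invariance built into the tessellation, the marking, the dual graph and the discretisation, so everything descends to the quotient $\temp_{\Gamma,\Tiling}$, and one concludes that $\prot_\Tiling$ is a multitemplate in $\un\Hy^2$ (with $\temp_{\Gamma,\Tiling}$ a multitemplate in $\un\Orb$). Assembling the case analysis of the first step with the vector-field matching of the second step completes the proof.
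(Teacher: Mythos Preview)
Your plan matches the paper's structure: verify axiom~$(ii)$ at the fibers over marked points using orthogonality of the internal-graph edges to the sides, and verify axiom~$(i)$ by showing that ribbons can meet away from those fibers only if two edges $c_{e_i}^{e_j},c_{e_k}^{e_l}$ cross in the interior of a tile, in which case the visual intervals $\Vis{i}{j},\Vis{k}{l}$ must be shown disjoint. You correctly identify this last point as the heart of the lemma but do not prove it, and two of your supporting remarks are off.

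First, in case~(c) your sentence ``the two edges of~$\Gr_\Tile$ are orthogonal to different sides, hence point in different directions'' is irrelevant: orthogonality holds only at the endpoints, not at an interior crossing, and the direction of the graph edge in~$\Hy^2$ has nothing to do with the $\bord_\infty\Hy^2$-coordinate of the ribbon. Second, your invocations of Condition~$(v)$ are misplaced. In case~(b) the two edges sit in distinct tiles, hence meet only at the common marked point~$v_0$; the ribbons therefore meet only inside the fiber over~$v_0$, automatically a subset of both vertical edges, and there is nothing further to check (no need for the visual intervals to ``coincide as subintervals''). In case~(c) the disjointness of visual intervals uses only convexity of the single tile: if the chords cross then, up to relabelling, the sides $e_i,e_k,e_j,e_l$ occur in this cyclic order on~$\bord\Tile$; the extremal geodesic $\underline\gamma_{i,j}^l$ through the right vertex of~$e_i$ and the left vertex of~$e_j$ then crosses the extremal geodesic $\underline\gamma_{k,l}^r$ inside~$\Tile$, and since two hyperbolic geodesics meet at most once, their positive endpoints on $\bord_\infty\Hy^2$ are ordered so that $\Vis{i}{j}=[(\underline\gamma_{i,j}^r)_+,(\underline\gamma_{i,j}^l)_+]$ and $\Vis{k}{l}=[(\underline\gamma_{k,l}^r)_+,(\underline\gamma_{k,l}^l)_+]$ are disjoint. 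That two-line argument is exactly what the paper supplies and what your plan is missing.
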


\begin{proof}
By definition, the set~$\prot_\Tiling$ is the union of several ribbons, which are in one-to-one correspondence with the oriented edges of the graph~$\Gr_\Tiling$. Let~$\Tile_1, \Tile_2$  be two adjacent tiles of~$\Tiling$. Call $e$ the common side of~$\Tile_1$ and $\Tile_2$, and let $p$ be a vertex of~$\Gr_\Tiling$ lying on~$e$. Since the tiles of~$\Tiling$ are supposed to have finitely many sides, there are finitely many ribbons that intersect the fiber~$\un\{p\}$ of~$p$. Since all the edges of~$\Gr_\Tiling$ with~$p$ as an extremity are orthogonal to~$e$, the associated product-ribbons all are tangent in~$\un\{p\}$. Now the product-ribbons that have an extremity in~$\un\{p\}$ decompose into four classes depending on whether they lie above~$\Tile_1$ or above~$\Tile_2$, and on whether they correspond to edges of~$\Gr_\Tiling$ starting at~$p$ or ending at~$p$. Let $(p,v)$ be a tangent vector based at~$p$. Suppose that $v$ points into~$\Tile_1$. Then the only ribbons that may contain~$(p,v)$ are those coming from geodesics with a positive extremity on the same side of~$e$ as $\Tile_2$. In this case, the vector field on any such ribbon at~$(p,v)$ is the unit vector orthogonal to~$e$, and pointing into~$\Tile_1$. Therefore the vector field on all such ribbons coincide. Similarly, if $v$ points into~$\Tile_2$, the vector fields of all ribbons that contain~$(p,v)$ are equal at~$(p,v)$ with the unit vector orthogonal to~$e$, and pointing into~$\Tile_2$.

There remains to show that product-ribbons are disjoint outside the fiber of the vertices of~$\Gr_\Tiling$. Since product-ribbons are in the fibers of edges of~$\Gr_\Tiling$, this is equivalent to showing that, if two edges~$c_{e_i}^{e_j},c_{e_k}^{e_l}$ of~$\Gr_\Tiling$ intersect inside a tile, say~$\Tile$, of~$\Tiling$, then the associated visual intervals $\Vis{i}{j}$ and $\Vis{k}{l}$ are disjoint. Indeed, in this situation, at the expense of possibly exchanging the indices and performing a symmetry, we can suppose that the edges $e_i, e_k, e_j, e_l$ are cyclically ordered. Let $\underline\gamma_{i,j}^l$ be the geodesics joining the right extremity of~$e_i$ to the left extremity of~$e_j$, and $\underline\gamma_{i,j}^r$ be the geodesics connecting the left extremity of~$e_i$ to the right extremity of~$e_j$. Define $\underline\gamma_{k,l}^l$ and $\underline\gamma_{k,l}^r$ similarly. Then $\Vis{i}{j}$ is the interval $[(\underline\gamma_{i,j}^r)_+, (\underline\gamma_{i,j}^l)_+]$, and $\Vis{k}{l}$ is $[(\underline\gamma_{k,l}^r)_+, (\underline\gamma_{k,l}^l)_+]$ (see Figure~\ref{F:VisInt}). The geodesics $\underline\gamma_{i,j}^l$ and $\gamma_{k,l}^r$ intersect inside~$\Tile$, so that $(\gamma_{k,l}^r)_+$ lies on the left of $(\underline\gamma_{i,j}^l)_+$ on~$\bord_\infty\Hy^2$. Therefore $\Vis{i}{j}$ and $\Vis{k}{l}$ are disjoint.
\end{proof}

\begin{figure}[hbt]
	\begin{center}
	\includegraphics*[scale=.75]{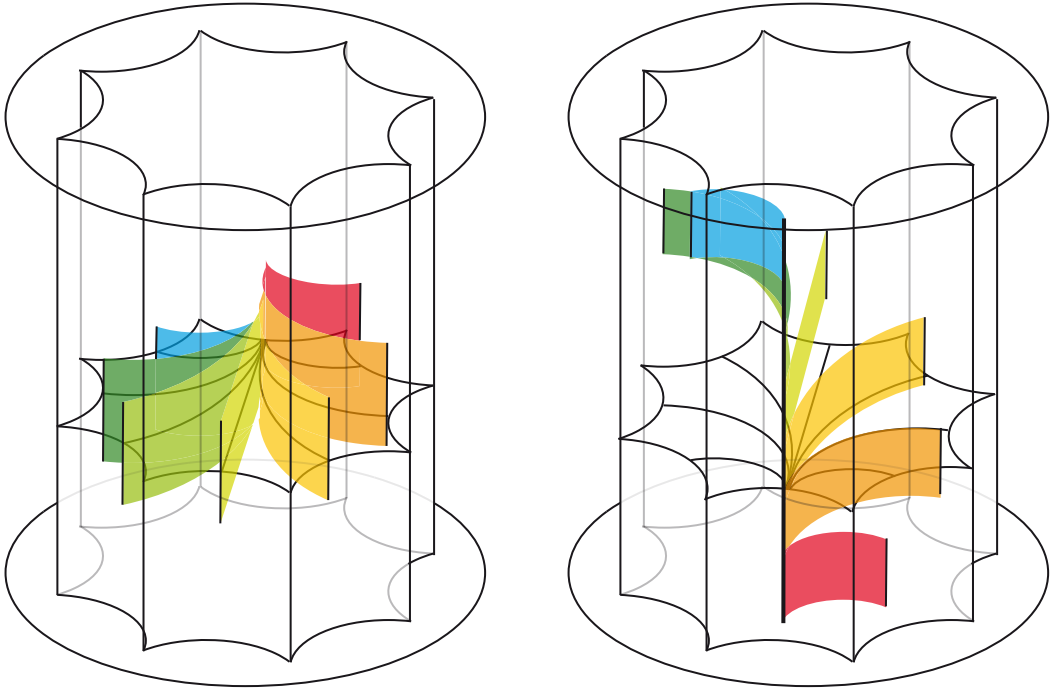}
	\end{center}
	\caption{\small Some ribbons of a template above a tile that is not an ideal polygon. Both incoming ribbons (on the left) and outgoing ribbons (on the right) overlap, since the associated visual intervals overlap.} 
	\label{F:TemplateNonIdeal}
\end{figure}

Assume now that the tiles of $\Tiling$ all are ideal polygons. Let $p$ be a vertex of~$\Gr_\Tiling$. Then all visual intervals associated with the edges of~$\Gr_\Tiling$ ending at~$p$ are disjoint. Hence, for every tangent vector~$v$ in the fiber~$\un\{p\}$, there is at most one escaping ribbon. Therefore~$\prot_\Tiling$ is a template, and so does $\temp_{\Gamma, \Tiling}$.

\begin{figure}[htb]
	\begin{center}
 	\includegraphics*[scale=0.8]{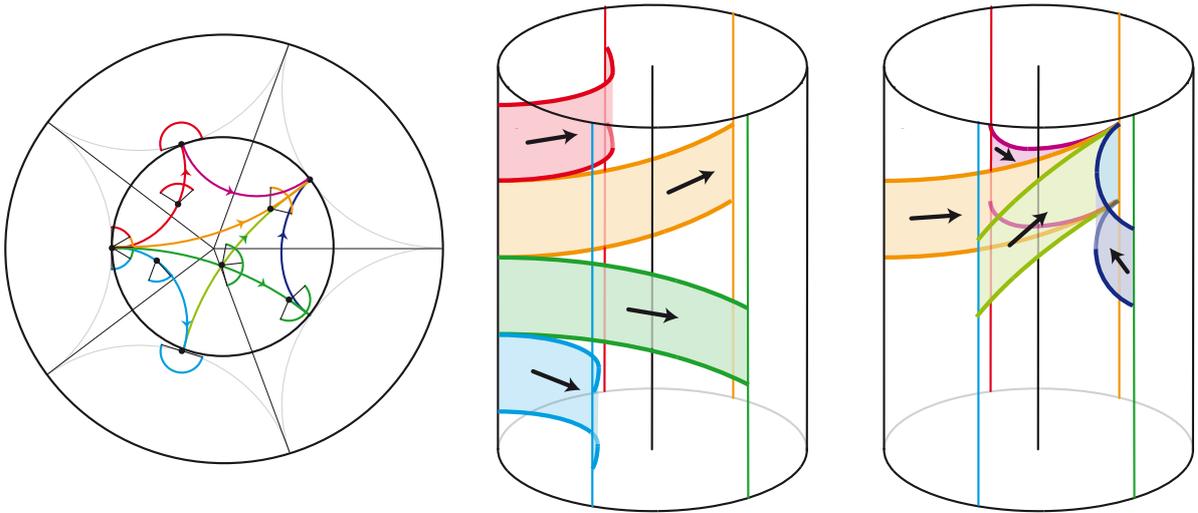}
	\end{center}
	\caption{\small Some ribbons of a template above a tile that is an ideal polygon. The ribbons emerging from the same side do not overlap, since the associated visual intervals are disjoint.} 
	\label{F:TemplateIdeal}
\end{figure}

In the above context, since all steps in the construction of~$\prot_\Tiling$ are $\Gamma$-invariant, the quotient~$\temp_{\Gamma,\Tiling}$ is also a multitemplate. We can now state the main result of this section.

\begin{theo}
\label{P:Template}
Assume that $\Gamma$ is Fuchsian group of the first kind, $\Tiling$ is an adapted tessellation of~$\Hy^2$, $\Gr_\Tiling$ is a graph dual to~$\Tiling$, and a discretisation of geodesics on $\Gr_\Tiling$ has been chosen. Let $\deform\Tiling$ denote the associated tearing map of~$\un\Hy^2$, and $\temp_{\Gamma,\Tiling}$ denote the associated multitemplate in~$\un\Hy^2/\Gamma$. Then the action of~$\deform\Tiling$ on~$\un\Hy/\Gamma$ induces an isotopy of every collection of periodic orbits of the geodesic flow~$\GF{\Hy^2/\Gamma}$ onto a collection of periodic orbits of~$\temp_\Tiling$. Moreover, if all tiles of~$\Tiling$ are ideal polygons, then~$\temp_{\Gamma,\Tiling}$ is a template, and $\deform\Tiling$ is a one-to-one correspondence between the periodic orbits of~$\GF{\Hy^2/\Gamma}$ and the periodic orbits of~$\temp_{\Gamma,\Tiling}$ that do not lie in the boundary of~$\temp_{\Gamma,\Tiling}$.
\end{theo}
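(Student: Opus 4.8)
The plan is to piece together the three ingredients already assembled in this section, namely Lemma~\ref{L:GeodDontCross} (the tearing map restricts to an isotopy on vectors tangent to $\Gamma$-periodic geodesics), Lemma~\ref{L:IsATemplate} (the target object $\prot_\Tiling$, and hence its quotient $\temp_{\Gamma,\Tiling}$, is a multitemplate), and the compatibility of $\deform\Tiling$ with the flow. First I would check that $\deform\Tiling^1$ sends the lift of a periodic geodesic into $\prot_\Tiling$: if $\underline\gamma$ crosses the tile $\Tile$ entering by side $e_i$ and leaving by $e_j$, then by the remark following Definition~\ref{D:Discretisation} the discretised curve $\discr{\underline\gamma}{\Tiling}$ contains the edge $c_{e_i}^{e_j}$, and every vector of $\deform\Tiling^1$ based on that edge points in the direction of some $\underline\gamma'_+$ with $\underline\gamma'$ joining a point of $e_i$ to a point of $e_j$, hence in $\Vis{i}{j}$; so $\deform\Tiling^1(p,v)$ lies in the product-ribbon $\Rib_{e_i}^{e_j}$. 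Doing this for each tile crossed shows the image of the lift lies in $\prot_\Tiling$, and after passing to the quotient, in $\temp_{\Gamma,\Tiling}$.

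Next I would verify that $\deform\Tiling^1$ carries orbits of $\GF{\Hy^2/\Gamma}$ to orbits of the multitemplate, i.e.\ that it intertwines the geodesic flow with the multiflow $X_S$. This is essentially built into the definitions: along a geodesic $\underline\gamma$ the geodesic flow moves the basepoint along $\underline\gamma$ while keeping the direction at infinity $\underline\gamma_+$ fixed to leading order; $\deform\Tiling^1$ moves the basepoint along $\discr{\underline\gamma}{\Tiling}$ and records $\underline\gamma_+$, which is exactly what the horizontal vector field on a product-ribbon does (it travels along $c_{e_i}^{e_j}\times\{*\}$ at unit speed). One has to reparametrise time, but that is harmless for isotopy of periodic orbits. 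Combined with Lemma~\ref{L:GeodDontCross}, which guarantees injectivity on the relevant set and hence that the family $\deform\Tiling^s$, $s\in[0,1]$, is an honest ambient-type isotopy of the collection, this yields statement: every finite collection of periodic orbits of $\GF{\Hy^2/\Gamma}$ is isotoped by $\deform\Tiling$ onto a collection of periodic orbits of $\temp_{\Gamma,\Tiling}$.

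For the ``moreover'' part, when all tiles are ideal polygons, I would argue two things. First, $\temp_{\Gamma,\Tiling}$ is a genuine template, not just a multitemplate: at a branching segment above a vertex $p$ on a side $e$ shared by $\Tile_1,\Tile_2$, an escaping ribbon corresponds to a geodesic leaving through some side on the far side of $e$; when the polygons are ideal, the visual intervals $\Vis{\cdot}{\cdot}$ for the various exit sides are pairwise disjoint \emph{and cover} the relevant arc of $\bord_\infty\Hy^2$, so through each tangent vector on the branching segment passes exactly one escaping ribbon, which is the defining property of a template. Second, surjectivity onto non-boundary periodic orbits: given a periodic orbit of $\temp_{\Gamma,\Tiling}$ not contained in $\bord\temp_{\Gamma,\Tiling}$, it determines at each branching segment a definite exit side, hence a bi-infinite sequence of tiles crossed, hence (by the ideal structure and hyperbolic geometry) a well-defined pair of endpoints in $\bord_\infty\Hy^2$ and thus a geodesic; its $\Gamma$-periodicity transfers back, and $\deform\Tiling$ sends it to the chosen template orbit. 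Injectivity of this correspondence follows from Lemma~\ref{L:GeodDontCross}.

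The main obstacle I expect is the last point: making precise that a non-boundary periodic orbit of the template really does come from an actual closed geodesic, i.e.\ that the combinatorial ``itinerary'' read off the template is realised by a genuine $\Gamma$-periodic geodesic in $\Hy^2$. This needs the adaptedness conditions of Definition~\ref{D:Tiling} (especially condition $(v)$, used via Lemma~\ref{L:IsATemplate}, which keeps the visual intervals from overlapping so the itinerary is unambiguous) together with the standard fact that, for a tessellation by ideal polygons, a periodic itinerary of tiles determines a quasi-geodesic and hence a unique geodesic with the same endpoints; one then checks this geodesic is invariant under the element of $\Gamma$ realising the period. Handling the boundary orbits of $\temp_{\Gamma,\Tiling}$ as the genuine exceptions — they correspond to ``degenerate'' itineraries hugging a cusp, with no closed geodesic behind them — is the delicate bookkeeping that the phrase ``that do not lie in the boundary'' is there to absorb.
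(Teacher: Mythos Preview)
Your proposal is correct and follows essentially the same route as the paper: image lands in the ribbons via the visual intervals, Lemma~\ref{L:GeodDontCross} gives the isotopy, disjointness of visual intervals for ideal tiles yields a genuine template, and surjectivity is recovered from the itinerary of a non-boundary template orbit.

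Two minor remarks. First, for the template property you only need \emph{at most} one escaping ribbon at each branching point, not exactly one; disjointness of the visual intervals suffices, and you need not argue they cover. Second, your surjectivity argument is slightly more elaborate than necessary: the paper avoids the quasi-geodesic machinery by simply projecting the $g$-invariant template orbit to a curve $\gamma_0$ in $\Gr_\Tiling$, observing that the non-boundary hypothesis forces its two endpoints $(\gamma_0)_\pm$ in $\bord_\infty\Hy^2$ to be distinct, and taking the geodesic $\gamma_1$ joining them; this $\gamma_1$ is automatically $g$-invariant and its discretisation is $\gamma_0$ by construction. Condition~$(v)$ of Definition~\ref{D:Tiling} is not really what drives this step; it was already spent in Lemma~\ref{L:IsATemplate} to keep ribbons disjoint.
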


\begin{proof}
Let $\underline\gamma$ be a geodesic of $\Hy^2$. Then its discretisation~$\discr {\underline\gamma}\Tiling$ is included in the graph~$\Gr_\Tiling$. Let $p$ be point on~$\underline\gamma$ and $v$ be the tangent vector to $\underline\gamma$ at $p$. Then the vector $\deform{\Tiling}^1((p,v))$ lies in a fiber over~$\Gr_\Tiling$. By construction, for every edge~$c_{e_i}^{e_j}$ of~$\Gr_\Tiling$ contained in~$\discr{\underline\gamma}\Tiling$, the direction~$\underline\gamma_+$ belongs to the visual interval~$\Vis i j$, so that the part of the curve $\deform{\Tiling}^1(\underline\gamma)$ above~$c_{e_i}^{e_j}$ lies in the ribbon~$\Rib_{e_i}^{e_j}$ and points towards~$\gamma_+$. Therefore $\deform{\Tiling}^1(\gamma)$ sits in the multitemplate~$\prot_{\Tiling}$, and is everywhere tangent to the vector field~$X_{\prot_{\Tiling}}$. By Lemma~\ref{L:GeodDontCross}, the restriction of $\deform\Tiling$ to $\Gamma$-invariant geodesics is an isotopy. Since everything commutes with the action of~$\Gamma$, we can mod out by~$\Gamma$, so that the projection of~$\deform\Tiling$ realizes an isotopy between the periodic orbits of the geodesic flow~$\GF{\Hy^2/\Gamma}$ and their images.

Suppose now that all tiles of~$\Tiling$ are ideal polygons. 
Let $\gamma(t)$ be an orbit of $X_{\prot_{\Tiling}}$ not lying in the boundary of~$\prot_{\Tiling}$ and $g$-invariant for some~$g$ in~$\Gamma$. Let $\gamma_0$ be its projection on~$\Hy^2$. It is a $g$-invariant curve in~$\Gr_\Tiling$. Since all tiles of~$\Tiling$ are ideal polygons, $\gamma_0$ is a simple curve. The assumption that $\gamma(t)$ does not lie in the boundary of~$\prot_{\Tiling}$ implies that the two extremities of $\gamma_0$ are distinct. Let $\gamma_1$ be the unique geodesics in~$\Hy$ connecting $(\gamma_0)_-$ to $(\gamma_0)_+$. Then $\gamma_1$ is also~$g$-invariant. It turns out that $\gamma_0$ is then the discretisation of~$\gamma_1$. Therefore, $\deform\Tiling$ maps the vectors that are tangent to~$\gamma_1$ to vectors that are tangent to~$\gamma(t)$.
\end{proof}

\begin{figure}[hbt]
	\begin{center}
	\includegraphics*[width=.8\textwidth]{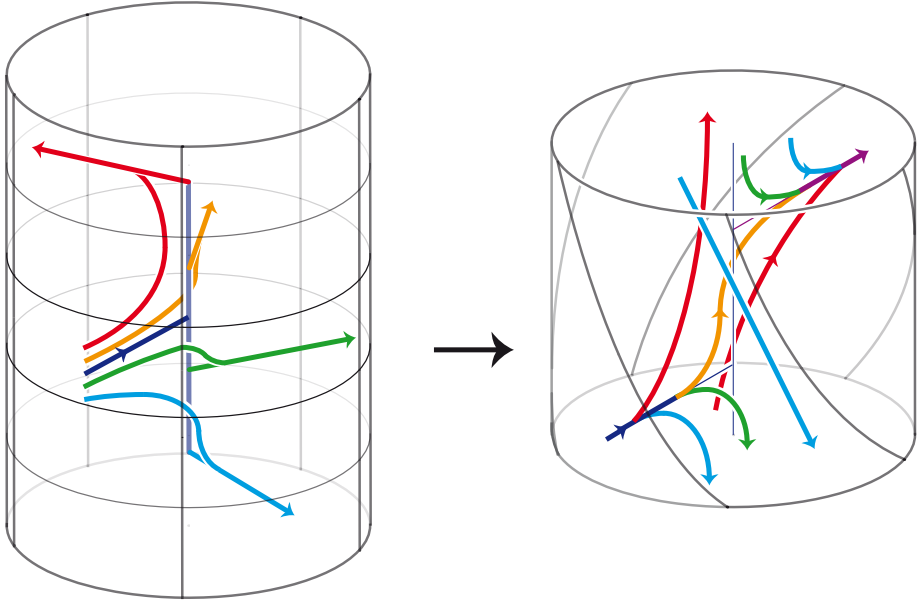}
	\end{center}
	\caption{\small 
	A simplified version of some ribbons of a template above some tile with an order~5 symmetry. On the left we displayed only 4 of the 20 ribbons, and only one orbit per ribbon; the other ribbons are obtained by iterating a screw-motion (remember Figure~\ref{F:UTBundle}). 
	On the right, the quotient of the unit tangent bundle by the order~5 symmetry, in the storey model. This is what we call an~\emph{elevator}. For example the red ribbon on the left goes one floor up (that is, it goes in the next fundamental domain for the storey model), so we see it crossing once the horizontal disc in the story model. 
	}
	\label{F:TemplateQuotient}
\end{figure}

To conclude this section, we introduce some terminology that will be useful when Theorem~\ref{P:Template} is applied in the sequel. Assume that $\Tiling$ is a tessellation of the hyperbolic plane, and that~$\Gr_\Tiling$ is an associated graph. Let $\Tile_0$ be a tile of~$\Tiling$ with $n$ sides. Then the part of the template~$\prot_\Tiling$ that lies above~$\Tile_0$, that is, the intersection of~$\prot_\Tiling$ with $\un\Tile_0$, consists of $n(n-1)$~ribbons, as depicted on Figure~\ref{F:TemplateIdeal} and Figure~\ref{F:TemplateNonIdeal}. In particular, there are $n$ branching segments on which the template flow enters the solid torus~$\un\Tile_0$, which we call~\emph{incoming segments}, and $n$ branching segments where the template flow escapes~$\un\Tile_0$, which we call \emph{outgoing segments}. We call such a part of a template a~\emph{switch tower}. If~$\Tile_0$ has a trivial stabilizor in~$\Gamma$, then the part of the template~$\temp_{\Gamma, \Tiling}$ above the quotient of~$\Tile_0$ by $\Gamma$ is also a complete star. 

Suppose now that~$\Tile_0$ has a non-trivial stabilizor, say~$\Gamma_{\Tile_0}$, in~$\Gamma$. Then the part of $\temp_{\Gamma, \Tiling}$ above~$\Tile_0/\Gamma$ is the quotient of~$\prot_\Tiling$ by~$\Gamma_{\Tile_0}$. If~$\Gamma_{\Tile_0}$ has order~$d$, then the part of the template has $n(n-1)/d$ ribbons. In particular, if $\Tile_0$ is a regular $n$-gon and if its stabilizor~$\Gamma_{\Tile_0}$ is of order~$n$, then there are only~$n-1$ ribbons in the quotient, all of them joining a unique incoming segment to a unique outgoing segment, see Figure~\ref{F:TemplateQuotient}. We call such a part of a template an~\emph{elevator}.

\section{Geodesic flow for the orbifolds of type~$(p, q, \infty)$}
\label{S:q}

We now turn to the linking properties of orbits associated with hyperbolic orbifolds of type~$(2, q, \infty)$ with $q\ge 3$. The goal of this section is to prove the first case of Theorem~A, that is, to prove that the linking number of every two orbits of~$\GF{\Orb_{2,q,\infty}}$ is negative. 

The idea is to apply the construction of Section~\ref{S:Template}, thus obtaining a template that describes the topology of the periodic orbits of~$\GF{\Orb_{2,q,\infty}}$ (\S\,\ref{S:Tpq}), and then to compute the linking number of a pair of periodic orbits. Actually, we do more and first compactify the unit tangent bundle into a lens space (\S\,\ref{S:Compactification}). As a lens space is a rational homology sphere, the linking number is defined for every pair of links. 
We then show that the linking number of every pair of periodic orbits of the template is negative (Proposition~\ref{P:2q} and case $(a)$ of Theorem~A). By the way, we consider a slightly more general context and construct a template for every orbifold of type~$(p,q,\infty)$ with~$p\ge 2$. The advantage of this approach is to also provide a precise formula for the linking number of a periodic orbit of~$\GF{\Orbpq}$ with the fiber of the cusp in the unit tangent bundle, that is, with the link that has been added for the compactification (Proposition~\ref{P:LkInfty}). 

\subsection{A template for~$\GF{\Orbpq}$}
\label{S:Tpq}

Here we introduce orbifolds of type~$(p, q, \infty)$, choose adapted tessellations of the hyperbolic plane, and describe the associated templates. 
As we will recall, the space~$\un\Orbpq$ is obtained by gluing two solid tori along their boundary, and what we will do is two describe a template that lies in a neighborhood of the gluing torus. 
In the case~$p=2, q=3$, we recover Ghys' template for the geodesic flow on the modular surface~\cite{GhysMadrid}, and, in the more general case~$p=2, q\ge 3$, we recover Pinsky's template~\cite{Tali}. 

Until the end of Section~\ref{S:q}, we assume that $p, q$ are fixed integers satisfying  $p\ge 2$ and~$q \ge3$. 
Since~$1/p+1/q<1$ holds, there exists a hyperbolic triangle~$PQZ$ in~$\Hy^2$, with the two vertices~$P,Q$ inside~$\Hy^2$ with respective angles~$2\pi/p$ and $2\pi/q$, and the vertex~$Z$ lying on~$\bord \Hy^2$. 
For convenience, we also suppose $P,Q,Z$ trigonometrically ordered. Let $\Gamma_{p, q}^*$ be the group generated by the symmetries around the sides of~$PQZ$, and let $\Gamma_{p, q}$ be its index~2 subgroup consisting of orientation preserving isometries, often called the \emph{Hecke triangular group}. The group~$\Gamma_{p, q}$ acts properly and discontinuously on~$\Hy^2$. The action is not free since, for example, $P$ and $Q$ have stabilizors of order $p$ and~$q$ respectively. The quotient~$\Hy^2/\Gamma_{p,q}$ is then an orbifold, with two singular points of order~$p$ and~$q$, and one cusp. We call it~$\Orbpq$.

\begin{figure}[hbt]
	\begin{center}
 	\includegraphics*[scale=.7]{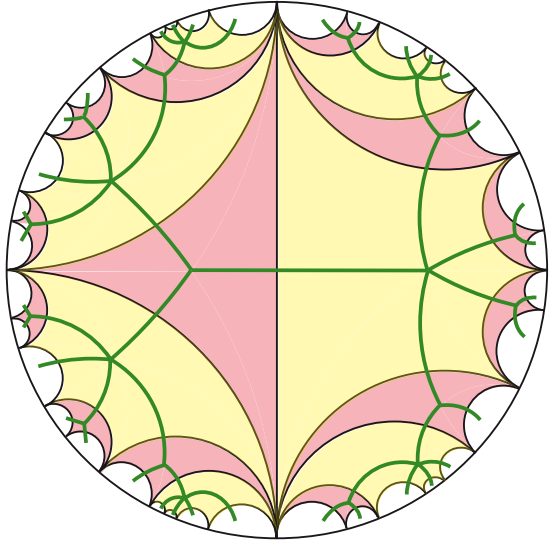}
	\end{center}
	\caption{\small The tessellation of~$\Hy^2$ by copies of the ideal polygons $\Delta_P$ et $\Delta_Q$, here with $p=3$ and $q=5$.} 
	\label{F:PTessel}
\end{figure}

For $k=1, \dots, {q-1}$, let $Z^k_q$ be the image of~$Z$ by a rotation of center~$Q$ and of angle~$2k\pi/q$ (see Figure~\ref{F:Spq8}). Then $Z, Z^1_q, \ldots, Z^{q-1}_q$ are the vertices of an ideal $q$-gon, say $\Delta_Q$. Let $\Gamma_Q$ be the stabilizor of~$Q$ in~$\Gamma_{p, q}$. Then $\Delta_Q$ is invariant under the action of~$\Gamma_Q$. 

Assume now $p>2$. Define similarly the points $Z^1_p, \ldots, Z^{p-1}_p$ on~$\bord_\infty\Hy^2$ and the polygon~$\Delta_P$. Note that the points $Z^1_q$ and $Z^{p-1}_p$ coincide. Call~$\Side$ the geodesics~$ZZ^1_q$. Then the polygons~$\Delta_P, \Delta_Q$ lie on different sides of~$\Side$, hence they are distinct. One easily sees that the images of~$\Delta_P$ and $\Delta_Q$ under $\Gamma_{p,q}$ cover the whole hyperbolic plane, and therefore form a tessellation (Figure~\ref{F:PTessel}). We denote it by~$\Tiling_{P,Q}$. The sides of the tiles of~$\Tiling_{P,Q}$ exactly are the images of~$e$ under $\Gamma_{p,q}$. Since all tiles are ideal polygons, no two sides in the tessellation intersect inside~$\Hy^2$. Also, every tile is a copy of either~$\Delta_P$ or~$\Delta_Q$, and therefore contains exactly one singular point in its interior. 

The unit tangent bundles to~$\Delta_Q/\Gamma_Q$ and $\Delta_P/\Gamma_P$ are both non-compact solid tori (remember Figure~\ref{F:UTBundle}). The unit tangent bundle $\un\Orbpq$ is then obtained by identifying the tangent vectors that constitute the boundaries of the unit tangent bundles to~$\Delta_Q/\Gamma_Q$ and~$\Delta_P/\Gamma_P$. These are exactly the images in the quotient of the tangent vectors based on~$\Side$, that is, the image of~$\un e$ in the orbifold~$\Orbpq$.

\begin{figure}[hbt]
	\begin{center}
	\begin{picture}(140,92)(0,0)
	\put(5,-1){\includegraphics*[scale=.7]{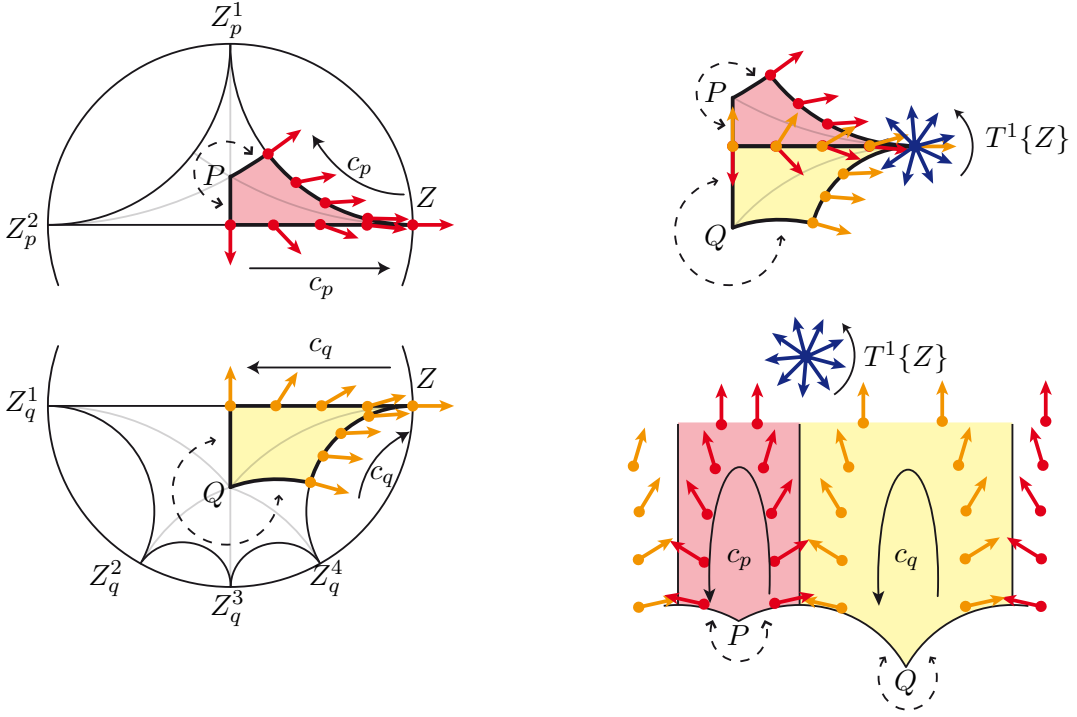}}
	\put(27,70){$P$}
	\put(27,28){$Q$}
	\put(1, 63){$Z^2_p$}
	\put(55,67){$Z$}
	\put(28,91){$Z^1_p$}
	\put(55,43){$Z$}
	\put(1, 40){$Z^1_q$}
	\put(12,17){$Z^2_q$}
	\put(28,13){$Z^3_q$}
	\put(41,17){$Z^4_q$}
	\put(41,56){$c_p$}
	\put(46,72){$c_p$}
	\put(41,48){$c_q$}
	\put(49,31){$c_q$}
	\put(93,81){$P$}
	\put(93,62){$Q$}
	\put(96,9){$P$}
	\put(118,3){$Q$}
	\put(96,20){$c_p$}
	\put(118,20){$c_q$}
	\put(130,75){$\un\{Z\}$}
	\put(114,46){$\un\{Z\}$}
	\end{picture}
	\end{center}
	\caption{\small On the left, the tiles~$\Delta_{P}$ and $\Delta_Q$, with $p=3$ et $q=5$. On the right a fundamental domain for the action of~$\Gamma_{p,q}$ on~$\Hy^2$. The curves $c_p$, $c_q$ and $\un\{Z\}$ 
	are also depicted. They lie on the common boundary~$\BoundTorus$ of the two solid tori $\barre{\un\Delta_P}/\Gamma_P$ and~$\barre{\un\Delta_Q}/\Gamma_Q$.} 
	\label{F:Spq8}
\end{figure}


We still assume $p>2$. Let $M$ be the intersection of the segment~$PQ$ with~$\Side$, let $\BranchPQ$ be the set of all tangent vectors at~$M$ pointing into~$\Delta_Q$, and let $\BranchQP$ be the set of all tangent vectors at~$M$ pointing into~$\Delta_P$. Then the template~$\Tpq$ given by Proposition~\ref{P:Template} consists of two parts: one elevator (Figure~\ref{F:TemplateQuotient}) sitting inside the solid torus~$\un\Delta_Q/\Gamma_Q$ with~$q-1$ ribbons, say~$\Rib_q^1, \ldots, \Rib_q^{q-1}$, all connecting~$\BranchPQ$ to~$\BranchQP$, and one elevator sitting inside~$\un\Delta_P/\Gamma_P$ with~$p-1$ ribbons, say~$\Rib_p^1, \ldots, \Rib_p^{p-1}$, all connecting~$\BranchQP$ to~$\BranchPQ$.

If $p=2$, then, with the above definition, $\Delta_P$ is a bigon with an empty interior. In this case, the tessellation~$\Tiling_{P,Q}$ consists of copies of~$\Delta_Q$ only. In the quotient of~$\Hy^2$ by~$\Gamma_{p,q}$, the edges of~$\Delta_Q$ are quotiented by order~2 rotations, so that the unit tangent bundle of~$\Orbdq$ is obtained by considering the solid torus~$\un\Delta_Q/\Gamma_Q$, and identifying pairs of points on the boundary with the order~$2$ rotation around~$P$.

For convenience (especially in view of the pictures in Section~\ref{S:LkPQ}), we slightly modify the tessellation in this case. We consider a tile~$\Delta'_P$ which is the~$\epsilon$-neighbourhood of~$\Delta_P$, and we change~$\Delta_Q$ accordingly. If~$\epsilon$ is positive, the sides of the tiles are no longer geodesic, so that the construction of Section~\ref{S:Template} does not apply. We rather see~$\epsilon$ as infinitely small. The unit tangent bundle of~$\Orbdq$ is then the union of~$\un\Delta'_Q/\Gamma_Q$, which is infinitesimally smaller than~$\un\Delta_Q/\Gamma_Q$, with the infinitesimally small solid torus~$\un\Delta'_P/\Gamma_P$. The role of the latter solid torus is to identify pairs of points on the boundary of~$\un\Delta'_Q/\Gamma_Q$. 

Mimicking the case $p>2$, we denote by~$M$ the point on the segment~$[PQ]$ that is at distance~$\epsilon$ from~$P$, by~$\BranchPQ$ the set of all tangent vectors at~$M$ pointing into~$\Delta'_Q$ and by~$\BranchQP$ the set of all tangent vectors at~$M$ pointing into~$\Delta'_P$. Then~$\Tdq$ consists of one elevator in~$\un\Delta'_Q/\Gamma_Q$ with~$q-1$ ribbons, say~$\Rib_q^1, \ldots, \Rib_q^{q-1}$, all connecting~$\BranchPQ$ to~$\BranchQP$, and one ribbon in~$\un\Delta'_P/\Gamma_P$ connecting~$\BranchQP$ to~$\BranchPQ$. 

In the sequel, it will be imported to visualize how the ribbons~$\Rib_p^1, \ldots, \Rib_p^{p-1}$ can be distorted on the torus~$\bord\un\Delta_P/\Gamma_P = \bord\un\Delta_Q/\Gamma_Q$ (and similarly for~$\Rib_q^1, \ldots, \Rib_q^{q-1}$). 
Figure~\ref{F:TemplateDef} shows two ways of deforming every such ribbon by pushing it to the left or to the right. 

\begin{figure}[hbt]
	\begin{center}
 	\begin{picture}(130,90)(0,0)
	\put(0,0){\includegraphics*[width=.45\textwidth]{TempQuotientDef.pdf}}
	\put(70,15){\includegraphics*[width=.35\textwidth]{TempQuotient2.pdf}}
	\put(27,10){$\Rib_5^{4,r}$}
	\put(-8,32){$\Rib_5^{4,l}$}
	\put(-8,26){$\Rib_5^{1,r}$}
	\put(14,47){$\Rib_5^{1}$}
	\put(37,37){$\Rib_5^{1}$}
	\put(125,15){$\Rib_5^{4,r}$}
	\put(126.5,27){$\Rib_5^{1,r}$}
	\put(125,31){$\Rib_5^{4,l}$}
	\put(125,44){$\Rib_5^{1,l}$}
	\end{picture}
	\end{center}
	\caption{\small The two possible deformations $\Rib_p^{i,l}$ and $\Rib_p^{i,r}$ of a ribbon of type~$\Rib_p^i$ on~$\bord\un\Delta_P/\Gamma_P$. 
	On the left, with $p=5$, the ribbon~$\Rib_p^{p-1}$ (in blue) can be pushed in~$\un\Delta_P$ either to the right (in light blue) or to the left (in orange). Another ribbon (here $\Rib_p^{1}$ in red) and one of its images under the $\Z/p\Z$-action. Since the rightmost ribbon (here the blue one) goes the lowest, when distorting the ribbons on~$\bord\un\Delta_P$, all crossings that appear are positive. 
	On the right, the two projections of all ribbons~$\Rib_p^1, \ldots, \Rib_p^{p-1}$ on~$\bord\un\Delta_P/\Gamma_P$, seen in the slice-of-cake model (the one whose fundamental domain is the space located between two vertical walls). 
	The blue ribbons are obtained when pushing to the right, and the orange ones and when pushing to the left.
		}
	\label{F:TemplateDef}
\end{figure}

\subsection{Compactification and coordinates}
\label{S:Compactification}

The unit tangent bundle~$\un\Orbpq$ is a non-compact 3-manifold with first homology group~$\Z$. 
This can be seen in the previous discussion by considering a loop of tangent vectors based along a horocycle centered at~$Z$, and checking that this loop is not null-homologous. For addressing Question~\ref{Q:OneHandedKnot}, we want to compute linking numbers in~$\un\Orbpq$. 

As stated in the introduction, we will make a more general computation by first compactifying~$\un\Orbpq$ into a rational homology sphere, and then compute linking in the resulting manifold. 
Since~$\Orbpq$ has one cusp, a natural compactification that does not change the homology type consists in adding a boundary-circle. 
For~$\un\Orbpq$, this corresponds to the addition of a boundary-torus. 
As we want a compactification with trivial first rational homology group, we need to fill this torus. 
A natural choice is to fill the boundary-circle with a disc and to lift this filling. 
But this choice is not appropriate for the hyperbolic structure, and a more adapted choice is to force all vectors tangent to a given horocycle to bound a disc in the compactification. 
These two compactifications are defined according to whether we see the boundary circle as a hole or a cusp. 
Actually, there is one filling of the torus, and therefore one compatification by a circle for every choice of a Euler number, thus leading to a fiber bundle with the chosen Euler class (as explained by Pinsky~\cite{Tali}). 
The hole-like filling corresponds to Euler number~$0$, while the cusp-like filling has Euler number~$-1$. This leads to the following

\begin{defi}
\label{D:Compactification}
The \emph{hyperbolic compactification}~$\barre{\un\Orbpq}$ of~$\un\Orbpq$ is obtained by adding a fiber associated to the cusp~$Z$, that is, by considering the topology induced by the compactification of~$\Orbpq$ in the hyperbolic disc.
\end{defi}

The compactification~$\barre{\un\Orbpq}$ is obtained by gluing the two solid tori~$\uQ$ and $\uP$ (with $\Delta'$ instead of~$\Delta$ in the case $p=2$) along their boundaries. 
It is then a lens space. 
In order to describe it, let us introduce some notation (see Figure~\ref{F:Spq8}). 
We write~$\BoundTorus$ for the 2-torus that is the boundary between~$\uQ$ and~$\uP$. 
We define~$\tinf$ to be the loop in~$\BoundTorus$ describing the fiber~$\un\{Z\}$ with the trigonometric orientation, and~$c_P$ to be the curve consisting of tangent vector based on~$\barre\Side$ and oriented by the geodesics going through~$P$. We define~$c_Q$ in the same way.
We also consider the set~$\hat D_P$ of all vectors based on points of~$\Delta_P$ and pointing in the direction of~$Z$, and its quotient~$D_P$ under the projection~$\barre{\un\Delta_P}\to\barre{\un\Delta_P/\Gamma_P}$ with the induced orientation (see Figure~\ref{F:MeridianDisc}). We write~$\bord D_P$ for the oriented boundary of~$D_P$. We define~$D_Q$ and~$\bord D_Q$ in the same way.

\begin{lemma}
\label{L:BoundBasis}
$(i)$ The set~$D_P$ is a meridian disc of the solid torus~$\uP$.

$(ii)$ The homology classes~$[c_P]$ and $[c_Q]$ form a basis of~$H_1(\BoundTorus; \Z)$. In this basis, we have the decompositions $[\tinf] = (1,1)$, $[\bord D_P]=(p-1,-1)$ and $[\bord D_Q]=(-1,q-1)$.
\end{lemma}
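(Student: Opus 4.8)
The plan is to work directly inside the two solid tori $\uP$ and $\uQ$ (with the convention $\Delta'$ in place of $\Delta$ when $p=2$), using the slice-of-cake model of Figure~\ref{F:UTBundle} together with the explicit geometry of the tiles $\Delta_P,\Delta_Q$ and the geodesic $e=ZZ_q^1$ separating them. First I would establish $(i)$: by definition $\hat D_P$ is the set of vectors based on a point of $\Delta_P$ pointing toward the ideal vertex $Z$. Since all these vectors point in a fixed direction at infinity, $\hat D_P$ is an embedded disc in $\barre{\un\Delta_P}$, transverse to the fibers, meeting each fiber exactly once — so it is a meridian disc of the solid torus $\barre{\un\Delta_P}$. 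Its boundary lies over $\bord\Delta_P$; the part over the ideal vertices is collapsed, and the part over the finite sides consists of the tangent vectors there pointing toward $Z$. The group $\Gamma_P$ (cyclic of order $p$, rotation about $P$) acts freely on the unit tangent bundle and permutes the $p$ copies of $e$ bounding $\Delta_P$; the quotient $\hat D_P/\Gamma_P=D_P$ is again an embedded disc meeting every fiber of $\uP$ once, hence a meridian disc. The same argument gives that $D_Q$ is a meridian disc of $\uQ$.

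Next, for $(ii)$, I would check that $c_P$ and $c_Q$ are two disjoint simple closed curves on $\BoundTorus$ that intersect transversally in exactly one point: $c_P$ (resp. $c_Q$) consists of unit vectors based along $\barre e$ oriented by the geodesic through $P$ (resp. through $Q$); at the single point $M\in[PQ]\cap e$ these two vectors differ by a rotation and the two curves cross once there, while elsewhere along $\barre e$ the directions toward $P$ and toward $Q$ are distinct, so the curves are otherwise disjoint. A pair of simple closed curves on a $2$-torus meeting once transversally forms a basis of $H_1(\BoundTorus;\Z)$; this gives the basis claim.

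It then remains to compute the three coordinate vectors. For $[\tinf]$: the fiber $\un\{Z\}$ over the ideal point $Z$ lies on $\BoundTorus$ and, going once around it (trigonometrically), the based direction sweeps all of $\Sph^1$ once; reading off how many times this loop crosses $c_P$ and $c_Q$ — once each, by the geometry at $Z$ where the directions toward $P$ and toward $Q$ are each attained once per turn — yields $[\tinf]=(1,1)$. For $[\bord D_P]$: since $D_P$ is a meridian disc of $\uP$, its boundary is a meridian of that solid torus; I would trace $\bord\hat D_P$ over the $p$ sides of $\Delta_P$ before taking the $\Gamma_P$-quotient. Along each of the $p$ copies of $e$ the vectors point toward $Z$; this direction agrees with "toward $P$" exactly at the corresponding copy of $M$ and makes a full turn relative to the $c_P$-framing as one goes around all $p$ sides, while relative to $c_Q$ it turns once in the opposite sense — giving $[\bord D_P]=(p-1,-1)$ after dividing by the order-$p$ action (and $(1,-1)$, consistently, when $p=2$). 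The symmetric computation in $\uQ$, with the roles of $P$ and $Q$ and of $p$ and $q$ exchanged, gives $[\bord D_Q]=(-1,q-1)$.

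The main obstacle here is entirely bookkeeping of orientations and winding numbers: getting the signs of the two coordinates right (which requires being careful about the trigonometric orientation of $\BoundTorus$ induced from $\un\Sigma$, and about the direction in which the normal framing $c_P$ or $c_Q$ winds as one runs along $\bord\Delta_P$), and correctly accounting for the factor coming from the order-$p$ (resp. order-$q$) quotient — this is why the meridian picks up $p-1$ rather than $p$. I expect no conceptual difficulty beyond carefully drawing the slice-of-cake model and checking the two base cases $p>2$ and $p=2$ separately, the latter with the infinitesimal tile $\Delta_P'$.
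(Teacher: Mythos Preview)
Your overall strategy --- show $D_P$ is a meridian disc, show $c_P,c_Q$ meet once hence form a basis, then read off coordinates via intersection numbers --- matches the paper's. But several of the geometric claims you rely on are wrong and would not survive a careful check.

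First, in $(i)$ you say the quotient $D_P$ ``meets every fiber of $\uP$ once, hence a meridian disc''. This is false if ``fiber'' means the Seifert fiber $\un\{x\}$: by the slice-of-cake picture (Figure~\ref{F:UTBundle}), a meridian disc of $\un\D^2_p$ meets each regular fiber $p$ times and the exceptional fiber once. The correct argument (which the paper gives) is simply that $D_P$ is an embedded disc in $\uP$ with boundary on $\BoundTorus$, and $\bord D_P$ is not null-homotopic on $\BoundTorus$ because its projection to the base circle $\barre e$ is nontrivial. Also, $\Gamma_P$ does not act on $\hat D_P$ (it sends vectors pointing at $Z$ to vectors pointing at $Z_p^k$), so writing ``$\hat D_P/\Gamma_P$'' is not quite meaningful; $D_P$ is just the (injective) image of $\hat D_P$ under the covering map.

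Second, in $(ii)$ you locate the intersection $c_P\cap c_Q$ at $M=[PQ]\cap e$. That is wrong: at $M$ the directions along the geodesic through $P$ and through $Q$ are opposite, so the two tangent vectors are different points of the fiber and the curves do \emph{not} meet there. The unique common point is over the ideal vertex $Z$, where every geodesic emanating from $Z$ has the same outgoing tangent; this is why $c_P$, $c_Q$ and $\tinf$ all pass through that single vector, giving simultaneously $[c_P]\cdot[c_Q]=\pm1$ and $[\tinf]\cdot[c_P]=[\tinf]\cdot[c_Q]=\pm1$, and hence $[\tinf]=(1,1)$ after fixing signs via the isotopy $\tinf\sim c_P * c_Q$.

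Third, your computation of $[\bord D_P]$ is too vague to be a proof, and the phrase ``after dividing by the order-$p$ action'' is not how the coordinates arise. The paper's route is cleaner: count intersection points of $\bord D_P$ with $c_P$ and with $c_Q$ directly. With $c_P$ there is a single intersection (the outgoing vector at $Z$); with $c_Q$ there are exactly $p-1$ intersections, namely the vectors on $\barre e$ that lie on a geodesic through $Q$ and point toward one of the $p-1$ vertices $Z_p^1,\ldots,Z_p^{p-1}$ (Figure~\ref{F:MeridianDisc}). Signs are then pinned down by the isotopy $\bord D_P \sim p\,c_P - \tinf$, yielding $(p-1,-1)$. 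I would rewrite your argument along these lines rather than try to track ``winding relative to the $c_P$-framing''.
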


\begin{proof}
$(i)$ The disc~$\hat D_P$ is contractible in~$\barre{\un\Delta_P}$ and its boundary belongs to the boundary~$\barre{\un(\bord\Delta_P})$. 
Therefore its quotient~$D_P$ is also contractible in~$\barre{\un\Delta_P/\Gamma_P}$, and its boundary~$\bord D_P$ belongs to the boundary~$\barre{\un(\bord\Delta_P/\Gamma_P)}$, which is, by definition, the 2-torus~$\BoundTorus$. 
The loop~$\bord D_P$ is not contractible in~$\BoundTorus$ because its projection on the basis is not. 
Therefore $D_P$ is a meridian disc in~$\BoundTorus$.

\begin{figure}[hbt]
	\begin{center}
	\begin{picture}(135, 80)(0,0)
 	\put(0,0){\includegraphics*[scale=1]{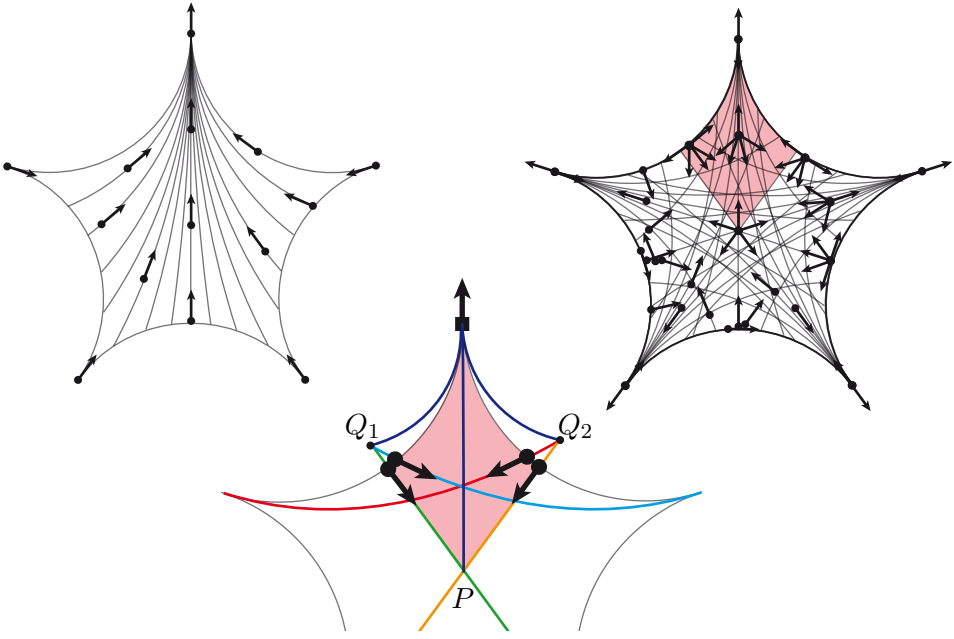}}
	\put(46,26){$Q_1$}
	\put(74,26){$Q_2$}
	\put(60,3){$P$}
	\end{picture}
	\end{center}
	\caption{\small On the left, the meridian disc~$\hat D_P$, with $p=5$. It is the set of all vectors pointing at~$Z$. On the right, the union of its iterated images under the rotation of angle~$2\pi/p$. It is the set of all vectors pointing at one of the $p$ vertices of~$\Delta_P$. The meridian disc~$D_P$ (Lemma~\ref{L:BoundBasis}) is obtained by restricting to a fundamental domain, for example the tinged part. On the bottom, the $p-1$ intersection points between~$c_Q$ and $\bord D_P$ (the leftmost and the rightmost vectors are identified in the quotient). The unique intersection point between~$c_P$ and $\bord D_P$ is the vector based at~$Z$ with a squared origin.}
	\label{F:MeridianDisc}
\end{figure}

$(ii)$ We write~$\cdot$ for the intersection form on the torus~$H_1(\BoundTorus; \Z)$. By definition, and as indicated on Figure~\ref{F:Spq8}, the three curves~$c_P, c_Q$ and $\tinf$ have one point in common, namely the unit tangent vector based at~$Z$ and oriented by outgoing geodesics. Therefore, we have
$$\vert[c_P]\cdot[c_Q]\vert=\vert[\tinf]\cdot[c_P]\vert=\vert[\tinf]\cdot[c_Q]\vert = 1,$$
so that the classes~$[c_P]$ and $[c_Q]$ form a basis of~$H_1(\BoundTorus; \Z)$. We orient the 2-torus~$\BoundTorus$ in such a way that the basis~$([c_P], [c_Q])$ is positive. The signs of the intersections~$[\tinf]\cdot [c_P]$ and $[\tinf]\cdot [c_P]$ can be determined by checking that the concatenation of the loops~$c_P$ and $c_Q$ is homotopic to~$\tinf$, so that, in the basis~$([c_P], [c_Q])$, we have~$[\tinf] = (1,1)$.

In order to determine the coordinates of~$[\bord D_P]$, we compute the intersection numbers with the basis vectors. 
For the intersection between~$[\bord D_P]$ and~$[c_P]$, we see on Figure~\ref{F:MeridianDisc} that there is only one vector in~$[\bord D_P]\cap[c_P]$, namely the vector based at~$Z$ and oriented by outgoing geodesics. 
For the intersection between~$[\bord D_P]$ and~$[c_Q]$, we have to count the vectors emerging from~$Q$ and pointing into one of the vertices~$Z_p^k$. 
There are $p-1$ such vectors, depicted on Figure~\ref{F:MeridianDisc}. Once again, the signs can be determined by checking that the loop $\bord D_P$ is isotopic to the concatenation of $p$ times~$c_P$ and one time~$\tinf$, taken backwards, whence the relation~$[\bord D_P] = (p-1, -1)$.

The coordinates of~$[\bord D_Q]$ are determined in the same way.
\end{proof}

\begin{figure}[hbt]
	\begin{center}
	\begin{picture}(65, 55)(0,0)
 	\put(0,0){\includegraphics*[scale=1]{MedianTorus.pdf}}
	\put(27,-2){$c_p$}
	\put(-4,28){$c_q$}
	\put(22,20){$\tinf$}
	\put(12,38){$\bord D_P$}
	\put(39,16){$\bord D_Q$}
	\end{picture}
	\end{center}
	\caption{\small The median torus~$\BoundTorus$, in the basis $(c_p, c_q)$, here with $p=5$ and $q=3$. The hyperbolic compactification~$\barre{\un\Orbpq}$ is obtained by gluing two solid tori~$\uP$ and~$\uQ$, with respective meridian~$\bord D_P$ and~$\bord D_Q$, along~$\BoundTorus$. The fibers of the points of~$e$ are the curves on~$\BoundTorus$ that are parallel to~$\tinf$.}
	\label{F:MedianTorus}
\end{figure}

We can now deduce the topology of~$\un\Orbpq$. 

\begin{lemma}
\label{L:Topology}
The hyperbolic compactification~$\barre{\un\Orbpq}$ of the unit tangent bundle to the orbifold~$\Orbpq$ is diffeomorphic to the lens space~$L_{pq-p-q, p-1}$, the circle added when compactifying being a $(p,q)$-torus knot drawn on a median torus of~$L_{pq-p-q, p-1}$.
\end{lemma}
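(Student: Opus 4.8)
The strategy is to compute the lens space directly from the genus-one Heegaard splitting provided by Lemma~\ref{L:BoundBasis}. First I would recall that $\barre{\un\Orbpq}$ is obtained by gluing the two solid tori $\uP$ and $\uQ$ along the median torus~$\BoundTorus$, with respective meridians $\bord D_P$ and $\bord D_Q$. By Lemma~\ref{L:BoundBasis}, in the basis $([c_P],[c_Q])$ of $H_1(\BoundTorus;\Z)$ we have $[\bord D_P]=(p-1,-1)$ and $[\bord D_Q]=(-1,q-1)$. A closed orientable $3$-manifold obtained by gluing two solid tori so that the two meridians are identified with curves of slopes $\alpha=(a,b)$ and $\beta=(c,d)$ is, by definition of lens spaces via Heegaard genus~$1$, the lens space $L_{n,k}$ where $n=|\alpha\cdot\beta|=|ad-bc|$ and $k$ is read off from the gluing matrix. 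Here $|ad-bc| = |(p-1)(q-1)-1| = pq-p-q = pq-p-q$, which identifies the order of $H_1$ and hence pins down $n$. To get the second parameter, I would write the change-of-basis matrix in $\mathrm{SL}_2(\Z)$ sending $[\bord D_P]$ to a standard meridian $(1,0)$ and track where $[\bord D_Q]$ goes; its first coordinate modulo $n$ gives $k$, and a short computation yields $k\equiv p-1$ (equivalently $q-1$, since $(p-1)(q-1)\equiv 1 \bmod n$), so $\barre{\un\Orbpq}\cong L_{pq-p-q,\,p-1}$.

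Next I would identify the added circle. The compactification adds the fiber $\un\{Z\}$ over the cusp, whose homology class on $\BoundTorus$ is $[\tinf]=(1,1)$ by Lemma~\ref{L:BoundBasis}. Since $\tinf$ lies on the Heegaard torus $\BoundTorus$ separating the two solid tori, it is by construction a curve drawn on a median torus of the lens space. To see that it is the $(p,q)$-torus knot, I would compute its intersection numbers with the two meridians: $|[\tinf]\cdot[\bord D_P]| = |(1)(-1)-(1)(p-1)| = p$ and $|[\tinf]\cdot[\bord D_Q]| = |(1)(q-1)-(1)(-1)| = q$. Thus $\tinf$ wraps $p$ times in the meridian direction of one solid torus and $q$ times in the meridian direction of the other — which is exactly the definition of the $(p,q)$-torus knot sitting on the Heegaard torus of $L_{pq-p-q,p-1}$. (That $\gcd(p,q)$ may exceed $1$ is not a problem here: the curve $\tinf$ is a single embedded circle on $\BoundTorus$ with primitive class $(1,1)$, and "$(p,q)$-torus knot on a median torus" should be read as this embedded curve; when $\gcd(p,q)>1$ it is a torus curve wrapping $(p,q)$ times but of course remains embedded since $(1,1)$ is primitive.)

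The main obstacle is bookkeeping the orientations and the precise value of the second lens-space parameter, since $L_{n,k}$ depends on conventions ($L_{n,k}\cong L_{n,k'}$ iff $k'\equiv \pm k^{\pm 1} \bmod n$), so I must be careful that the parameter I extract is consistent with the paper's convention and that the gluing matrix I write down actually lies in $\mathrm{GL}_2(\Z)$ with the correct determinant sign (orientations of $\BoundTorus$, $\uP$, $\uQ$ must be compatibly chosen as in the proof of Lemma~\ref{L:BoundBasis}). Concretely, one picks $A\in\mathrm{SL}_2(\Z)$ with first row $[\bord D_P]^{T}=(p-1,-1)$ — e.g. complete it using Bézout, $(p-1)\cdot 0 - (-1)\cdot 1 = 1$, so the second row can be taken $(0,1)$ — then $A[\bord D_Q] = (p-1,-1)\!\cdot\!(-1,q-1)$-style product gives the image of the second meridian, whose slope modulo $pq-p-q$ is the lens parameter; carrying this out gives $k\equiv p-1$. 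Everything else (that the manifold is closed, that the two solid tori are glued along their entire boundary, that $\tinf$ is embedded and lies on the Heegaard torus) is immediate from the construction in \S\ref{S:Tpq} and Lemma~\ref{L:BoundBasis}.
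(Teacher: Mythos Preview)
Your approach is essentially the paper's: use the genus-one Heegaard splitting $\uP\cup_{\BoundTorus}\uQ$, read off $n=|[\bord D_P]\cdot[\bord D_Q]|=pq-p-q$ from the meridian coordinates of Lemma~\ref{L:BoundBasis}, extract the second lens parameter, and then compute $[\tinf]\cdot[\bord D_P]=p$ and $[\tinf]\cdot[\bord D_Q]=q$ to identify $\tinf$ as the $(p,q)$-torus knot on the Heegaard torus.

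One slip in your concrete bookkeeping: the matrix with rows $(p-1,-1)$ and $(0,1)$ has determinant $p-1$, not $1$, so it is not in $\mathrm{SL}_2(\Z)$ for $p>2$; your Bézout check ``$(p-1)\cdot 0-(-1)\cdot 1=1$'' is not computing the determinant. A valid completion is the second row $(1,0)$. The paper sidesteps this matrix juggling entirely: it observes that $c_Q$ meets $\bord D_Q$ once, hence is a longitude (parallel) for $\uQ$, and then the second lens parameter is simply $|[c_Q]\cdot[\bord D_P]|=p-1$. This is the same linear algebra as your change-of-basis but with less room for sign and determinant errors.
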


\begin{proof}
(See Figure~\ref{F:MedianTorus}.)
We continue with the same notation. Since~$\barre{\un\Orbpq}$ is obtained by gluing the two solid tori~$\barre{\un\Delta_P/\Gamma_P}$ and $\barre{\un\Delta_Q/\Gamma_Q}$, it is a lens space. 
By Lemma~\ref{L:BoundBasis}, the two curves~$\bord D_P, \bord D_Q$ are respective meridians in the two solid tori. 
Using their coordinates, we deduce that their intersection number is~$\left| \begin{matrix} p-1 & -1 \\ -1 & q-1 \end{matrix} \right|= pq-p-q$. 
As the curve~$c_Q$ intersects~$\bord D_Q$ once, it is a parallel for the solid torus~$\barre{\un\Delta_Q}$. As $c_Q$ intersects $p-1$ times~$\bord D_P$, the 3-manifold~$\barre{\un\Orbpq}$ is the lens space~$L_{pq-p-q,p-1}$.

The circle that has been added when compactifying is the fiber~$\tinf$ of the point~$Z$. By Lemma~$\ref{L:BoundBasis}$, it intersects $p$ times the circle $\bord D_P$, and $q$~times $\bord D_Q$. 
Therefore it is a $(p,q)$-torus knot
\end{proof}

\begin{remark}
Since $(p-1)(q-1)\equiv 1\mod pq-p-q$, Brody's theorem asserts that the lens spaces $L_{pq-p-q,p-1}$ and $L_{pq-p-q, q-1}$ are diffeomorphic. 
This can be seen in the above proof by exchanging~$p$ and $q$.
\end{remark}

\begin{remark}
One can check that the alternative compactifications of~$\un\Orbpq$ associated with other Euler numbers can be obtained by cutting along~$\BoundTorus$, making a transvection along the curve~$\tinf$, and gluing back. This changes the manifold into~$L_{kpq-p-q, kp-1}$ for some $k$ in~$\Z$ (see Pinsky~\cite{Tali} for more detail).
\end{remark}

\begin{figure}[p]
	\begin{center}
 	\begin{picture}(60,63)(0,0)
 	\put(0,0){\includegraphics*[width=.38\textwidth]{TempPQ1.pdf}}
	\end{picture}
	\hspace{5mm}	
 	\begin{picture}(60,63)(0,0)
	\put(0,0){\includegraphics*[width=.38\textwidth]{TempPQ2.pdf}}
	\end{picture}
	\vspace{3mm}

 	\begin{picture}(60,63)(0,0)
	\put(0,0){\includegraphics*[width=.38\textwidth]{TempPQ3.pdf}} 
	\put(50,13){$\bord D_Q$}
	\put(3,43){$\bord D_P$}
	\put(30,32){$\tinf$}
	\put(36,25){$c_Q$}
	\put(23,32 ){$c_P$}
	\end{picture}
	\hspace{3.5mm}	
 	\begin{picture}(60,63)(0,0)
	\put(0,0){\includegraphics*[width=.38\textwidth]{TempPQ4.pdf}}
	\put(24,23){\begin{rotate}{-17}{$\BranchPQ$}\end{rotate}}
	\put(24,52){\begin{rotate}{-17}{$\BranchQP$}\end{rotate}}
	\end{picture}
	\end{center}
	\caption{\small The projection of the template~$\Tpq$ on the 2-torus~$\BoundTorus$, here with $p=q=4$. The sources of the projection are the fibers~$\un\{P\}$ for the part that lies inside the solid torus~$\uP$, and~$\un\{Q\}$ for what lies inside~$\uQ$. On the top left, the part of~$\Tpq$ lying inside~$\uQ$. On the top right, the part of~$\Tpq$ lying inside~$\uP$. The two pictures differ by a transvection. This is due to the choice of the compactification. Changing the compactification of~$\un\Orbpq$ leads to another transvection for the identification. Since the two solid tori are glued outgoing normal vs.\ incoming normal, the two pictures have opposite orientations, namely the front/back order of the ribbons is reversed. On the bottom left, the vectors~$[c_P], [c_Q], [\tinf], [\bord D_P]$ and $[\bord D_Q]$ in $H_1(\BoundTorus; \Z)$. The slope~$-1$ of $[c_P]$ explains the transvection on the top right picture. On the bottom right, the directions of the two possible deformations of the ribbons that constitute~$\Tpq$ on~$\BoundTorus$. The four colors correspond to the four types~$\Rib_p^{i, g}$, $\Rib_p^{i, d}$, $\Rib_q^{i, g}$ and $\Rib_q^{i, d}$. The key point for proving the negativity of linking numbers (Proposition~\ref{P:2q}) is that, in each of the two vertical intervals between~$\BranchPQ$ and~$\BranchQP$, all ribbons go in the same direction.}
	\label{F:ProjTemp}
\end{figure}

We now have a full description of the template~$\Tpq$ and of how it embeds into~$\barre{\un\Orbpq}$ (see also Figure~\ref{F:ProjTemp}). It is worth noting that in the case $p=2, q=3$, the compactification~$\barre{\un\Orbpq}$ is the 3-sphere, the fiber~$\tinf$ of the cusp is a trefoil knot, and the template~$\Tpq$ is Lorenz' template, as stated by Ghys~\cite{GhysMadrid}.

\subsection{Linking with the fiber of the cusp}

For $p\ge2, q\ge 3$, we use now the template~$\Tpq$ for computing the linking number in~$\barre{\un\Orbpq}$ between a periodic orbit of~$\GF{\Orbpq}$ and the $(p,q)$-torus knot~$\tinf$ that has been added when compactifying~$\un\Orbpq$ (Proposition~\ref{P:LkInfty}). This computation has been done in the case~$p=2, q=3$ by Ghys~\cite{GhysMadrid}. In this case, the linking number equals the Rademacher function of the underlying geodesics---a function of interest in number theory~\cite{Ogg}. 
As before, we assume that we are given a triangle~$PQZ$ in~$\Hy^2$, that~$\Gamma_{p,q}$ is the associated Hecke triangular group, that~$\Tiling_{P,Q}$ is the associated adapted tessellation of~$\Hy^2$, and that~$\Tpq$ is the associated template. 

Let~$\underline\gamma$ be a geodesic of~$\Hy$ whose extremities are not lifts of the cusp of~$\Orbpq$, that is, $\underline\gamma_+$ and $\underline\gamma_-$ are not in the orbit~$\Gamma_{p,q}(Z)$. Then picking an arbitrary starting point on it, $\underline\gamma$ is determined by a starting tile~$\Tile_0$ and a bi-infinite code~$\ldots u^{i_{-1}}v^{j_{-1}}u^{i_{0}}v^{j_{0}}u^{i_{1}}v^{j_{1}}\ldots$ describing how $\underline\gamma$ behaves in each tile of the tessellation~$\Tiling_{P,Q}$. Precisely, if $\gamma$ enters a copy of~$\Delta_P$ by a side, and goes out by another side that is obtained from the entering one by a rotation of angle~$2i\pi/p$, then the corresponding letter is~$u^i$. Similarly, when $\underline\gamma$ enters a copy of~$\Delta_Q$, the corresponding letter~$v^j$ describes how to pass from the entering side to the outgoing side. As $\Delta_P$ has~$p$ sides, every index~$i_k$ is between $1$ and $p-1$. Similarly, every index $j_k$ lies between $1$ and $q-1$. Considering another starting tile induces a shift of the code. If two geodesics are obtained one from the other by the action of an element~$g$ of~$\Gamma_{p,q}$, then their starting tiles are also obtained from one another by~$g$, and their codes coincide. Therefore, there is a one-to-one correspondence between codes up to shift and geodesics on the orbifold~$\Orbpq$ not pointing into the cusp. Moreover, if a geodesic on~$\Orbpq$ is periodic, then its code is periodic, that is, of the form~$(u^{i_1}\ldots v^{j_m})^\Z$. In this case, we call the word~$u^{i_1}\ldots v^{j_m}$, which is assumed to be of minimal possible length, a~\emph{reduced code} of the periodic geodesic. Different reduced codes for a given periodic geodesic differ by a cyclic permutation of the letters. 

We now define an invariant of periodic geodesics that will be useful for expressing the linking number of their liftings in~$\un\Orbpq$ with the fiber of the cusp. Assume that $\underline\gamma$ is a geodesic in~$\Hy^2$ with code~$\ldots u^{i_{-1}}v^{j_{-1}}u^{i_{0}}v^{j_{0}}u^{i_{1}}v^{j_{1}}\ldots$. For a more symmetric expression, we set~$i'_k = i_k-p/2$ and $j'_k = j_k- q/2$. Then the discretisation~$\discr{\underline\gamma}{\Tiling_{P,Q}}$ of~$\underline\gamma$ lies in the tree depicted in Figure~\ref{F:PTessel}. By definition, for every index~$k$, the discretisation~$\discr{\underline\gamma}{\Tiling_{P,Q}}$ turns by an angle~$2\pi i'_k/p$ in the corresponding copy of~$\Delta_P$ and by an angle~$2\pi j'_k/q$ in the corresponding copy of~$\Delta_Q$.

\begin{defi}
\label{D:WheelTurn}
Assume that $\underline\gamma$ is a periodic geodesic on~$\Orbpq$. Let $u^{i_1}v^{j_1}u^{i_2}\ldots v^{j_m}$ be a reduced code of~$\underline\gamma$. Then the \emph{wheel turn}~$\WT{\underline\gamma}$ of~$\underline\gamma$ is the rational number $\sum_{i=1}^m i'_k/p + j'_k/q$.
\end{defi}

Here is the expected evaluation of the linking number between a geodesic of~$\GF{\Orbpq}$ and the fiber of the cusp in terms of an analog of the Rademacher function. 

\begin{prop}
\label{P:LkInfty}
Assume $p \ge 2, q \ge 3$. Then, for every periodic orbit~$\gamma$ of the geodesic flow~$\GF{\Orbpq}$, we have
$$\lk(\gamma, \tinf) = \frac{pq}{pq-p-q} \WT{\underline\gamma},$$
where $\underline\gamma$ is the projection of~$\gamma$ on~$\barre\Orbpq$.
\end{prop}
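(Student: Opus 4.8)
The strategy is to compute the intersection number of an explicit Seifert surface for $\tinf$ with the periodic orbit $\gamma$, after pushing $\gamma$ onto the template $\Tpq$ via Proposition~\ref{P:Template}. First I would recall from Lemma~\ref{L:Topology} that $\tinf$ is the $(p,q)$-torus knot on the median torus $\BoundTorus$, and from Lemma~\ref{L:BoundBasis} that $[\tinf] = [c_P]+[c_Q]$ in $H_1(\BoundTorus;\Z)$ while the meridian discs of the two solid tori satisfy $[\bord D_P] = (p-1,-1)$ and $[\bord D_Q] = (-1,q-1)$. Since $\barre{\un\Orbpq} = L_{pq-p-q,p-1}$ is a rational homology sphere, $\tinf$ is rationally null-homologous; explicitly, a multiple of $\tinf$ bounds a $2$-chain, and the natural candidate is to take $pq-p-q$ copies of $\tinf$ (equivalently, a suitable rational $2$-chain with boundary $\tinf$) assembled from two pieces, one in each solid torus, glued along annuli on $\BoundTorus$. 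The factor $\frac{pq}{pq-p-q}$ in the statement already signals that this is the right normalization: $pq-p-q = [\bord D_P]\cdot[\bord D_Q]$ is the order of $H_1$, and the numerator $pq$ records that $\tinf$ wraps $p$ times around one core and $q$ times around the other.

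The heart of the argument is a local count. Using Proposition~\ref{P:Template}, I would replace $\gamma$ by its image on the template $\Tpq$, which consists of the elevators inside $\uQ$ (with ribbons $\Rib_q^1,\dots,\Rib_q^{q-1}$) and inside $\uP$ (with ribbons $\Rib_p^1,\dots,\Rib_p^{p-1}$). The periodic orbit $\gamma$ is encoded by a reduced code $u^{i_1}v^{j_1}\cdots u^{i_m}v^{j_m}$; each letter $u^{i_k}$ corresponds to one pass through the $\uP$-elevator on ribbon $\Rib_p^{i_k}$, and each $v^{j_k}$ to one pass through the $\uQ$-elevator on ribbon $\Rib_q^{j_k}$. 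I would then intersect $\gamma$ with a chosen rational Seifert surface $\Sigma_\infty$ for $\tinf$. The cleanest choice of $\Sigma_\infty$ is built from the meridian discs: take the disc $D_P$ (resp. $D_Q$), whose boundary lives on $\BoundTorus$, and the piece of $\BoundTorus$ bounded appropriately, so that the whole $2$-chain has boundary a rational multiple of $[c_P]+[c_Q] = [\tinf]$. Each time $\gamma$ passes through a ribbon $\Rib_p^{i_k}$ it crosses $D_P$ a number of times controlled by $i_k$ (relative to the ``neutral'' value $p/2$, which is why the shifted quantities $i'_k = i_k - p/2$, $j'_k = j_k - q/2$ appear), and similarly for $\Rib_q^{j_k}$ crossing $D_Q$ with weight $j'_k$. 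Summing over all letters of the reduced code gives $\sum_k (i'_k/p + j'_k/q) = \WT{\underline\gamma}$ up to the normalization coming from expressing everything in terms of a genuine (rational) Seifert surface of $\tinf$ rather than of the disc $D_P$ or $D_Q$ individually; tracking the denominators converts the raw count into $\frac{pq}{pq-p-q}\WT{\underline\gamma}$.

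More concretely, I would argue as follows. In the basis $([c_P],[c_Q])$ of $H_1(\BoundTorus;\Z)$, the fiber classes $[\tinf]=(1,1)$, $[\bord D_P]=(p-1,-1)$, $[\bord D_Q]=(-1,q-1)$ span a lattice in which $\tinf$ is $\frac{1}{pq-p-q}$ of an integral combination of the meridians: indeed $p\,[\bord D_Q] + q\,[\bord D_P]$... one checks $q(p-1,-1)+p(-1,q-1) = (pq-q-p, \, pq-p-q) = (pq-p-q)(1,1) = (pq-p-q)[\tinf]$. Hence the $2$-chain $\frac{1}{pq-p-q}(q\,D_P + p\,D_Q)$, suitably completed on $\BoundTorus$, is a rational Seifert surface for $\tinf$. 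Now $\lk(\gamma,\tinf) = \mathrm{Int}(\gamma, \frac{1}{pq-p-q}(q\,D_P + p\,D_Q))$. The intersection number of $\gamma$ with $D_P$ is computed ribbon by ribbon: a pass along $\Rib_p^{i}$ contributes, after subtracting the contribution that would be cancelled by the part of the surface on $\BoundTorus$, exactly $i'_k/p \cdot p = i'_k$ (an honest integer count once one fixes how $D_P$ is cut off at $\BoundTorus$), and similarly $\mathrm{Int}(\gamma, D_Q) = \sum_k j'_k$ measured in the analogous units. Assembling, $\lk(\gamma,\tinf) = \frac{1}{pq-p-q}\bigl(q\sum_k i'_k \cdot \tfrac{1}{1} + p\sum_k j'_k\bigr)$... and rewriting $q\sum i'_k/p\cdot p \leftrightarrow$ etc. produces $\frac{pq}{pq-p-q}\bigl(\sum_k i'_k/p + j'_k/q\bigr) = \frac{pq}{pq-p-q}\WT{\underline\gamma}$.

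\textbf{Main obstacle.} The delicate point is not the homological bookkeeping but getting the \emph{local} intersection count on the template exactly right — in particular, correctly accounting for the portion of the rational Seifert surface lying on the median torus $\BoundTorus$ and for the branching behaviour of the template at the segments $\BranchPQ$ and $\BranchQP$. One must verify that each traversal of a ribbon contributes precisely the shifted index $i'_k$ (resp. $j'_k$), with the correct sign, and that there are no extra contributions from the orbit's passage through the branch loci or from the compactifying disc $D_P$ itself; this is where the choice of ``neutral'' turning value $p/2$ (resp. $q/2$) is forced, and where a careful picture (in the spirit of Figures~\ref{F:MeridianDisc} and~\ref{F:ProjTemp}) is needed to see that the naive count is the honest one. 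I expect this step to require the bulk of the work, with the rest being the linear algebra over $H_1(\BoundTorus;\Z)$ already set up in Lemma~\ref{L:BoundBasis}.
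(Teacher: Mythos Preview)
Your approach is dual to the paper's and is in principle sound, but it is genuinely different and leaves more to check than the paper's route. You build a rational Seifert surface for~$\tinf$ out of the meridian discs $D_P$, $D_Q$ (your identity $q[\bord D_P]+p[\bord D_Q]=(pq-p-q)[\tinf]$ is correct) and then intersect it with~$\gamma$ on the template. The paper instead decomposes~$\gamma$: via Lemma~\ref{L:HomolPQ} it shows that in $H_1$ of the complement of~$\tinf$ one has $[\gamma]=\sum_k\big([h]+(i_k-1)[a_P]+(j_k-1)[a_Q]\big)$, where $h$ is the horocyclic orbit of code $(u^1v^1)^\Z$ and $a_P,a_Q$ are the cores $\un P,\un Q$. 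Linking is then linear, so the computation reduces to three concrete numbers: $\lk(h,\tinf)=-1$ (a horodisc capped at the cusp meets~$\tinf$ once), and $\lk(a_P,\tinf)=q/r$, $\lk(a_Q,\tinf)=p/r$ (obtained using the very discs $D_P,D_Q$ you invoke, together with the coordinates of Lemma~\ref{L:BoundBasis}). Summing gives the formula immediately.

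What the paper's decomposition buys is precisely the avoidance of your ``main obstacle''. Your assertion that a pass along $\Rib_p^{i_k}$ contributes the \emph{shifted} index~$i'_k=i_k-p/2$ is not what a direct count gives: the raw intersection of such an arc with~$D_P$ is of the form $i_k+\text{const}$ (from Lemma~\ref{L:HomolPQ} one sees two consecutive ribbons differ by exactly~$[a_P]$, hence by one crossing of~$D_P$), and the passage from $i_k+\text{const}$ to the centred $i'_k$ only emerges after you also compute the contribution of the torus piece~$A$ bounding $r\,\tinf - q\,\bord D_P - p\,\bord D_Q$, and verify that this contribution is the \emph{same} for every period regardless of the indices $i_k,j_k$ (equivalently, that the branching segments $\BranchPQ,\BranchQP$ sit in regions of~$A$ with constant coefficient). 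You flag this honestly, but you do not carry it out; the paper's route sidesteps it entirely by pushing all the constants into the single quantity $\lk(h,\tinf)$, which is computed by an independent geometric argument (the horodisc) rather than by bookkeeping on~$\BoundTorus$.
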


The principle of the proof is as follows. Write $r$ for the number~$pq-p-q$. Since the first homology group of~$L_{pq-p-q, p-1}$ is~$\Z/r\Z$, we know that for every element~$[c]$ of~$H_1(L_{pq-p-q, p-1}; \Z$), the cycle~$r[c]$ is a boundary of an integral 2-chain. The idea will be to construct a 2-chain with boundary~$r[\gamma]$ which is transverse to~$\tinf$, and then to count the intersection number with~$\tinf$. Since~$\gamma$ is isotopic in the complement of~$\tinf$ in an orbit of the template~$\Tpq$, we can then make use of the available information about the position of the latter in~$\barre{\un\Orbpq}$.

In order to implement the argument, let us write~$h$ for the orbit of~$\Tpq$ whose code is~$(u^1v^1)^\Z$. 
Note that~$h$ is one of the two periodic orbits of~$\Tpq$ that is not isotopic to a periodic orbit of the geodesic flow, but to a periodic orbit of the horocyclic flow. Write~$a_P$ for the curve that describes the fiber~$\un P$. It is the core of the solid torus~$\uP$. Similarly, write~$a_Q$ for curve describing the fiber~$\un Q$. 
We begin with a preliminary computation. 
Remember that $u^{i_1}v^{j_1}u^{i_2}\ldots v^{j_m}$ denotes a reduced code of~$\gamma$.

\begin{lemma}
\label{L:HomolPQ}
In the above context, the cycle~$[\gamma]$ is homologous in~$\un\Orbpq$ to the 1-cycle
$$\sum_{k=1}^m \bigg([h]+ (i_k-1)[a_P] + (j_k-1)[a_Q]\bigg).$$
\end{lemma}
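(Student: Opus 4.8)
The strategy is to track the curve $\discr{\underline\gamma}{\Tiling_{P,Q}}$ through the template $\Tpq$ and record, tile by tile, how it differs homologically from a corresponding path along the orbit $h$. Recall that $h$ has code $(u^1v^1)^\Z$, so after deforming $\gamma$ onto the template (Proposition~\ref{P:Template}), one reduced period of $\gamma$ runs through the $P$-elevator along the ribbon $\Rib_p^{i_1}$, then through the $Q$-elevator along $\Rib_q^{j_1}$, then $\Rib_p^{i_2}$, $\Rib_q^{j_2}$, and so on, while one period of $h$ alternates $\Rib_p^1, \Rib_q^1, \Rib_p^1, \Rib_q^1, \dots$. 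So the homology classes of the two loops differ by a sum of ``correction'' contributions, one for each passage through an elevator.

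First I would compute the correction for a single passage. Inside the $P$-elevator (the solid torus $\uP$), the ribbon $\Rib_p^{i}$ and the ribbon $\Rib_p^{1}$ both start at $\BranchQP$ and end at $\BranchPQ$, but $\Rib_p^i$ winds $i$ times around the core $a_P$ where $\Rib_p^1$ winds once (this is exactly the screw-motion structure of the elevator visible in Figure~\ref{F:TemplateQuotient} and the index-$d$ quotient in Section~\ref{S:Template}). Concatenating the arc of $\Rib_p^i$ with the arc of $\Rib_p^1$ run backwards gives a loop in $\uP$ that is homologous to $(i-1)[a_P]$. The same argument in the $Q$-elevator gives $(j-1)[a_Q]$ for each passage through a $v$-letter. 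I would make this precise by choosing, for each elevator, a fixed arc along $\Rib^1$ from the incoming branch segment to the outgoing branch segment, and noting that the difference arc $\Rib^i_\bullet - \Rib^1_\bullet$ closes up (both endpoints lie on the branch segments, which are common to all ribbons) into a loop whose class in $H_1(\uP;\Z)=\Z[a_P]$ is $(i-1)[a_P]$, and analogously $(j-1)[a_Q]$ in the $Q$-torus; this is the computation the lemma's statement is anticipating, so I would keep it short.

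Then I would assemble: start from the closed orbit $\gamma$ on $\Tpq$, written as the cyclic concatenation of arcs $\alpha_1^P \ast \alpha_1^Q \ast \alpha_2^P \ast \cdots \ast \alpha_m^Q$, where $\alpha_k^P$ is the $\Rib_p^{i_k}$-arc and $\alpha_k^Q$ the $\Rib_q^{j_k}$-arc. Replace each $\alpha_k^P$ by (the fixed $\Rib_p^1$-arc) plus a loop homologous to $(i_k-1)[a_P]$, and each $\alpha_k^Q$ similarly; the fixed arcs then cyclically concatenate, $m$ copies of $(\Rib_p^1$-arc$)\ast(\Rib_q^1$-arc$)$, into exactly $m$ copies of the period of $h$. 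Adding up the correction loops over $k=1,\dots,m$ yields
$$[\gamma] = \sum_{k=1}^m\Big([h] + (i_k-1)[a_P] + (j_k-1)[a_Q]\Big)$$
in $H_1(\un\Orbpq;\Z)$, which is the claim.

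The main obstacle I anticipate is bookkeeping the winding/screw-motion carefully enough to be sure the correction is $(i-1)[a_P]$ and not, say, $(i_k - i_{k-1})[a_P]$ or something involving the branch-segment identifications between successive elevators. In particular one must check that the chosen reference arcs along $\Rib^1$ in the two elevators are compatible at the incoming/outgoing branch segments $\BranchPQ$ and $\BranchQP$, so that their concatenation really is (a period of) $h$ with no extra homology picked up at the gluing; and one must confirm that the difference loop lies entirely inside the single solid torus $\uP$ (resp.\ $\uQ$), so its class is correctly read off as a multiple of $[a_P]$ (resp.\ $[a_Q]$). The orientation of $a_P$ and $a_Q$ — inherited from the fiber $\un P$, $\un Q$ with a chosen orientation — must also be pinned down so the sign of $i_k-1$, $j_k-1$ comes out right; Figure~\ref{F:TemplateQuotient} and the conventions fixed around Lemma~\ref{L:BoundBasis} should supply this.
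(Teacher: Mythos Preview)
Your proposal is correct and follows essentially the same approach as the paper: deform $\gamma$ onto the template, decompose one period into its $2m$ elevator passages, and compare each passage along $\Rib_p^{i_k}$ (resp.\ $\Rib_q^{j_k}$) to the reference passage along $\Rib_p^1$ (resp.\ $\Rib_q^1$), reading the difference as a multiple of the core $[a_P]$ (resp.\ $[a_Q]$). The only cosmetic difference is that the paper telescopes this correction, writing $\Rib_p^{i_k}-\Rib_p^1$ as the sum over $i=1,\dots,i_k-1$ of the loops ``back along $\Rib_p^i$, forward along $\Rib_p^{i+1}$'' and observing each such loop equals $[a_P]$; your version jumps directly to $(i_k-1)[a_P]$, which is fine once the screw-motion structure of the elevator is understood.
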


\begin{proof}
Let $\gamma^1$ be the image of~$\gamma$ under the deformation~$\deform{\Tiling_{P,Q}}^1$. Then $\gamma^1$ is an orbit of the template~$\Tpq$. Suppose that~$\gamma$ first travels along the ribbon~$\Rib_{p}^{1}$, and then along~$\Rib_{q}^{1}$. 
Then it is homologous to~$h$ in~$\un\Orbpq$ during the corresponding interval of time, and its code starts with~$u^1v^1$. 
Otherwise, the homology class of~$\gamma$ in the complement of~$\tinf$ during one period is obtained by adding to~$h$ the cycles consisting in traveling along~$\Rib_{p}^{i}$ backwards and then along~$\Rib_{p}^{i+1}$ frontwards, for every $i$ between $1$ and $i_k$, and by also adding the cycles consisting in traveling along~$\Rib_{q}^{j}$ backwards and then along~$\Rib_{q}^{j+1}$ forwards, for every $j$ between $1$ and $j_k$. 
Every cycle in the first category is actually equal to~$[a_P]$. 
Indeed, the ribbons are not the same, but the annuli of the form $\Rib_{p}^{j+1}-\Rib_{p}^{j}$ are homologous in the quotient~$\barre{\un\Orbpq}$: they correspond to curves turning once around the point~$P$ on~$\Orbpq$.
So $[a_P]$ is added $i_k-1$~times. 
Similarly, every cycle in the second category is equal to~$[a_Q]$, so $[a_Q]$ is added $j_k-1$~times. 
\end{proof}

We can then complete the argument.

\begin{proof}[Proof of Proposition~\ref{P:LkInfty}]
In~$\barre{\un\Orbpq}$, the cycle~$h$ bounds a disc whose intersection number with~$\tinf$ equals~$-1$. Indeed, since~$h$ is homologous to a horocyle, the latter bounds a horodisc, say~$d_h$, which is foliated by horocycles parallel to~$h$. By definition of the compactification, the family of all vectors tangent to these horocycles extends to the fiber of the cups, and therefore form a disc in~$\barre{\un\Orbpq}$ that intersects~$\tinf$ in exactly one point, namely the limit of the tangent vectors. Hence we have~$\lk(h, \tinf) = -1/r$.

Let us turn to~$\lk(h, \tinf)$. Write~$c^+_P, \bord D^+_p$ and $\bord D^+_q$ for the curves $c_P, \bord D_p$ and $\bord D_q$ slightly pushed away from~$\BoundTorus$ in~$\uP$, so that they do not intersect~$\tinf$. As~$c_Q$ is a parallel for the solid torus~$\uP$, the cycle~$r[a_P]$ is homologous in~$\un\Orbpq$ to~$r[c^+_P]$. The latter has coordinates~$(pq-p-q,0)$ in the basis~$([c_P], [c_Q])$ of~$H_1(\BoundTorus; \Z)$. By Lemma~\ref{L:BoundBasis}, the cycles~$[\bord D^+_p]$ and $[\bord D^+_q]$ have coordinates~$(p-1, -1)$ and $(-1, q-1)$, so that $r[c^+_P]$ equals $(1-q)[\bord D^+_p] - [\bord D^+_q]$. On the one hand, $[\bord D^+_p]$ bounds a meridian disc for~$\uP$ that does not intersect~$\tinf$. On the other hand, since $\bord D_q$ and~$\tinf$ intersect $q$ times on~$\BoundTorus$, the curve~$\bord D^+_q$ bounds a meridian disc for~$\uQ$ that intersect $-q$ times~$\tinf$. Therefore we have~$\lk(a_P, \tinf) = q/r$. We obtain in the same way~$\lk(a_Q, \tinf) = p/r$. The result then follows from Lemma~\ref{L:HomolPQ}.
\end{proof}

\subsection{Linking number between collections of geodesics}
\label{S:LkPQ}

We now restrict to the case~$p=\nobreak2$, and study the linking number between two collections of periodic geodesics of~$\GF{\Orbdq}$. Our goal is to show 

\begin{prop}[case $(a)$ of Theorem~A]
\label{P:2q}
Assume $q\ge 3$. Then, for all collections of periodic orbits~$\gamma, \gamma'$ of the geodesic flow~$\GF{\Orbdq}$ in~$\barre{\un\Orbdq}$, the linking number between~$\gamma$ and $\gamma'$ is negative.
\end{prop}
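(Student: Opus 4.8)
The plan is to feed the template description of $\GF{\Orbdq}$ from Sections~\ref{S:Template}--\ref{S:Tpq}, together with the explicit embedding of the template in the lens space $\barre{\un\Orbdq}$ worked out in \S\ref{S:Compactification}, into a crossing count in which every crossing contributes with the same sign.

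First, by Proposition~\ref{P:Template} the collections $\gamma$ and $\gamma'$ are isotopic in $\barre{\un\Orbdq}$ to collections of periodic orbits of the template $\Tdq$; since linking numbers are isotopy invariants I may assume $\gamma,\gamma'\subset\Tdq$. Each orbit of $\Tdq$ is encoded by a cyclic word $(j_1,\dots,j_m)$ over $\{1,\dots,q-1\}$, running successively through the ribbons $\Rib_q^{j_1},\Rib_p^1,\Rib_q^{j_2},\Rib_p^1,\dots$ of the two elevators. In particular the two exceptional orbits $(1)^\Z$ and $(q-1)^\Z$, which correspond to parabolic elements and are isotopic to horocyclic rather than geodesic orbits, do not occur in $\gamma\cup\gamma'$. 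Now flatten $\Tdq$ onto the median torus $\BoundTorus$: each ribbon can be pushed onto $\BoundTorus$ in one of two ways (the ``$g$'' and ``$d$'' deformations of Figures~\ref{F:TemplateQuotient} and~\ref{F:ProjTemp}), and choosing all deformations to be of type ``$g$'' realizes every template orbit as an immersed closed curve on $\BoundTorus$ that winds some number of times around the cores $a_P=\un P$ and $a_Q=\un Q$, which remain disjoint from $\BoundTorus$. The key structural input, recorded in the caption of Figure~\ref{F:ProjTemp}, is that in each of the two annuli into which the branch curves $\BranchPQ,\BranchQP$ cut $\BoundTorus$ all the flattened ribbons run in the same direction; hence every crossing in the resulting diagram on $\BoundTorus$, be it a self-crossing of one orbit or a crossing between two distinct orbits, has the same sign. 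A direct check in the oriented picture of \S\ref{S:Compactification} (consistent, for $p=q=3$, with the left-handedness of the modular geodesic flow, where $\Tpq$ is the Lorenz template) shows that the corresponding contributions to $\lk$ are non-positive.

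Next I would compute $\lk(\gamma,\gamma')$ from this flattened picture, being careful that the ambient manifold is the lens space $L_{q-2,1}$ and not $S^3$. Concretely, build a rational $2$-chain bounding $\gamma$ (equivalently an integral $2$-chain bounding $r\gamma$, $r=q-2=|H_1(\barre{\un\Orbdq})|$) out of: $r$ parallel push-offs of the flattened $\gamma$ on $\BoundTorus$; copies of the meridian discs $D_P,D_Q$ of Lemma~\ref{L:BoundBasis}; and horodiscs capping off the $h$-periods exactly as in the proof of Proposition~\ref{P:LkInfty}, via the homological identity of Lemma~\ref{L:HomolPQ}. Intersecting with $\gamma'$, the total splits into crossing terms on $\BoundTorus$ (each non-positive by the previous paragraph) plus disc terms, the latter being winding numbers of $\gamma'$ around $a_P,a_Q$ and around $\tinf$, which are evaluated using $\lk(h,\tinf)=-1/r$, $\lk(a_P,\tinf)=q/r$, $\lk(a_Q,\tinf)=p/r$; a short computation shows these disc terms combine so as to keep the total non-positive. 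Dividing by $r$ gives $\lk(\gamma,\gamma')\le 0$.

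Finally, strict negativity. Since all contributions have one sign, $\lk(\gamma,\gamma')=0$ forces every crossing between (push-offs of) $\gamma$ and $\gamma'$ to vanish, i.e.\ the two flattened curves can be made disjoint on $\BoundTorus$ with no winding interaction around $a_P,a_Q$. A short combinatorial lemma --- comparing the cyclic words of the two orbits through their relative positions along the expanding branch segment $\BranchPQ$, \`a la Birman--Williams --- shows this cannot happen for two distinct template orbits unless one of them is $(1)^\Z$ or $(q-1)^\Z$, which are excluded; hence $\lk(\gamma,\gamma')<0$. I expect the genuine obstacle to be this last combinatorial lemma, together with the lens-space bookkeeping of the third paragraph (pinning down the signs of the disc contributions and verifying that the choice of flattening introduces no positive crossing): this is exactly where the hypothesis $p=2$ --- a single return ribbon $\Rib_p^1$ --- and the removal of the two parabolic words are really used.
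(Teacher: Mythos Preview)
Your reduction to the template $\Tdq$ via Proposition~\ref{P:Template} is correct and matches the paper.  But the heart of your argument---that after flattening onto $\BoundTorus$ ``every crossing\dots has the same sign'' and contributes non-positively to $\lk$---does not hold, and the paper's own proof shows why.  When one projects the elevator ribbons onto $\BoundTorus$ (say all to the left, as you propose), the crossings between $\Rib_q^{i,l}$ and $\Rib_q^{j,l}$ are \emph{positive}: this is exactly the content of Lemma~\ref{L:EnlSpSq} and Figure~\ref{F:PositiveIntersection}.  The caption of Figure~\ref{F:ProjTemp} that you cite says only that within each annulus the ribbons run in the same direction, which controls how ribbons overlap near $\BranchQP$; it does not say that the resulting crossings are negative.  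In fact the paper builds its $2$-chain in three pieces $\Sp\cup\Sq\cup\Sb$, finds that $\Sq\cap\gamma'$ is genuinely \emph{positive} (bounded by \eqref{Eq:EnlSq}), and then shows that the contribution of the torus $2$-chain $\Sb$ (Lemma~\ref{L:EnlSb}) is negative enough to dominate.  The final step is an explicit comparison of two quadratic forms in the $b_i,b'_j$, not a ``one sign fits all'' count.

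A second, related issue: your flattening by a single deformation (all ``$g$'') does not in general produce a null-homologous curve on $\BoundTorus$, so you cannot bound it by a $2$-chain living in the torus.  The paper's specific weighted projection $\gamma_\pi$ ($i$ copies of $\alpha^l$ and $q{-}2{-}i$ copies of $\alpha^r$) is chosen precisely so that Lemma~\ref{L:ProjNulle} holds and $\Sb$ exists.  Without this, your ``disc terms combine so as to keep the total non-positive'' is not a short computation but the entire problem, and your strict-negativity paragraph (which presumes all contributions share a sign) collapses with it.
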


The proof of this statement will occupy the rest of Section~\ref{S:q}. The strategy is as follows. Owing to Proposition~\ref{P:Template}, it is enough to show that the linking number of every pair~$\gamma, \gamma'$ of collections of periodic orbits of the template~$\Tdq$ is negative. By Lemma~\ref{L:Topology}, the first homology group of~$\barre{\un\Orbdq}$ is $\Z/(q-2)\Z$, so that the $1$-cycle~$(q-2)[\gamma]$ is the boundary of some 2-chain. What we shall do is  to explicitly construct a 2-chain~$S$ whose boundary is~$(q-2)[\gamma]$, and to show that the intersection number of~$S$ with~$\gamma'$ is negative. As the family~$\gamma'$ lies in the template~$\Tdq$, working with the 1-skeleton of the template as in the proof of Proposition~\ref{P:LkInfty} is impossible. Instead, we shall choose a particular projection of~$\Tdq$ on~$\BoundTorus$ and reduce the problem to computing intersection numbers on~$\BoundTorus$. Practically, we shall construct the $2$-chain~$S$ as the union of three parts, namely a 2-chain~$\Sq$ lying inside the solid torus~$\un\Delta'_Q/\Gamma_Q$, a 2-chain~$\Sp$ lying inside the solid torus~$\un\Delta'_P/\Gamma_Q$, and a 2-chain~$\Sb$ in the torus~$\BoundTorus$. Then we shall show that the intersection number between~$\Sq$ and~$\gamma'$ is slightly positive, that the intersection number between~$\Sp$ and~$\gamma'$ is zero, and that the intersection number between~$\Sb$ and $\gamma'$ is very negative, so that the sum of these three numbers is negative, as expected.

Let us turn to the construction of the 2-chains~$\Sp$, $\Sq$, and~$\Sb$. They will be defined by glueing discs whose boundaries will consist of \emph{elementary arcs}, some particular segments drawn inside the ribbons of the multitemplate~$\Tdq$. 

As depicted on Figure~\ref{F:ProjTemp}, every ribbon~$\Rib_q^j$ of~$\Tdq$ can be distorted in two ways on~$\BoundTorus$, according to whether the ribbon is pushed on its right or on its left (see Figure~\ref{F:TemplateQuotient}). We denote by $\Rib_q^{i, l}$ and $\Rib_q^{i, r}$ the two ribbons in~$\BoundTorus$ produced that way. Similarly, the ribbon~$\Rib_p$ can be pushed on the right or on the left, and can thus be distorted on two ribbons on~$\BoundTorus$. We denote them by $\Rib_p^l$ and $\Rib_p^r$. What we shall do is to decompose the orbits of~$\Tdq$ into pieces lying inside a ribbon, and choose for every such piece a combination of the two possible projections, so that the sum of these projections is null-homologous in~$\BoundTorus$ (see Figure~\ref{F:ProjOrbit}). Here is the precise notion. 

\goodbreak

\begin{defi}
\label{D:ProjRibbon}
We say that $\alpha$ is an \emph{elementary arc} (of~$\Tdq$) if $\alpha$ is 

- (type 1) either a segment of an orbit in~$\Tdq$ that goes from a point~$A_0$ of $\BranchPQ$ to a point~$A_1$ of $\BranchQP$ and travels along the ribbon~$\Rib_q^i$ for some $i$ between $1$ and~$q-1$; then we write~$\alpha^l$ and $\alpha^r$ for the segments of~$\BoundTorus$ that connect~$A_0$ to $A_1$ and are orbits in the ribbons~$\Rib_q^{i, l}, \Rib_q^{i, r}$ respectively,

- (type 2) or a segment of an orbit in~$\Tdq$ that goes from~$\BranchQP$ to~$\BranchPQ$ by travelling along~$\Rib_p$; then we write~$\alpha^l$ and $\alpha^r$ for the deformations of~$\alpha$ that are orbits of the ribbons~$\Rib_p^l$ and~$\Rib_p^r$.
\end{defi}

\begin{figure}[hbt]
	\begin{center}
	\begin{picture}(90,90)(0,0)
 	\put(0,0){\includegraphics*[width=0.52\textwidth]{Temp2Q2.pdf}}
	\put(85,15){$\Rib^{i,d}_q$}
	\put(85,57){$\Rib^{j,g}_q$}
	\end{picture}
	\end{center}
	\caption{\small The two projections of~$\Tdq$ on~$\BoundTorus$, with $q=5$, in the slide-of-cake model. The curve~$a_Z$ is the vertical boundary of the depicted square, while the curve~$c_Q$ is the horizontal boundary. The solid torus~$\un\Delta'_P/\Gamma_P$ is in front of the picture, so that we see on the front the two projections of the ribbon~$\Rib_2^1$. On the back, the two projections of each of the four ribbons~$\Rib_q^1, \ldots, \Rib_q^4$.}
	\label{F:ProjOrbit}
\end{figure}

We now choose a canonical projection of every elementary arc to a convenient multicurve. So assume that $\alpha, \alpha'$ are elementary of type~$1$ and $2$ respectively, and that the end of~$\alpha$ coincides with the origin of~$\alpha'$. Note that the condition about the ends implies that $\alpha'$ is uniquely determined by~$\alpha$. Then we denote by~$\alpha_\pi$ the multicurve consisting of $i$ times~$\alpha^l$ and $q-2-i$ times~$\alpha^r$, followed by $i$ times~$\alpha'^l$ and $q-2-i$ times~$\alpha'^r$. The reason for this particular choice is the following 

\begin{lemma}
\label{L:ProjNulle}
Let $\alpha_1, \alpha'_1 \ldots, \alpha_n, \alpha'_n$ be the decomposition of~$\gamma$ into a concatenation of elementary arcs of type 1 and 2 alternately. 
Then the union~$\gamma_\pi$ of the multicurves~$(\alpha_1)_\pi, (\alpha'_1)_\pi$, \dots, $(\alpha_n)_\pi$, $(\alpha'_n)_\pi$ is a multicurve on~$\BoundTorus$ that is trivial in homology.
\end{lemma}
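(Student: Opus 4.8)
The plan is to compute the homology class of $\gamma_\pi$ in $H_1(\BoundTorus;\Z)$ directly, using the basis $([c_P],[c_Q])$ from Lemma~\ref{L:BoundBasis}, and show it vanishes. The key observation is that each elementary arc $\alpha$ of type~1 or~2 goes from a point of $\BranchPQ$ to a point of $\BranchQP$, and its two projections $\alpha^l,\alpha^r$ differ from each other by a loop that is homologous to $\pm[a_P]$ (for the type-2 arc) or to the core curve of the corresponding elevator ribbon (for type~1). More precisely, I would first fix a convenient reference: pick one ``straight'' projection for each elementary arc and record the discrepancy between $\alpha^l$ and $\alpha^r$ as a multiple of a fixed generator of $H_1(\BoundTorus;\Z)$. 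The numerology in the definition of $\alpha_\pi$ — taking $i$ copies of $\alpha^l$ and $q-2-i$ copies of $\alpha^r$ — is designed precisely so that, per elementary arc, the total weight is $q-2$ and the ``center of mass'' of the chosen projections is independent of $i$; this is what makes the construction canonical.

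The key steps, in order, are as follows. First I would set up: after concatenating $\gamma$ into $\alpha_1,\alpha'_1,\dots,\alpha_n,\alpha'_n$, the multicurve $\gamma_\pi$ is a closed $1$-cycle on $\BoundTorus$ with total multiplicity $q-2$ (each elementary arc contributes $i$ copies of the left projection and $q-2-i$ of the right). Second, I would compute $[\alpha^l]$ and $[\alpha^r]$ individually: using the picture of the elevator (Figure~\ref{F:TemplateQuotient}) and the projection picture (Figure~\ref{F:ProjOrbit}), the endpoints $A_0\in\BranchPQ$, $A_1\in\BranchQP$ are fixed, so $\alpha^l-\alpha^r$ is a genuine loop, and reading off Figure~\ref{F:ProjOrbit} shows it is homologous to the core $a_Q$ (resp.\ $a_P$) of the relevant solid torus, with a coefficient determined by the ribbon index. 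Third — the heart of the computation — I would show that the weighted sum $i[\alpha^l]+(q-2-i)[\alpha^r]$, summed over all $2n$ elementary arcs of $\gamma$, telescopes: the ``$\BranchPQ\to\BranchQP$'' halves and the ``$\BranchQP\to\BranchPQ$'' halves of consecutive arcs share endpoints, so the fixed-endpoint contributions cancel around the closed orbit $\gamma$, and the leftover is exactly $(q-2)$ times the class $[\gamma]$ pushed onto $\BoundTorus$ plus correction terms in $[a_P],[a_Q]$ whose coefficients, by the choice of weights $i$ vs.\ $q-2-i$, cancel in pairs. Finally I would invoke that $[\gamma]$ itself — being homologous in $\un\Orbdq$ to the expression of Lemma~\ref{L:HomolPQ} with $p=2$ — contributes $q-2$ times a class that is $(q-2)$-torsion, hence $\gamma_\pi$, carrying the full factor $q-2$, is null-homologous in $\BoundTorus$.

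The main obstacle I expect is bookkeeping the orientations and the precise coefficient in $[\alpha^l]-[\alpha^r]$ as a function of the ribbon index $i$: one must check that the weighting $(i, q-2-i)$ is exactly the one that kills the $[a_Q]$-component (this is the content of the middle of Figure~\ref{F:ProjOrbit}, where each ribbon $\Rib_q^i$ is displaced by an amount proportional to $i$), and that the single type-2 ribbon $\Rib_p$ — displaced by a fixed amount — contributes a term that cancels against the accumulated $[a_P]$-shifts coming from the type-1 arcs. A clean way to organize this is to work in the universal abelian cover of $\BoundTorus$, lift each elementary arc, and check that the lift of $\gamma_\pi$ closes up; alternatively, one computes the two intersection numbers $[\gamma_\pi]\cdot[c_P]$ and $[\gamma_\pi]\cdot[c_Q]$ and shows both are zero, which by Lemma~\ref{L:BoundBasis}$(ii)$ suffices since $([c_P],[c_Q])$ is a basis. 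I would favor the latter, as the intersection numbers are read directly off the slice-of-cake picture: each elementary arc crosses $c_Q$ a number of times independent of the left/right choice, so the weighting makes the weighted crossing count a clean multiple of $q-2$, while crossings with $\tinf$ (hence with $c_P$) are handled by the displacement analysis above.
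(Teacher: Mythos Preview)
Your primary approach has a conceptual gap at the final step. You want to write $[\gamma_\pi]$ as $(q-2)$ times ``the class $[\gamma]$ pushed onto $\BoundTorus$'' plus corrections that cancel, and then kill the main term by invoking that $[\gamma]$ is $(q-2)$-torsion via Lemma~\ref{L:HomolPQ}. But that torsion statement lives in $H_1(\barre{\un\Orbdq};\Z)\simeq\Z/(q-2)\Z$, whereas you need vanishing in $H_1(\BoundTorus;\Z)\simeq\Z^2$, which is torsion-free. In $\Z^2$, the class $(q-2)\cdot c$ vanishes if and only if $c$ itself vanishes, so the torsion argument gives you nothing. Moreover, ``the push of $[\gamma]$ to $\BoundTorus$'' is not canonically defined: $\gamma$ is a curve in the $3$-manifold, and different projections of its elementary arcs onto $\BoundTorus$ yield different closed curves whose classes differ by multiples of $[a_P]$ and $[a_Q]$. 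The whole purpose of the weighting $(i,\,q-2-i)$ is precisely to pick the one projection for which the total class vanishes, so separating the answer into ``$(q-2)$ times a reference push'' plus ``corrections'' does not simplify the problem.

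Your alternative route, on the other hand, is exactly what the paper does and is the clean way to proceed. The paper computes the algebraic intersection of $\gamma_\pi$ with $\tinf$ and with $c_Q$ (which together form a basis of $H_1(\BoundTorus;\Z)$, equivalent to your proposed $[c_P],[c_Q]$). Reading off Figure~\ref{F:ProjOrbit}, one tabulates for each ribbon $\Rib_q^{i,l},\Rib_q^{i,r},\Rib_p^l,\Rib_p^r$ its intersection numbers with $\tinf$ and $c_Q$; then for each consecutive pair $(\alpha,\alpha')$ of type~1 and type~2, the weighted combination $i\alpha^l+(q-2-i)\alpha^r+i\alpha'^l+(q-2-i)\alpha'^r$ has zero intersection with both basis curves. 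Summing over all pairs gives $[\gamma_\pi]=0$. There is no telescoping and no appeal to torsion: each type-1/type-2 pair contributes zero on its own. I would recommend abandoning the cobordism/telescoping plan and carrying out this direct intersection computation.
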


\begin{proof}
We see on Figure~\ref{F:ProjOrbit} that, for every~$i$, the ribbon~$\Rib_q^{i,r}$ (blue on the picture) cuts the curve~$\tinf$ (the vertical boundary on the picture) $i$ times and the curve~$c_Q$ (the horizontal boundary) $-1$ times. Similarly, $\Rib_q^{i,l}$ (orange) cuts~$\tinf$ (vertical) $q-i$ times and~$c_Q$ zero time. In the same way, $\Rib_p^r$ cuts~$\tinf$ minus one times and~$c_Q$ one time, whereas $\Rib_p^l$ cuts~$\tinf$ one time and~$c_Q$ zero time.

Suppose that~$\alpha, \alpha'$ are two consecutive elementary arcs of~$\gamma$, with~$\alpha$ lying on the ribbon~$\Rib_q^i$ for some~$i$ and~$\alpha'$ lying on~$\Rib_p$. Then the above remark implies that the (non-close) multi-curve~$\alpha\cup\alpha'$ has zero-intersection with both~$\tinf$ and~$c_Q$. By adding the contributions of all elementary arcs of~$\gamma$, we deduce that~$\gamma$ is null-homologous in~$\BoundTorus$.
\end{proof}

We are now going to define the 2-chains~$\Sp$ and~$\Sq$. If $\alpha$ is an elementary arc of type~1, we denote by~$S_q^\alpha$ the 2-cycle consisting of $i$ times a disc in~$\barre{\un\Delta'_Q/\Gamma_Q}$ with boundary~$\alpha\cup -\alpha^l$ plus $q-2-i$ times a disc with boundary~$\alpha\cup -\alpha^r$. Symmetrically, if~$\alpha$ is of type~2, we denote by~$S_p^\alpha$ the $2$-cycle consisting of $i$ times a disc in~$\barre{\un\Delta'_P/\Gamma_P}$ with boundary~$\alpha \cup -\alpha^l$ plus $q-2-i$ times a disc with boundary~$\alpha \cup -\alpha ^r$.

\begin{defi}
With the above notation, we define~$\Sq$ to be the union of the 2-cycles $S_q^{\alpha_1}, \ldots, S_q^{\alpha_n}$, and $\Sp$ to be the union of the 2-cycles~ $S_p^{\alpha'_1}, \ldots, S_p^{\alpha'_n}$. 
\end{defi}

The next step is to complete~$\Sp\cup\Sq$ into a 2-chain with boundary~$\gamma$. Owing to Lemma~\ref{L:ProjNulle}, this can be done inside~$\BoundTorus$.
Indeed, the multi-curve~$\gamma_\pi$ divides~$\BoundTorus$ into a finite number of regions, say~$R_1, \ldots, R_n$, that can be seen as 2-chains. Since~$[\gamma_\pi]$ is zero in~$H_1(\BoundTorus; \Z)$, there exists an integral linear combination~$\sum \lambda_k [R_K]$ with boundary~$\gamma_\pi$. In fact, the coefficients~$\lambda_k$ are defined up to a constant only. With our particular choice of the projection~$\gamma_\pi$, at every point of~$\BranchPQ$ or~$\BranchQP$, the number of segments of~$\gamma_\pi$ that come from the left (\resp right) equals the number of segments that leave to the left (\resp right). 

\begin{defi}
Let us choose numbers~$\lambda_k$ so that, for every region~$R_i$ intersecting~$\BranchQP$, the associated coefficients~$\lambda_i$ is zero. Then we define~$\Sb$ to be the 2-chain $\sum\lambda_k R_k$.
\end{defi}

Note that, by construction, the boundary of the 2-chain~$\Sb$ is the multicurve~$\gamma_\pi$. 

At this point, we have associated with the first collection of periodic orbits~$\gamma$ a certain 2-chain~$\Sp\cup\Sq\cup\Sb$ that, by construction, has boundary~$\gamma$. Let us now consider the second collection of periodic orbits~$\gamma'$, which is assumed to be disjoint from~$\gamma$. We shall estimate the intersection number between~$\gamma'$ and each of the 2-chains $\Sp, \Sq$ and~$\Sb$, and prove that their sum is negative. For this, we introduce specific combinatorial data encoding the position of the collections~$\gamma$ and~$\gamma'$ inside the template~$\Tdq$.

\begin{lemma}
\label{L:EnlSpSq}
The collection~$\gamma'$ does not intersect the 2-chain~$\Sp$, and the intersection number between~$\gamma'$ and~$\Sq$ is at most
\begin{equation}
\label{Eq:EnlSq}
\sum_{1\le i<j\le q-1} \left(\left\lfloor\frac{j-i}2\right\rfloor + 1\right) (i-1) \, b_i b'_j
+ \sum_{1\le j<i\le q-1} \left(\left\lfloor\frac{i-j}2\right\rfloor + 1\right) (q-1-i) \, b_i b'_j,
\end{equation}
where, for every~$i$ between~$1$ and $q-1$, $b_i$ (\resp $b'_i$) is the number of elementary arcs of~$\gamma$ (\resp $\gamma'$) lying in the ribbon~$\Rib_q^i$.
\end{lemma}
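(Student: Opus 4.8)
The plan is to analyze carefully how the 2-chain $\Sp\cup\Sq$ meets the orbits of $\gamma'$, keeping in mind that every intersection point must lie over one of the two solid tori $\un\Delta'_P/\Gamma_P$ or $\un\Delta'_Q/\Gamma_Q$. First I would record the geometric picture: each $S_p^{\alpha'}$ is a union of discs in $\barre{\un\Delta'_P/\Gamma_P}$ interpolating between an elementary arc $\alpha'$ of $\gamma$ of type 2 and its two pushed-off copies $\alpha'^l,\alpha'^r$ on $\BoundTorus$. Since the single ribbon $\Rib_p$ and its two deformations $\Rib_p^l,\Rib_p^r$ sweep out only a small infinitesimal solid torus, and since all of $\gamma'$ lies on the template $\Tdq$, the only way $\gamma'$ could meet $S_p^{\alpha'}$ would be through the ribbon $\Rib_p$; but the interpolating discs can be taken in the complement of the (finitely many) orbits of $\Tdq$ carried by $\Rib_p$ other than $\alpha'$ itself, because these orbits are parallel arcs in a product neighbourhood. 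Hence $\gamma'\cap\Sp=\varnothing$. I expect this part to be essentially a transversality/position argument and not the main obstacle.

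The core of the proof is the bound on $\mathrm{Int}(\gamma',\Sq)$. Here I would work arc-by-arc: for each elementary arc $\alpha$ of $\gamma$ of type 1, lying in $\Rib_q^i$, the 2-cycle $S_q^\alpha$ is $i$ copies of a disc bounded by $\alpha\cup(-\alpha^l)$ plus $q-2-i$ copies of a disc bounded by $\alpha\cup(-\alpha^r)$. For each elementary arc $\beta$ of $\gamma'$ lying in $\Rib_q^j$, I would count how many times $\beta$ must cross such a disc. The key input is the structure of the elevator inside $\un\Delta'_Q/\Gamma_Q$ (Figure~\ref{F:TemplateQuotient} and Figure~\ref{F:ProjOrbit}): the ribbons $\Rib_q^1,\dots,\Rib_q^{q-1}$ wind around the solid torus by amounts determined by their index, and pushing $\alpha$ to $\alpha^l$ versus $\alpha^r$ differs by a meridian disc of the torus. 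The number of times a given $\beta$ in $\Rib_q^j$ is forced to puncture the disc interpolating $\alpha$ (in $\Rib_q^i$) to $\alpha^l$ or $\alpha^r$ is governed by the relative winding of the two ribbons, which is where the factors $\lfloor (j-i)/2\rfloor + 1$ and $\lfloor (i-j)/2\rfloor + 1$ come from — these count how many times index-$j$ strands sweep past the annular region between the two push-offs of an index-$i$ strand, roughly a half-turn per step of index difference, rounded. The factors $(i-1)$ and $(q-1-i)$ are exactly the multiplicities of the two kinds of discs in $S_q^\alpha$. Summing over all pairs of arcs $(\alpha,\beta)$ then groups into $\sum_i\sum_j b_i b_j'$ times the stated coefficients, split according to whether $j>i$ (when $\beta$ passes the $\alpha^l$-side discs, with multiplicity $i-1$) or $j<i$ (when it passes the $\alpha^r$-side discs, with multiplicity $q-1-i$), and the case $i=j$ contributes nothing because same-index ribbons can be made disjoint in the elevator. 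The inequality rather than equality in the statement comes from the fact that we only need an upper bound, and some of the geometric crossings can be cancelled in sign.

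The main obstacle I anticipate is the precise bookkeeping of the winding numbers: one must pin down, in the quotient-by-$\Gamma_Q$ picture (the "slice-of-cake" or "storey" model), exactly how many meridian-disc punctures an index-$j$ orbit makes against the region swept between the left and right deformations of an index-$i$ orbit. This requires carefully tracking the screw-motion identification and making a consistent choice of left/right push-offs so that the count is uniform over all arcs; getting the rounding $\lfloor\cdot\rfloor+1$ correct (as opposed to off-by-one errors) is the delicate point. Once the per-pair crossing count is established and shown to be $\le (\lfloor |j-i|/2\rfloor+1)$ times the appropriate multiplicity, the rest is a routine double sum over the elementary arcs of $\gamma$ and $\gamma'$, using that $b_i$ (resp. $b_i'$) counts arcs in $\Rib_q^i$. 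I would present Figure~\ref{F:ProjOrbit} as the visual anchor for the winding computation and phrase the disc-interpolation discs explicitly as products (arc)$\times$(sub-interval of the fiber) to make the intersection count with $\gamma'$ manifestly finite and sign-controlled.
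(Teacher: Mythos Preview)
Your overall strategy matches the paper's: reduce the 3D intersection count to crossings of projected ribbons on the torus~$\BoundTorus$, split the count by the pair of ribbon indices $(i,j)$, and sum. Two points deserve correction, though.

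For $\Sp$, the paper's argument is different and cleaner than yours. Rather than arguing that the interpolating discs ``can be taken'' disjoint from the other $\Rib_p$-arcs (which is delicate, since $\Sp$ has already been fixed before this lemma), the paper simply observes from the projection picture (Figure~\ref{F:ProjOrbit}) that the projected ribbons $\Rib_p^l$ and $\Rib_p^r$ cross no other projected ribbon on~$\BoundTorus$. Since every intersection point of $\gamma'$ with $\Sp\cup\Sq$ must sit over a crossing of projected ribbons, there are none coming from the $p$-side.

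More importantly, you misidentify the source of the inequality. You write that ``some of the geometric crossings can be cancelled in sign''; this is false. The key observation (Figure~\ref{F:PositiveIntersection}) is that \emph{every} such intersection point is positive. The reason the statement is only an upper bound is this: the projected ribbons $\Rib_q^{i,l}$ and $\Rib_q^{j,l}$ have exactly $\lfloor|i-j|/2\rfloor$ transverse crossings, and in addition they \emph{overlap} just before the branching segment~$\BranchQP$. Whether two particular arcs $\alpha\subset\Rib_q^i$ and $\beta\subset\Rib_q^j$ actually produce an intersection at this overlap depends on their relative vertical order on~$\BranchQP$, which the counts $b_i,b'_j$ alone do not determine. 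The bound $\lfloor|i-j|/2\rfloor+1$ comes from assuming this overlap always contributes; since all contributions are positive, this yields a valid upper bound. Your plan is otherwise on target, but this positivity-plus-overlap mechanism is the actual engine of the proof, not sign cancellation.
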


\begin{figure}[hbt]
	\begin{center}
 	\includegraphics*[scale=0.8]{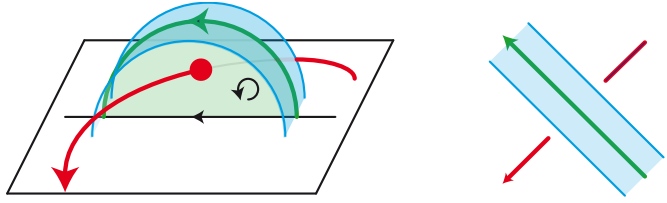}
	\end{center}
	\caption{\small An elementary arc of $\gamma'$ (red) may intersect the 2-chain $\Sq$ (green) only if it lies on a ribbon under the ribbon containing~$\gamma$ (blue). Since the projection of this intersection point corresponds to a positive crossing (see Figure~\ref{F:ProjOrbit}), the intersection number is~$+1$.}	
	\label{F:PositiveIntersection}
\end{figure}

\begin{proof}
Every intersection point between~$\gamma'$ and the 2-chains~$\Sp, \Sq$ is the intersection between one of the discs bounded by an elementary arc, say~$\alpha$, of~$\gamma$ and one of its two projections~$\alpha^l$ or~$\alpha^r$, and an elementary arc, say~$\alpha'$, of~$\gamma'$. This implies (see Figure~\ref{F:PositiveIntersection}) that $\alpha, \alpha'$ project on~$\BoundTorus$ on a double point, and that the ribbon containing~$\alpha'$ between~$\BoundTorus$ and the ribbon containing~$\alpha$. In particular, these two ribbons have to be different.

On Figure~\ref{F:ProjOrbit}, one sees that all intersections between projected ribbons on~$\BoundTorus$ correspond to two ribbons of type~$\Rib_q^{i,l}$ and~$\Rib_q^{j,l}$, or to two ribbons of type~$\Rib_q^{i,r}$ and~$\Rib_q^{j,r}$. Therefore no intersection point comes from~$\Rib_p^l$ or $\Rib_p^r$, so that $\gamma'$ does not intersect~$\Sp$. We also see that, for every $i,j$, the two projected ribbons~$\Rib_q^{i,l}$ and~$\Rib_q^{j,l}$ intersect~$\lfloor\vert i - j\vert / 2\rfloor$ times transversely, and overlap just before the gluing segment~$\BranchQP$.

The collection of the numbers~$b_i$ does not determine the position of the orbit~$\gamma$ on~$\Tdq$ completely. In particular, it does not say whether two orbits on~$\Rib_q^{i,l}$ and~$\Rib_q^{j,l}$ respectively will overlap before~$\BranchQP$. Nevertheless, since all projected crossings are positive, we obtain an upper bound for the intersection number when assuming that two such elementary arcs always overlap before~$\BranchQP$. 

By construction, there are $i\,b_i$ elementary arcs of~$\gamma_\pi$ on~$\Rib_q^{i,l}$, and $(q-2-i)\,b_i$ on~$\Rib_q^{i,r}$. Each elementary arc of type~1 yields at most~$(\lfloor (j-i)/2\rfloor +1)b'_j$ intersection points with elementary arcs of~$\gamma'$ lying on~$\Rib_q^{j}$ if $j>i$, and no intersection point of~$j\le i$. Similarly, for $j<i$, each elementary arc of type~2 yields at most~$(\lfloor (j-i)/2\rfloor +1)b'_j$ intersection points with elementary arcs of~$\gamma'$ lying on~$\Rib_q^{j}$. All intersection points are positive, and \eqref{Eq:EnlSq} follows.
\end{proof}

We now compute the contribution of~$\Sb$ to the linking number of~$\gamma$ and~$\gamma'$. For convenience, we set $\Delta = \sum_{1\le i\le q-1} (i-1)(q-1-i)\,b_i$.

\begin{lemma}
\label{L:EnlSb}
The intersection number between~$\gamma'$ and~$\Sb$ is at most
\begin{equation}
\label{Eq:EnlSf}
\sum_{j\le q/2}(-\Delta+\sum_{k\le j} (k-1) \, b_k) b'_j
+ \sum_{j > q/2}(-\Delta+\sum_{k> j} (q-1-k) \, b_k) b'_j.
\end{equation}
\end{lemma}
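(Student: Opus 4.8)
The plan is to compute the intersection number $\gamma' \cdot \Sb$ on the torus $\BoundTorus$, using the fact that $\Sb = \sum \lambda_k R_k$ is determined by the coefficients $\lambda_k$, which in turn are determined by crossing the regions $R_k$ and counting how the multicurve $\gamma_\pi$ winds. The key point is that $\gamma'$, after its canonical projection, meets $\Sb$ exactly along the projections $\gamma'_\pi$, so the intersection number equals $\sum_k \lambda_k \,(\gamma'_\pi \cdot R_k)$, which can be rewritten as a sum over the arcs of $\gamma'_\pi$ of the coefficient $\lambda$ of the region it passes through (with a sign). So first I would make explicit how $\lambda_k$ varies from one region to an adjacent one: crossing a strand of $\gamma_\pi$ changes $\lambda$ by $\pm 1$, and since $\gamma_\pi$ decomposes into the projected arcs $(\alpha_m)^l, (\alpha_m)^r, (\alpha'_m)^l, (\alpha'_m)^r$ with the prescribed multiplicities, I can track $\lambda$ as a function of ``how far'' a point is along the $\tinf$-direction and the $c_Q$-direction, starting from the normalization that $\lambda = 0$ on every region touching $\BranchQP$.

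Next I would evaluate $\lambda$ along an elementary arc $\alpha'$ of $\gamma'$ of type $1$ lying on $\Rib_q^{j}$. The projection $\alpha'^l$ (resp. $\alpha'^r$) runs from $\BranchPQ$ to $\BranchQP$; since $\lambda=0$ on the $\BranchQP$-side, the value of $\lambda$ along $\alpha'_\pi$ is obtained by summing the $\pm 1$ jumps caused by the strands of $\gamma_\pi$ it crosses on the way. Reading off Figure~\ref{F:ProjOrbit}: the left-pushed projected ribbons $\Rib_q^{k,l}$ and $\Rib_q^{k,r}$ cross $\tinf$ a controlled number of times ($q-k$ and $i$ respectively, reindexed), and the arcs of $\gamma_\pi$ coming from $\gamma$ on ribbon $k$ appear with multiplicity $(k-1)$ on the right and $(q-1-k)$ — wait, with multiplicity $i$ on the left and $q-2-i$ on the right per the definition of $\alpha_\pi$; combining these two counts and summing over $k$, the strands below (for $k \le j$, the left-type intersections) contribute $\sum_{k\le j}(k-1)b_k$ and the global shift from the normalization at $\BranchQP$ (equivalently, the winding of $\gamma_\pi$ around the whole torus, which involves $\sum_k (k-1)(q-1-k) b_k = \Delta$) contributes $-\Delta$. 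This produces the first sum in~\eqref{Eq:EnlSf}; the case $j > q/2$ is symmetric, with $\Rib_q^{k,r}$ taking the role of $\Rib_q^{k,l}$ and the count becoming $\sum_{k>j}(q-1-k)b_k$. Summing over all elementary arcs $\alpha'$ of $\gamma'$, grouped by the ribbon index $j$ (there being $b'_j$ of them per ribbon, after the projection is taken with multiplicities that cancel appropriately with the overlaps), yields the claimed bound; the ``at most'' comes from the same phenomenon as in Lemma~\ref{L:EnlSpSq}, namely that we do not control whether certain projected arcs overlap or cross transversely just before $\BranchQP$, and choosing the worst case gives an upper bound because all the relevant crossings have the same (here negative) sign.

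The main obstacle I expect is the bookkeeping of signs and multiplicities: one has to keep straight (a) the orientation conventions on $\BoundTorus$ fixed in Lemma~\ref{L:BoundBasis} (the basis $[c_P],[c_Q]$ and the resulting intersection form), (b) the multiplicities $i$ versus $q-2-i$ attached to the left/right projections in the definition of $\alpha_\pi$, (c) the fact that $\lambda_k$ is pinned down only after the normalization at $\BranchQP$, so that the global quantity $\Delta$ enters as an additive constant, and (d) that $\gamma$ and $\gamma'$ play asymmetric roles (one bounds the chain, the other is intersected). Verifying that all the crossings counted are negative — which is what turns the raw count into the upper bound~\eqref{Eq:EnlSf} with the stated signs — is the delicate geometric input, and it should follow by inspecting Figure~\ref{F:ProjOrbit} exactly as in the proof of Lemma~\ref{L:EnlSpSq}, where the analogous crossings were seen to be uniformly positive. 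Once the sign analysis is in place, the formula is just the sum of the local contributions $\lambda$ along $\gamma'_\pi$, regrouped by ribbon index.
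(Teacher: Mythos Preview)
Your overall framework is right — the intersection $\gamma' \cdot \Sb$ is determined by reading off the coefficient $\lambda$ at the points where $\gamma'$ pierces the torus $\BoundTorus$, and the normalization $\lambda=0$ along $\BranchQP$ pins everything down — but there are two genuine gaps.

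First, a misconception about where the intersection happens. The curve $\gamma'$ lives in the $3$-manifold and meets $\BoundTorus$ transversally only at isolated points on $\BranchPQ \cup \BranchQP$; the projected multicurve $\gamma'_\pi$ plays no role in this lemma, and the expression ``$\gamma'_\pi \cdot R_k$'' does not typecheck (a curve and a region on the same surface have no transverse intersection number). Since $\lambda=0$ on $\BranchQP$ and every arc of $\gamma'$ crosses $\BranchPQ$ with the same orientation (from the $Q$-solid-torus into the $P$-solid-torus), the intersection number is simply the sum, over the crossing points of $\gamma'$ with $\BranchPQ$, of the level $\lambda$ there. What is needed, then, is a pointwise upper bound for $\lambda$ along $\BranchPQ$, obtained \emph{without any reference to $\gamma'$}.

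Second, the paper's computation of this bound is structurally different from what you sketch, and the source of the ``at most'' is not the overlap ambiguity of Lemma~\ref{L:EnlSpSq}. The paper subdivides $\BranchPQ$ into segments $[M_jM_{j+1}]$ (one per ribbon $\Rib_q^j$), shows $\lambda=-\Delta$ at the extreme points $M_1, M_q$ by following the \emph{fiber direction} from $M_q$ to $\BranchQP$ and counting the strands of $\gamma_\pi$ crossed, and then walks along $\BranchPQ$ itself: at each point where $\gamma$ meets $\BranchPQ$ the level jumps by $i-l$, where $i$ is the outgoing ribbon index and $l$ the index of the ribbon $\gamma$ traversed just before, hence by at most $i-1$ since $l\ge1$. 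Induction gives $\lambda \le -\Delta + \sum_{k\le j}(k-1)b_k$ on $[M_jM_{j+1}]$ for $j\le q/2$, and the symmetric bound from $M_q$ for $j>q/2$. The ``at most'' thus comes from not knowing the incoming index $l$. Your proposed route — tracking $\lambda$ by following arcs of $\gamma'_\pi$ from $\BranchPQ$ to $\BranchQP$ — would instead entangle the bound with the crossing pattern between $\gamma_\pi$ and $\gamma'_\pi$, which is precisely the data already handled separately in Lemma~\ref{L:EnlSpSq} and which you do not want to re-count here.
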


\begin{proof}
Since~$\gamma'$ intersects the torus~$\BoundTorus$ on~$\BranchPQ$ and~$\BranchQP$ only, we have to estimate the coefficients~$\lambda_k$ of the associeted regions in the 2-chain~$\Sb$. By definition of~$\Sb$, the coefficient of every region intersecting~$\BranchQP$ is zero. Since every elementary arc of~$\gamma'$ that intersects~$\BranchPQ$ goes from the solid torus~$\un\Delta'_Q/\Gamma_Q$ into~$\un\Delta'_P/\Gamma_P$, the intersection number between~$\gamma'$ and~$\Sb$ is exactly the sum of the levels of the intersection points of~$\gamma'$ with~$\BranchPQ$.  Let us cut the segment~$\BranchPQ$ into~$q-1$ segments, say~$[M_1M_2], \ldots, [M_{q-1}M_q]$, corresponding to the origins of the ribbons~$\Rib_q^1, \ldots, \Rib_q^{q-1}$.

We claim that the level of the points~$M_1$ and~$M_q$ is~$-\Delta$. Indeed, starting from~$M_q$ (the top point in the segment~$\BranchPQ$ on Figure~\ref{F:ProjOrbit}), and following the fiber until we reach~$\BranchQP$, we intersect the projections of all several ribbons of type~$\Rib_q^{i,l}$. For every~$i$, there are~$q-1-i$ such intersections, all being positive. Since the ribbon~$\Rib_q^{i,l}$ contains $i\,b_i$ elementary arcs of~$\gamma$, we cross~$\gamma$ exactly~$\Delta$ times along the path. The same argument works for~$M_1$.

Now we claim that, for~$i\le q/2$, the level at every point of~$[M_iM_{i+1}]$ is at most~$\Delta+\sum_{k\le i} (k-1) \, b_k$. Indeed, when starting from~$M_1$ and following~$\BranchPQ$, the level changes when we cross an intersection point of~$\gamma$ with~$\BranchPQ$. Let $B$ be such a point. Then there are~$q-2$ elementary arcs of~$\gamma$ arriving at~$B$ from the ribbon~$\Rib_p$. Depending on the ribbon~$\Rib_q^j$ followed before~$\Rib_p$, the~$q-2$ elementary arcs of the projection~$\gamma_\pi$ arriving at~$B$ decompose into $l-1$ of them arriving from the left along~$\Rib_q^{j,r}$, and $q-1-l$ arriving from the right along~$\Rib_q^{j,l}$. Similarly, since $\gamma$ leaves $B$ along~$\Rib_q^i$, there are~$q-2$ elementary arcs of~$\gamma_\pi$ that leave~$B$, $i-1$ of them on the left along~$\Rib_q^{i,l}$, and $q-1-i$ of them on the right along~$\Rib_q^{i,r}$. Therefore the difference of level under and above~$B$ is~$i-l$. In particular, it is at most~$i$. Using an induction on $i$, we deduce that the level is at most~$\Delta+\sum_{k\le i} (k-1) \, b_k$ at~$M_{i+1}$, and \emph{a fortiori} at every point on~$[M_iM_{i+1}]$. We get a similar result for $i$ larger that~$q/2$. Equation~\eqref{Eq:EnlSf} easily follows.
\end{proof}

We are now able to complete the argument.

\begin{proof}[Proof of Proposition~\ref{P:2q} (case $(a)$ of Theorem~A)] 
We continue with the same notations. Equation~\eqref{Eq:EnlSf} bounding the intersection number between~$\gamma'$ and~$\Sb$ expands into
\begin{eqnarray*}
&-&\sum_{1\le j\le q/2}\big(\sum_{i\le j} (i-1)(q-2-i)\,b_i + \sum_{i > j} (i-1)(q-1-i)\,b_i\big) b'_j \\
&-&\sum_{q/2 < j \le q-1}\big(\sum_{i< j} (i-1)(q-1-i)\,b_i + \sum_{i \ge j} (i-2)(q-1-i)\,b_i\big) b'_j.
\end{eqnarray*}

By adding Equation~\eqref{Eq:EnlSq}, we obtain the following expression
\begin{eqnarray*}
	\sum_{1\le j\le q/2}
	(&\sum_{i< j}& -(i-1) \big(q-2-i - \left\lfloor({j-i})/2\right\rfloor - 1\big)\,b_i\, \\
	+ &\sum_{i > j}& -(q-1-i) \big(i - \left\lfloor({i-j})/2\right\rfloor - 1 \big)\,b_i \quad )\quad b'_j \\
	+ \sum_{q/2 < j \le q-2}(
	&\sum_{i< j}& -(i-1)\big(q-1-i-\left\lfloor({j-i})/2\right\rfloor - 1\big)\,b_i \\
	+ &\sum_{i > j}& -(q-1-i)\big( i-2 -\left\lfloor({i-j})/2\right\rfloor -1 \big)\,b_i\quad) \quad b'_j,
\end{eqnarray*}
plus some terms in~$b_ib'_j$ whose coefficients all are negative. Therefore, the intersection number between~$\gamma'$ and~$S$ is bounded from above by a quadratic form in the families $(b_{i,j}), (b'_{i,j})$, all of whose coefficients are negative. Therefore, the linking number~$\lk(\gamma,\gamma')$ is negative.
\end{proof}

 
\section{Surfaces and orbifolds of type~$(2,3,4g{+}2)$ }
\label{S:237}

We now turn to the hyperbolic 2-orbifolds~$\Orbg$ and to case~$(b)$ in Theorem~A, namely the result that every two collections of periodic orbits of the geodesic flow on~$\Orbg$ are negatively linked. We recall from the introduction that, as the unit tangent bundle~$\un\Orbg$ is a quotient of the unit tangent bundle of a specific hyperbolic surface~$\Sigma_g$ of genus~$g$, our strategy will be to lift the question to~$\un\Sigma_g$, estimate the linking number between lifts of orbits of~$\GF{\Orbg}$, and eventually use Lemma~\ref{L:Covering}. 

In the whole section, $g$ denotes a fixed integer larger than or equal to 2.  The successive steps are as follows. We start in~\S\,\ref{S:Template3} with a $4g{+}2$-gon  in the hyperbolic plane and consider the multitemplate~$\temp_{4g+2}$  provided by Proposition~\ref{P:Template}. Mimicking the method of the previous section, we bound in~\S\,\ref{S:Bounds3} the linking number of a pair of collections of periodic orbits of~$\temp_{4g+2}$ by a quadratic form~$Q_{4g+2}$  in terms of the number of arcs that travel along every ribbon of~$\temp_{4g+2}$. The form~$Q_{4g+2}$ is not negative on the cone of admissible coordinates for geodesics on~$\Sigma_g$, but, using symmetries to reduce the set of possible coordinates, we introduce a refined form~$S_{4g+2}$ in~\S\,\ref{S:Lk237}, and show that the linking form is negative on the reduced cone.

\subsection{A template for $\GF{\Sigma_g}$ }
\label{S:Template3}

From now on, we fix a regular~$4g{+}2$-gon~$\Polg$  in the hyperbolic plane whose angles all are equal to~$2\pi/{(2g{+}1)}$ . We write~$e_1, \dots, e_{4g+2}$ for the sides of~$\Polg$. For every side~$e_i$, we write~$e_i^l$ for its left extremity (when looking at $e_i$ from inside~$\Polg$), and~$e_i^r$ for its right extremity. We also write $e_{\cci}$ for the side opposite to~$e_i$ (that is, we set~$\cc i = i{+}2g{+}1\mod 4g{+}2$). We call~$\Sigma_g$ the genus~$g$-surface obtained by identifying opposite sides of~$\Polg$. The vertices of type~$e_{2k}^r$ then project to one point of~$\Sigma_g$, say~$V_0$. Similarly, the vertices of type~$e_{2k}^l$ project to one point, say~$V_1$. The unit tangent bundle to~$\Polg$ is the product~$\Polg\times\bord_\infty\Hy^2$, where a tangent vector is identified with its direction on~$\bord_\infty\Hy^2$. Then~$\un\Polg$ is a solid torus whose boundary is made of the 4g{+}2 annuli~$\un e_1, \dots, \un e_{4g+2}$. The unit tangent bundle~$\un\Sigma_g$ is obtained from~$\un\Polg$ by identifying opposite annuli \emph{via} homographies of~$\Hy^2$. Precisely, if $g_{i,\cci}$ denotes the isometry that maps~$e_i$  to~$e_{\cci}$, then $g_{i,\cci}$ extends to~$\bord_\infty\Hy^2$, and the fibers of two paired points of~$e_i$ and $e_{\cci}$ are identified using the extension of~$g_{i, \cci}$ to~$\bord_\infty\Hy^2$.
We also introduce two small discs~$D_0, D_1$ on~$\Sigma_3$ centered at~$V_0, V_1$ respectively. We write~$\Polgint$ for the complement of~$D_0\cup D_1$ in~$\Polg$. This is a domain whose boundary is made of $4g{+}2$ geodesic segments and $4g{+}2$ arcs of circle are small radius.

\begin{figure}[hbt]
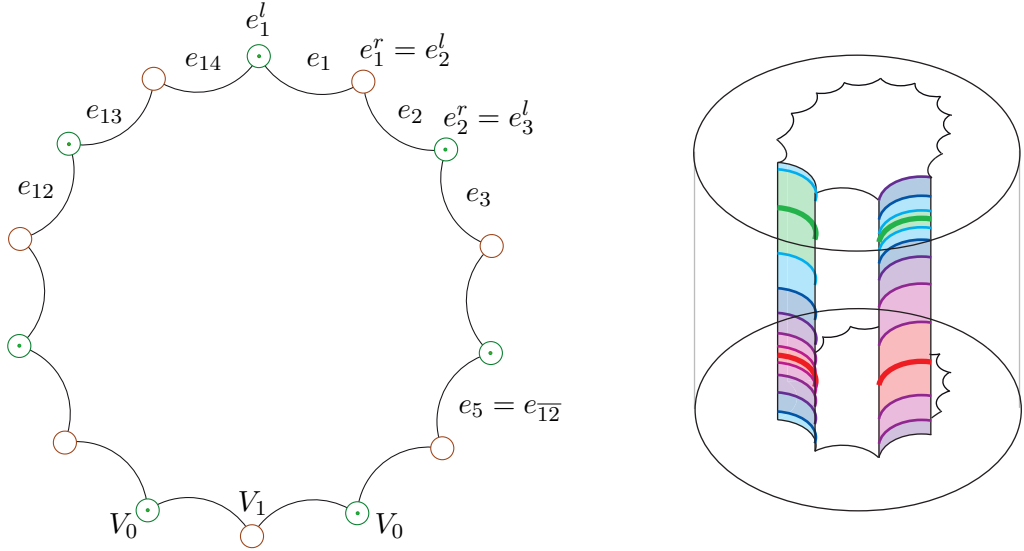

	\begin{center}
	\begin{picture}(135,70)(0,0)
 	\put(0,0){\includegraphics*[scale=.4]{Polg.pdf}}
	\put(40,65){$e_1$}
	\put(52,58){$e_2$}
	\put(61,47){$e_3$}
	\put(60,19){$e_{5} = e_{\overline{12}}$}
	\put(2,48){$e_{12}$}
	\put(11,58){$e_{13}$}
	\put(24,65){$e_{14}$}
	\put(32,70){$e_1^l$}
	\put(47,66){$e_1^r = e_2^l$}
	\put(58,57){$e_2^r = e_3^l$}
	\put(14,3){$V_0$}
	\put(49,3){$V_0$}
	\put(31,6){$V_1$}
 	\put(90,5){\includegraphics*[scale=.5]{Gluing.pdf}}
	\end{picture}
	\end{center}
	\caption{\small On the left, the regular 14-gon~$P_{14}$. The surface~$\Sigma_3$ is obtained by identifying opposite sides. On the right, the unit tangent bundle is obtained in the standard coordinates $\Polg\times\bord_\infty\Hy^2$ by gluing opposite walls using homographies.} 
	\label{F:237}
\end{figure}

The fundamental group~$\pi_1(\Sigma_g)$ is generated by the isometries~$g_{1,\bar 1}, \dots, g_{4g+1, \bar{4g+1}}$. We write~$\Tiling_g$ for the tessellation of~$\Hy^2$ induced by the images of~$\Polg$ under~$\pi_1(\Sigma_g)$. One easily checks that~$\Tiling_g$ is adapted to~$\Sigma_g$ (only point~$(v)$ in Definition~\ref{D:Tiling} requires  some attention).
Finally, we choose a graph $\Gr_{\Tiling_g}$ dual to~$\Tiling_g$ and an associated discretisation of geodesics.
We then write~$\temp_{4g+2}$ for the corresponding template in~$\un\Sigma_g$.
By definition, it consists of $(4g{+}2)(4g{+}1)$  ribbons connecting every pair of distinct boundary annuli. For every~$i,j$, we denote by~$\Rib_{i,j}$ the ribbon that connects $\un e_i$ to~$\un e_j$. Above every side of~$\Polg$, there are two branching segments, corresponding to geodesics crossing the side in both directions. The length of each branching segment is half the length of the fiber. Since we are interested in the topology of~$\temp_{4g+2}$ only, we can distort it using an isotopy, so that each branching segment has a small length, say~$\epsilon$, and consists of vectors that are almost orthogonal to~$e_i$. We then obtain a template similar to the one depicted on Figure~\ref{F:ProjTemplate}. For every edge~$e_i$ of~$\Polg$, we denote by~$\BS_{i, \cci}$ the branching segment that contains the orbits arriving on the side~$e_i$ and leaving from the side~$e_{\cci}$, and by~$\BS_{\cci, i}$ the other branching segment that contains the orbits arriving on the side~$e_{\cci}$ and leaving from the side~$e_{i}$. 

\begin{figure}[hbt]
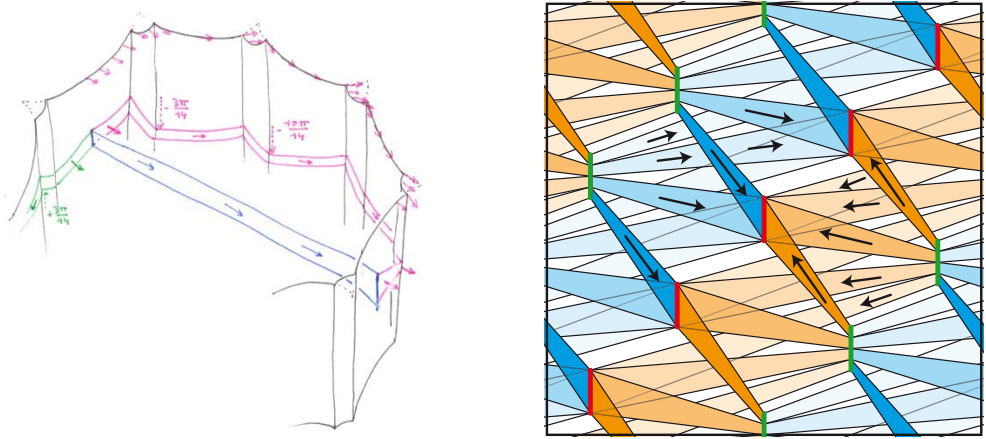

	\includegraphics*[scale=.45]{DefTemp4.pdf}
	\hspace{1cm}
	\includegraphics*[scale=.45]{Temp3.pdf}
	\caption{\small On the left, a ribbon~$\Rib_{i,j}^l$ and its projection~$\Rib_{i,j}^l$ on~$\un\bord\Polg$. By definition, it is horizontal in every wall of type~$\un e_i$. As stated in Lemma~\ref{L:ToursFibres}, it goes down around every vertex. This long descent (which is reminiscent of a picture by Escher) stems from the observation that a vector travelling along the left boundary of~$\Polg$ and staying tangent to~$\bord\Polg$ has to turn right at every vertex. On the right, the two projections of the whole template~$\temp_{4g+2}$ on~$\un\bord\Polg$ (with 5 instead of~$4g{+}2$): every ribbon has a blue and an orange projection.} 
	\label{F:ProjTemplate}
\end{figure}

In the sequel, we need two particular deformations of~$\temp_{4g+2}$ on~$\un\bord\Polgint$ that we describe now. Assume that~$\Rib_{i,j}$ is a ribbon of~$\temp_{4g+2}$. We isotope~$\Rib_{i,j}$ to the boundary of~$\un\Polgint$ without changing the extremities in two ways. For the first deformation, we push~$\Rib_{i,j}$ to the left until reaching~$\un\Polgint$ (see Figure~\ref{F:ProjTemplate} left). The image is denoted by~$\Rib_{i,j}^l$. Moreover, we choose the isotopy so that\\
- the part of~$\Rib_{i,j}^l$ lying in~$\un e_i$ has width~$\epsilon$ and consists of vectors almost orthogonal to~$e_i$,\\
- for every $c$ satisfying $j<c<i$ in the cyclic order, the part of~$\Rib_{i,j}^l$ lying in~$\un e_c$ has width~$\epsilon$ and consists of vectors almost parallel to~$e_c$,\\
- the part of~$\Rib_{i,j}^l$ lying in~$\un e_j$ has width~$\epsilon$ and consists of vectors almost orthogonal to~$e_i$.\\
We construct $\Rib_{i,j}^r$ similarly by pushing~$\Rib_{i,j}$ to the right in~$\un\Polgint$. We write~$\temp_{4g+2}^l$ for the union of all left projections of ribbons of~$\temp_{4g+2}$, and $\temp_{4g+2}^r$ for the union of all right projections (see Figure~\ref{F:ProjTemplate} right).

\subsection{Bounds for the linking number of orbits in $\GF{\Sigma_g}$}
\label{S:Bounds3}

Our goal is now to estimate and to bound the linking number between two null-homologous collections of periodic orbits of~$\temp_{4g+2}$. We will do that by considering the number of times the given collections travel along every ribbon of~$\temp_{4g+2}$. The formula may look convoluted, but hopefully the meaning of every term should be clear from the proof. The key point is that the bound we establish is bilinear in the number of times each collection travels along every ribbon, so that it can be easily estimated. We use Knuth's convention and write $\{\cdot \}$ for the characteristic function of a property. Also the inequality signs refer to the cyclic order in~$\Z/(4g{+}2)\Z$. The functions~$v_0, v_1,h_0, h_1$ will be defined in Definitions~\ref{D:v} and~\ref{D:h} below.

\begin{defi}
\label{D:BoundLk}
For every $i,j,k,l$ in $\{1, \ldots, 4g{+}2\}$ with $i\neq j$ and $k\neq l$, we define the real number~$q_{i,j,k,l}$ by 
\begin{eqnarray*}
\label{Eq:TotalLk}
& & \frac 1 2\big(\{i < k < l \le j\} + \{k < i < j \le l\}\big) \\
&-& \frac 1 8\big(\{k\neq i, j\} + \{k\neq \bar i, \bar j\} \big) \\
&+& \vzero i j \hzero k l + \vun i j \hun k l + \frac 1 {2g-2} (\vzero i j + \vun i j) (\vzero k l + \vun k l);
\end{eqnarray*}
we write~$Q_{4g+2}$ for the bilinear form on~$\R^{(4g+2)(4g+1)}$ whose coefficients are the~$q_{i,j,k,l}$.
\end{defi}

\begin{defi}
\label{D:LinearCode}
Assume that $\gamma$ is a null-homologous collection of periodic orbits of the template~$\temp_{4g+2}$. For every $i,j$ in $\{1, \ldots, 4g{+}2\}$, let~$b_{i,j}$ denote the number of arcs of~$\gamma$ that travel along the ribbon~$\Rib_{i,j}$,  respectively. The family $(b_{i,j})_{1\le i\neq j\le 4g+2}$ consists of $(4g{+}2)(4g{+}1)$ non-negative integers, it is called the \emph{linear code} of~$\gamma$.
\end{defi}
 
\begin{prop}
\label{P:Lk3}
Assume that $\gamma, \gamma'$ are two null-homologous collections of periodic orbits of the template~$\temp_{4g+2}$. Denote by $(b_{i,j})$ and $(b'_{i,j})$ their linear codes. Then the linking number $\lk(\gamma, \gamma')$ is at most $\sum_{1\le i,j,k,l\le 4g+2} q_{i,j,k,l}b_{i,j}b'_{k,l}$.
\end{prop}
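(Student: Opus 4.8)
The plan is to realize the linking number $\lk(\gamma,\gamma')$ as an intersection number between a $2$-chain bounded by $\gamma$ and the cycle $\gamma'$, then decompose that intersection number into local contributions indexed by pairs of ribbons, exactly as in the proof of Proposition~\ref{P:2q}. Concretely, since $\un\Sigma_g$ has first Betti number that makes $\gamma$ null-homologous only after we know the linear code sums correctly, I would first build a $2$-chain $S$ with $\bord S = \gamma$ in $\un\Sigma_g$: push each elementary arc of $\gamma$ (the piece of an orbit travelling along one ribbon $\Rib_{i,j}$) simultaneously to its left projection $\Rib_{i,j}^l$ on $\un\bord\Polgint$ and to its right projection $\Rib_{i,j}^r$, with carefully chosen integer multiplicities so that the resulting multicurve on $\un\bord\Polgint$ is null-homologous there. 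The coned-off discs between $\alpha$ and $\alpha^l$ (resp.\ $\alpha^r$) live inside the solid torus $\un\Polg$, and a $2$-chain $\Sb$ inside the boundary torus-with-punctures $\un\bord\Polgint$ completes the cycle; the discs $D_0,D_1$ around $V_0,V_1$ force the extra $\frac1{2g-2}$-type terms, via the Euler number of the Seifert fibration over the two cone points, which is what produces the denominator $2g-2 = -\chi(\Sigma_g)/\text{(something)}$ — this normalization must be pinned down from the homology of $\un\Sigma_g$ computed in Section~\ref{S:Template}.

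Next I would estimate the intersection number of $\gamma'$ with each piece of $S$. Every intersection point of $\gamma'$ with one of the coned discs projects to a crossing on $\un\bord\Polgint$ between a projected ribbon of $\gamma$ and one of $\gamma'$; counting these crossings, for an ordered pair of ribbons $(\Rib_{i,j},\Rib_{k,l})$, gives the combinatorial terms. The first line of $q_{i,j,k,l}$, with the characteristic functions $\{i<k<l\le j\}+\{k<i<j\le l\}$ and the factor $\tfrac12$, records when the chords $e_i\!\to\! e_j$ and $e_k\!\to\! e_l$ in the polygon $\Polg$ are linked (nested vs.\ crossing), which is precisely the contribution of a generic crossing of the two discretized geodesics inside a single tile — this is the template analogue of Birkhoff's count of geodesic intersections. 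The second line, $-\tfrac18(\{k\neq i,j\}+\{k\neq\bar i,\bar j\})$, comes from the winding of a ribbon's projection around the vertices $V_0,V_1$ (the ``Escher descent'' of Figure~\ref{F:ProjTemplate}): each such descent contributes a fixed negative amount per vertex passed, and the $\tfrac18$ is the fraction of the fiber circle traversed per vertex ($4g+2$ vertices around each of two cone points, but halved twice for the two branching directions). The terms $\vzero i j\hzero k l+\vun i j\hun k l$ are the contributions from the $2$-chain $\Sb$ on $\un\bord\Polgint$: the functions $v_0,v_1$ (to be defined in Definition~\ref{D:v}) encode how $\gamma$'s projection crosses the two ``vertical'' fibers over $V_0,V_1$, and $h_0,h_1$ how $\gamma'$ crosses them, so their product counts intersections of $\gamma'$ with the filling $2$-chain. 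The last term is the correction from the Euler-number filling discs as above.

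After assembling these, I would argue that the total intersection number $\lk(\gamma,\gamma')$ is \emph{at most} (not equal to) $\sum q_{i,j,k,l}b_{i,j}b'_{k,l}$: the inequality, rather than equality, appears because, as in Lemma~\ref{L:EnlSpSq}, the linear code does not record the cyclic ordering of the arcs along a ribbon, and when two projected ribbons merely overlap near a branching segment without transversely crossing we overcount by bounding ``overlap'' by ``crossing'' — and crucially all such over-counted crossings are \emph{positive}, so replacing the true count by the larger combinatorial count only increases the bound. So the proof is: (1) construct $S = (\text{cone discs in }\un\Polg) \cup \Sb \cup (\text{Euler filling discs})$ with $\bord S=\gamma$; (2) write $\gamma'\cdot S$ as a sum over ordered pairs of elementary arcs; (3) identify each local contribution with $q_{i,j,k,l}$ up to a sign-controlled error; (4) sum and bilinearize. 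I expect the main obstacle to be step~(1)–(3) bookkeeping: correctly choosing the left/right multiplicities so that $\Sb$ exists, and tracking the fractional constants ($\tfrac12$, $\tfrac18$, $\tfrac1{2g-2}$) through the Seifert-fibered structure of $\un\Sigma_g$ and the vertex windings — in particular verifying that the vertex-descent contributes exactly $-\tfrac18$ per relevant vertex and that the Euler-class term has exactly the coefficient $\tfrac1{2g-2}$ — since an error in any of these constants would propagate into the definiteness analysis of $Q_{4g+2}$ in the next subsection.
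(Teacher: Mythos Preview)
Your overall architecture matches the paper's: build a rational $2$-chain with boundary~$\gamma$ from projection discs plus a wall chain plus fiber-filling surfaces, then bound its intersection with~$\gamma'$ term by term. You also correctly identify the source of the inequality (overcounting positive overlap crossings near branching segments when $j=l$). However, your attribution of which piece of the $2$-chain produces which line of~$q_{i,j,k,l}$ is scrambled, and this would derail the bookkeeping you flag as the main obstacle.

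In the paper the four pieces are $\Spi$, $\Sd$, $\Ss$, $\Sss$, and the correspondence is:
\begin{itemize}
\item The left/right multiplicities are simply $\tfrac12,\tfrac12$ (not variable integers): $\gamma_\pi=\tfrac12\gamma_\pi^l+\tfrac12\gamma_\pi^r$, and the cone discs form~$\Spi$. Its intersection with~$\gamma'$ gives line~1 with the inequality (Lemma~\ref{L:LkSpi}).
\item The $-\tfrac18$ line does \emph{not} come from vertex winding. It comes from a separate $2$-chain $\Sd$ supported in the wall annuli $\un e_c$ (Definition~\ref{D:Sd}), whose intersection with~$\gamma'$ occurs only at branching segments and is computed \emph{exactly} (Lemma~\ref{L:LkSd}). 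The $\tfrac18$ is $\tfrac14$ (from the expression $\Delta_c=\tfrac14\sum\cdots$) times $\tfrac12$ (from averaging over $c$ and~$\bar c$); your ``$4g{+}2$ vertices halved twice'' heuristic gives the wrong constant.
\item The Escher descent (Lemma~\ref{L:ToursFibres}) is used to define $v_0,v_1$: these record the net height change of~$\gamma_\pi$ around the vertex fibers, hence the multiples of $\un V_0,\un V_1$ left over in $\partial(\Spi\cup\Sd)$. The filling surfaces $\Ss,\Sss$ are those multiples of a section with boundary $(2g{-}2)\un V_i$, and their intersection with~$\gamma'$ (Lemma~\ref{L:LkSs}) produces \emph{both} the $v_0h_0+v_1h_1$ terms and the $\tfrac1{2g-2}$ term. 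There is no separate boundary chain~$\Sb$ contributing the $vh$ terms; $h_0,h_1$ arise instead as the linking numbers of $\gamma'$ with a single fiber, via the cobordism of Lemma~\ref{L:Cohomologous}.
\end{itemize}
So your step~(3) would misfire: you would look for the $-\tfrac18$ in the vertex neighbourhoods and for $v_0h_0$ in the walls, and find neither.
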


Note that, in the expression for $q_{i,j,k,l}$ given in Definition~\ref{D:BoundLk}, the roles of~$\gamma$ and~$\gamma'$ are not symmetric. This is connected with our subsequent choice of a particular 2-chain, and with the fact that the coefficients~$b_{i,j}$ satisfy some linear constraints, so that the above formula is one among many other possible expressions.

The idea of the proof of Proposition~\ref{P:Lk3} is to construct a rational 2-chain~$S^\gamma$ with boundary~$\gamma$, and to bound its intersection number with~$\gamma'$. The 2-chain~$S^\gamma$ will consist of four parts, denoted by~$\Spi, \Sd, \Ss$ and $\Sss$, each being a combination of several rational 2-cells. 

We now establish several intermediate results consisting in evaluating various intersection numbers. First, we consider the above defined projections $\temp^l_{4g+2}$ and $\temp^r_{4g+2}$ of~$\temp_{4g+2}$. We write~$\gamma^l_\pi$ for the image of~$\gamma$ that lies in $\temp^l_{4g+2}$, and~$\gamma^r_\pi$ for the image that lies in~$\temp^r_{4g+2}$. 

\begin{defi}
\label{D:Spi}
Let~$\gamma_\pi$ be the combination~$\frac 1 2 \gamma^l_\pi + \frac 1 2 \gamma^r_\pi$. Then we define~$\Spi$ to be the sum, for each elementary arc~$\alpha$ of~$\gamma$, of a (rational) disc~$d_\alpha^l$ with boundary~$\frac1 2(\alpha\cup-\alpha_\pi^l)$ and of a (rational) disc~$d_\alpha^r$ with boundary~$\frac1 2(\alpha\cup-\alpha_\pi^r)$.
\end{defi}

It follows from the definition that $\Spi$ connects~$\gamma$ to~$\gamma_\pi$.

\begin{lemma}
\label{L:LkSpi}
The intersection number between the collection~$\gamma'$ and the rational 2-chain~$\Spi$ is at most $\sum_{i,j,k,l} \frac 1 2(\{i<k<l\le j\}+\{k<i<j\le l\}) \, b_{i,j}b'_{k,l}$.
\end{lemma}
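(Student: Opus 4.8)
The plan is to compute the intersection number of $\gamma'$ with $\Spi$ by reducing it, arc by arc, to a count of transverse crossings between projected elementary arcs on the boundary torus $\un\bord\Polgint$. Recall from Definition~\ref{D:Spi} that $\Spi$ is built from the rational discs $d_\alpha^l$ and $d_\alpha^r$, each with boundary $\frac12(\alpha\cup-\alpha_\pi^{l})$, resp.\ $\frac12(\alpha\cup-\alpha_\pi^{r})$. Since $\gamma'$ is a collection of orbits of $\temp_{4g+2}$ disjoint from $\gamma$, any intersection point of $\gamma'$ with one of these discs projects, on $\un\bord\Polgint$, to a transverse double point between an elementary arc $\alpha'$ of $\gamma'$ and one of the two projections $\alpha_\pi^{l}$, $\alpha_\pi^{r}$ of an elementary arc $\alpha$ of $\gamma$ — exactly as in the argument of Lemma~\ref{L:EnlSpSq}. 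So the first step is to set up this correspondence: fix an elementary arc $\alpha$ of $\gamma$ travelling along $\Rib_{i,j}$ and an elementary arc $\alpha'$ of $\gamma'$ travelling along $\Rib_{k,l}$, and determine how many times $\alpha_\pi^{l}$ crosses ${\alpha'_\pi}^{l}$ and how many times $\alpha_\pi^{r}$ crosses ${\alpha'_\pi}^{r}$ on the torus, together with the signs of these crossings.

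**The crossing count.** The heart of the proof is the combinatorial observation — read off from the description of $\Rib_{i,j}^{l}$ in \S\ref{S:Template3} and from Figure~\ref{F:ProjTemplate} — that the left projection $\Rib_{i,j}^{l}$ occupies the walls $\un e_c$ for $c$ running cyclically from $j$ to $i$, entering orthogonally at $\un e_i$, running almost parallel through the intermediate walls, and exiting orthogonally at $\un e_j$. Two left projections $\Rib_{i,j}^{l}$ and $\Rib_{k,l}^{l}$ can only cross transversely when their wall-intervals $[j,i]$ and $[l,k]$ are "linked" in the cyclic order in the appropriate nested fashion; a direct case analysis shows the relevant configuration contributing a genuine transverse crossing (as opposed to an overlap absorbed near a branching segment) is precisely $i<k<l\le j$, and symmetrically $k<i<j\le l$ for the right projections — these are exactly the two indicator functions appearing in the statement. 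In each such case the number of transverse crossings between one arc of $\gamma^l_\pi$ on $\Rib_{i,j}^l$ and one arc of $\gamma'^l_\pi$ on $\Rib_{k,l}^l$ is one, and the sign is $+1$ for the same reason as in the proof of Lemma~\ref{L:EnlSpSq}: all projected crossings induced by this particular pushing-to-the-left (resp.\ right) convention are positive. Multiplying by the factor $\frac12$ coming from $\gamma_\pi=\frac12\gamma^l_\pi+\frac12\gamma^r_\pi$ in Definition~\ref{D:Spi}, and summing over the $b_{i,j}$ arcs of $\gamma$ and the $b'_{k,l}$ arcs of $\gamma'$, yields the bound
\[
\sum_{i,j,k,l} \tfrac12\big(\{i<k<l\le j\}+\{k<i<j\le l\}\big)\, b_{i,j}\,b'_{k,l}.
\]

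**Assembling the estimate.** So the steps, in order, are: (1) recall that intersection points of $\gamma'$ with $\Spi$ correspond to transverse double points of projected arcs, citing the mechanism already used in Lemma~\ref{L:EnlSpSq}; (2) describe, for a fixed ribbon $\Rib_{i,j}$, the wall-support of $\Rib_{i,j}^l$ and $\Rib_{i,j}^r$ from the construction in \S\ref{S:Template3}; (3) run the cyclic-order case analysis to show that a transverse crossing between left projections occurs iff $i<k<l\le j$ (resp.\ between right projections iff $k<i<j\le l$), and that each such crossing is simple and positively signed; (4) note that overlaps of projections that are not transverse crossings are swept off near the branching segments $\BS$ and contribute nothing, so the count is an upper bound rather than an equality; (5) sum over all elementary arcs, weighting by $\tfrac12$ from the definition of $\gamma_\pi$, to obtain the stated bound.

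**Main obstacle.** The delicate point is step (3): getting the exact cyclic-order conditions and, especially, checking that every contributing crossing carries sign $+1$ and that the arcs pushed to the left do not create negative crossings against arcs of $\gamma'$ pushed to the left when their wall-supports merely overlap near an extremity (the picture in Figure~\ref{F:ProjTemplate} makes this plausible but a careful bookkeeping of the "long descent" around the vertices $V_0,V_1$ is needed). This is analogous to — but combinatorially heavier than — the corresponding verification in the proof of Lemma~\ref{L:EnlSpSq}, because here the ambient polygon has $4g{+}2$ sides and the ribbons wind around two distinct cone points; the verification that the construction of $\temp_{4g+2}^l$ and $\temp_{4g+2}^r$ really makes all such crossings positive is what the choice of the left/right pushing conventions is designed to guarantee.
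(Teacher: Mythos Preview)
Your proposal has a genuine gap in two places.

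First, the reduction in step~(1) is not set up correctly. An intersection point of $\gamma'$ with $\Spi$ is an intersection of an arc $\alpha'$ \emph{sitting on the template} with one of the discs $d_\alpha^l$ or $d_\alpha^r$ (which interpolate between $\alpha$ on the template and $\alpha_\pi^{l/r}$ on $\un\bord\Polgint$). This is not the same thing as a crossing between $\alpha_\pi^l$ and ${\alpha'}_\pi^l$ on the boundary torus, and your text oscillates between the two. The correct condition is: $\alpha'$ hits $d_\alpha^r$ precisely when the ribbon $\Rib_{k,l}$ lies between $\Rib_{i,j}$ and the boundary on the right side, which in cyclic terms is $i<k<l<j$; and $\alpha'$ hits $d_\alpha^l$ when $k<i<j<l$. (You have the left/right labels swapped, and more importantly your indicator conditions already contain the non-strict $\le$, which is not what the geometric count gives in the generic case.)

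Second, and more seriously, the paper's proof splits into four cases according to whether $i=k$ and whether $j=l$, and the delicate case $i=k,\ j\neq l$ is entirely absent from your sketch. In that case $\alpha$ and $\alpha'$ share the same incoming branching segment~$\BS_{\cci,i}$, and whether $\alpha'$ pierces $d_\alpha^{l}$ or $d_\alpha^{r}$ depends on the vertical order of their starting points on~$\BS_{\cci,i}$ together with the relative position of $j$ and $l$. The paper does \emph{not} bound this contribution directly; instead it shows that whenever such an intersection occurs, the underlying geodesics must have crossed \emph{after} $e_i$ rather than before, so that the overcount made in the earlier case $i\neq k,\ j=l$ (where one pessimistically assumed an intersection, producing the non-strict $\le j$ and $\le l$ in the formula) exactly compensates. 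Your step~(4), which asserts that non-transverse overlaps ``are swept off near the branching segments and contribute nothing,'' does not capture this compensation argument at all, and without it the bound does not close: you would either double-count the $i=k$ contributions or fail to justify the $\le$ in the indicator.
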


\begin{proof}
We have to estimate, for every pair of elementary arcs~$(\alpha, \alpha')$ of~$\gamma$ and $\gamma'$ respectively, whether~$\alpha'$ intersects the discs~$d_\alpha^l$ and~$d_\alpha^r$ defined above, how many times it possibly does, and what is the sign of the intersection points. 
Let~$\Rib_{i,j}$ denote the ribbon containing~$\alpha$, and let $\Rib_{k,l}$ the ribbon containing~$\alpha'$. 

First, suppose $i\neq k$ and $j\neq l$. Figure~\ref{F:ProjTemplate} right then shows that $\alpha'$ intersects~$d_\alpha^r$ if and only if~$i<k<l<j$ in the cyclic order. In this case, there is only one intersection point, and its sign is positive (Figure~\ref{F:PositiveIntersection} is also relevant here). Since the disc~$d_\alpha^r$  has a coefficient~$\frac 1 2$, the contribution of this intersection point to the total intersection number is~$+\frac 1 2$. Similarly,  $\alpha'$ intersects~$d_\alpha^l$ if and only if~$i<j<l<k$ in cyclic order, and the contribution is then~$+\frac 1 2$.

Second, suppose $i\neq k$ and $j=l$. Then $\alpha'$ may intersect~$d_\alpha^l$ or~$d_\alpha^r$ or not, depending on which arc is above the other on~$\BS_{j, \cc j}$, and which arc comes from the right or the left before reaching~$\un e_j$. Since we look for an upper bound on the linking number, and since the sign of the intersection, if any, is positive, we can assume that there is always an intersection, so that the contribution is~$+\frac 1 2$. This happens if~$i<k<l=j$ or $i<j=l<k$ in the cyclic order. (Note that this is the only approximation that makes our computation of the linking number not exact. It will be refined for symmetric collections of orbits in the next section.)

Third, suppose $i=k$ and $j\neq l$. Then, as in the previous case, the arc $\alpha'$ may intersect~$d_\alpha^l$ or~$d_\alpha^r$ or not. But, unlike the previous case, we can ignore this potential intersection point. Indeed, let $A, B$ denote the respective starting points of~$\alpha$ and~$\alpha'$, which are located in the branching segment~$\BS_{\cci, i}$. Then there is an intersection point if $A$ is under $B$ and at the same time we have $j>l$, or if $A$ is above $B$ and we have $j<l$. At the expense of possibly performing a symmetry, we may restrict to the first case. $A$ under~$B$ means that~$\alpha$ points on the right of~$\alpha'$ on~$\bord_\infty\Hy^2$, whereas $j>l$ means that $\alpha$ escapes on the left of~$\alpha'$. This is possible for $j = l+1$, but this implies that the geodesics of~$\Hy^2$ that have been distorted onto~$\underline\gamma$ and~$\underline\gamma'$ intersect after crossing~$e_i$. As they are geodesics, they cannot intersect twice, so that they did not intersect before crossing~$e_i$. 
Therefore there was a pair of arcs that lie before~$\alpha$ and $\alpha'$ on~$\gamma$ and~$\gamma'$ that was counted in the previous paragraph (since $i=k$) and should not have.
So we can compensate this factor~$+\frac 1 2$ by ignoring the current intersection.

Fourth, suppose $i=k$ and $j=l$. Then $\alpha, \alpha'$ lie on the same ribbon, and $\alpha'$ does not intersect the discs~$d_\alpha^r$ and~$d_\alpha^l$.

Summing up, we obtain the announced upper bound. 
\end{proof}

The second part of~$S^\gamma$ will lie in the $2g{+}1$ annuli~$\un e_c$ with  $1\le c\le 2g{+}1$ (we recall that~$e_c$ is identified with~$e_{c+2g+1}$). Its boundary will be made of~$\gamma_\pi$ plus some curves lying in~$\un\bord D_0$ and~$\un\bord D_1$. Before describing it, we must describe~$\gamma_\pi$ in more detail.

\goodbreak

\begin{lemma}
\label{L:CompoGPi} (See Figure~\ref{F:ProjTemplate} left.)
Let $e_c$ be a side of~$\Polg$. Then the part of~$\gamma_\pi$ that lies in~$\un e_c$ consists of

$(i)$ $\frac 1 2\sum_{i\neq c} b_{i,c}$ arcs joining the fiber $\un e^l_c$ to the branching segment~$\BS_{c,\cc c}$, plus $\frac 1 2\sum_{i\neq c} b_{i,c}$ arcs joining~$\un e^r_c$ to~$\BS_{c,\cc c}$, plus $\frac 1 2\sum_{j\neq \cc c} b_{c,j}$ arcs joining~$\BS_{c,\cc c}$ to~$\un e^l_c$, plus $\frac 1 2\sum_{j\neq \cc c} b_{c,j}$ arcs joining~$\BS_{c,\cc c}$ to~$\un e^r_c$, all these arcs lying at a height that corresponds to vectors escaping from~$\Polg$ almost orthogonally,

$(ii)$ $\frac 1 2\sum_{i\neq \cc c} b_{i,\cc c}$ arcs joining~$\un e^l_c$ to~$\BS_{\cc c,c}$, plus $\frac 1 2\sum_{i\neq \cc c} b_{i,\cc c}$ arcs joining~$\un e^r_c$ to~$\BS_{\cc c, c}$, plus $\frac 1 2\sum_{j\neq c} b_{\cc c,j}$ joining~$\BS_{\cc c, c}$ to~$\un e^l_c$, plus $\frac 1 2\sum_{j\neq c} b_{\cc c,j}$ arcs joining~$\BS_{\cc c, c}$ to~$\un e^r_c$, all these arcs lying at a height that corresponds to vectors entering~$\Polg$ almost orthogonally,

$(iii)$ $\frac 1 2(\sum_{i<c<j<i} b_{i,j} + \sum_{i<j<\cc c<i} b_{i,j})$ arcs joining the fiber $\un e^r_c$ to~$\un e^l_c$
, all these arcs lying at a height that corresponds to vectors almost tangent to $e_c$ and pointing toward~$e^l_c$,

$(iii)$ $\frac 1 2(\sum_{i<j<c<i} b_{i,j} + \sum_{i<\cc c<j<i} b_{i,j})$ arcs joining the fiber $\un e^l_c$ to~$\un e^r_c$
, all these arcs lying at a height that corresponds to vectors almost tangent to $e_c$ and pointing toward~$e^r_c$.
\end{lemma}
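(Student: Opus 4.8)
The plan is to unwind the definitions of the two projections $\temp_{4g+2}^l$ and $\temp_{4g+2}^r$ given in \S\ref{S:Template3} and to count, side by side, how many pieces of $\gamma^l_\pi$ and $\gamma^r_\pi$ fall into each annulus $\un e_c$, then take the half-sum since $\gamma_\pi = \frac12\gamma^l_\pi + \frac12\gamma^r_\pi$. First I would fix a side $e_c$ with $1\le c\le 2g{+}1$ and recall that $\un e_c$ in the solid torus $\un\Polgint$ is an annulus (a wall), cut by the two branching segments $\BS_{c,\cc c}$ and $\BS_{\cc c,c}$ into pieces at ``heights'' corresponding to the direction of the tangent vector relative to $e_c$: vectors escaping $\Polg$ almost orthogonally, vectors entering almost orthogonally, and vectors almost tangent pointing toward either endpoint $e_c^l$ or $e_c^r$. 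This stratification by height is exactly the one forced on $\Rib_{i,j}^l$ and $\Rib_{i,j}^r$ by the three bullet conditions defining the left (resp.\ right) projection.

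Next I would classify, for a single ribbon $\Rib_{i,j}$ (carrying $b_{i,j}$ elementary arcs of $\gamma$), the ways its left projection $\Rib_{i,j}^l$ can meet the wall $\un e_c$. By the defining conditions of $\Rib_{i,j}^l$: if $c=i$, the trace lies at ``escaping orthogonally'' height and runs from one of $\un e_c^l,\un e_c^r$ to $\BS_{c,\cc c}$; if $c=\cc j$ (equivalently $j=\cc c$), the trace lies at ``entering orthogonally'' height and runs from $\BS_{\cc c,c}$ to one of $\un e_c^l,\un e_c^r$; and if $c$ is strictly between $j$ and $i$ in the cyclic order (so the projected ribbon passes through $\un e_c$ as a ``tangent'' arc), the trace runs from $\un e_c^r$ to $\un e_c^l$ at ``tangent toward $e_c^l$'' height --- this is the Escher-like descent of Figure~\ref{F:ProjTemplate} and Lemma~\ref{L:ToursFibres}, which forces the tangent orientation. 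A symmetric analysis applies to $\Rib_{i,j}^r$, with left and right endpoints (and $c$ versus $\cc c$) interchanged, producing the ``tangent toward $e_c^r$'' arcs and the entrances/exits on the opposite side. Summing over $i\neq c$ (resp.\ $j\neq\cc c$, resp.\ all $(i,j)$ with $j<c<i$ or $i<c<j$ etc.) gives the unweighted counts for $\gamma^l_\pi$ and $\gamma^r_\pi$; multiplying by $\tfrac12$ and adding yields precisely the four items of the statement, where the ``$+$'' of a left-contribution and a right-contribution accounts for both the $\tfrac12\sum b_{i,c}$ arcs to $\un e_c^l$ and the $\tfrac12\sum b_{i,c}$ arcs to $\un e_c^r$ in $(i)$ and $(ii)$, and the two different tangent directions in $(iii)$.

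The only genuinely delicate point is getting the cyclic-order bookkeeping exactly right in items $(iii)$: one must check that an arc of $\gamma$ on $\Rib_{i,j}$ contributes a ``tangent, pointing toward $e_c^l$'' piece in $\un e_c$ through $\Rib_{i,j}^l$ precisely when $c$ lies strictly between $j$ and $i$ (written $i<c<j<i$, i.e. in the arc of $\Z/(4g{+}2)\Z$ from $j$ to $i$), and --- because $e_c$ is identified with $e_{\cc c}$ --- also a ``toward $e_c^l$'' piece when $\cc c$ lies strictly between $j$ and $i$ (written $i<j<\cc c<i$), so that the coefficient is $\frac12(\sum_{i<c<j<i} b_{i,j} + \sum_{i<j<\cc c<i} b_{i,j})$; the ``toward $e_c^r$'' count comes symmetrically from $\Rib_{i,j}^r$. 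I would verify this by tracking one ribbon through Figure~\ref{F:ProjTemplate} (right) in the model with $4g{+}2$ replaced by a small value, where all four cases are visible, and then observe that the argument is purely combinatorial in the cyclic order and hence independent of $g$. Once the cases are correctly enumerated, the lemma is just the termwise half-sum, so I do not expect further obstacles.
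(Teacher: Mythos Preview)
Your approach is essentially the paper's: fix a wall $\un e_c$, run a case analysis on each ribbon $\Rib_{i,j}$ according to whether $c$ (or $\cc c$) equals $i$, equals $j$, or lies strictly between them in cyclic order, read off from the defining bullets of $\Rib_{i,j}^l,\Rib_{i,j}^r$ which height-band and which endpoints the trace uses, and take the half-sum. One correction to your case labels: when $c=i$ the ribbon $\Rib_{i,j}$ \emph{starts} at $e_c$, so the tangent vector points \emph{into} $\Polg$ and the trace begins at the entering branching segment $\BS_{\cc c,c}$, not $\BS_{c,\cc c}$; the escaping-height contributions to item~$(i)$ come from $c=j$ (end of $\Rib_{i,j}$) and from $i=\cc c$ (start of $\Rib_{\cc c,j}$, seen through the identification $e_c\!\sim\! e_{\cc c}$). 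With that swap your bookkeeping matches the paper's proof exactly.
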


\begin{figure}[hbt]
	\begin{center}
	\begin{picture}(50,62)(0,0)
 	\put(0,0){\includegraphics*[scale=.7]{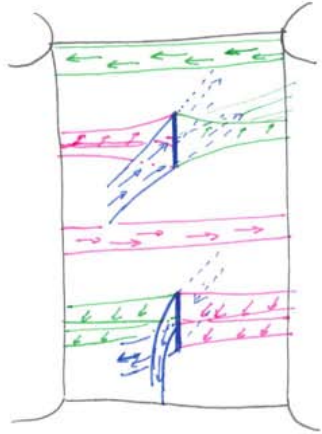}}
	\end{picture}
	\end{center}
	\caption{\small The templates~$\temp_{4g+2}^l$ and $\temp_{4g+2}^r$ inside a wall of type~$\un e_i$. Between the two branching segments, all ribbons have the same orientation. If $\gamma$ is a null-homologous collection of orbits of~$\temp_{4g+2}$, then there are as many arcs of~$\gamma_\pi$ traveling from left to right (along the pink ribbons) as arcs of~$\gamma_\pi$ traveling from right to left (along the green ribbons).}
	\label{F:Sd}
\end{figure}

\begin{proof}
Let~$\Rib_{i,j}$ be a ribbon of~$\temp_{4g+2}$. Then every arc of~$\gamma\cap\Rib_{i,j}$ projects on an arc of~$\gamma^l\cap\Rib^l_{i,j}$ and on an arc of~$\gamma^r\cap\Rib^r_{i,j}$. 
First suppose that the index~$c$ differs from both~$i$ and $j$. If~$e_c$ lies on the right of $\Rib_{i,j}$, then the arcs of $\gamma^r\cap\Rib^r_{i,j}$ travel along $\un e_c$, from~$\un e^r_c$ to~$\un e^l_c$. By construction of~$\Rib^r_{i,j}$, they are at the height of vectors almost tangent to~$e_c$. Therefore they contribute to~$(iii)$. 
Similarly, if~$e_c$ is on the left of $\Rib_{i,j}$, then the arcs of $\gamma^l\cap\Rib^l_{i,j}$ travel along $\un e_c$, and contribute to~$(iv)$. In the same vein, we obtain the two others terms of~$(iii)$ and $(iv)$ by recalling that~$\un e_c$ identified with~$\un e_{\cc c}$, so that, if $\cc c$ differs from both~$i$ and $j$, the arcs of $\gamma\cap\Rib_{i,j}$ also project on~$\un e_{\cc c}$ in the same way.

Suppose now~$c=j$. Then the arcs of $\gamma^l\cap\Rib^l_{i,j}$ finish their travel by connecting $\un e^l_c$ to~$\BS_{c,\cc c}$, and the arcs of $\gamma^r\cap\Rib^r_{i,j}$ connect $\un e^r_c$ to~$\BS_{c,\cc c}$. Thus they all contribute to~$(i)$. 
Similarly if $\cc c=i$, then the arcs of $\gamma^l\cap\Rib^l_{i,j}$ begin their travel by connecting $\BS_{c,\cc c}$ to~$\un e^r_{\cc c} = \un e^l_{c}$ to, and the arcs of $\gamma^r\cap\Rib^r_{i,j}$ connect $\BS_{c,\cc c}$ to~$\un e^r_{c}$, thus all contributing~$(i)$.
Similarly, we get the expression for~$(ii)$ by considering the cases~$c=i$ and $\cc c=j$.
\end{proof}

As the collection~$\gamma$ is null-homologous, the collection~$\gamma_\pi$ is also null-homologous, so that for every side~$e_c$ of~$\Polg$, the number of arcs of~$\gamma_\pi$ that travel along $e_c$ in one direction is equal to the number of arcs in the other direction. This implies that the numbers of arcs given by Lemma~\ref{L:CompoGPi}~$(iii)$ and $(iv)$ are equal. We then define~$\Delta_c$ to be their common value, which then admits the more symmetric expression 
$$\frac 1 4(\sum_{i<c<j<i} b_{i,j} + \sum_{i<j<\cc c<i} b_{i,j} + \sum_{i<j<c<i} b_{i,j} + \sum_{i<\cc c<j<i} b_{i,j}),$$
or simply~$\frac 1 4(\sum_{i,j\neq c}b_{i,j} + \sum_{i,j\neq \cc c}b_{i,j})$. Also, since every arc of~$\gamma$ that arrives on~$\BS_{c, \cc c}$ is followed by an arc that leaves~$\BS_{c, \cc c}$, the numbers $\sum_{i\neq \cc c} b_{i,\cc c}$ and $\sum_{j\neq c} b_{\cc c,j}$ are equal. Hence it is possible to choose a 2-chain in~$\un e_c$ whose boundary is~$\gamma_\pi\cap \un e_c$, plus some arcs in the fibers~$\un e^l_c$ and~$\un e^r_c$. This 2-chain is unique up to adding multiples of~$\un e_c$, so that we can make a specific choice that will be convenient for estimating the contributions of the last two components~$\Ss$ and~$\Sss$ of~$S^\gamma$.

\begin{defi}
\label{D:Sd}
(See Figure~\ref{F:Sd}.) 
With the above notation, we define $\Sd$ to be the 2-chain consisting, for every side~$c$ of~$\Polg$, of $\Delta_c$ cells in~$\un e_c$ whose oriented boundary consists of the $\Delta_c$~arcs of~$\gamma_\pi$ that join~$\un e^l_c$ to~$\un e^r_c$, plus the $\Delta_c$~arcs of~$\gamma_\pi$ that join~$\un e^r_c$ to~$\un e^l_c$, plus $\Delta_c/2$~arcs that go up and $\Delta_c/2$~arcs that go down in the fiber~$\un e^l_c$,  plus $\Delta_c/2$~arcs that go up and $\Delta_c/2$~arcs that go down in the fiber~$\un e^r_c$.
\end{defi}

\begin{lemma}
\label{L:LkSd}
The intersection number between the collection~$\gamma'$ and the rational 2-chain~$\Sd$ is equal to $$-\sum_{i,j,k,l} \frac 1 8(\{i\neq k \mbox{ and }  j\neq k\}+\{i\neq \cc k \mbox{ and } j\neq \cc k\}) \, b_{i,j}b'_{k,l}.$$
\end{lemma}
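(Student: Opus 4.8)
The idea is that the only way the collection~$\gamma'$ can meet the 2-chain~$\Sd$ is through the cells that live in the annuli~$\un e_c$, and each such cell has boundary consisting of arcs of~$\gamma_\pi$ running along~$\un e_c$ together with vertical arcs in the two boundary fibers~$\un e_c^l$ and~$\un e_c^r$. An elementary arc~$\alpha'$ of~$\gamma'$ on the ribbon~$\Rib_{k,l}$ passes through~$\un e_c$ precisely when~$c$ (or its opposite~$\cc c$) is strictly between~$k$ and~$l$ in the cyclic order, i.e.\ when~$c\notin\{k,l\}$ and~$c$ lies on the side of the ribbon swept by the projection; so the contribution is governed by the indicator $\{k\neq c,\ l\neq c\}$ together with the same condition for~$\cc c$. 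First I would fix such a pair~$(\alpha,\alpha')$ with~$\alpha$ on~$\Rib_{i,j}$ and~$\alpha'$ on~$\Rib_{k,l}$, and decide in which of the walls~$\un e_c$ the two arcs can cross one of the cells of~$\Sd$.

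**Key steps.** (1) Using Lemma~\ref{L:CompoGPi}, identify inside a wall~$\un e_c$ the $\Delta_c$ ``horizontal'' arcs of~$\gamma_\pi$ (those of type~$(iii)$ and~$(iv)$) bounding the cells of~$\Sd$, and note that these arcs all sit at the height of vectors almost tangent to~$e_c$, whereas every arc of~$\gamma'_\pi$ entering or leaving~$\BS_{c,\cc c}$ or~$\BS_{\cc c,c}$ sits at the height of vectors almost orthogonal to~$e_c$. Hence an arc of~$\gamma'$ on the ribbon~$\Rib_{k,l}$ meets a cell of~$\Sd$ in~$\un e_c$ exactly when its projection is a ``tangent'' arc there, i.e.\ when~$c\notin\{k,l\}$ (or~$\cc c\notin\{k,l\}$), and the sign of each such crossing is fixed. (2) Count these crossings: since the cell carries coefficient built from~$\Delta_c$ and the projection~$\gamma'_\pi=\frac12\gamma_\pi^{\prime l}+\frac12\gamma_\pi^{\prime r}$ splits each arc of~$\gamma'$ into a left copy and a right copy, one arc of~$\gamma'$ on~$\Rib_{k,l}$ contributes~$-\frac18$ for each side~$e_c$ with~$c\notin\{k,l\}$ and~$-\frac18$ for each~$e_c$ with~$\cc c\notin\{k,l\}$; the factor~$\frac18$ is~$\frac12\cdot\frac12$ (two halves from the two projections) times~$\frac12$ (the way the~$\Delta_c$ vertical arcs in each fiber split into up/down), and the minus sign comes from the orientation of the cells, which is opposite to the positive crossings of Lemma~\ref{L:LkSpi} (compare Figure~\ref{F:PositiveIntersection} read in the reverse wall). (3) Sum over all~$c$ and over all pairs~$(\alpha,\alpha')$: the number of arcs of~$\gamma$ on~$\Rib_{i,j}$ is~$b_{i,j}$ and of~$\gamma'$ on~$\Rib_{k,l}$ is~$b'_{k,l}$, and the count is independent of~$(i,j)$ except through the bookkeeping already encoded in~$\Delta_c$; reorganising gives exactly
$$-\sum_{i,j,k,l}\frac18\bigl(\{i\neq k\text{ and }j\neq k\}+\{i\neq \cc k\text{ and }j\neq \cc k\}\bigr)\,b_{i,j}b'_{k,l}.$$
(4) Finally, check that the arcs of~$\gamma'$ lying in the fibers~$\un e_c^l,\un e_c^r$ do not produce extra intersection: those vertical arcs of~$\Sd$ are matched in up/down pairs, so their algebraic intersection with any arc of~$\gamma'$ transverse to the fiber is zero, and no arc of~$\gamma'$ runs along a fiber.

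**Main obstacle.** The delicate point is the precise bookkeeping of signs and of the fractional coefficients: one must verify that every crossing of~$\gamma'$ with a cell of~$\Sd$ in a wall~$\un e_c$ really is negative (not just nonzero), which requires carefully reading off the orientations in Figure~\ref{F:Sd} and Figure~\ref{F:ProjTemplate}, and that the total weight of one arc-on-arc contribution is exactly~$\frac18$ rather than~$\frac14$ or~$\frac12$; here the subtlety is that~$e_c$ and~$e_{\cc c}$ are the \emph{same} wall, so the two indicator terms~$\{i\neq k,\ j\neq k\}$ and~$\{i\neq\cc k,\ j\neq\cc k\}$ must not be double counted, and one must be sure that an arc of~$\gamma'$ traversing~$\un e_c$ is seen once, contributing through whichever of the two cyclic conditions actually holds. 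Once the local picture in a single wall is pinned down, summing is routine, so the real work is entirely in step~(1)--(2).
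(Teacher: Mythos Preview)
Your plan rests on a geometric misreading of where $\gamma'$ can meet~$\Sd$. An elementary arc~$\alpha'$ of~$\gamma'$ lying on the ribbon~$\Rib_{k,l}$ lives in the \emph{interior} of~$\un\Polg$ (along that ribbon); it touches the boundary walls only at its two endpoints, which sit on the branching segments in~$\un e_k$ and~$\un e_l$. It does \emph{not} pass through~$\un e_c$ for any~$c$ strictly between~$k$ and~$l$---that is a property of the projection~$\alpha'^l_\pi$ or~$\alpha'^r_\pi$, not of~$\alpha'$ itself. Since~$\Sd$ is a $2$-chain contained in the walls~$\un e_c$, and since we are intersecting~$\gamma'$ (not~$\gamma'_\pi$) with~$\Sd$, all intersection points occur at branching segments. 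Your steps~(1)--(2), which count contributions ``for each side~$e_c$ with~$c\notin\{k,l\}$'', are therefore counting the wrong thing; and your step~(4), worrying about vertical arcs in the fibers~$\un e_c^l,\un e_c^r$, is moot because~$\gamma'$ never reaches those fibers at all.

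The paper's argument is much more direct. The collection~$\gamma'$ crosses the wall~$\un e_k$ only at the two branching segments~$\BS_{k,\cc k}$ and~$\BS_{\cc k,k}$; the number of such crossings (in each direction) is~$\sum_{l\neq k} b'_{k,l}$. At each such crossing the arc of~$\gamma'$ is at the ``orthogonal'' height, which lies between the two ``tangent'' heights where the horizontal boundary arcs of~$\Sd$ sit, so each crossing pierces~$\Delta_k/2$ of the cells, always with negative sign (read off Figure~\ref{F:Sd}). Summing over all sides gives~$-\frac12\sum_k \Delta_k\sum_{l\neq k} b'_{k,l}$, and expanding~$\Delta_k=\frac14\big(\sum_{i,j\neq k}b_{i,j}+\sum_{i,j\neq\cc k}b_{i,j}\big)$ yields the stated formula. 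Notice in particular that the indicators~$\{i\neq k,\ j\neq k\}$ come from the expansion of~$\Delta_k$---a condition on the $\gamma$-indices relative to the \emph{fixed} wall~$e_k$ where~$\alpha'$ starts---and not from any range of walls traversed by~$\alpha'$. This also explains why the formula is independent of~$l$, something your approach would not naturally produce.
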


\begin{proof}
The collection~$\gamma'$ intersects~$\Sd$ only on branching segments. Figure~\ref{F:Sd} then shows that all intersection points have negative sign. For every side~$e_k$ of~$\Polg$, there are~$\sum_{l\neq k} b'_{k,l}$ arcs of~$\gamma'$ that cross~$\un e_k$ in each direction. Every such arc then intersects~$\Delta_k/2$ cells of~$\Sd$  negatively, so that the total contribution of~$\un e_k$ to the intersection number is $\Delta_k\sum_{l\neq k} b'_{k,l}$. Therefore the total intersection number is the sum over all sides~$e_k$ of~$\Polg $ of the terms $\Delta_k\sum_{l\neq k} b'_{k,l}$. As the sides $e_k$ and~$e_{\cc k}$ coincide, the latter sum admits the more symmetric expression $\frac 1 2 \sum_{k} \Delta_k\sum_{l\neq k} b'_{k,l}$. We then find the expected value by expanding~$\Delta_k$.
\end{proof}

The boundary of the 2-chain~$\Spi\cup\Sd$ that was constructed above is $\gamma$, plus some multiples of the fibers~$\un V_0$ and~$\un V_1$ that we now determine. 

\begin{lemma}
\label{L:ToursFibres}
Assume that~$\Rib_{i,j}$ is a ribbon of~$\temp_{4g+2}$. Then

$(i)$ the part of~$\Rib_{i,j}^l$ that lies in the neighbourhood of~$\un e_i^l$ goes down by a height $(2g{-}3)\pi/(4g{+}2)$,  the part of~$\Rib_{i,j}^l$ that lies in the neighbourhood of~$\un e_j^r$ goes down by a height $(2g{-}3)\pi/(4g{+}2)$,  except if $j=i+1$, in which case the part of~$\Rib_{i,j}^l$ that lies in the neighbourhood of~$\un e_i^l=\un e_j^r$ goes up by a height~ $4\pi/(4g{+}2)$, 

$(ii)$ the part of~$\Rib_{i,j}^l$ that lies in the neighbourhood of~$\un e_c^l$, for $
j+1<c<i$, goes down by a height $(4g{-}2)\pi/(4g{+}2)$, 

$(iii)$ the part of~$\Rib_{i,j}^r$ that lies in the neighbourhood of~$\un e_i^r$ goes up by a height $(2g{-}3)\pi/(4g{+}2)$, the part of~$\Rib_{i,j}^r$ that lies in the neighbourhood of~$\un e_j^l$ goes up by a height $(2g{-}3)\pi/(4g{+}2)$, except if $j=i-1$, in which case the part of~$\Rib_{i,j}^r$ that lies in the neighbourhood of~$\un e_i^r=\un e_j^l$ goes up by a height~$4\pi/(4g{+}2)$, 

$(iv)$ the part of~$\Rib_{i,j}^r$ that lies in the neighbourhood of~$\un e_c^r$, for $
i<c<j-1$, goes up by a height $4\pi/(4g{+}2)$. 
\end{lemma}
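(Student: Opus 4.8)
The statement is a local computation about how the projections $\Rib_{i,j}^l$ and $\Rib_{i,j}^r$ of a ribbon wind around the fibers $\un V_0$ and $\un V_1$ near the vertices of $\Polg$, so the plan is purely geometric and reduces to tracking angles of unit vectors. The key observation, already hinted at in the caption of Figure~\ref{F:ProjTemplate}, is that a unit vector that travels along the left boundary $\bord\Polg$ while staying tangent to $\bord\Polg$ must turn \emph{right} at each vertex by the exterior angle of the polygon. Since each interior angle of $\Polg$ equals $2\pi/(2g{+}1)$, the exterior angle is $\pi - 2\pi/(2g{+}1) = (2g-1)\pi/(2g+1) = (4g{-}2)\pi/(4g{+}2)$, and this is exactly the quantity in $(ii)$. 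So the first step is to fix, once and for all, the convention that a point of $\un e_c$ is coordinatized by the angle $\phi$ its vector makes with $e_c$, with $\phi$ near $0$ meaning ``almost tangent pointing toward $e_c^l$'' and $\phi$ near $\pi$ meaning ``almost tangent pointing toward $e_c^r$,'' and $\phi$ near $\pi/2$ meaning ``almost orthogonal, escaping.'' With this convention in hand, $(ii)$ and $(iv)$ follow immediately: a ribbon $\Rib_{i,j}^l$ passing through a side $e_c$ strictly between $j+1$ and $i$ enters and leaves $\un e_c$ at the ``almost tangent'' height, and between the two the vector pivots around the two endpoints $V_0, V_1$ of $e_c$; going once around a vertex corresponds to the full exterior turn $(4g{-}2)\pi/(4g{+}2)$, downward because the left-moving vectors turn right. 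The case $(iv)$ is the mirror image: right-moving vectors turn left, and here the relevant side $e_c$ with $i<c<j-1$ is entered and left ``almost tangent'' but the elementary geometry shows the net turn around the single relevant vertex is only $4\pi/(4g{+}2)$ upward (this asymmetry between $(ii)$ and $(iv)$ is because the left and right projections use the opposite endpoint of each intermediate side, and the two endpoints $V_0,V_1$ are hit a different number of times along the way — this is the ``Escher descent'' of Figure~\ref{F:ProjTemplate}).

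The second step handles the \emph{terminal} sides $e_i$ and $e_j$, i.e.\ items $(i)$ and $(iii)$. Here the ribbon changes regime: along $\un e_i$ it must interpolate between the ``almost orthogonal'' height (coming out of the branching segment $\BS_{\cci,i}$) and the ``almost tangent'' height with which it proceeds to the next intermediate side, while winding around $e_i^l$ (for the left projection) once. The net vertical displacement is the difference between a quarter-turn $\pi/2 = (2g{+}1)\pi/(4g{+}2)$ and the exterior turn $(4g{-}2)\pi/(4g{+}2)$ contributed by the vertex; computing $(2g{+}1)\pi/(4g{+}2) - (4g{-}2)\pi/(4g{+}2)$ is not quite right, so one must be careful: the correct bookkeeping is that the vector goes around \emph{half} of the vertex figure in this terminal regime, giving the stated $(2g{-}3)\pi/(4g{+}2)$ descent. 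The exceptional case $j=i+1$ in $(i)$ (respectively $j=i-1$ in $(iii)$) is where there are no intermediate sides at all, so $\un e_i^l = \un e_j^r$ and the ribbon goes directly from $\BS_{\cci,i}$ to $\BS_{j,\cc j}$; a direct inspection of the two branching-segment heights (both ``almost orthogonal'') together with the single vertex turn gives the $4\pi/(4g{+}2)$ rise. I would verify these four numerical values by explicitly drawing the angle-versus-arclength graph of a representative ribbon, exactly as in the left half of Figure~\ref{F:ProjTemplate}, and reading off the heights at each vertex.

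The main obstacle, and the only genuinely delicate point, is keeping the sign conventions and the ``half a vertex versus a full vertex'' bookkeeping consistent across all four cases simultaneously: the left and right projections are related by the orientation-reversing symmetry $e_c \leftrightarrow e_{\cci}$ composed with a reflection, so one expects $(iii)$ and $(iv)$ to be obtained from $(i)$ and $(ii)$ by that symmetry, but the asymmetry in the values ($(2g{-}3)\pi/(4g{+}2)$ versus $4\pi/(4g{+}2)$ in the intermediate case) shows that the two projections are genuinely \emph{not} mirror images in the naive sense — the discrepancy is absorbed by which of $V_0$, $V_1$ is encircled. The cleanest way to avoid errors is to prove $(i)$ and $(ii)$ carefully from the angle-coordinate picture, then derive $(iii)$ and $(iv)$ by applying the isometry $g_{c,\cci}$ of $\Hy^2$, which swaps $e_c^l \leftrightarrow e_{\cci}^r$, carries $\temp_{4g+2}^l$ to $\temp_{4g+2}^r$, and reverses the orientation of the fibers — so a ``descent by $h$'' on the left becomes an ``ascent by $h$'' on the right, while the roles of the two vertices, and hence the two possible magnitudes, get interchanged. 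Once the left-hand statements $(i)$–$(ii)$ are pinned down by the explicit angle graph, the right-hand statements $(iii)$–$(iv)$ follow formally, and the lemma is proved.
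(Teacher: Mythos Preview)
Your approach is essentially the paper's: both reduce the lemma to reading off angle differences along $\bord\Polgint$, using that the interior angle of $\Polg$ is $2\pi/(2g{+}1)$ and that the ribbons $\Rib_{i,j}^l,\Rib_{i,j}^r$ have been normalized to sit at the ``almost orthogonal'' height over $e_i,e_j$ and the ``almost tangent'' height over intermediate sides. The paper's proof is in fact even terser than yours --- it simply records the two identities $\pi/2 - 2\pi/(2g{+}1) = (2g{-}3)\pi/(4g{+}2)$ and $\pi - 2\pi/(2g{+}1) = (4g{-}2)\pi/(4g{+}2)$ and refers to the figures.

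One place where your write-up is muddier than it needs to be: in the terminal case $(i)$ you start computing $(2g{+}1)\pi/(4g{+}2) - (4g{-}2)\pi/(4g{+}2)$, declare it ``not quite right,'' and retreat to a vague ``half of the vertex figure.'' The clean computation is the paper's: going from the orthogonal height to the tangent height over the single vertex $e_i^l$ costs $\pi/2$ minus the \emph{interior} angle $2\pi/(2g{+}1)$, not the exterior angle, and this gives $(2g{-}3)\pi/(4g{+}2)$ directly. Likewise the exceptional value $4\pi/(4g{+}2)$ is nothing but $2\pi/(2g{+}1)$, the interior angle itself. Once you phrase everything in terms of the interior angle rather than the exterior one, the four cases fall out without any ``half a vertex'' bookkeeping, and your symmetry argument for $(iii)$--$(iv)$ via $g_{c,\cci}$ becomes cleaner as well.
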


\begin{proof}
The proof is illustrated on Figures~\ref{F:ProjTemplate} and~\ref{F:Fiber}. It relies on the assumption that the angle between adjacent sides of~$\Polg$ is~$2\pi/(2g{+}1)$, and on the height we chose for the parts of the ribbons~$\Rib_{i,j}^l$ and~$\Rib_{i,j}^r$ above each edge of~$\Polgint$. The values follow from the equalities~$\pi/2-2\pi/(2g{+}1)=(2g{-}3)\pi/(4g{+}2)$ and $\pi-2\pi/(2g{+}1)=(4g{-}2)\pi/(4g{+}2)$. 
\end{proof}

\begin{defi}
\label{D:v}
For~$i,j$ in the range~$\{1, \dots, 4g{+}2\}$, we define~$\vzero i j$ to be the sum over all even vertices of~$\Polg$ of the increases of~$\Rib^l_{i,j}$ and of~$\Rib^r_{i,j}$ around this vertex. 
\end{defi}

 For example, if $i=1$ and $j=2$, then $\Rib^l_{1,2}$ contributes~$4\pi/(4g{+}2)$ to $\vzero 0 1$ and~$0$ to~$\vun 0 1$.  On the other hand, $\Rib^r_{1,2}$ contributes $2g$ times $+(4g{-}2)\pi/(4g{+}2)$ to~$\vzero 0 1$ and $2$ times~$(2g{-}3)\pi/(4g{+}2)$ plus $2g{-}1$ times~$(4g{-}2)\pi/(4g{+}2)$ to~$\vun 0 1$. Therefore we have~$\vzero 0 1 = (8g^2-4g+4)\pi/(4g{+}2)$ and~$\vun 0 1 = (8g^2-4g-4)\pi/(4g{+}2)$. 
With the above notation, the boundary of the 2-chain $\Spi\cup\Sd$ consists of the union of $\gamma$, of $-\sum_{i,j} \vzero i j b_{i,j}$ times the fiber~$\un V_0$ and of~$-\sum_{i,j} \vun i j b_{i,j}$ times the fiber~$\un V_1$.

\begin{figure}[hbt]
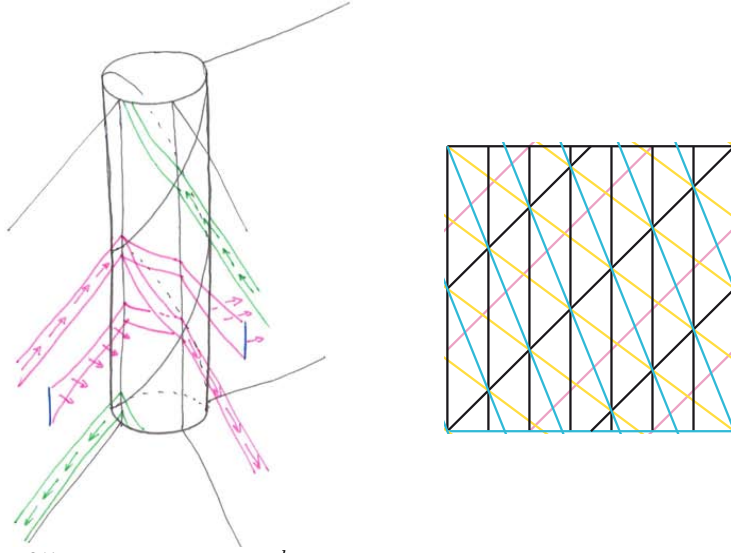

	\begin{center}
	\begin{picture}(110,65)(0,0)
 	\put(-2,-5){\includegraphics*[scale=.55]{MarkFiber2.pdf}}
 	\put(60,10){\includegraphics*[scale=.85]{BoundaryFiber.pdf}}
	\end{picture}
	\end{center}
	\caption{\small The templates~$\temp_{4g+2}^l$ and~$\temp_{4g+2}^r$ on the neighourhood of a vertex~$V_i$. For~$\temp_{4g+2}^l$, most of the ribbons go down by $(4g{-}2)\pi/(4g{+}2)$ (in blue), or they are close to a branching segment and they go down by $(2g{-}3)\pi/(4g{+}2)$ (in yellow), unless they are close to both branching segments and they go up by~$4\pi/(4g{+}2)$ (in pink). For~$\temp_{4g+2}^r$, the signs are reversed.}
	\label{F:Fiber}
\end{figure}

In order to complete the chain~$S^\gamma$, it suffices that we add a 2-chain whose boundary is $(\sum_{i,j} \vzero i j b_{i,j})\un V_0$ and a 2-chain whose boundary is $(\sum_{i,j} \vun i j b_{i,j})\un V_1$. Since~$\Sigma_g$ has Euler characteristic~$2{-}2g$, there exists a vector field on~$\Sigma_g$ with only one singularity at~$V_0$, the index of the latter being~$2{-}2g$. By lifting this vector field in~$\un\Sigma_g$, we obtain a surface with boundary $(2g{-}2)\un V_0$. We then define~$\Ss$ to be $\frac 1 {2g-2} (\sum_{i,j} \vzero i j b_{i,j})$ times this surface. Similarly, we can construct a surface with boundary~$(2g{-}2)\un V_1$, and we then define~$\Sss$ to be $\frac 1 {2g-2} (\sum_{i,j} \vun i j b_{i,j})$ times the latter surface. We have now only to determine the intersection number of~$\gamma'$ with both~$\Ss$ and~$\Sss$. For this it is enough to determine the linking number of~$\gamma'$ with the fibers~$\un V_0$ and~$\un V_1$, and then to multiply by $\frac 1 {2g-2} (\sum_{i,j} \vzero i j b_{i,j})$ and $\frac 1 {2g-2} (\sum_{i,j} \vun i j b_{i,j})$ respectively.

\begin{defi}
\label{D:h}
For~$i,j$ in the range~$\{1,\dots,4g{+}2\}$, we define~$\hzero i j$ as the number of even vertices of~$\Polg$ on the left of~$\Rib_{i,j}$, minus the number of even vertices on the right of~$\Rib_{i,j}$, divided by~$2g{+}1$. Similarly, we define $\hun i j$ as the number of odd vertices of~$\Polg$ on the left of~$\Rib_{i,j}$, minus the number of odd vertices on the right, divided by~$2g{+}1$. 
\end{defi}

The precise expressions for~$\hzero i j$ and $\hun i j$ are $((j-i)[4g{+}2]-(2g{+}1)+j[2]-i[2])/(2g{+}1)$ and $((j-i)[4g{+}2]-(2g{+}1)-j[2]+i[2])/(2g{+}1)$, respectively. Moreover, we fix two arbitrary points~$V'_0$ and~$V'_1$ on the boundaries of~$D_0$ and~$D_1$ respectively. We also choose two meridians~$m_0$ and~$m_1$ of the solid tori~$\un D_0$ and~$\un D_1$.

\begin{lemma}
\label{L:Cohomologous}
The collection~$\gamma'$ is cohomologous, in the complement of~$\un V_0 \cup \un V_1$, to $(\sum_{k,l}\vzero k l b'_{k,l}) \un V'_0 + (\sum_{k,l}\vun k l b'_{k,l}) \un V'_1 + (\sum_{k,l}\hzero k l b'_{k,l}) m_0 + (\sum_{k,l}\hun k l b'_{k,l}) m_1$.
\end{lemma}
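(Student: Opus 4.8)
The plan is to compute the homology class of $\gamma'$ in the complement of the fibers $\un V_0$ and $\un V_1$ by accounting separately for the ``vertical'' and ``horizontal'' contributions of each ribbon. First I would recall that, after removing the solid tori $\un D_0$ and $\un D_1$ (neighbourhoods of the fibers of $V_0$, $V_1$), the relevant piece of $\un\Sigma_g$ has the homotopy type controlled by the fibers $\un V_0'$, $\un V_1'$ over the chosen basepoints $V_0'\in\bord D_0$, $V_1'\in\bord D_1$ together with the meridians $m_0$, $m_1$ of these solid tori; since we only care about the class of $\gamma'$ modulo things that bound in the complement of $\un V_0\cup\un V_1$, it suffices to express, for a single elementary arc $\alpha'$ of $\gamma'$ lying on a ribbon $\Rib_{k,l}$, its contribution to each of these four generators, and then sum over all arcs using the linear code $(b'_{k,l})$.

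The key computation is arc-by-arc. For a fixed ribbon $\Rib_{k,l}$, I would push $\alpha'$ to one of its boundary projections $\Rib^l_{k,l}$ or $\Rib^r_{k,l}$ (this does not change the relevant homology class, since the two projections differ by a curve bounded by a sub-annulus of $\un e_c$'s, which is already killed in the complement). As the arc travels along the boundary annuli $\un e_c$, near each vertex of $\Polg$ it winds vertically by the amounts tabulated in Lemma~\ref{L:ToursFibres}; summing these increments over the \emph{even} vertices gives, by Definition~\ref{D:v}, exactly $\vzero k l$ full turns around $\un V_0$, hence a contribution $\vzero k l \, \un V_0'$, and similarly $\vun k l \, \un V_1'$ for the odd vertices. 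The horizontal (meridional) contribution is the net number of times $\alpha'$ circles the removed solid torus $\un D_0$, i.e.\ the signed count of even vertices lying on the left versus the right of $\Rib_{k,l}$, which by Definition~\ref{D:h} is $(2g{+}1)\hzero k l$; after dividing out by the fact that a full loop of $\bord D_0$ around $V_0$ meets $\un\{V_0\}$ with multiplicity $2g{+}1$ in the Seifert structure (compare the elevator picture of Figure~\ref{F:UTBundle} and Figure~\ref{F:TemplateQuotient}), this is precisely $\hzero k l \, m_0$, and likewise $\hun k l \, m_1$ from the odd vertices. Adding these four contributions for the arc $\alpha'$ and then summing over all $b'_{k,l}$ arcs of $\gamma'$ on ribbon $\Rib_{k,l}$, and over all pairs $(k,l)$, yields the stated formula. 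I would also check that the leftover terms coming from the horizontal parts of the arcs over non-vertex edges $\un e_c$ are null-homologous in the complement (they bound sub-rectangles of $\un e_c$), so they contribute nothing.

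The main obstacle I expect is bookkeeping of orientations and the precise Seifert-framing normalisation: one must be careful that ``a turn around the vertex fiber'' in the template picture (Lemma~\ref{L:ToursFibres}, where increments are measured as fractions of $2\pi$, i.e.\ heights like $(4g{-}2)\pi/(4g{+}2)$) adds up correctly to the \emph{integer} $\vzero i j$ of Definition~\ref{D:v}, and that the meridian $m_0$ is oriented so that the sign conventions in Definition~\ref{D:h} match. Concretely, the worked example following Definition~\ref{D:v} (with $i=1$, $j=2$) should be re-derived as a consistency check, and the special cases $j=i\pm1$ in Lemma~\ref{L:ToursFibres} (where the ribbon goes \emph{up} rather than down near a shared vertex) must be incorporated with the right sign. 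Once the per-arc contribution is pinned down, the passage to the full collection is immediate by additivity of homology, and the identity ``cohomologous in the complement of $\un V_0\cup\un V_1$'' follows because two cycles with the same coordinates on the generators $\un V_0'$, $\un V_1'$, $m_0$, $m_1$ differ by a boundary there.
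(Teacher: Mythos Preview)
Your approach coincides with the paper's, which proves the lemma in one sentence: the 2-chain $\Spi\cup\Sd$ of Definitions~\ref{D:Spi} and~\ref{D:Sd}, built now with~$\gamma'$ in place of~$\gamma$, lives in the complement of $\un V_0\cup\un V_1$ and realizes the required cobordism between~$\gamma'$ and the stated combination of fibers and meridians on $\un\bord D_0\cup\un\bord D_1$. You are redoing this construction arc by arc rather than citing it.

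One bookkeeping slip to watch: you push each arc to a \emph{single} projection $\Rib^l_{k,l}$ or $\Rib^r_{k,l}$, but $\vzero k l$ in Definition~\ref{D:v} is the sum of the vertical increments of \emph{both} projections (and the worked example following that definition shows the two contributions differ). The paper's symmetrized choice $\gamma_\pi=\tfrac12\gamma^l_\pi+\tfrac12\gamma^r_\pi$ is precisely what makes the coefficients come out as $\vzero k l$, $\vun k l$; with a single projection you would land on a homologous curve but a different-looking intermediate formula, and you would still owe an argument that it matches the statement. Also, your claim that the two projections ``differ by a curve bounded by a sub-annulus of $\un e_c$'s'' is not quite right: they are homotopic rel endpoints through the disc swept by the isotopies inside $\un\Polgint$, not through the boundary annuli. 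Finally, the $(2g{+}1)$ normalization you worry about for the meridian is already built into Definition~\ref{D:h}, so no further Seifert-framing correction is needed.
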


\begin{proof}
A construction similar to the construction of the 2-chain~$\Spi\cup\Sd$, applied to~$\gamma'$ instead of~$\gamma$, realizes a cobordism between~$\gamma$ and the announced collection of curves.
\end{proof}

\begin{lemma}
\label{L:LkSs}
The intersection number between~$\Ss\cup\Sss$ and~$\gamma'$ is equal to 
\[ \sum_{i,j,k,l} \left[\vzero i j \hzero k l + \vun i j \hun k l + \frac 1 {2g{+}1} (\vzero i j + \vun i j) (\vzero k l + \vun k l)\right] b_{i,j}b'_{k,l}.\]
\end{lemma}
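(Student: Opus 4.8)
The plan is to express $\Ss\cdot\gamma'$ and $\Sss\cdot\gamma'$ in terms of the linking numbers $\lk(\un V_0,\gamma')$ and $\lk(\un V_1,\gamma')$, and then to evaluate those via Lemma~\ref{L:Cohomologous}. Since $\un\Sigma_g\to\Sigma_g$ is a circle bundle of Euler number $\chi(\Sigma_g)=2-2g$, the class of a fiber is torsion of order $2g{-}2$ in $H_1(\un\Sigma_g;\Z)$; in particular $\un V_0,\un V_1,\un V'_0,\un V'_1$ are all rationally null-homologous, so every linking number appearing below is a well-defined rational number. Recall that $\Ss=\tfrac1{2g-2}\bigl(\sum_{i,j}\vzero i j\,b_{i,j}\bigr)W_0$, where $W_0$ is the lift to $\un\Sigma_g$ of a vector field on $\Sigma_g$ having a single zero, of index $2{-}2g$, at $V_0$, so that $\bord W_0=(2g{-}2)\un V_0$ for a suitable orientation. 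Choosing a rational $2$-chain $S^{\gamma'}$ with $\bord S^{\gamma'}=\gamma'$, a Stokes-type count of the boundary points of the $1$-manifold $W_0\cap S^{\gamma'}$ yields $W_0\cdot\gamma'=\pm(2g{-}2)\,(S^{\gamma'}\cdot\un V_0)=\pm(2g{-}2)\lk(\un V_0,\gamma')$, hence $\Ss\cdot\gamma'=\pm\bigl(\sum_{i,j}\vzero i j\,b_{i,j}\bigr)\lk(\un V_0,\gamma')$, and likewise $\Sss\cdot\gamma'=\pm\bigl(\sum_{i,j}\vun i j\,b_{i,j}\bigr)\lk(\un V_1,\gamma')$.

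Next I would compute $\lk(\un V_0,\gamma')$. By Lemma~\ref{L:Cohomologous}, $\gamma'$ is homologous in $\un\Sigma_g\smallsetminus(\un V_0\cup\un V_1)$ to
$$C=\Bigl(\sum_{k,l}\vzero k l\,b'_{k,l}\Bigr)\un V'_0+\Bigl(\sum_{k,l}\vun k l\,b'_{k,l}\Bigr)\un V'_1+\Bigl(\sum_{k,l}\hzero k l\,b'_{k,l}\Bigr)m_0+\Bigl(\sum_{k,l}\hun k l\,b'_{k,l}\Bigr)m_1,$$
and, linking numbers with $\un V_0$ being invariant under homologies in $\un\Sigma_g\smallsetminus\un V_0$, we get $\lk(\un V_0,\gamma')=\lk(\un V_0,C)$. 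The meridian $m_1$ bounds a meridian disc inside $\un D_1$, disjoint from $\un D_0\supset\un V_0$, so $\lk(\un V_0,m_1)=0$; the meridian $m_0$ bounds a meridian disc of $\un D_0$ meeting the core $\un V_0$ once, so $\lk(\un V_0,m_0)=\pm1$, and all orientations are fixed, once and for all, so that this is $+1$. For the fibers $\un V'_0$ and $\un V'_1$, a vector field on $\Sigma_g$ with a single zero of index $2{-}2g$ at $V'_0$ (resp.\ $V'_1$) lifts to a $2$-chain whose boundary is $(2g{-}2)$ times that fiber and which meets $\un V_0$ in the one point given by the value of the vector field at $V_0$, whence $\lk(\un V_0,\un V'_0)=\lk(\un V_0,\un V'_1)=\tfrac1{2g-2}$. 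Substituting into $\lk(\un V_0,C)$ gives
$$\lk(\un V_0,\gamma')=\sum_{k,l}\hzero k l\,b'_{k,l}+\frac1{2g-2}\sum_{k,l}\bigl(\vzero k l+\vun k l\bigr)b'_{k,l},$$
and the identical computation around $\un V_1$ (with $\lk(\un V_1,m_0)=0$, $\lk(\un V_1,m_1)=1$, $\lk(\un V_1,\un V'_0)=\lk(\un V_1,\un V'_1)=\tfrac1{2g-2}$) gives the same expression with $\hun$ in place of $\hzero$.

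Adding the two contributions and expanding then produces
$$\Ss\cdot\gamma'+\Sss\cdot\gamma'=\sum_{i,j,k,l}\Bigl[\vzero i j\,\hzero k l+\vun i j\,\hun k l+\frac1{2g-2}\bigl(\vzero i j+\vun i j\bigr)\bigl(\vzero k l+\vun k l\bigr)\Bigr]b_{i,j}\,b'_{k,l},$$
the announced bilinear form (with the coefficient $\tfrac1{2g-2}$ of Definition~\ref{D:BoundLk}). I expect the only genuinely delicate part to be the \emph{sign and orientation bookkeeping}: one must orient $\un\Sigma_g$, the fibers $\un V_0,\un V_1,\un V'_0,\un V'_1$, the meridians $m_0,m_1$ and the four pieces $\Spi,\Sd,\Ss,\Sss$ of $S^\gamma$ in a mutually compatible way, so that all four elementary linking numbers carry the sign indicated and so that $\Ss\cdot\gamma'=+\bigl(\sum_{i,j}\vzero i j\,b_{i,j}\bigr)\lk(\un V_0,\gamma')$; once these conventions are pinned down, the rest is routine. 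A subsidiary point, easily dispatched, is that the auxiliary choices of $m_0,m_1,V'_0,V'_1$ are irrelevant: changing them alters $C$ only by cycles bounding in $\un\Sigma_g\smallsetminus(\un V_0\cup\un V_1)$, which affects neither $\lk(\un V_0,\cdot)$ nor $\lk(\un V_1,\cdot)$.
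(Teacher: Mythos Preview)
Your argument is correct and follows essentially the same route as the paper: reduce to the elementary linking numbers $\lk(\un V_i,m_j)$ and $\lk(\un V_i,\un V'_j)$, evaluate the former via the meridian discs and the latter via the lifted vector field with a single singularity, and then feed Lemma~\ref{L:Cohomologous} into the definition of~$\Ss,\Sss$. You are in fact more explicit than the paper, which compresses the whole computation into two sentences; and you correctly obtain the coefficient $\tfrac1{2g-2}$ of Definition~\ref{D:BoundLk} (the $\tfrac1{2g+1}$ printed in the lemma statement is a typo).
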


\begin{proof}
The curve~$m_0$ bounds a meridian disc for~$\un D_0$, so that its linking numbers with~$\un V_0$ and~$\un V_1$ are 0 and 1 respectively. Similarly one has~$\lk(m_1, \un V_0)= 0$ and $\lk(m_1, \un V_1)= 1$. The lift of the vector field on~$\Sigma_3$ with only one singularity~$p$ defines a surface in~$\un\Sigma_g$ whose boundary is~$(2g{-}2)\un p$ and which intersects every other fiber once. Therefore, $\lk(\un p, \un p') = \frac 1 {2g{-}2}$ holds for every point~$p'$ distinct from~$p$. 
\end{proof}

Proposition~\ref{P:Lk3} now follows from Lemmas~\ref{L:LkSpi},~\ref{L:LkSd}, and~\ref{L:LkSs}, which together give the expected bound for $\lk(\gamma, \gamma')$ directly.

The set of linear codes  $(b_{i,j})$ that correspond to geodesics on~$\Sigma_g$ is a subset of~$\R^{(4g+2)(4g+1)}$. Actually, it is a cone included in~$\R_+^{(4g+2)(4g+1)}$ that we denote by~$\Cone_g$. It is not hard to see that~$\Cone_g$ is a proper subset of~$\R_+^{(4g+2)(4g+1)}$, {\it i.e.}, that there are more constraints on the possible values of~$(b_{i,j})$ than the positivity of the coefficients. For example, there are linear equality constraints coming from the fact that every arc of the associated collection that crosses a side of~$\Polg$ continues on the other side, as well as linear equalities coming from the fact the the collection is null-homologous. There are also inequality constraints coming from the fact that the collection consists of geodesics, so that it cannot always wind around a vertex. Precisely, some coefficients of the form $b_{i,j}$ with $\vert i-j\vert \ge 2$ cannot be too small when compared with the coefficients of the form~$b_{i,i+1}$. 

Implementing the above constraints in a computer program leads to numerical bounds for the linking numbers of orbits of~$\GF{\Sigma_g}$. However, as we shall see in Section~\ref{S:Question}, some collections of orbits have a positive linking number, so there is no hope to prove a uniform negativity result.

\subsection{Linking of geodesics on the orbifolds~$\Sigma_{2,3,4g+2}$}
\label{S:Lk237}

We now consider the case of the orbifold~$\Orbg$. Our goal is to establish upper bounds for the linking numbers of pairs of orbits in the associated geodesic flow. We shall prove

\begin{prop}[case $(b)$ of Theorem~A]
\label{P:237}
Let $\gamma, \gamma'$ be two orbits of~$\GF{\Orb_{2,3,4g+2}}$ in~$\un\Orb_{2,3,4g+2}$. Then we have $\lk(\gamma, \gamma') < 0$.
\end{prop}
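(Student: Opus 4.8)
The plan is to deduce Proposition~\ref{P:237} from Proposition~\ref{P:Lk3} by passing to the covering~$\un\Sigma_g\to\un\Orbg$ and exploiting the symmetry of liftings of geodesics. First I would fix the covering: the orbifold~$\Orbg=\Sigma_{2,3,4g+2}$ is covered by the genus-$g$ surface~$\Sigma_g$ of \S\ref{S:Template3}, and this covering is chosen so that the deck group contains the order-$(2g{+}1)$ rotational symmetry~$\rho$ of the polygon~$\Polg$ together with the order-$2$ symmetry exchanging $V_0$ and $V_1$; altogether the covering~$\un\Sigma_g\to\un\Orbg$ has some index~$n$ (equal to $2(2g{+}1)$, or a suitable divisor), and the full automorphism group of the tessellation~$\Tiling_g$ acts on the set of ribbons~$\Rib_{i,j}$. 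By Lemma~\ref{L:Covering}, it suffices to show that the equivariant lifts~$\hat\gamma,\hat\gamma'$ in~$\un\Sigma_g$ of the two given orbits of~$\GF{\Orbg}$ satisfy $\lk(\hat\gamma,\hat\gamma')<0$.

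Next I would analyse the linear code $(b_{i,j})$ of an equivariant lift. The key point, announced in the introduction, is that because $\hat\gamma$ is invariant under the deck symmetries, its linear code is invariant under the induced action on the index set, i.e. $b_{i,j}=b_{\rho(i),\rho(j)}$ for the rotation~$\rho$, and similarly for the reflection swapping even and odd vertices. So the codes of equivariant lifts live in a subcone~$\Cone_g^{\mathrm{sym}}\subset\Cone_g$ of~$\R_+^{(4g+2)(4g+1)}$ cut out by these symmetry relations (this is the subcone where the form is negative, as opposed to the full cone~$\Cone_g$ where Proposition~\ref{P:CEx} shows positivity can occur). I would then substitute these relations into the bilinear form~$Q_{4g+2}$ of Definition~\ref{D:BoundLk}: many terms collapse, and one is left with a much smaller form~$S_{4g+2}$ in the reduced set of variables — essentially parametrised by the ``jump'' $j-i\in\Z/(4g{+}2)\Z$ and the parity of~$i$. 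Concretely, averaging $q_{i,j,k,l}$ over the group orbit makes the first two (combinatorial) terms simplify using the cyclic-order indicator sums, and the functions~$\vzero i j,\vun i j,\hzero k l,\hun k l$ become explicit linear functions of~$j-i$ via the formulas recorded after Definitions~\ref{D:v} and~\ref{D:h}.

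Then I would show that the reduced form~$S_{4g+2}$ is negative definite on~$\Cone_g^{\mathrm{sym}}$. This is the heart of the argument. I expect to do it by first using the additional linear constraints on the code — conservation of flux across each side of~$\Polg$ (each incoming arc on~$\un e_i$ continues out some~$\un e_j$) and the null-homology constraints — which, combined with the symmetry, pin the admissible codes down to a cone of rather small dimension; and second by a direct estimate showing the resulting quadratic expression is a negative combination of squares, or more simply that every off-diagonal coefficient is negative and the diagonal ones dominate. A convenient reformulation: group the variables by the value of the jump and show that~$S_{4g+2}$ becomes, up to positive scalars, $-(\sum_{\text{jumps}} c_m x_m)^2$ minus further nonnegative terms, where the~$c_m$ are the explicit weights coming from~$v_0+v_1$ and $h_0,h_1$. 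Finally, since $\hat\gamma\ne\hat\gamma'$ and their codes are nonzero (a periodic geodesic traverses some ribbon), the inequality is strict, giving $\lk(\hat\gamma,\hat\gamma')<0$ and hence $\lk(\gamma,\gamma')<0$ by Lemma~\ref{L:Covering}.

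The main obstacle I anticipate is the last step: proving negativity of~$S_{4g+2}$ on the reduced cone uniformly in~$g$. The form is not negative on all of~$\R_+^{(4g+2)(4g+1)}$, nor even on all of~$\Cone_g$ (Proposition~\ref{P:CEx}), so one genuinely needs the symmetry relations \emph{and} the geodesic inequality constraints relating the $b_{i,i+1}$ to the $b_{i,j}$ with $|i-j|\ge 2$; keeping track of which constraints are actually used, and organising the resulting algebra into a manifestly negative expression, is where the real work lies. A secondary, more bookkeeping-type difficulty is computing the exact index~$n$ of the covering and making sure the equivariant lift is connected (or handling it componentwise), so that Lemma~\ref{L:Covering} applies cleanly with the correct normalisation factor~$1/n$.
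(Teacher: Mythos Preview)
Your overall strategy matches the paper's: lift to~$\un\Sigma_g$ via Lemma~\ref{L:Covering}, use the rotational symmetry of the equivariant lift to reduce the linear code to a small number of coordinates, and then show the reduced bilinear form is negative on the cone of admissible codes. You also correctly anticipate that this last step is where the real difficulty lies. However, there is a genuine gap in your plan, and it is precisely the one you flag as the main obstacle.

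The reduced form~$\hat Q_{4g+2}$ obtained by averaging~$Q_{4g+2}$ over the symmetry (your proposed~$S_{4g+2}$) is \emph{not} negative on the symmetric subcone~$\Cone_g^{\mathrm{sym}}$, even after imposing all flux and null-homology constraints. The paper does two further things you do not have. First, it \emph{refines the upper bound} of Proposition~\ref{P:Lk3}: the estimate of Lemma~\ref{L:LkSpi} is slack when many consecutive arcs of~$\hat\gamma$ travel along the leftmost or rightmost ribbon, and Lemma~\ref{L:LkSpiBis} sharpens it by introducing extra coordinates~$c_m,d_m$ counting how many times this happens. This adds a negative correction term~$-(2g{+}1)\sum_{m\ge2}(c_mc'_m+d_md'_m)$ to~$\hat Q_{4g+2}$, and the paper's~$S_{4g+2}$ lives on~$\R^{6g+1}$, not on the $(4g{+}1)$-dimensional space of symmetric~$b_j$'s alone. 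Second, to prove negativity the paper does not write~$S_{4g+2}$ as a negative sum of squares; instead it identifies an explicit finite set of extremal rays~$V_{x,y}$ (with $1\le x,y\le 2g$) for a cone~$C_{4g+2}$ containing all reduced codes---this uses a dynamical coding of geodesics by words~$L^{x_1}R^{y_1}\cdots$ in the triangular tessellation~$\Tiling_{4g+2}$, with the key constraint $x_k,y_k\le 2g$ coming from the geodesic condition---and then checks negativity of~$S_{4g+2}$ on all pairs~$(V_{x,y},V_{x',y'})$ by reducing (via piecewise-linearity in~$x,x',y,y'$) to a handful of vertices of the cube~$[1,2g]^4$. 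The paper explicitly notes that at the vertex~$(1,1,2g,2g)$ the unrefined form~$\hat Q_{4g+2}$ is \emph{positive}; only the correction from Lemma~\ref{L:LkSpiBis} makes~$S_{4g+2}$ negative there. So your plan, as stated, would fail at exactly this point.

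Minor bookkeeping: the covering index is~$3(4g{+}2)$, not~$2(2g{+}1)$, and the relevant rotational symmetry is of order~$4g{+}2$.
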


\begin{figure}[hbt]
	\begin{center}
	\includegraphics*[scale=.55]{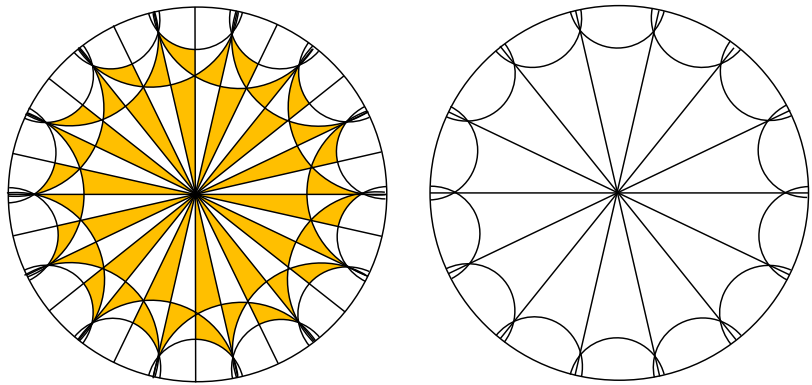}
	\end{center}
	\caption{\small On the left, the covering of~$\Orbg$ by genus~$g$ surface, for $g=3$. A fundamental domain for~$\Orbg$ is obtained by taking the union of any white triangle with a neighbouring orange triangle. On the right the intermediate tiling~$\Tiling_{4g+2}$ of the hyperbolic plane by equilateral triangles with angles~$\pi/({2g{+}1})$. Each triangle is a 3-fold cover of~$\Orbg$. The polygon~$\Polg$ (and therefore the surface~$\Sigma_g$) is obtained by gluing $4g{+}2$ triangles that are adjacent to a~vertex.}
	\label{F:Tessel14}
\end{figure}

The proof relies on a a more precise study of the template~$\temp_{4g+2}$ and refinement of Proposition~\ref{P:Lk3}. The starting point is that~$\Orb_{2,3,4g+2}$ admits a covering of index~$3(4g{+}2)$ by a genus $g$ surface~$\Sigma_g$ obtained by identifying sides of a regular $4g{+}2$-gon (see Figure~\ref{F:Tessel14}). So, by the behaviour under quotient of the linking number (Lemma~\ref{L:Covering}), in order to establish Proposition~\ref{P:237}, it is enough to prove that $\Gamma_{2,3,4g+2}$-invariant geodesics of~$\Sigma_g$ have a negative linking number. These $\Gamma_{2,3,4g+2}$-invariant geodesics have three advantages that are needed in the proof. First, their symmetry properties allows to use reduced linear codes with $4g{+}1$ coordinates instead of $(4g{+}2)(4g{+}1)$, thus also simplifying the matrix~$Q_{4g+2}$ bounding the linking number to a more simple $(4g{+}1)\times(4g{+}1)$ matrix (Lemma~\ref{L:AReduced}). Second, it is possible to refine the bounds on the linking number by refining the intersection number between the 2-chain~$\Spi$ and the curve~$\gamma'$, thus refining the first term in Definition~\ref{D:BoundLk}. The price to pay is to add $2g$ coordinates to the reduced linear code that describes how many consective times the family takes the rightmost and leftmost ribbons of the template. These two first steps then associate to every collection of $\Gamma_{2,3,4g+2}$-invariant geodesics a reduced linear code with $6g{+}1$ coordinates, so that the linking number between two collections in bounded by a bilinear form~$S_{4g+2}$ in the reduced linear code. Third, we determine a cone $\Cone_{2,3,4g+2}$ in~$\R^{6g+1}$ that (strictly) contains all reduced linear codes, and whose extremal rays are easy to determine. The proof of Proposition~\ref{P:237} then consists in proving that the form $S_{4g+2}$ is negative on all pairs of extremal rays of~$\Cone_{2,3,4g+2}$.
 

So, let $\gamma, \gamma'$ be two orbits of~$\GF{\Orbg}$. Let $\hat \gamma, \hat\gamma'$ be the images in the template~$\temp_{4g+2}$ of the $\Gamma_{2,3,4g+2}$-invariant lifts of~$\gamma$ and~$\gamma'$ in~$\un\Sigma_g$. Denote by~$b_{i,j}$ and $b'_{i,j}$ their linear codes, as defined in Definition~\ref{D:LinearCode}. Since the collection~$\hat\gamma$ is invariant under an order~$4g{+}2$ rotation around the center of~$\Polg$, we have $b_{i,j} = b_{i+1,j+1}$ for every $i,j$. Therefore, one can consider a simpler code~$\hat b_{i,j}$ defined for~$j=1, \dots, 4g{+}1$ by $\hat b_{j} = \sum_{i=0, \dots, 4g+1} b_{i,i+j}$. Similarly, we introduce a reduced form~$\hat Q_{2,3,4g+2}$ on~$\R^{4g+1}$ whose coefficients $\hat q_{j,l}$ are defined by $\hat q_{j,l} = \sum_{i,k=0, \dots, 4g+1}q_{i,i+j,k,k+l}$.

\begin{lemma}
\label{L:AReduced}
With the above definitions, for $j,l = 1, \dots, 4g{+}1$, we have 
\begin{equation}
\label{Eq:LkSym}
\hat q_{j,l} = (2g+1)\vert j - l \vert - 2g(2g+1) +\frac 1 {2g{-}2} (j-2g-1) (l-2g-1).
\end{equation}
\end{lemma}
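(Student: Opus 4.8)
The statement is proved by a direct bilinear computation. The plan is to substitute $(I,J,K,L)=(i,\,i{+}j,\,k,\,k{+}l)$ into the expression for $q_{I,J,K,L}$ given in Definition~\ref{D:BoundLk}, sum over $i,k\in\{0,\dots,4g{+}1\}$ (all indices read in $\Z/(4g{+}2)\Z$), and treat the three lines of that expression separately; the first two lines will produce exactly the first two terms of~\eqref{Eq:LkSym} by elementary counting, and the third will produce the quadratic term.

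For the first line, I fix the offsets $j,l\in\{1,\dots,4g{+}1\}$ and count the pairs $(i,k)$ realizing the cyclic relation $i<k<k{+}l\le i{+}j$: writing $d=(k-i)\bmod(4g{+}2)$ this is equivalent to $1\le d\le j-l$, so there are $(4g{+}2)\max(0,j{-}l)$ such pairs, and symmetrically $(4g{+}2)\max(0,l{-}j)$ pairs realizing $k<i<i{+}j\le k{+}l$. Hence the first line contributes $\tfrac12(4g{+}2)\bigl(\max(0,j{-}l)+\max(0,l{-}j)\bigr)=(2g{+}1)|j-l|$. For the second line, for each $i$ exactly $4g$ of the $4g{+}2$ values of $k$ avoid $\{i,\,i{+}j\}$ and exactly $4g$ avoid $\{\bar\imath,\,\overline{\imath{+}j}\}$ (both pairs being genuine since $j\not\equiv0$), so the second line contributes $-\tfrac18\cdot2\cdot(4g{+}2)(4g)=-2g(2g{+}1)$, independently of $j$ and $l$.

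The third line is the delicate one. Since $i$ and $k$ vary independently, the corresponding sum factors as
\[
\Bigl(\sum_i v_0(i,i{+}j)\Bigr)\Bigl(\sum_k h_0(k,k{+}l)\Bigr)
+\Bigl(\sum_i v_1(i,i{+}j)\Bigr)\Bigl(\sum_k h_1(k,k{+}l)\Bigr)
+\tfrac{1}{2g-2}\Bigl(\sum_i(v_0{+}v_1)(i,i{+}j)\Bigr)\Bigl(\sum_k(v_0{+}v_1)(k,k{+}l)\Bigr).
\]
From the explicit formulas for $h_0,h_1$ recalled after Definition~\ref{D:h}, the parity corrections $\pm(j[2]-i[2])$ cancel after summation over a full period, so $\sum_k h_0(k,k{+}l)=\sum_k h_1(k,k{+}l)$ is an affine function of $l$ vanishing at $l=2g{+}1$. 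The rotation of $\Polg$ by one notch is an isometry that exchanges even and odd vertices and maps $\Rib_{i,j}^{l}$ to $\Rib_{i+1,j+1}^{l}$, hence it interchanges the families $v_0$ and $v_1$ up to the index shift; therefore $\sum_i v_0(i,i{+}j)=\sum_i v_1(i,i{+}j)$, and I evaluate this common value from Lemma~\ref{L:ToursFibres} by adding, around each of the $2g{+}1$ relevant vertices, the signed heights through which $\Rib^{l}$ and $\Rib^{r}$ climb; this exhibits it as an affine function of $j$ vanishing at $j=2g{+}1$. Substituting these two affine quantities (in the correct normalisation for fibre length) into the factored expression, the purely linear and constant parts cancel and exactly $\tfrac{1}{2g-2}(j-2g{-}1)(l-2g{-}1)$ remains; together with the two previous terms this gives~\eqref{Eq:LkSym}.

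I expect the main obstacle to be entirely in this last step: keeping track of the off-by-one shifts in the indices of $v_0,v_1,h_0,h_1$, of the normalisation of the $v$'s against the length of the fibre, and of the sign conventions in Lemma~\ref{L:ToursFibres}, and then checking that every contribution of the third line other than the quadratic one cancels. Everything else reduces to the two elementary counts in $\Z/(4g{+}2)\Z$ above.
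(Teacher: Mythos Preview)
Your approach matches the paper's: both treat the three lines of Definition~\ref{D:BoundLk} separately, handle the first two by elementary counting in $\Z/(4g{+}2)\Z$ (the paper phrases the first-line count geometrically via parallel chords in Figure~\ref{F:Chord}, you do it algebraically with the shift $d=k-i$), and reduce the third line via the rotational symmetry interchanging even and odd vertices before deferring the final normalising constant to Lemma~\ref{L:ToursFibres}. One small slip in your write-up: since you have already argued that $\sum_i v_\epsilon(i,i{+}j)$ and $\sum_k h_\epsilon(k,k{+}l)$ are \emph{linear} in $(j{-}2g{-}1)$ and $(l{-}2g{-}1)$ respectively, every summand in your factored third line is already a pure multiple of $(j{-}2g{-}1)(l{-}2g{-}1)$, so there are no ``linear and constant parts'' to cancel---only a single overall coefficient to pin down.
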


\begin{proof}
We start from the formula for~$q_{i,j,k,l}$ given by Definition~\ref{D:BoundLk} with replacing~$j$ by $i+j$ and $l$ by~$k+l$. The first term $-\frac 1 2 (\{i<k<k+l\le i+j\} + \{k<i<i+j\le k+l\})$ equals~$\frac 1 2$ if the two chords connecting the edges~$e_i$ to $e_{i+j}$, and $e_k$ to $e_{k+l}$ do not intersect and are parallel, or if they have a common head (see Figure~\ref{F:Chord}). When the differences~$j$ and $l$ are fixed, they are~$4g+2$ possible choices for the first chord, and then there are $\vert j-l\vert$ positions for the second chord that give an admissible position. This gives the first term of Equation~\eqref{Eq:LkSym}.

\begin{figure}[hbt]
	\begin{center}
	\begin{picture}(60,60)(0,0)
 	\put(0,0){\includegraphics*[scale=.8]{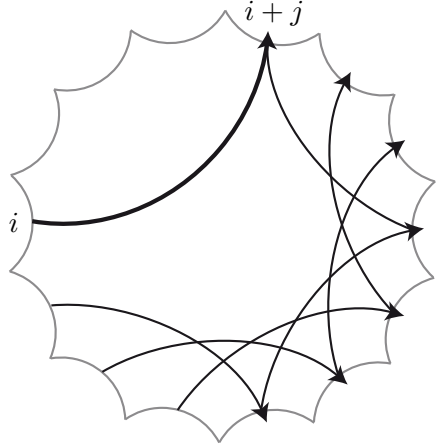}}
	\put(1,29){$i$}
	\put(32,57){$i+j$}
	\end{picture}
	\end{center}
	\caption{\small Once the chord connecting $e_i$ to $e_{i+j}$ is chosen (with $j=4$ on the picture), there are $\vert l-j\vert$ choices for $k$, so that the chord connecting $e_k$ to $e_{k+l}$ contributes to~$\{i < k < k{+}l\le i{+}j\}$ or to $\{k<i<i{+}j\le k{+}l\}$ (with $l=11$ on the picture).} 
	\label{F:Chord}
\end{figure}

For the second term, we note that when $i,i{+}j$ are fixed, there are $4g$ values of~$k$ that add~$1/8$ to the sum, and $4g$ values of~$\bar k$ that also add~$1/8$. So this yields a contribution of~$g$ when $i$ is fixed. By summing over all~$i$, we obtain the second term.

The third term in Definition~\ref{D:BoundLk} depends of the parity of~$i,i{+}j,k,k{+}l$, because we are considering the rotation amount of the chord with respect to the two different vertices of~$\Polg$. When summed over all $i, j$, these two rotation amounts are equal, so that we only consider the mean rotation of the chords. These are equal to~$j-2g-1$ and $l-2g-1$ respectively. Then the contribution to~$\hat q_{j,l}$ is a multiple of~$(j-2g-1)(l-2g-1)$. The constant is given by Lemma~\ref{L:ToursFibres}.
\end{proof}

The symmetry of the families $\hat\gamma, \hat\gamma'$ now allows us to refine Lemma~\ref{L:LkSpi}, at the expense of expanding the code. The idea is that if several consecutive arcs of~$\hat \gamma$ all travel along the rightmost ribbon, then they cannot cross as many arcs of~$\hat\gamma'$ as the bound (and the proof) of Lemma~\ref{L:LkSpi} suggests. For the sequel, it is important to remember that the families~$\hat\gamma$ and $\hat\gamma'$ are invariant by a rotation of order~$4g{+}2$ of~$\Polg$.

\begin{defi}
\label{D:ReducedLinearCode}
For $m=1,\dots,2g$, let $c_m$ ({\it resp.} $d_m$) denote the number of arcs of~$\hat\gamma$ that travel exactly $m$ consecutive times along~$\Rib_{0,1}$ ({\it resp.} $\Rib_{0,4g+1}$). Define~$c'_m$ and $d'_m$ similarly. The set $((b_j)_{j=1, \dots, 4g+1}, (c_m)_{m=1, \dots, 2g}, (d_m)_{m=1, \dots, 2g})$ is called the \emph{linear reduced code} of~$\hat\gamma$.

For $m, n = 1, \dots, 2g$, define $r_{m,n}$ to be ${-}2g{-}1$ if $i=j\neq 1$ and $0$ otherwise. Let $R_{4g+2}$ denote the bilinear form on~$\R^{2g}$ with coefficients $r_{m,n}$, and let $S_{4g+2}$ denote the bilinear form on~$\R^{8g+1}$ which is the direct sum $\hat Q_{4g+2}\oplus R_{4g+2} \oplus R_{4g+2}$.
\end{defi}

Note that if an arc travels~$m$ consecutive times along a ribbon, then it travels~$m-1$ times at it next move. Thus we have~$c_{m-1}\ge c_{m}$ and~$d_{m-1}\ge d_{m}$ for every~$m$. Note also that some orbits of the template could travel more than~$g$ times along the leftmost ribbon, thus making more than one half-turn around the corresponding vertex of~$\Polg$. These orbits do not interest us, since they cannot come from geodesics. 

\begin{lemma}
\label{L:LkSpiBis}
With the above notation, the intersection number between~$\hat\gamma'$ and the 2-chain~$\Spi$ is at most $\sum_{j,l=1}^{4g+1} (2g+1)\vert j - l \vert  b_{j}b'_{l} - (2g{+}1)\sum_{m=2}^{g} (c_m c'_{m} + d_m d'_{m})$. The linking number~$\lk(\hat\gamma, \hat\gamma')$ is smaller than $S_{4g+2}\big( ((b_j), (c_m), (d_m)), ((b_j), (c_m), (d_m)) \big).$
\end{lemma}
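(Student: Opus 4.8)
The plan is to prove Lemma~\ref{L:LkSpiBis} in two stages: first a refined bound on the intersection number of $\hat\gamma'$ with the 2-chain $\Spi$, and then an assembly of all the pieces into the bilinear form $S_{4g+2}$.

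\textbf{Refining the intersection with $\Spi$.} Recall from the proof of Lemma~\ref{L:LkSpi} that the only inexact step was the second case ($i\neq k$, $j=l$), where we counted a contribution $+\tfrac12$ for each pair of elementary arcs $(\alpha,\alpha')$ on ribbons $\Rib_{i,j}$, $\Rib_{k,j}$ with $i<k<j$ or $i<j<k$ in the cyclic order, regardless of whether the arcs actually overlap before the branching segment $\BS_{j,\cc j}$. I would revisit precisely this case under the extra hypothesis that $\hat\gamma$ and $\hat\gamma'$ are invariant under the order-$4g{+}2$ rotation of $\Polg$. The key geometric observation is the one already used in the third case of the proof of Lemma~\ref{L:LkSpi}: the discretised geodesics underlying $\hat\gamma$ and $\hat\gamma'$ are genuine geodesics of $\Hy^2$, so they cross at most once; consequently if $\alpha$ and $\alpha'$ enter $\un e_j$ in a given left/right order and leave in the opposite order, they did not cross beforehand, and the crossing being counted is forced. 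Now suppose $\alpha$ is one of $c_m$ arcs of $\hat\gamma$ that travels $m$ consecutive times along $\Rib_{0,1}$ (the rightmost ribbon), and $\alpha'$ is one of $c'_m$ such arcs of $\hat\gamma'$. By the rotation invariance these blocks sit ``in parallel'' around $m$ consecutive vertices of $\Polg$, and the geometric constraint shows that along this block the number of genuine crossings is $m-1$ fewer than the naive count $\sum\{\cdots\}$ predicts; the same holds with $\Rib_{0,4g+1}$ (the leftmost ribbon) and the numbers $d_m, d'_m$. Summing the deficit over $m=2,\dots,g$ and over the $4g{+}2$ rotates (the $(2g{+}1)$ in front accounts for the rotation orbit, exactly as in Lemma~\ref{L:AReduced}) yields the correction term $-(2g{+}1)\sum_{m=2}^{g}(c_m c'_m + d_m d'_m)$, while the first term $\sum_{j,l}(2g{+}1)\lvert j-l\rvert\, b_j b'_l$ is just the reduced form of the $\Spi$-contribution already computed in Lemma~\ref{L:LkSpi} and summed via Lemma~\ref{L:AReduced}.

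\textbf{Assembling $S_{4g+2}$.} For the second assertion, I combine the refined $\Spi$-bound above with the contributions of $\Sd$ (Lemma~\ref{L:LkSd}) and of $\Ss\cup\Sss$ (Lemma~\ref{L:LkSs}), all expressed in the reduced code via the summation $\hat q_{j,l}=\sum_{i,k}q_{i,i+j,k,k+l}$ of Lemma~\ref{L:AReduced}. The sum of the $\Spi$-term's first piece with the $\Sd$- and $\Ss\cup\Sss$-terms is exactly $\sum_{j,l}\hat q_{j,l}\,b_j b'_l$, i.e.\ $\hat Q_{4g+2}$ evaluated on $((b_j),(b_j))$; the correction term $-(2g{+}1)\sum_{m=2}^g(c_m c'_m+d_m d'_m)$ is precisely $R_{4g+2}((c_m),(c'_m)) + R_{4g+2}((d_m),(d'_m))$ by the definition of the coefficients $r_{m,n}$ (which are $-(2g{+}1)$ on the diagonal for $m\neq 1$ and $0$ otherwise). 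Hence the total upper bound for $\lk(\hat\gamma,\hat\gamma')$ is the value of the direct-sum form $S_{4g+2}=\hat Q_{4g+2}\oplus R_{4g+2}\oplus R_{4g+2}$ on the reduced linear code, which is the claim. (I note in passing that the codomain dimension $8g{+}1$ in Definition~\ref{D:ReducedLinearCode} versus $6g{+}1$ mentioned in the text reflects only a padding/indexing convention for the $R_{4g+2}$ blocks and does not affect the argument.)

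\textbf{Main obstacle.} The routine part is the bookkeeping in the assembly step; the genuinely delicate point is the refined crossing count. One must argue carefully that, along a block of $m$ consecutive visits to the extremal ribbon, the rotation invariance of both families really does pin down the left/right entry and exit orders at each of the $m$ branching segments, and that the ``geodesics cross at most once'' argument applies uniformly across the block rather than segment-by-segment in a way that might double-count or miss a deficit. Getting the exact constant $m-1$ (and not $m$ or $m-2$) requires tracking which of the two overlapping half-turns at each vertex is the one being resolved, much as in the fourth paragraph of the proof of Lemma~\ref{L:LkSpi}; this is where I expect the argument to need the most care, and where a picture analogous to Figures~\ref{F:ProjTemplate} and~\ref{F:PositiveIntersection} is indispensable.
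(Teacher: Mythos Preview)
Your assembly step is fine, but the refinement of the $\Spi$-intersection has a genuine gap, both in the mechanism you invoke and in the arithmetic it would produce.

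The overcounting in the second case of Lemma~\ref{L:LkSpi} concerns pairs $(\alpha,\alpha')$ with $i\neq k$ and $j=l$: the two arcs arrive at the \emph{same} branching segment $\BS_{j,\cc j}$ along \emph{different} ribbons. In your argument you take $\alpha$ and $\alpha'$ both on the extremal ribbon $\Rib_{0,1}$; but then $i=k$, and this pair falls under the fourth case of Lemma~\ref{L:LkSpi}, which was already exact. So the scenario you analyse is not the one being overcounted. The ``geodesics cross at most once along a parallel block'' picture is therefore not addressing the relevant pairs.

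The paper's mechanism is different and more local. It fixes a branching segment $\BS_{j,\cc j}$ and observes that an arc $\alpha'$ of $\hat\gamma'$ arriving along the extremal ribbon $\Rib_{j+1,j}$ actually meets $d_\alpha^l$ or $d_\alpha^r$ (for $\alpha$ on some $\Rib_{i,j}$, $i\neq j{+}1$) only when the head of $\alpha'$ lies under the head of $\alpha$ on $\BS_{j,\cc j}$. The vertical position on $\BS_{j,\cc j}$ is governed by the direction at infinity, and for arcs on the extremal ribbon this direction is monotone in the number of \emph{remaining} extremal-ribbon visits. Subdividing $\Rib_{j+1,j}$ (and, via the rotation-correspondence of Figure~\ref{F:SpiBis}, the outgoing extremal ribbon $\Rib_{\cc j,\cc j-1}$) into sub-ribbons indexed by this remaining count shows that, for each $m\ge 2$, the $c'_m$ arcs of $\hat\gamma'$ with remaining count $\ge m$ cannot sit under the $c_m$ corresponding arcs of $\hat\gamma$; each such pair was therefore overcounted by $\tfrac12$, and summing over the $4g{+}2$ branching segments yields the term $-(2g{+}1)c_mc'_m$. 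This is what produces $\sum_{m\ge 2}c_mc'_m$, one unit per $m$.

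Finally, even granting your mechanism, the arithmetic does not close: a deficit of ``$m-1$ per pair of length-$m$ blocks'' would give $(2g{+}1)\sum_m (m-1)c_mc'_m$, not $(2g{+}1)\sum_m c_mc'_m$. The identity you would need, $\sum_{m\ge2}c_mc'_m=\sum_{\text{pairs of blocks}}(\min(M,M')-1)$, shows that the correct count involves $\min$ of block lengths, not a fixed $m-1$ on equal-length pairs. So the place to rework is precisely your ``main obstacle'': replace the block-vs-block crossing count by the ordering argument at a single branching segment, using the subdivision of the extremal ribbon by remaining visit count.
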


\begin{figure}[hbt]
	\begin{center}
	\begin{picture}(110,85)(0,0)
 	\put(0,0){\includegraphics*[scale=.8]{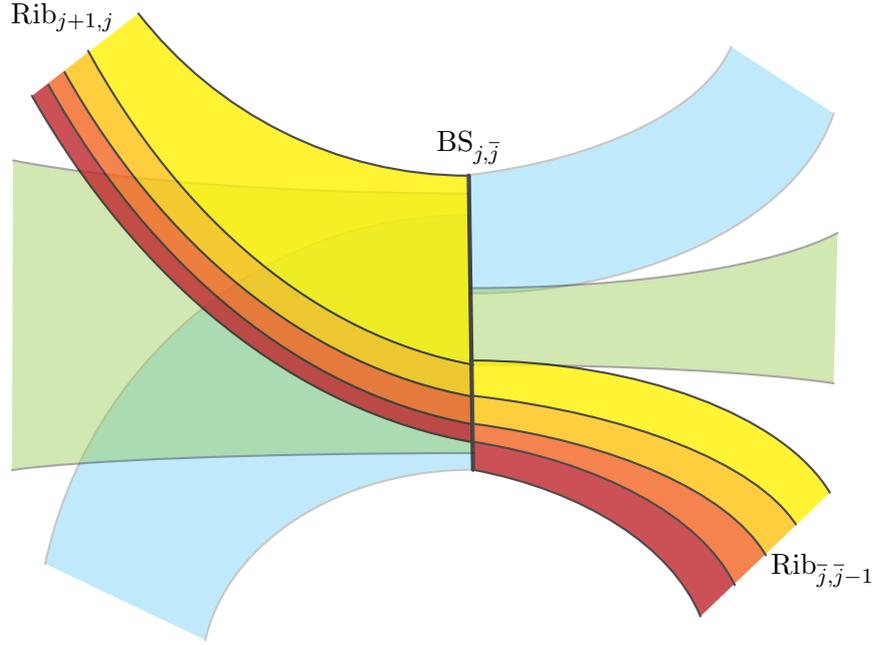}}
	\put(57,66){$\BS_{j,\cc j}$}
	\put(1,83){$\Rib_{j+1, j}$}
	\put(101,10){$\Rib_{\cc j, \cc j-1}$}
	\end{picture}
	\end{center}
	\caption{\small A neighbourhood of a branching segment~$\BS_{j,\cc j}$ in~$\un\Sigma_g$. Both ribbons~$\Rib_{j+1,j}$ and $\Rib_{\cc j, \cc j-1}$ are subdivided into subribbons containing arcs that travel $1, 2, 3, \dots$ consecutive times respectively along the rightmost ribbon. For a family of orbits of the template that is invariant by rotation of~$\Polg$, the arcs on $\Rib_{j+1,j}$ and on $\Rib_{\cc j, \cc j-1}$ are in one-to-one correspondance.} 
	\label{F:SpiBis}
\end{figure}

\begin{proof} 
(See Figure~\ref{F:SpiBis}.) We use the notation introduced in the proof of Lemma~\ref{L:LkSpi}. In the second case of this proof ($i\neq k, j=l$), we assumed that there was always an intersection between the considered arc~$\alpha'$ on~$\Rib_{k,l}$ and any elementary piece~$d_\alpha^l$ or~$d_\alpha^r$ of the 2-chain~$\Spi$. Actually, if $\alpha'$ is an arc that lies in the rightmost ribbon~$\Rib_{j+1,j}$ there is an intersection point with~$d_\alpha^l$ or~$d_\alpha^r$ if and only if $\alpha$ does not lie in~$\Rib_{j+1,j}$ and the head of~$\alpha'$ in the vertical branching segment~$\BS_{j, \cc j}$ is under the head of~$\alpha$. In particular, we know that there are~$c'_2$ elementary arcs of~$\hat\gamma'$ in~$\Rib_{j+1,j}$ whose heads are above all tails of arcs of~$\hat\gamma$ that will travel more than one time along the rightmost ribbon. Indeed, if an arc travels two or more times along the rightmost ribbon, then its direction at infinity is on the right of the direction of an arc travels only once on the rightmost ribbon (see Figure~\ref{F:SpiBis}). Since there are at least~$d_2$ such arcs in~$\hat\gamma$ at each branching segment, we can add a term~$-\frac{4g+2}2 d_2d'_2$ to the previous bound on the intersection number between~$\Spi$ and~$\hat\gamma'$. Similarly, we can consider the $d'_3$ arcs of~$\hat\gamma'$ that reach~$\BS_{j,\cc j}$ along the rightmost arc and that will travel along it two more times. Their heads cannot be above the tails of the~$d_3$ arcs of~$\hat\gamma$ that arrive at~$\BS_{j,\cc j}$ from a different ribbon and that travel two or more times along the rightmost ribbon. At the end, we can then add a term~$-(2g+1) (d_2d'_2 +d_3d'_3+\dots+d_gd'_g)$. Considering also the leftmost ribbons gives the announced extra-term.

The formula for total linking number then follows by replacing the first term in Equation~\eqref{Eq:LkSym} by the above one.
\end{proof}

The goal is now to bound the value of the quadratic form~$S_{4g+2}$ on the set of linear reduced codes that come from geodesics of~$\Orbg$. In order to do this, we first determine a cone in~$\R^{6g+1}$ that contains the set of linear reduced codes. 

\begin{defi}
\label{D:Cone}
For $x,y$ in~$\{1, \dots, 2g\}$, let $V_{x,y}$ be the vector in~$\R^{4g+1}\oplus \R^{g} \oplus \R^{g}$ with coordinates $((x-1, 0, \dots, 0, 1, 0, \dots, 0, 1, 0, \dots, 0, y-1),(2, \dots, 2, (1), 0, \dots, 0),(2, \dots, 2, (1),  0, \dots))$, where the two $1$ in the first block are in position $y+1$ and $4g-x+1$, where there are $\lfloor \frac{x-1}2\rfloor$ coefficients $2$ in the second block, one $1$ if $x$ is even, and there are $\lfloor \frac{y-1}2\rfloor$ coefficients $2$ in the last block, and one $1$ if $y$ is even.

Let $C_{4g+2}$ be the conway hull in~$\R^{6g+1}$ of the rays generated by the $4g^2$ vectors~$V_{x,y}$.
\end{defi}

\begin{lemma}
\label{L:Cone}
With the above definition, the reduced linear code of every collection of $\Gamma_{2,3,4g+2}$-periodic geodesics belongs to~$C_{4g+2}\setminus\{0\}$. 
\end{lemma}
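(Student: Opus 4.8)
The plan is to show that an arbitrary $\Gamma_{2,3,4g+2}$-invariant collection of periodic geodesics on $\Sigma_g$ produces a reduced linear code that lies in the cone $C_{4g+2}$, by verifying that it is a nonnegative combination of the generating vectors $V_{x,y}$. First I would pin down the geometric meaning of the $\Gamma_{2,3,4g+2}$-symmetry: since the collection is invariant under the order-$(4g{+}2)$ rotation, its code is determined by the finitely many ``chord types'' $\hat b_j$, and the extra data $(c_m),(d_m)$ recording how many arcs wind $m$ consecutive times around an even (\emph{resp.} odd) vertex. The key point is that the further $\Gamma_{2,3,4g+2}$-symmetry (the order-$3$ and order-$2$ symmetries of the underlying orbifold) forces every geodesic arc to be, after lifting, part of a symmetric orbit family, so that the possible ``passages'' through $\Polg$ are severely restricted: each geodesic arc of $\Orbg$ unfolds into an arc of $\Sigma_g$ whose entrance and exit sides differ by exactly one of the admissible quantities $x,y \in \{1,\dots,2g\}$, and the winding behaviour around the two vertex-classes is then forced. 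The upshot should be that a \emph{single} symmetric orbit passing through $\Polg$ with parameters $(x,y)$ contributes exactly the vector $V_{x,y}$ (or a cyclic shift of it, which has the same reduced code), so that an arbitrary invariant collection is a finite nonnegative-integer combination of the $V_{x,y}$, hence lies in $C_{4g+2}\setminus\{0\}$ (it is nonzero because the collection is nonempty).

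The concrete steps I would carry out are: (1) Fix the tessellation $\Tiling_{4g+2}$ of $\Hy^2$ by equilateral triangles with angle $\pi/(2g{+}1)$ (Figure~\ref{F:Tessel14}) and recall that $\Polg$ is obtained by gluing $4g{+}2$ triangles around a vertex; the $\Gamma_{2,3,4g+2}$-action is then generated by a rotation of order $2g{+}1$ about a triangle vertex, a rotation of order $3$ about a triangle center, and a rotation of order $2$ about a triangle edge midpoint. (2) Take a $\Gamma_{2,3,4g+2}$-invariant geodesic $\delta$ on $\Sigma_g$ and look at one of its arcs crossing $\Polg$, entering side $e_i$ and leaving side $e_j$; the order-$(4g{+}2)$ symmetry lets me assume $i=0$, so the arc type is recorded by $j$, equivalently by the pair $(x,y)$ with $x=4g{+}1-j$ hmm — more precisely by how far the arc's left endpoint lies from $e_0^l$ and its right endpoint from $e_j^r$, which are the quantities bounded by $2g$. (3) Track the discretisation $\discr{\delta}{\Tiling_g}$ through this arc: it must turn at each intermediate vertex, and the number of consecutive windings it performs around the even (odd) endpoint vertex of $e_0$ (of $e_j$) is precisely $\lfloor (x{-}1)/2\rfloor$ (plus a possible $1$ when $x$ is even), matching the $(c_m)$-block of $V_{x,y}$, and similarly for $y$ and the $(d_m)$-block. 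The bookkeeping here is exactly the content of Lemma~\ref{L:ToursFibres}: a vector travelling along $\bord\Polg$ turns right at every vertex, so the winding counts are forced. (4) Conclude that the reduced linear code of the arc equals $V_{x,y}$, sum over all arcs of the collection (with multiplicity), and observe that the sum is a nonzero element of $C_{4g+2}$.

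The main obstacle will be step (3): correctly translating the geometry of a geodesic arc crossing the symmetric $4g{+}2$-gon into the precise coordinates of $V_{x,y}$, in particular getting the floor functions $\lfloor (x{-}1)/2\rfloor$ and the parity-dependent extra $1$ exactly right, and checking that the $2$'s sit in positions $y{+}1$ and $4g{-}x{+}1$ of the first block. This requires carefully superimposing the two projections $\temp_{4g+2}^l$ and $\temp_{4g+2}^r$ near the relevant vertices (Figure~\ref{F:Fiber}) and counting, for an arc of given type, how its left and right deformations wrap around the even versus odd vertices — the parity of the vertex class along the arc alternates, which is where the split into a $c$-block and a $d$-block comes from. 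A secondary, more routine difficulty is verifying that cyclic shifts of a passage through $\Polg$ (coming from the choice of starting tile along a periodic geodesic) yield the same reduced code, so that the map ``arc $\mapsto$ $V_{x,y}$'' is well-defined on the symmetric collection; this follows from the rotation-invariance built into the reduced code $\hat b_j=\sum_i b_{i,i+j}$ and $\hat q_{j,l}=\sum_{i,k}q_{i,i+j,k,k+l}$, but it should be stated explicitly. Once step (3) is done, the inclusion in $C_{4g+2}$ is immediate from Definition~\ref{D:Cone}, and nonvanishing is clear.
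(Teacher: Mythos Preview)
Your proposal has a genuine gap: you are trying to associate a vector $V_{x,y}$ to a \emph{single} arc of $\hat\gamma$ in $\Polg$, but that is not how the $V_{x,y}$ arise. A single arc from $e_i$ to $e_{i+j}$ contributes one unit to $\hat b_j$ (and possibly to some $c_m$ or $d_m$), which is far less structured than a full $V_{x,y}$. Your step~(2), where you write ``the arc type is recorded by $j$, equivalently by the pair $(x,y)$ with $x=4g{+}1-j$ hmm,'' already shows the confusion: one arc carries one parameter, not two, and there is no natural way to extract both an $x$ and a $y$ from it. Likewise, invoking Lemma~\ref{L:ToursFibres} in step~(3) is a red herring: that lemma computes vertical heights of ribbons in $\un\bord\Polgint$, which feeds into the linking-number estimates, not into the combinatorics of the reduced code.

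What the paper actually does is pass to the finer triangular tessellation $\Tiling_{4g+2}$ and record, for each triangle crossed, whether the geodesic exits to the left ($L$) or to the right ($R$). This gives a cyclic word $L^{x_1}R^{y_1}\cdots L^{x_n}R^{y_n}$, and the geodesic property immediately forces $1\le x_k,y_k\le 2g$ (more than $2g$ consecutive identical letters would mean crossing a single geodesic line twice). The crucial step is then to truncate this word into $n$ blocks $RL^{x_k}R^{y_k-1}$: each such block comprises $x_k+y_k$ consecutive triangle-crossings, hence several arcs in $\Polg$, and it is the \emph{combined} contribution of these arcs to the reduced linear code that equals $V_{x_k,y_k}$. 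The pair $(x_k,y_k)$ thus records a left-run followed by a right-run, and the $c_m$, $d_m$ coordinates count how many of the leftmost (resp.\ rightmost) ribbons are traversed in succession, which is governed by $\lfloor x_k/2\rfloor$ and $\lfloor y_k/2\rfloor$ respectively. Summing over the $n$ blocks exhibits the reduced code as a nonnegative integer combination of the $V_{x,y}$ with $1\le x,y\le 2g$. You should rebuild your argument around this $L/R$ coding and block decomposition; without it, there is no mechanism in your plan that produces the bound $x,y\le 2g$, nor a correct correspondence between geodesic data and the generators of $C_{4g+2}$.
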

  
\begin{figure}[hbt]
	\begin{center}
	\begin{picture}(60,60)(0,0)
 	\put(0,0){\includegraphics*[scale=.8]{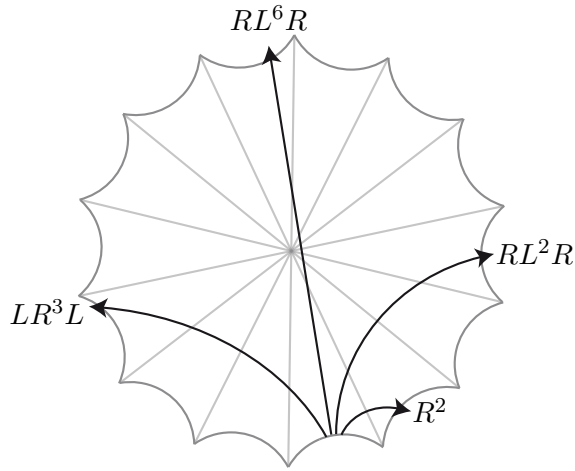}}
	\put(56,28){$RL^2R$}
	\put(21,59){$RL^6R$}
	\put(45,7){$R^2$}
	\put(-8,20){$LR^3L$}
	\end{picture}
	\end{center}
	\caption{\small Some dynamical codes. The lenght of the code equals the number of crossed triangles.} 
	\label{F:DynCode}
\end{figure}

\begin{proof}
(See Figure~\ref{F:DynCode}.)
Let~$\Tiling_{4g+2}$ denote the $\Gamma_{2,3,4g+2}$-invariant tessellation of~$\Hy^2$ by equilateral triangles with angles~$\frac{2\pi}{4g{+}2}$. Note that a fundamental domain for the action of $\Gamma_{2,3,4g+2}$ on~$\Hy^2$ is given by a third a tile of~$\Tiling_{4g+2}$. Note also that by considering the~$4g{+}2$ triangular tiles that are adjacent to a given vertex, we obtain a fundamental domain~$\Polg$ for the surface~$\Sigma_g$. 

As before, let~$\hat\gamma$ be a $\Gamma_{2,3,4g+2}$-periodic geodesics, considered in~$\Hy^2$. 
We associate to it a~\emph{dynamical code} in the following way. Starting from an arbitrary intersection point between~$\hat\gamma$ and an edge of~$\Tiling_{4g+2}$, we follow the geodesics~$\hat\gamma$. Everytime we cross a triangle of~$\Tiling_{4g+2}$, we add a letter $L$ to the dynamical code if $\hat\gamma$ goes to the left in this triangle, of a letter~$R$ if it goes to the right. Of course we stop after one period. At the expense of a cyclic permutation, the dynamical code can then be assumed to be of the form~$L^{x_1}R^{y_1}L^{x_2}\dots R^{y_n}$.

The key-point is that $1\le x_k\le 2g$ and~$1\le y_k\le 2g$ hold for every~$k$. Indeed, a curve that go more than~$2g$ consecutive times on the left crosses one of the geodesics containing edges of the tiling more than once, and therefore it cannot be a geodesics.

The second point is that the linear reduced code depends linearly of the exponents~$x_k, y_k$ in an explicit way. Indeed, every arc~$\alpha$ of~$\hat\gamma$ in~$\Polg$ is associated to a unique position in the dynamical code that describes the dynamical code when starting at the tail of~$\alpha$. Conversely, to every position in the dynamical code are associated $4g+2$ arcs of~$\hat\gamma$ that are obtained one from another by a rotation about the center of~$\Polg$.

Now, if an arc $\alpha$ goes from an edge $e_i$ to the edge~$e_{i+j}$ in~$\Polg$ with~$2\le j\le 2g{+}1$, then the corresponding dynamical code is~$LR^{j-1}L$, while the linear reduced code contains only a $1$ in~$j{-}1$-st position. Similarly if an arc goes from~$e_i$ to~$e_{i+j}$ with~$2g{+}1\le j\le 4g$, then the dynamical code is~$RL^{4g+1-j}R$ and the linear reduced code contains only a $1$ in~$j{-}1$-st position. (There is an ambiguity in the case~$j=2g+1$ for the dynamical code, depending on which side of the center of~$\Polg$ the geodesics go. But both give rise to the same the linear reduced code, so that this ambiguity is of no consequence.) 

In the remaining case, if an arc goes from $e_i$ to $e_{i+1}$, then the dynamical code begins with~$L^2$, and the linear reduced code begins with a $1$ in $1$-st position. However, the second block of coordinates (that corresponds to the vector $(c_m)$) can be non-zero, depending on how many times the geodesics~$\hat\gamma$ will go on the left after escaping~$\Polg$. The point here is that the dynamical code actually begins with~$L^xR$, and the number of times that $\hat\gamma$ will travel along the leftmost ribbon is $\lfloor \frac {x} 2\rfloor$. Therefore the second block there contains a $1$ in $\lfloor \frac x 2\rfloor$-th position. The case~$j=4g+1$ is treated similarly.

Finally, we truncate the dynamical code of~$\hat\gamma$ into the $n$~blocks $RL^{x_1}R^{y_1-1}$, $RL^{x_2}R^{y_2-1}$, \dots, $RL^{x_n}R^{y_n-1}$. The linear reduced code that corresponds to a block~$RL^{x_k}R^{y_k-1}$ is the sum of the linear codes corresponding to each of the $x_k+y_k$ letters, which turns out to be~$V_{x_k, y_k}$ by the above discussion. Therefore the linear code asociated to~$\hat\gamma$ is the sum of~$n$ such vectors. Thus it belongs to~$C_{4g+2}\setminus \{0\}$.
\end{proof}

\begin{lemma}
\label{L:FormNegative}
The form~$S_{4g+2}$ is negative on~$C_{4g+2}\setminus\{0\}$.
\end{lemma}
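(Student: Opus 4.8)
The plan is to reduce negativity of $S_{4g+2}$ on the whole cone to finitely many pairwise inequalities between the generators $V_{x,y}$, and then to carry those out by direct computation. First I would record that $S_{4g+2}$ is \emph{symmetric}: by Lemma~\ref{L:AReduced} the coefficient $\hat q_{j,l}=(2g+1)|j-l|-2g(2g+1)+\frac1{2g-2}(j-2g-1)(l-2g-1)$ is symmetric in $(j,l)$, and $r_{m,n}=-(2g+1)\{m=n\neq 1\}$ is symmetric, so $S_{4g+2}=\hat Q_{4g+2}\oplus R_{4g+2}\oplus R_{4g+2}$ is too. Consequently, for any $v=\sum_{x,y}\lambda_{x,y}V_{x,y}\in C_{4g+2}$ with $\lambda_{x,y}\ge 0$ and $v\neq 0$, bilinearity gives $S_{4g+2}(v,v)=\sum\lambda_{x,y}\lambda_{x',y'}\,S_{4g+2}(V_{x,y},V_{x',y'})$; hence it is enough to prove $S_{4g+2}(V_{x,y},V_{x',y'})\le 0$ for all $x,y,x',y'\in\{1,\dots,2g\}$, with strict inequality when $(x,y)=(x',y')$ (then some $\lambda_{x_0,y_0}>0$ contributes a strictly negative diagonal term while all cross terms are nonpositive).

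Next I would make the pairing explicit. By Definition~\ref{D:Cone} the first block of $V_{x,y}$ lives in $\R^{4g+1}$ and is supported at the positions $1,\ y+1,\ 4g-x+1,\ 4g+1$ with weights $x-1,\ 1,\ 1,\ y-1$ (total weight $x+y$, and since $x+y\le 4g$ these four positions are in increasing order), while its last two blocks $(c_m),(d_m)$ have $\sum_{m\ge 2}c_m^2$ and $\sum_{m\ge 2}d_m^2$ explicitly computable from $x$ and $y$ up to a bounded correction. A useful intermediate identity is $\langle w,V_{x,y}\rangle=(x-1)(-2g)+(y-2g)+(2g-x)+(y-1)(2g)=(2g+1)(y-x)$ for $w_j:=j-2g-1$, so the rank-one part of $\hat Q_{4g+2}$ contributes $\tfrac{(2g+1)^2}{2g-2}(y-x)(y'-x')$ to $S_{4g+2}(V_{x,y},V_{x',y'})$; the constant $-2g(2g+1)$ contributes $-2g(2g+1)(x+y)(x'+y')$, the distance part contributes $(2g+1)\sum_{j,l}|j-l|\,b_jb'_l$ over the two four-point supports, and the $R$-blocks contribute $-(2g+1)(\sum_{m\ge2}c_mc'_m+\sum_{m\ge2}d_md'_m)$.

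On the diagonal these collapse cleanly: expanding the $4\times4$ distance sum gives $\sum_{j,l}|j-l|b_jb_l=2(4gxy+2xy-x^2-y^2-x-y)$, and combining with the other two parts (after factoring out $2g+1$) yields $\hat Q_{4g+2}(V_{x,y},V_{x,y})=(2g+1)\big[(\tfrac{2g+1}{2g-2}-2(g+1))(x-y)^2-2(x+y)\big]$. Since $\tfrac{2g+1}{2g-2}-2(g+1)=\tfrac{-4g^2+2g+5}{2g-2}<0$ for all $g\ge 2$, and $x+y\ge 2$, the bracket is negative; adding the nonpositive $R$-block terms gives $S_{4g+2}(V_{x,y},V_{x,y})<0$. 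This settles the diagonal and also reveals the structure of the general pairing: the only positive contributions are the distance sum and the rank-one term, and they must be absorbed by the dominant negative term $-2g(2g+1)(x+y)(x'+y')$.

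The off-diagonal case is the main obstacle. I would expand $\sum_{j,l}|j-l|b_jb'_l$ over the two four-point supports and note that the only pair of positions at the maximal distance $4g$ is $(1,4g+1)$, with combined weight $(x-1)(y'-1)+(y-1)(x'-1)$, all remaining pairs being at distance $4g-x,4g-x',4g-y,4g-y'$ or less; bounding this sum against $2g(2g+1)(x+y)(x'+y')$ and controlling the rank-one term $\tfrac{(2g+1)^2}{2g-2}(y-x)(y'-x')$ (of order $g^3$ versus the order-$g^4$ dominant term) over the finitely many order types of the eight support points should give $S_{4g+2}(V_{x,y},V_{x',y'})\le 0$ uniformly in $g\ge 2$. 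An alternative route is to use the conditional negativity of the distance kernel (the identity $\iint|s-t|\,d\mu\,d\nu=-2\int F_\mu F_\nu$ for mean-zero measures, applied to $b-b'$ after subtracting the total masses, which the $-2g(2g+1)$ term is tailored to absorb) to rewrite $S_{4g+2}$ as minus a manifestly nonnegative bilinear combination of the weight functionals $x+y$, the rotation functionals $\tfrac1{2g+1}\langle w,V_{x,y}\rangle=y-x$, and the block data. Either way, the key structural input is that the supports of the $V_{x,y}$ are confined to the extremes $\{1\},\{4g+1\}$ together with $[2,2g+1]\cup[2g+1,4g]$ (a consequence of Lemma~\ref{L:Cone} and of $x,y\le 2g$), which is precisely what rescues the inequality, in agreement with the fact that $S_{4g+2}$ fails to be negative on all of $\R_+^{6g+1}$ (Proposition~\ref{P:CEx}).
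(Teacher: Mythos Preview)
Your proposal has a conceptual gap at the very first step: you interpret the lemma as a statement about the associated quadratic form $v\mapsto S_{4g+2}(v,v)$ on the cone, and your reduction (off-diagonal $\le 0$, diagonal $<0$) is tailored to that reading. But the lemma, as used in the proof of Proposition~\ref{P:237}, is applied to the pair of reduced linear codes of two \emph{different} families $\hat\gamma,\hat\gamma'$, so what is needed is $S_{4g+2}(v,w)<0$ for all $v,w\in C_{4g+2}\setminus\{0\}$. For that bilinear statement, knowing only that each $S_{4g+2}(V_{x,y},V_{x',y'})\le 0$ with strict inequality on the diagonal is not enough: if $v$ and $w$ are conical combinations supported on disjoint sets of generators, all cross-terms could vanish and you would only conclude $S_{4g+2}(v,w)\le 0$. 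You therefore need strict negativity on \emph{every} pair of generators, which is exactly what the paper proves.

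Secondly, the off-diagonal case is not actually carried out. You sketch two possible routes (crude bounding of the distance sum against the dominant term $-2g(2g+1)(x+y)(x'+y')$, or a rewriting via conditional negativity of the distance kernel), but both remain at the level of heuristics. The crude bound is delicate near the boundary of $[1,2g]^4$: the paper explicitly notes that at $(x,y,x',y')=(1,1,2g,2g)$ the uncorrected form $\hat Q_{4g+2}$ is \emph{positive}, and it is only the $R_{4g+2}$-correction from Lemma~\ref{L:LkSpiBis} that makes $S_{4g+2}$ negative there. Your order-of-magnitude argument (``of order $g^3$ versus the order-$g^4$ dominant term'') does not detect this, since at that vertex $x+y=2$ is bounded while $x'+y'=4g$; the cancellation is genuinely tight.

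The paper's approach sidesteps all of this with a different observation: after replacing each $\lfloor\frac{\cdot-1}{2}\rfloor$ in the $R$-block by $\frac{\cdot-1}{2}$ (which only increases the value), the function $(x,y,x',y')\mapsto S_{4g+2}(V_{x,y},V_{x',y'})$ becomes \emph{affine} in each variable on each of the four regions cut out by $x=x'$ and $y=y'$. A multi-affine function on a box attains its maximum at a vertex, so strict negativity reduces to checking the sixteen vertices of $[1,2g]^4$, which symmetry cuts down to six explicit evaluations. This is both simpler and sharper than the estimates you propose, and it delivers the strict inequality on all pairs that the application requires.
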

 
\begin{proof}
The form~$S_{4g+2}$ is bilinear, so that it is enough to show that it is negative when evaluated on every pair~$(V_{x,y}, V_{x',y'})$ of extremal vectors. Now we note that~$S_{4g+2}(V_{x,y}, V_{x',y'})$ is a function of~$x,x',y,y'$ which is almost linear in each of the coordinates. Indeed, we can expend Equation~\eqref{Eq:LkSym} corrected with the term of Lemma~\ref{L:LkSpiBis} and obtain for $S_{4g+2}(V_{x,y}, V_{x',y'})$ the value
\begin{eqnarray*}
\label{Eq:LkV}
& & (2g{+}1)\big( (x{-}1)(y'{+}4g{-}x') + (y{-}1)(4g{-}y'{+}x') + (x'{-}1)(y{+}4g{-}x) + (y'{-}1)(4g{-}y{+}x) 
\\
& & \qquad + 4g(x{-}1)(y'{-}1) + 4g(x'{-}1)(y{-}1) 
+ \vert y{-}y'\vert + \vert y{-}4g{+}x'\vert + \vert x {-} x'\vert + \vert y'{-}4g{+}x\vert \big)
\\
&-& (2g+1)\big( 4\min(\lfloor \frac {x-1} 2\rfloor, \lfloor \frac {x'-1} 2\rfloor) + 4\min(\lfloor \frac {y-1} 2\rfloor, \lfloor \frac {y'-1} 2\rfloor) +\dots \big)
\\
&-&2g(2g+1)(x+y)(x'+y')
\\
&+&\frac{2g{-}1}{2g{-}2}\big( -2g(x{-}1)+(y{-}2g) +(2g{-}x)+2g(y{-}1)\big)
\\
& &
\qquad\qquad\times \big( -2g(x'{-}1)+(y'{-}2g) +(2g{-}x')+2g(y'{-}1)\big).
\end{eqnarray*}

The second term---which corresponds to the correction that we added---contains an extra term dealing with the parity of $x$ and $y$. Since it is negative, forgetting it can only increase the result.

The observation here is that, except for what concerns the integer part operation in the second term, the above formula is linear in the variables $x, x', y, y'$ on the four regions $\{x<x', y<y'\}$, $\{x<x', y>y'\}$, $\{x>x', y<y'\}$, and $\{x>x', y>y'\}$.
By replacing $\lfloor \frac {x-1} 2\rfloor$ by $\frac {x-1} 2$, thus slightly increasing the result, we obtain a formula that is linear is all four variables. 

Therefore, in order to prove that $S_{4g+2}(V_{x,y}, V_{x',y'})$ is negative, we only have to evaluate the above formula on the extremal points of the four connected components of the domain that we are considering. These turn out to be 16 vertices of the cube $[1, 2g]^4$. Using symmetries, we can actually reduce the computation to six points, namely to $(1,1,1,1), (1,1,1,2g), (1,1,2g,2g)$, $(1,2g,1,2g)$, $(1,2g,2g,1),$ and $(2g,2g,2g,2g)$. It is then easy to check that the form is negative on these points. 
\end{proof}

Note that for all six points except $(1,1,2g,2g)$, the correction term provided by Lemma~\ref{L:LkSpiBis} is useless. However, at~ $(1,1,2g,2g)$, the uncorrected form is positive, while the corrected one is negative. This vertex corresponds to the linking number of two collections that go as right as possible, that is whose dynamical code is $LR^{2g}$. It is not a surprise that this vertex is where the form is the least negative, as the linking number of two (non-geodesic) collections whose dynamical code contains only~$R$ is positive (such collections are isotopic to a multiple of a fiber in~$\un\Orbg$, and two such fibers are positively linked).

We can now conclude.

\begin{proof}[Proof of Proposition~\ref{P:237} (case $(b)$ of Theorem~A)]
Since~$\un\Sigma_g$ is a finite cover of~$\un\Orbg$, it is enough by Lemma~\ref{L:Covering} to show that the invariant lifts of the families~$\gamma$ and~$\gamma'$ have negative linking number in~$\un\Sigma_g$. By the construction of the template~$\temp_{4g+2}$ and by Theorem~\ref{P:Template}, these lifts are isotopic to two families~$\hat\gamma, \hat\gamma'$ of periodic orbits of~$\temp_{4g+2}$. By Lemma~\ref{L:Cone}, the reduced linear codes of~$\hat\gamma, \hat\gamma'$ belong to the cone~$C_{4g+2}\setminus\{0\}$, and, by Lemma~\ref{L:FormNegative}, the form~$S_{4g+2}$ is negative on the pair formed by the two codes. By Lemma~\ref{L:LkSpiBis}, the linking number between~$\lk(\hat\gamma, \hat\gamma')$ is then negative, and so is~$\lk(\gamma, \gamma')$
\end{proof}

Thus the proof of Theorem~A is complete.

\section{Further questions}
\label{S:Question}

We conclude with a few remarks and questions about extensions of the above results. Here we shall both construct counter-examples showing some limitations for possible generalizations and discuss a few plausible conjectures.

\subsection{Left-handed flows}

We exhibited in Theorem~A some hyperbolic orbifolds with no rational homology on which any two orbits of the geodesic flow have a negative linking number. It is natural to ask for further examples of orbifolds with the same property. One could even wonder whether the property could be true for every hyperbolic orbifold. This is \emph{not} the case, and there exist counter-examples on every hyperbolic surface.

\begin{prop}
\label{P:Cex}
Let $\Sigma_2$ be a genus two hyperbolic surface. Then there exist two null-homologous collections~$\gamma, \gamma'$ of periodic orbits of~$\GF{\Sigma_2}$ satisfying~$\lk(\gamma, \gamma') >0$.
\end{prop}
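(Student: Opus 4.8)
\medskip
\noindent\emph{Proof proposal.} The plan is to exploit the one piece of positive linking that is already present in $\un\Sigma_2$, namely the mutual linking of two fibers of the bundle $\un\Sigma_2\to\Sigma_2$: by the computation in the proof of Lemma~\ref{L:LkSs} (with $g=2$, so $2g-2=2$), any two distinct fibers $F,F'$ satisfy $\lk(F,F')=\tfrac12>0$. Although a fiber is not an orbit of $\GF{\Sigma_2}$, a suitable null\nobreakdash-homologous collection of periodic \emph{geodesic} orbits will carry this positive contribution. Since the linking number and the property of being rationally null-homologous only depend on the isotopy class of the links in the closed $3$-manifold $\un\Sigma_2$, and the latter is independent of the metric, I would fix any convenient hyperbolic metric on $\Sigma_2$ from the start.

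First I would choose a separating simple closed geodesic $\delta$, splitting $\Sigma_2$ into two one-holed tori $T_1,T_2$ with $\chi(T_1)=\chi(T_2)=-1$, and orient it. Its lift $\vec\delta\subset\un\Sigma_2$ (unit tangent field of $\delta$) is a periodic orbit of $\GF{\Sigma_2}$ whose class in $H_1(\un\Sigma_2;\Z)$ projects to $[\delta]=0$ and has fiber component $\chi(T_1)[F]=-[F]$; since the Euler number of $\un\Sigma_2\to\Sigma_2$ is $\chi(\Sigma_2)=-2$, the class $[F]$ is $2$-torsion, so $\vec\delta$ is rationally null-homologous. For the second collection I would take the lift $\vec c$ of an oriented null-homologous closed geodesic $c$ that crosses $\delta$ transversally (for instance the geodesic representative of a null-homologous element of $\pi_1\Sigma_2$ not conjugate into $\pi_1T_1$ or $\pi_1T_2$, or of a high power of such an element): because $c$ is nowhere tangent to $\delta$, the orbits $\vec\delta$ and $\vec c$ are disjoint in $\un\Sigma_2$, and $\vec c$ is rationally null-homologous because $[c]=0$ in $H_1(\Sigma_2;\Q)$. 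Note that $\gamma$ and $\gamma'$ \emph{cannot} be taken to be symmetric pairs of geodesics: reversing the orientation of one orbit negates the linking number, so a collection of the form $\vec\alpha\cup\overleftarrow\alpha$ links trivially with everything, and this is exactly why the construction must use single orientations of curves that are already null-homologous downstairs.

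The remaining, and main, step is to evaluate $\lk(\vec\delta,\vec c)$ and to arrange it to be strictly positive. I would build a rational $2$-chain bounding $\vec\delta$ adapted to the decomposition $\Sigma_2=T_1\cup_\delta T_2$: take a vector field on $\Sigma_2$ restricting to the tangent field of $\delta$ along $\delta$ and with a single zero (of index $-2$), so that its graph already contains $\vec\delta$; the portion over $T_2$ then bounds $\vec\delta$ up to a fiber term, which is cancelled by an appropriate multiple of the vector-field surface of Lemma~\ref{L:LkSs}. Intersecting this $2$-chain with $\vec c$ reduces $\lk(\vec\delta,\vec c)$ to a count of the points of $c\cap T_2$ where the chosen field points along $c$ — a turning-number quantity governed by $\chi(T_2)$ — plus a multiple of $\lk(F,\vec c)$. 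The expected main obstacle is precisely the sign and turning-number bookkeeping here: one must show that this total does not vanish for every choice of $c$ (equivalently, exhibit one geodesic $c$, e.g. one winding many times around $\delta$ on both sides, for which the turning count is nonzero), after which, flipping the orientation of $c$ if necessary, the positive self-linking of the fibers forces $\lk(\vec\delta,\vec c)>0$. Setting $\gamma:=\vec\delta$ and $\gamma':=\vec c$ then completes the proof. As an alternative route, one could instead invoke the template of Proposition~\ref{P:Template} together with the bilinear bound of Section~\ref{S:237}, using that the form governing the linking is positive on the ``all-$R$'' rays of the relevant cone (as already observed at the end of \S\,\ref{S:Lk237} for fiber-like collections) and upgrading that observation from fiber multiples to genuine geodesic orbits; this avoids Birkhoff-type constraints but requires controlling the lower bound for the linking rather than just the upper bound established in Proposition~\ref{P:Lk3}.
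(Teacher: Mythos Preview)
Your core observation---that the positive mutual linking of fibers ($\lk(F,F')=\tfrac12$ when $g=2$) is the source of positive linking for suitable geodesic collections---is exactly the mechanism the paper exploits. But your execution diverges from the paper's and leaves the main step unresolved.

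The paper's argument is considerably simpler than your outline. Instead of taking a separating curve~$\delta$ and a second curve that \emph{crosses} it, the paper chooses $\gamma$ and $\gamma'$ so that their projections lie in \emph{different} pieces of a decomposition of~$\Sigma_2$ into two pairs of pants. A vector field on one piece lifts to a surface whose boundary is $\gamma$ together with twice a fiber, and this surface does not meet~$\gamma'$ at all since $\gamma'$ sits over the other piece. Hence $\gamma$ is homologous to $-2[F]$ in the complement of~$\gamma'$, and symmetrically $\gamma'$ is homologous to $-2[F']$ in the complement of~$\gamma$, so $\lk(\gamma,\gamma')=4\,\lk(F,F')=+2$. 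No turning-number bookkeeping is needed precisely because the projections are disjoint.

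Your route, by contrast, forces the $2$-chain bounding~$\vec\delta$ to meet~$\vec c$ nontrivially over~$T_2$; you correctly flag this as ``the expected main obstacle'' but do not resolve it. Your fallback---``flipping the orientation of~$c$ if necessary''---does not work: replacing $c$ by $-c$ replaces $\vec c$ by the \emph{different} orbit~$\overleftarrow c\subset\un\Sigma_2$, not by the orientation-reversal of the knot~$\vec c$, so there is no general relation $\lk(\vec\delta,\overleftarrow c)=-\lk(\vec\delta,\vec c)$. For the same reason your aside that a symmetric pair $\vec\alpha\cup\overleftarrow\alpha$ ``links trivially with everything'' is false: Birkhoff's formula recalled in the introduction gives $\lk(\vec\alpha\cup\overleftarrow\alpha,\vec\beta\cup\overleftarrow\beta)=-i(\alpha,\beta)$, typically nonzero. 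The clean fix is to drop the crossing requirement and take the two collections over disjoint subsurfaces, as the paper does.
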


\vspace{-1cm}
\begin{proof}
\vspace{-.55cm}
\parbox[t]{.6\textwidth}{Let $\gamma$ be the lift of the green collection, and $\gamma'$ be the lift of the orange collection in the picture on the right.}\hfill
\parbox{.4\textwidth}{\vspace{1cm}\hspace{.5cm}\includegraphics[width=.3\textwidth]{CEx0.pdf}}

\vspace{-.2cm}
\parbox{.3\textwidth}{\includegraphics[width=.25\textwidth]{Cex22.pdf}}
\parbox{.655\textwidth}{Then the lift of the green vector field is a surface whose boundary is the union of $\gamma$ and twice the fiber of a point, and which does not intersect~$\gamma'$. The same vector field on the other pair of pants connects~$\gamma'$ to twice another fiber. Then one checks that the linking number between two fibers is~$+\frac 1 2$, and we thus obtain~$\lk(\gamma, \gamma') = +2$.}
\vspace{-.5cm}
\end{proof}

However, let us mention that such counter-examples are rare. Indeed, using the techniques of Section~\ref{S:237} and a computer, we have explored the possible linking numbers of periodic orbits of~$\GF{\Sigma_2}$ and~$\GF{\Sigma_3}$. In a vast majority of cases, the linking number is negative, and the situation of Proposition~\ref{P:Cex} is exceptional. So far we have no explanation for this rarety.

\begin{ques}
Let $\Sigma_g$ be a genus~$g$ hyperbolic surface.  Characterize those pairs of collections of periodic orbits of~$\GF{\Sigma_g}$ that have a positive linking number. 
\end{ques}

We note that the counter-examples of Proposition~\ref{P:Cex} involve parallel collections of geodesics. A more specific, and maybe more accessible question, could be

\begin{ques} 
Let $\Sigma_g$ be a genus~$g$ hyperbolic surface.  If~$\gamma, \gamma'$ are two collections of periodic orbits of~$\GF{\Sigma_g}$ whose projections are not parallel and intersect, do we have~$\lk(\gamma, \gamma') \le 0$?
\end{ques}

In another direction, it is natural to wonder whether the assumption of a negative curvature can be dropped. Corollary~\ref{C:Sph2} shows that the geodesic flow is also left-handed on orbifolds with constant positive curvature, and, although their unit tangent bundle is not a homology sphere, orbifolds with constant zero curvature also yields flows that are left-handed in some weak sense (see Theorem~B and its corollaries). Nevertheless, one cannot hope for the geodesic flow on every sphere to be left-handed.

\begin{prop}
\label{P:Intersecting}
If a surface~$\Sigma$ admits at least two separating geodesics that do not intersect, then the geodesic flow~$\GF{\Sigma}$ is not left-handed.
\end{prop}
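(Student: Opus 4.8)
The plan is to exhibit, exactly as in Proposition~\ref{P:Cex}, an explicit pair of null-homologous collections of periodic orbits with positive linking number, and moreover to promote this to a statement about invariant measures so as to contradict left-handedness. First I would fix the hypothesis: $\Sigma$ contains two disjoint simple closed separating geodesics~$c_0, c_1$. Each~$c_i$ separates~$\Sigma$, so there is a decomposition of~$\Sigma$ along~$c_0 \cup c_1$ into (at least) two pieces; the essential topological fact is that $c_0$ and $c_1$ cobound a subsurface~$\Sigma'$ with $\bord\Sigma' = c_0 \cup c_1$. Let~$\gamma_0, \gamma_1$ be the liftings of~$c_0, c_1$ to~$\un\Sigma$ obtained by taking, say, both orientations of each geodesic, so that $\gamma_0$ and $\gamma_1$ are each null-homologous in~$\un\Sigma$ (being a symmetric pair of lifts of a separating curve). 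This is the analog of the green/orange collections of Proposition~\ref{P:Cex}.

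The core computation is that $\lk(\gamma_0,\gamma_1) > 0$. Following the proof of Proposition~\ref{P:Cex}: on the subsurface~$\Sigma'$ cobounded by~$c_0$ and~$c_1$, pick a vector field~$X$ tangent to~$\bord\Sigma'$ along the boundary and with finitely many singularities of total index~$\chi(\Sigma')$. Lifting~$X$ to a section $\Sigma' \to \un\Sigma$ produces a surface~$S$ in~$\un\Sigma$ whose boundary is $\gamma_0 \cup \gamma_1$ together with~$\chi(\Sigma')$ copies of fibers of the singular points (here one uses that $X$ restricted near~$c_i$ sweeps out exactly the symmetric lift, i.e.\ both orientations; the factor comes out as $\pm 1$ times the fiber per singularity once one checks the index convention against the framing of~$\un\Sigma$). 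Then $S$ realizes a rational cobordism from $\gamma_0 \sqcup \gamma_1$ to a multiple of a collection of fibers, so modulo fibers $\gamma_0$ and $\gamma_1$ are homologous in the complement of each other, reducing $\lk(\gamma_0, \gamma_1)$ to a combination of linking numbers of fibers. The needed input is $\lk(\un p, \un p') = +\tfrac12$ (resp.\ a positive rational) for distinct points $p, p'$ on an oriented surface — exactly the computation already carried out in Lemma~\ref{L:LkSs} and used at the end of the proof of Proposition~\ref{P:Cex}, where $\lk(\gamma,\gamma')=+2$ is obtained. One gets $\lk(\gamma_0,\gamma_1) = c\,\chi(\Sigma')\chi(\Sigma'')$ up to explicit positive constants; since both Euler characteristics are negative (the pieces are hyperbolic with boundary, hence have negative~$\chi$), the product, and hence the linking number, is positive. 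This already gives the first assertion, a positive linking number for a pair of null-homologous periodic-orbit collections, which is the content needed.

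For the statement that $\GF{\Sigma}$ is \emph{not} left-handed I would argue that the geodesic flow on a hyperbolic surface is Anosov, hence knot-shadowable, so Lemma~\ref{L:Approx} applies: if $\GF\Sigma$ were left-handed then every pair of periodic orbits would have negative linking number, contradicting the pair just constructed. Alternatively, and more directly in the spirit of the definition, the Dirac-type invariant measures supported on~$\gamma_0$ and~$\gamma_1$ have positive mutual asymptotic linking number (the asymptotic linking number of invariant measures restricts to the ordinary linking number on periodic orbits, up to the length normalization, as recalled in the proof of Lemma~\ref{L:Approx}), so left-handedness fails outright. I would present the Anosov/Lemma~\ref{L:Approx} route as the clean argument.

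The main obstacle is the bookkeeping in the cobordism step: getting the \emph{sign} of the fiber contributions right and confirming that the framing induced on fibers by the lift of~$X$ agrees with the one used in Lemma~\ref{L:LkSs}, so that the positivity of $\lk(\un p, \un p')$ indeed forces $\lk(\gamma_0,\gamma_1) > 0$ rather than the opposite sign. This is precisely the place where Proposition~\ref{P:Cex}'s proof is terse ("one checks that the linking number between two fibers is~$+\tfrac12$"), and I would need to verify that the separating hypothesis (rather than the genus-2 picture used there) does not flip anything — but it does not, since the only ingredients are: both orientations of each~$c_i$ are taken (forcing null-homology and making the boundary behavior of the lift of~$X$ symmetric), and the two complementary pieces each have negative Euler characteristic. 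Everything else is the computation already in the paper.
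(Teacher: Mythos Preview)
There is a genuine gap: your choice of the \emph{symmetric} lift $\gamma_i=c_i^+\cup c_i^-$ forces $\lk(\gamma_0,\gamma_1)=0$, not a positive number. Indeed, the Birkhoff annulus over~$c_0$ (unit vectors based on~$c_0$ pointing into one fixed side) is a $2$-chain with oriented boundary exactly $c_0^+ + c_0^-=\gamma_0$; since this annulus lies entirely over~$c_0$ and $c_0\cap c_1=\emptyset$, it is disjoint from~$\gamma_1$, whence $\lk(\gamma_0,\gamma_1)=0$. This is precisely the Birkhoff computation recalled in the introduction (linking of two symmetric pairs equals minus the intersection number of the underlying geodesics). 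Your parenthetical assertion that the lift of a vector field~$X$ tangent to~$\partial\Sigma'$ ``sweeps out exactly the symmetric lift, i.e.\ both orientations'' is simply false: a vector field tangent to~$c_i$ selects one direction along~$c_i$, so the section over~$c_i$ is a single lift~$c_i^+$ or~$c_i^-$, never both. Moreover, putting the vector field on the \emph{middle} piece~$\Sigma'$ produces a surface whose boundary contains lifts of \emph{both} $c_0$ and $c_1$, which cannot be used to compute $\lk(\gamma_0,\gamma_1)$; you need a chain bounding $\gamma_0$ alone in the complement of~$\gamma_1$.

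The paper's argument takes a \emph{single} lift of each geodesic. Writing $\Sigma_i$ for the piece of $\Sigma\setminus c_i$ \emph{not} containing the other curve, a vector field on~$\Sigma_i$ tangent to~$c_i$ shows that the chosen lift is homologous, in the complement of the other lift, to~$\chi(\Sigma_i)$ copies of a fiber; for the sphere in the figure $\chi(\Sigma_0)=\chi(\Sigma_1)=1$, and with the drawn orientations one lift is~$+[\text{fiber}]$ and the other~$-[\text{fiber}]$, giving $\lk=-\lk(\un p,\un p')=+\tfrac12$. Note also that the proposition carries no curvature hypothesis---the paper's illustrating example is a sphere with caps of Euler characteristic~$+1$---so your appeal to ``pieces are hyperbolic with boundary, hence have negative~$\chi$'' is unwarranted and, for that example, false. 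Once one has a single pair of periodic orbits with positive linking, the corresponding Dirac measures already witness the failure of left-handedness; no Anosov or knot-shadowing argument is needed.
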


\noindent\parbox[b]{.65\textwidth}{\begin{proof}
The picture in the margin corresponds to the case of a sphere whose curvature has a non-constant sign. The lifts of the two drawn curves are cohomologous, in the complement of the other curve, to a fiber and the opposite of a fiber respectively. Their linking number is $+\frac 1 2$. The argument is similar in the general case.\end{proof}}
\hspace{.5cm}
\parbox[b]{.5\textwidth}{\includegraphics[width=.3\textwidth]{CEx4.pdf}\vspace{5mm}
}

The situation of Proposition~\ref{P:Intersecting} cannot happen for a sphere with a positive curvature, and we propose

\begin{conj}
\label{C:Sphere}
Assume that $\Sigma$ is a 2-sphere with a (not necessarily constant) positive curvature. Then the geodesic flow~$\GF{\Sigma}$ is left-handed.
\end{conj}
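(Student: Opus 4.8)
The plan is to reduce the statement to the round case, which is settled by Corollary~\ref{C:Sph2}, and then to deform. The manifold $\un\Sph^2$ is diffeomorphic to the lens space $L_{2,1}=\mathbb{RP}^3$, a rational homology sphere with $H_1(\un\Sph^2;\Z)=\Z/2\Z$, so every linking number of orbits of $\GF{\Sigma}$ is a well-defined half-integer. First I would try to reduce left-handedness to the negativity of the linking number of every pair of \emph{periodic} orbits. Lemma~\ref{L:Approx} does not apply verbatim, since the geodesic flow of a positively curved $\Sph^2$ need not be Anosov and knot-shadowability is not automatic; however, positivity of the Gaussian curvature forces $\GF{\Sigma}$, seen on $\un\Sph^2$ as a Reeb flow for the canonical contact form, to be dynamically convex in a suitable sense (the curvature controls the index form of closed geodesics, so that their Conley--Zehnder indices are large), and such flows have abundant periodic orbits together with a disk-like global surface of section by the theorem of Hofer, Wysocki and Zehnder. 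Combining this with a closing-type argument one expects knot-shadowability to hold; I would isolate this as a separate lemma, after which it suffices to treat pairs of periodic orbits.

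Next, I would connect $\Sigma$ to a round metric through positively curved metrics by running the normalized Ricci flow: starting from $\Sigma$ it stays in the space of metrics of positive curvature and converges to a round metric $g_\infty$ (Hamilton and Chow), giving a continuous path $(\Sph^2,g_t)$, $t\in[0,\infty]$, of metrics on the \emph{fixed} manifold $\un\Sph^2$ with continuously varying geodesic flow. At $t=\infty$ the flow is periodic: all geodesics are great circles, the orbits Seifert-fiber $\un\Sph^2=L_{2,1}$ as a circle bundle of Euler number $\pm 2$ over the $\Sph^2$ of oriented great circles, so that any two distinct orbits link with number $-\tfrac12<0$; and since every invariant measure of a periodic flow is an average of normalized Dirac measures on closed orbits, $\GF{(\Sph^2,g_\infty)}$ is left-handed. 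The heart of the argument is then to propagate the negativity of pairwise linking numbers from $t=\infty$ down to $t=0$: for a pair of periodic orbits that persists over an interval of parameters the linking number is a locally constant half-integer, hence constant; at a saddle--node or Morse--Bott bifurcation where a pair of closed orbits is created or destroyed, the linking with any fixed third orbit extends continuously across the bifurcation, and the mutual linking of the two colliding orbits is read off in a flow box. Using the index structure coming from dynamical convexity to organise these bifurcations, one would conclude that the linking form of $\GF{\Sigma}$ is everywhere negative, whence, via the knot-shadowability lemma, left-handedness.

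The principal obstacle is exactly this propagation step. Unlike the hyperbolic orbifolds of Theorem~A, a positively curved sphere carries no hyperbolicity, hence there is no uniform symbolic coding of the periodic orbits along the path $g_t$, orbits can appear and disappear in a complicated way, and KAM phenomena may even obstruct a clean knot-shadowability statement; keeping the linking form negative through all such events — in particular ruling out that a newly created orbit is positively linked with an old one — is where positive curvature must be used quantitatively rather than merely qualitatively. An alternative route, closer in spirit to the rest of the paper, would bypass the deformation and establish the negativity of pairwise linking numbers directly: fix a simple closed geodesic $\gamma_0$, which exists on a positively curved $\Sph^2$ by Birkhoff's minimax argument (and on any $\Sph^2$ by Lusternik--Schnirelmann), and consider its Birkhoff annulus $A_{\gamma_0}$, bounded by the two lifts of $\gamma_0$; on a positively curved sphere $A_{\gamma_0}$ is a global surface of section (Birkhoff), its first-return map is an annulus twist map, and every geodesic crosses $\gamma_0$ because two disjoint simple closed geodesics cannot coexist on such a sphere (compare Proposition~\ref{P:Intersecting}). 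One would then replay the strategy of Section~\ref{S:237}, using the first-return dynamics on $A_{\gamma_0}$ to build, for each collection $\gamma$ of periodic orbits, a rational $2$-chain bounding it, and to bound its intersection number with a second collection $\gamma'$ by a bilinear form in the coordinates recording how the orbits wind through the open book. The difficulty, once more, is that a twist map is far less rigid than the hyperbolic templates, so making the resulting bilinear estimate negative is precisely the real content of the conjecture.
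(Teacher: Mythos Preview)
This statement is recorded in the paper as a \emph{conjecture}, not a theorem: the paper offers no proof and explicitly presents it as open, remarking only that the ellipsoid case ``could be accessible'' because its geodesic flow is integrable. So there is no proof in the paper to compare yours against.

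What you have written is a coherent research plan, not a proof, and you yourself say as much in the last two paragraphs. The two strategies you sketch --- deforming to the round metric along Ricci flow and tracking linking numbers through bifurcations, or building a $2$-chain from a Birkhoff annulus and bounding intersection numbers --- both identify sensible ingredients (dynamical convexity, Hofer--Wysocki--Zehnder global sections, the nonexistence of disjoint simple closed geodesics in positive curvature). But in each case you correctly isolate the genuine gap and then do not close it. For the deformation route, the ``propagation step'' is the whole problem: nothing you invoke prevents a newly born periodic orbit from being positively linked with an existing one, and continuity of linking along a persisting pair says nothing about this. For the Birkhoff-annulus route, you end by saying that making the bilinear estimate negative ``is precisely the real content of the conjecture,'' which is an admission that the argument is not complete.

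There is also a subsidiary gap you flag but do not resolve: knot-shadowability for a general positively curved sphere. Lemma~\ref{L:Approx} needs this hypothesis, and a ``closing-type argument'' combined with dynamical convexity is a hope rather than a lemma; KAM tori on a generic convex sphere make it plausible that not every invariant measure is approximated by periodic orbits in the required sense. Until that is settled, even a complete argument for pairs of periodic orbits would not yield left-handedness. In short: your outline is a reasonable map of the territory, but it does not prove the conjecture, and the paper does not claim to either.
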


The particular case of an ellipsoid could be accessible as, in this case, the geodesic flow is integrable.

On the other hand, the counter-example of Proposition~\ref{P:Cex} heavily relies on the fact that the homology of~$\Sigma_2$ is non-trivial. Therefore the conjecture of Ghys claiming that, if~$\Sigma$ is a hyperbolic 2-orbifold with~$H_1(\Sigma, \Q)=0$, then the geodesic flow~$\GF{\Sigma}$ is left-handed remains open and plausible.

So, in view of the known results and the above conjectures, the only cases for which the situation is totally unclear are those of orbifolds whose curvature has a non-constant sign and in which any two geodesics intersect, typically a pair of pants capped with three round hemispheres and slightly distorted so that the circles bounding the pants are not geodesic.

\subsection{Template knots}

The construction of Section~\ref{S:Template} associates a (multi)-template with every regular tessellation of~$\Hy^2$. Among all templates arising in this way, it is natural to pay special attention to those associated with the orbifolds~$\Orbpq$ of Section~\ref{S:q}. In this case, the tiles exclusively are ideal polygons and, therefore, there exists a one-to-one correspondence between the periodic orbits of the template and the periodic geodesics on the orbifold. The knots appearing in this approach generalize Lorenz knots, which correspond to the special case $p = 2, q= 3$. Lorenz knots have many interesting properties, and one can wonder whether similar properties could be true for those knots that appear in the above more general setting. 

\begin{ques}
Which knots appear as periodic orbits of~$\GF{\Orbpq}$?
\end{ques}

In this direction, Pinsky announced~\cite{Tali} that every periodic orbit of~$\GF{\Orbdq}$ is a prime knot (in the non-compact manifold~$\un\Orbpq$. Also, our current results show that these knots are fibered in~$\barre{\un\Orbpq}$. So, in particular, all knots cannot appear in this way.

\subsection{Gauss linking forms}
Let~$M$ be 3-manifold. A Gauss linking form on~$M$ is a differential $(1,1)$-form whose integral along every pair of null-homologous curves equals their linking number. Gauss linking forms exist on arbitrary 3-manifolds, but explicit formulas are known in very few cases: essentially, the only known examples are the those of~\cite{DeTurck} for the cases of~$\Sph^3, \R^3$, and~$\Hy^3$.

Now, Ghys' theorem~\cite{GhysJapan} states that a flow is left-handed if and only if their exists a Gauss linking form that is negative on the flow. Therefore, Theorem~A implies the existence, for the considered orbifolds~$\Sigma$, of a Gauss linking form in~$\un\Sigma$ that is negative along~$\GF{\Sigma}$. However, our proof of Theorem~A gives no indication about the involved Gauss linking forms. 

\begin{ques}
Are there explicit formulas for the Gauss linking forms implicitly involved in Theorem~A?
\end{ques}

More generally, better understanding Gauss linking forms appears as a plausible way to address Question~\ref{Q:OneHanded} and Conjecture~\ref{C:Sphere}.


\end{document}